\newtheorem{theorem}{Theorem}[section]
\newtheorem{lemma}[theorem]{Lemma}
\newtheorem{conjecture}[theorem]{Conjecture}
\newtheorem{corollary}[theorem]{Corollary}
\newtheorem{proposition}[theorem]{Proposition}
\theoremstyle{definition}
\newtheorem{definition}[theorem]{Definition}
\newtheorem{remark}[theorem]{Remark}
\newtheorem{example}[theorem]{Example}
\numberwithin{equation}{section}
\def\Z{\mathbb{Z}}
\def\F{\mathbb{F}}
\def\CFK {\widehat{\operatorname{CFK}}}
\def\HFK {\widehat{\operatorname{HFK}}}
\def\HF {\widehat{\operatorname{HF}}}
\def\HFKm {\operatorname{HFK}^-}
\def\HS {\operatorname{HS}}
\def\Kh{\widetilde{\operatorname{Kh}}}
\def\o{\mathfrak{o}}
\def\CFKtil {\widetilde{\operatorname{CFK}}}
\def\CFKtt {\underline{\widetilde{\operatorname{CFK}}}}
\def\HFKtil {\widetilde{\operatorname{HFK}}}
\def\HFKtt {\underline{\widetilde{\operatorname{HFK}}}}
\def\KK {\mathcal{K}}
\def\LL {\mathcal{L}}
\def\O{\mathbb{O}}
\def\P{\mathbb{P}}
\def\X{\mathbb{X}}
\def\A{\mathbb{A}}
\def\AA {\mathcal{A}}
\def\DD {\mathcal{D}}
\def\FF {\mathcal{F}}
\def\HH {\mathcal{H}}
\def\MM {\mathcal{M}}
\def\RR {\mathcal{R}}
\def\SS {\mathcal{S}}
\def\XX {\mathcal{X}}
\def\YY {\mathcal{Y}}
\def\T {\mathbb{T}}
\def\s {\mathfrak{s}}
\def\p{\mathbf{p}}
\def\r{\mathbf{r}}
\def\x{\mathbf{x}}
\def\y{\mathbf{y}}
\def\z{\mathbf{z}}
\def\w{\mathbf{w}}
\newcommand{\abs}[1] {\left\lvert #1 \right\rvert}
\newcommand{\gen}[1] {\langle #1 \rangle}
\def\Th{^{\text{th}}}
\def\minus{\setminus} \def\co{\colon\thinspace}
\DeclareMathOperator{\Sym}{Sym} \DeclareMathOperator{\Spin}{Spin}
\definecolor{purple}{RGB}{128,0,128}
\definecolor{darkgreen}{RGB}{0,128,0}
\title[A combinatorial spanning tree model for $\HFK$]{A combinatorial spanning tree model for knot Floer homology}
\author[John A.\ Baldwin]{John A.\ Baldwin}
\address{Department of Mathematics, Boston College, Chestnut Hill, MA 02467}
\email{john.baldwin@bc.edu}
\author[Adam Simon Levine]{Adam Simon Levine}
\address{Department of Mathematics, Princeton University, Princeton, NJ 08544} \email{asl2@math.princeton.edu}
\thanks{Both authors were partially supported by NSF Postdoctoral Fellowships. This article appeared in \emph{Advances in Mathematics} \textbf{231} (2012) 1886--1939. The references have been updated from the published version.}
\newcommand\cfkt{\CFKtil}
\newcommand\HFKt{\HFKtil}
\newcommand\Ta{\mathbb{T}_{\alpha}}
\newcommand\Tb{\mathbb{T}_{\beta}}
\def\OO{\mathbb{O}}
\def\XX{\mathbb{X}} \DeclareMathOperator\rk{rk} 
\def\ts{\textsection}
\def\ul{\underline}
\def\ol{\overline}
\begin{document}
\begin{abstract}
We iterate Manolescu's unoriented skein exact triangle in knot Floer homology with coefficients in the field of rational functions over $\Z/2\Z$. The result is a spectral sequence which converges to a stabilized version of $\delta$-graded knot Floer homology. The $(E_2,d_2)$ page of this spectral sequence is an algorithmically computable chain complex expressed in terms of spanning trees, and we show that there are no higher differentials. This gives the first combinatorial spanning tree model for knot Floer homology.
\end{abstract}

\maketitle
 \setlength{\parskip}{5pt}

\section{Introduction}
\label{sec:introduction}
Knot Floer homology is an invariant of oriented links in the 3-sphere,
originally defined by Ozsv{\'a}th-Szab{\'o} \cite{OSzKnot} and by Rasmussen
\cite{RasmussenThesis} using Heegaard diagrams and holomorphic disks. This
invariant comes in several flavors. The simplest is a bigraded vector space
over $\F=\mathbb{Z}/2\mathbb{Z}$,
\[
\HFK(L) = \bigoplus_{m,a}\HFK_m(L,a),
\]
from which one can recover the Seifert genus of $L$ \cite{OSzGenus} and
determine whether $L$ is fibered \cite{GhigginiFibred, NiFibered}. In addition,
knot Floer homology categorifies the Alexander polynomial:
\begin{equation}\label{eq:geuler}
\sum_{m,a}(-1)^{m+(|L|-1)/2}\,\text{rk}\,\HFK_m(L,a)\cdot t^a =
(t^{-1/2}-t^{1/2})^{|L|-1} \cdot \Delta_L(t),
\end{equation}
where $|L|$ is the number of components of $L$.

In 2006, Manolescu--Ozsv\'ath--Sarkar \cite{ManolescuOzsvathSarkar} and Sarkar--Wang \cite{SarkarWang} discovered algorithms for computing knot Floer homology via Heegaard diagrams in which the counts of holomorphic disks are completely combinatorial. The following year, Ozsv\'ath and Szab\'o \cite{OSzCube} gave an algebro-combinatorial formulation of knot Floer homology using a singular cube of resolutions construction which takes as input a
marked braid-form projection of a knot. The purpose of this article is to give an entirely novel combinatorial description of the \emph{$\delta$-graded} knot Floer homology groups,
\begin{equation}\label{eq:deltagradedhfk}
\HFK_{\delta}(L) = \bigoplus_{a-m=\delta}\HFK_m(L,a),
\end{equation}
in terms of \emph{spanning trees}. Before launching into this description, we provide some background and motivation.

Let $\DD$ be a connected planar projection of $L$, and color its complementary
regions black and white in a checkerboard fashion, so that the unbounded region
of $\mathbb{R}^2\minus\DD$ is colored white. One forms the \emph{black graph} $B(\DD)$
by placing a vertex in each black region and connecting two vertices by an edge
for every crossing of $\DD$ that joins the corresponding regions. %The
%\emph{white graph} $W(\DD)$ is obtained in a similar manner.
A \emph{spanning tree} of $B(\DD)$ is a connected, acyclic subgraph of $B(\DD)$
that contains all vertices of $B(\DD)$. The Alexander and Jones polynomials of
$L$ can be expressed as sums of monomials associated to such trees. When $L$ is
a knot, for example,
\begin{equation} \label{eq:kauffman}
\Delta_L(t) = \sum_{s \in\mathcal{T}(B(\DD))} (-1)^{M(s)}\cdot t^{A(s)},
\end{equation}
where $\mathcal{T}(B(\DD))$ is the set of spanning trees of $B(\DD)$, and
$A(s)$ and $M(s)$ are integers \cite{KauffmanFormal}.

Since knot Floer homology encodes the Alexander polynomial, one expects that it should also admit a formulation in terms of spanning trees. Indeed, in \cite{OSzAlternating}, Ozsv{\'a}th and Szab{\'o} associate to $\DD$ a doubly-pointed Heegaard diagram $(\Sigma, \bm\alpha, \bm\beta, z,w)$ for $L$ for which generators of the chain complex $\CFK(\Sigma, \bm\alpha, \bm\beta,z,w)$ are in 1-to-1 correspondence with spanning trees of $B(\DD)$, with the bigrading given by the quantities $A(s)$ and $M(s)$ in \eqref{eq:kauffman}. Using this Heegaard diagram, they prove that the knot
Floer homology of an alternating knot is determined by its Alexander polynomial and signature. However, despite numerous efforts, no one has managed to find a combinatorial description of the differential on this complex, largely because
there is no general algorithm for counting the relevant holomorphic disks.

%Even if there were, however, there is no reason to expect this differential to
%admit any sort of nice, explicit formulation in terms of spanning trees.
%Indeed, one encounters a similar difficulty in Khovanov homology although it is
%an intrinsically combinatorial theory (we shall return to this point in a bit).

In this article, we introduce a complex for knot Floer homology whose differential is combinatorial and can be described explicitly in terms of spanning trees. Our construction starts with an oriented, connected planar projection $\DD$ for $L$. We choose $m$ marked points on the edges of $\DD$ so that every edge contains at least one such point. Let $\FF = \F(T)$, the field of rational functions in a single variable $T$ with coefficients in $\F$. In Section \ref{sec:complex}, we define a graded chain complex $(C^\Omega(\DD), \partial^{\Omega}),$ where $C^\Omega(\DD)$ is a direct sum of $2^{m-1}$-dimensional vector spaces over $\FF$, one for each spanning tree of $B(\DD)$, and $\partial^{\Omega}$ can be described explicitly in terms of the planar embedding of $B(\DD)$, the marked points, and a generic function $\Omega$ from the crossings of $\DD$ to the integers.

Our main theorem is the following.

\begin{theorem} \label{thm:main}
The homology of $(C^\Omega(\DD),\partial^{\Omega})$ is isomorphic as a graded $\FF$-vector space to $\HFK(L) \otimes_{\mathbb{F}} V^{\otimes(m-|L|)}\otimes_{\mathbb{F}} \FF$ with respect to the $\delta$-grading on $\HFK(L)$, where $V$ is a two-dimensional vector space over $\mathbb{F}$ supported in grading zero.
\end{theorem}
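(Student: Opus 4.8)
The plan is to build the complex $(C(\DD),\partial^\Omega)$ as the $(E_2,d_2)$ page of an iterated skein spectral sequence, and then prove that it computes $\HFK$ by showing there are no higher differentials. First I would recall Manolescu's unoriented skein exact triangle: at each crossing of $\DD$, the two resolutions $\DD_0$ and $\DD_1$ (the oriented and disoriented resolutions, in some convention) fit into an exact triangle relating $\HFK$ of $L$ with $\HFK$ of the two resolved links, over the field $\FF$ — here the passage to $\FF = \operatorname{Frac}(\F[\Z])$ is what makes the triangle work uniformly and lets us package the gradings by a single variable. Iterating this triangle over all $n$ crossings of $\DD$ produces a spectral sequence whose $E_1$ page is the direct sum, over all $2^n$ complete resolutions, of $\HFK$ of the resulting unlinks (each tensored with the appropriate $V^{\otimes k}\otimes\FF$), and whose first differential $d_1$ is induced by the triangle maps. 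The genericity of $\Omega$ is used to order the crossings / break ties so that the iterated mapping cone is well-defined and the induced filtration is honest.

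The second step is to identify the $(E_2,d_2)$ page with $(C(\DD),\partial^\Omega)$ explicitly. Each complete resolution of $\DD$ is a diagram of an unlink, and $\HFK$ of an $\ell$-component unlink is $V^{\otimes(\ell-1)}$ up to the stabilization factors; the key combinatorial fact — essentially the content of the Bar-Natan / Kauffman-state picture behind \eqref{eq:kauffman} — is that after taking $d_1$-homology, the surviving resolutions are exactly those corresponding to spanning trees of the black graph $B(\DD)$ (equivalently, the resolutions for which the resulting diagram is a connected unknot diagram, i.e. represents the unknot). This is where the $2^{m-1}$-dimensional summand per spanning tree comes from: the extra marked points inflate each unknot resolution to an $(m-|L|+1)$-component unlink's worth of generators, hence the $V^{\otimes(m-|L|)}$ in the statement, but the \emph{number} of surviving resolution-classes is the number of spanning trees. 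Computing $d_2$ then amounts to tracking the secondary maps in the iterated cone between adjacent spanning trees — these differ by a single edge swap in $B(\DD)$ — and showing they agree with the combinatorially-defined $\partial^\Omega$ of Section \ref{sec:complex}; the role of $\Omega$ and the marked points is precisely to record the signs/powers of the $\Z$-variable attached to each such edge swap.

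The third and crucial step is the collapse of the spectral sequence: show $d_r = 0$ for $r \geq 3$. Here the idea is a grading/dimension argument. The $\delta$-grading is preserved by all differentials (up to the fixed shift built into the skein triangle), and the total dimension over $\FF$ of $H_*(C(\DD),\partial^\Omega)$ must, by the Euler-characteristic computation \eqref{eq:geuler} together with the multiplicativity under the skein triangle, have graded Euler characteristic equal to that of $\HFK(L)\otimes V^{\otimes(m-|L|)}$; if in addition one knows (from the structure of the $E_2$ page and the behavior of $\delta$-gradings under the triangle) that $E_2$ is supported in a single $\delta$-grading relative to the relevant Alexander/Maslov normalization — mirroring the thin-ness phenomenon exploited by Ozsv\'ath-Szab\'o in \cite{OSzAlternating} — then no higher differential can be nonzero for grading reasons. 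The honest obstacle is making this last point rigorous without circularity: \emph{a priori} the $E_2$ page need not be thin, so the argument must instead be that the higher differentials $d_r$ for $r\geq 3$ shift the (internal) $\Z$-power grading coming from $\FF$ in a way incompatible with the module structure over $\F[\Z]$, or equivalently that each $d_r$ with $r\ge3$ factors through maps that are already forced to vanish by the exactness of the individual triangles. I expect the bulk of the work — and the genuinely new input beyond assembling known skein-triangle machinery — to be precisely this no-higher-differentials argument, together with the bookkeeping that matches the abstract $d_2$ with the hands-on $\partial^\Omega$.
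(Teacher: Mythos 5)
Your high-level outline — build a skein-triangle cube of resolutions over $\FF$, identify the $(E_2,d_2)$ page with $(C(\DD),\partial^\Omega)$, and force collapse by a grading argument — is the right plan and matches the paper's strategy. But you have misidentified two key mechanisms and over-complicated the third.

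First, the reduction to spanning trees is not achieved by ``taking $d_1$-homology'': it happens already at the $E_1$ level. The generic function $\Omega$ is used to define a cohomology class $[\omega_\Omega]_I$ on the complement of each resolution $L_I$, and the point (Lemma \ref{lemma:generic} together with Proposition \ref{prop:unlink}) is that when $\DD_I$ is disconnected this class is nonzero, which makes the twisted homology $\ul\HFKt(\LL_I,[\omega_\Omega]_I;\FF)$ vanish over the fraction field $\FF$. Connected resolutions, by contrast, have $[\omega_\Omega]_I = 0$ and contribute a full copy of $\HFKt(\LL_I)\otimes\FF$. So $E_1(\SS^\Omega_\FF)$ is already supported exactly on the spanning trees, before any differential acts. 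Then $d_1$ is identically zero, for the elementary combinatorial reason that changing a single crossing in a connected resolution always disconnects it: no edge of the cube joins two connected resolutions. This also corrects your account of the genericity of $\Omega$; it is there to guarantee $[\omega_\Omega]_I \ne 0$ on every disconnected $\DD_I$, not to ``order crossings'' or ``break ties'' in the filtration.

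Second, your worry about circularity in the collapse step is misplaced, and your proposed fix (an argument about the $\Z$-power grading from $\FF$) is not what is needed. Because every surviving $E_1$ summand is attached to a \emph{connected} resolution — hence an unknot — each summand $\HFKt(\LL_I)\otimes\FF$ sits in a single $\delta$-grading (there is nothing to prove: it is the knot Floer homology of an unknot). After incorporating the filtration-dependent shift, the paper's $\Delta$-grading on $E_2$ satisfies $\Delta = (Q - n_-(\DD))/2$, where $Q=|I|$ is the filtration degree. Since $d_k$ raises $Q$ by $k$ and (by Theorem \ref{thm:gradedcube}) raises $\Delta$ by $1$, a nonzero $d_k$ forces $2 = k$. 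No Euler-characteristic input and no auxiliary $\Z$-grading are required. What \emph{does} take real work in the paper is the grading bookkeeping behind Theorem \ref{thm:gradedcube} (computing the $\delta$-shifts of the polygon-counting maps, Propositions \ref{prop:welldefined}--\ref{prop:triple}) and the explicit identification of $d_2$ with $\partial^\Omega$ via the basepoint action $\psi_i$ and the model computation of Section \ref{sec:twisted}, which your sketch does not engage with.
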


Our construction makes use of Manolescu's unoriented skein exact triangle \cite{ManolescuSkein}, which relates the knot Floer homology of $L$ with those of its two resolutions at a crossing. Under mild technical assumptions, one can iterate Manolescu's triangle in the manner of Ozsv{\'a}th-Szab{\'o} \cite{OSzDouble}. The result is a cube of resolutions spectral sequence $\SS_\F$ that converges to $\HFK(L)\otimes V^{\otimes(m-|L|)}$ and whose $E_1$ page is a direct sum,
\[
\bigoplus_{I\in \{0,1\}^n} \HFK(L_I)\otimes V^{\otimes(m-|L_I|)},
\]
over complete resolutions $L_I$ of $\DD$. The $d_1$ differential of $\SS_{\F}$ can be described explicitly. Unfortunately, however, $E_2(\SS_{\F})$ is not an invariant of $L$ (see Remark \ref{rmk:untwistedd1}).

To skirt this issue, we perform the above iteration instead over $\FF$, using a system of twisted coefficients determined by $\Omega$. With these coefficients, the knot Floer homologies of disconnected resolutions vanish, and the $E_1$ page of the resulting spectral sequence, $\SS^{\Omega}_{\FF}$, is a direct sum of vector spaces associated to \emph{connected} resolutions, which are themselves in 1-to-1 correspondence with spanning trees of $B(\DD)$. This page is isomorphic to the complex $C^\Omega(\DD)$, and $d_1(\SS^{\Omega}_{\FF})$ is identically zero since no edge in the cube of resolutions of $\DD$ can join two connected resolutions. We identify the differential $d_2(\SS^{\Omega}_{\FF})$ with $\partial^{\Omega}$ and, based on a grading argument, show that $\SS^{\Omega}_\FF$ collapses at its $E_3$ page. This proves Theorem \ref{thm:main}.

For the remainder of this section, we shall denote the homology $H_*(C^\Omega(\DD),\partial^{\Omega})$ by $\HS_*(L,m)$.

Although the $\delta$-grading on knot Floer homology contains less information than the bigraded theory (e.g. one generally needs the bigrading to determine Seifert genus), it is still a rather powerful invariant with several
applications. Below, we briefly recast some of these in terms of $\HS(L,m)$. Recall that the \emph{homological width} of $L$ is
\[
w(L) = 1+\max\{\delta \mid \HFK_{\delta}(L)\neq 0\}-\min\{\delta \mid \HFK_{\delta}(L)\neq 0\}.
\]
If $w(L)=1$, we say that $L$ is \emph{thin}. One of the most useful features of our theory is that it measures width. Indeed, by Theorem \ref{thm:main},
\[
w(L) = 1+\max\{\delta \mid \HS_{\delta}(L,m)\neq 0\}-\min\{\delta \mid
\HS_{\delta}(L,m)\neq 0\}.
\]
Note that when $L$ is thin, its bigraded knot Floer homology is completely determined by $\HS(L,m)$ and $\Delta_L(t)$. Theorem \ref{thm:main} and the results of Ozsv\'ath-Szab\'o \cite{OSzGenus}, Ghiggini \cite{GhigginiFibred} and Ni \cite{NiActions} therefore imply the following.

\begin{corollary}
\noindent
\begin{enumerate}
\item $L$ is the $k$-component unlink if and only if $w(L) = k$ and $\rk_{\FF}\HS(L,m)=2^{m-1}$.
\item $L$ is the figure-eight knot if and only if $L$ is thin and $\Delta_L(t) = -t^{-1}+3-t$.
\item $L$ is the left- or right-handed trefoil if and only if $\rk_{\FF}\HS(L,m)=3\cdot 2^{m-1}$ and $\HS(L,m)$ is supported in the grading $-1$ or $+1$, respectively.
\end{enumerate}
\end{corollary}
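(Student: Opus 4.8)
The plan is to deduce each item from Theorem~\ref{thm:main} by translating statements about $\HFK_\delta(L)$ into statements about $\HS(L,m)$, and then invoking the known classification results. Throughout, observe that the two basic numerical invariants extractable from $\HS(L,m)$ are its total rank, which by Theorem~\ref{thm:main} equals $2^{m-|L|}\cdot\rk_{\mathbb F}\HFK(L)$, and the set of $\delta$-gradings in which it is supported, which (since $V$ sits in grading zero) coincides with the set of $\delta$ for which $\HFK_\delta(L)\neq 0$. In particular the width formula recorded just before the statement holds, and the Euler characteristic identity \eqref{eq:geuler} lets us read off $\Delta_L(t)$ from the $\delta$-graded ranks when $L$ is thin.

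For item~(1): if $L$ is the $k$-component unlink, then $\HFK(L)\cong V^{\otimes(k-1)}$ supported in $\delta=0$, so $w(L)=1=k$ fails unless $k=1$; more carefully, the unlink has $\HFK$ supported entirely in $\delta=0$, hence $w(L)$ is $1$ regardless of $k$ --- wait, I must instead use that $w(L)=k$ forces the support to span $k$ consecutive $\delta$-values. Actually the cleanest route is: $\rk_{\FF}\HS(L,m)=2^{m-1}$ together with Theorem~\ref{thm:main} gives $\rk_{\mathbb F}\HFK(L)=2^{|L|-1}$, and \eqref{eq:geuler} then forces $\Delta_L(t)=(t^{-1/2}-t^{1/2})^{|L|-1}$ up to sign with all $\rk\HFK_M(L,A)$ accounted for with matching signs, which (comparing with the spanning-tree count) means $\HFK(L)\cong V^{\otimes(|L|-1)}$; since $w(L)=k$ and this $\HFK$ lives in a single $\delta$, we get $k=1$ and $\HFK(L)\cong\widehat{HF}(S^3)$, so $L$ is the unknot by the genus-detection theorem of \cite{OSzGenus} (an unknot-detection consequence). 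For $k=1$ this is exactly the unknot-detection statement. For general $k$, one instead notes $w(L)=k$ and the rank condition together pin down $\HFK(L)$ to be that of the $k$-component unlink by the link version, and then applies \cite{OSzGenus}, \cite{NiActions} (Thurston-norm / fiberedness detection for links) to conclude.

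For item~(2): if $L$ is thin, then $w(L)=1$ means $\HS(L,m)$, hence $\HFK(L)$, is supported in a single $\delta$-grading; by \cite{OSzAlternating} (or directly from thinness plus \eqref{eq:geuler}) the bigraded $\HFK(L)$ is then determined by $\Delta_L(t)$ and the signature, and $\Delta_L(t)=-t^{-1}+3-t$ is the Alexander polynomial of the figure-eight knot. One then needs that no other thin knot shares this Alexander polynomial --- this follows because a thin knot with $|\Delta_L(-1)|=\det L=5$ and genus $1$ (read off from $\Delta_L$) has $\HFK$ of total rank $5$ concentrated appropriately, and by \cite{OSzGenus} the knot has genus one, while \cite{GhigginiFibred} shows it is not fibered (the top Alexander coefficient is $-1$, not a unit). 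The figure-eight is the unique genus-one non-fibered thin knot with determinant $5$; this last uniqueness is the main obstacle and is where I expect to lean hardest on the literature (e.g.\ the classification of thin knots with small determinant, or alternatively phrasing ``figure-eight knot'' as meaning ``$\HFK$ agrees with that of the figure-eight,'' which is then immediate from Theorem~\ref{thm:main}). For item~(3): $\rk_{\FF}\HS(L,m)=3\cdot 2^{m-1}$ gives $\rk_{\mathbb F}\HFK(L)=3\cdot 2^{|L|-1}$, and supported in a single grading $\delta=\mp1$ forces $L$ to be a knot (parity of the grading and \eqref{eq:geuler} rule out $|L|>1$) with $\HFK(L)$ that of the left- resp.\ right-handed trefoil; then \cite{OSzGenus} gives genus one, \cite{GhigginiFibred} gives fiberedness, and the unique fibered genus-one knots with this Alexander polynomial and width one are the two trefoils, distinguished by the $\delta$-grading (equivalently by $\tau$). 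Again the only real content beyond Theorem~\ref{thm:main} is the input classification theorems, which I would cite rather than reprove.
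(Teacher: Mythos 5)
The paper does not actually write out a proof of this Corollary; it is asserted in a single sentence as a direct consequence of Theorem~\ref{thm:main} together with the detection theorems of Ozsv\'ath--Szab\'o, Ghiggini, and Ni, so your task is really to reconstruct that translation. Your overall strategy --- read off $\rk_{\F}\HFK(L)$ and the $\delta$-support from $\HS(L,m)$ via Theorem~\ref{thm:main}, then feed these into the cited detection results --- is the right one. However, item~(2) contains a genuine error, and item~(1) has a computational slip worth flagging.

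In item~(2) you write that Ghiggini's theorem shows the knot is ``not fibered (the top Alexander coefficient is $-1$, not a unit).'' This is false: $-1$ \emph{is} a unit in $\Z$, and the figure-eight knot is a well-known fibered genus-one knot (it is the standard example of a hyperbolic fibered knot). The argument should run in the opposite direction: a thin knot with $\Delta_L(t) = -t^{-1}+3-t$ has genus $1$ and $\rk\HFK(L,1)=1$, so Ghiggini/Ni's fiberedness detection shows $L$ \emph{is} fibered. Since the fibered genus-one knots are precisely the two trefoils and the figure-eight (a classical, elementary classification, not something one needs to ``lean hardest on the literature'' for), and only the figure-eight has the given Alexander polynomial, one concludes $L = 4_1$. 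Your version instead calls for a classification of non-fibered thin genus-one knots of determinant $5$, which is both unnecessary and built on the false premise.

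In item~(1) the visible backtracking stems from a wrong computation: you assert at one point that $\HFK$ of the $k$-component unlink is $V^{\otimes(k-1)}$ in a single $\delta$-grading, which would force $k=1$ and contradict the statement you are trying to prove. In fact the $k$-component unlink has $\HFK$ concentrated at Alexander grading $0$ with Maslov gradings spanning $\{0,-1,\dots,-(k-1)\}$, hence $\delta \in \{0,1,\dots,k-1\}$ and $w = k$; also $\Delta_L \equiv 0$ for a multi-component split link, not $(t^{-1/2}-t^{1/2})^{|L|-1}$. The correct route (which you do eventually gesture at) is simply: Theorem~\ref{thm:main} gives $\rk_{\F}\HFK(L)=2^{|L|-1}$, which by Ni \cite{NiActions} forces $L$ to be the $|L|$-component unlink, and then $w(L)=|L|=k$. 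Item~(3) is plausibly sketched, though the claim that parity alone rules out $|L|>1$ deserves more care since the collapsed $\delta$-grading is integer-valued regardless of $|L|$; the rest of the argument (thinness, determinant $3$, $\tau=\pm1$) is in the right spirit.
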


Moreover, when $L$ is a thin \emph{knot}, its genus is simply the degree of
$\Delta_L(t)$ \cite{OSzGenus}, and it is fibered if and only if $\Delta_L(t)$
is monic \cite{GhigginiFibred, NiFibered}. In addition, the concordance
invariant $\tau(L)$, whose absolute value is a lower bound for the smooth
four-ball genus of $L$, is equal to the unique grading in which $\HS(L,m)$ is
supported \cite{OSz4Genus}.

It would be interesting to find a refinement of our construction which captures
the full bigrading on $\HFK$. However, the fact that the $\delta$-grading is
especially natural from our vantage hints that our theory may be well-suited to
certain applications, which we now describe.

The \emph{reduced Khovanov homology} of a link $L \subset S^3$ is a bigraded
vector space over $\F$,
\[
\Kh(L) = \bigoplus_{i,j}\Kh{}^{i,j}(L),
\]
which categorifies the Jones polynomial of $L$. In spite of their disparate
origins, Khovanov homology and knot Floer homology possess intriguing
similarities. For instance, although the bigrading on Khovanov homology behaves
quite differently from that on knot Floer homology, one can collapse the former
into a single grading,
\[
\Kh{}^{\delta}(L) = \bigoplus_{j/2-i = \delta}\Kh{}^{i,j}(L),
\]
and all available evidence points to the following conjecture, first formulated
by Rasmussen \cite{RasmussenHomologies} in the case of knots.
\begin{conjecture} \label{conj:khhfk}
For any link $L\subset S^3$,
\[
2^{\abs{L}-1-\eta(L)}\cdot\rk_{\F} \Kh^{\delta}(L) \ge \rk_{\F}
\HFK_{\delta}(L),
\]
where $\eta(L)$ is the rank of the Alexander module of $L$ over $\Z[H_1(S^3
\minus L;\mathbb{Z})]$.
\end{conjecture}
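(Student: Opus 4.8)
The plan is to attack Conjecture~\ref{conj:khhfk} by playing our spanning tree complex for $\HFK$ against the spanning tree models for reduced Khovanov homology due to Champanerkar--Kofman and Wehrli, exploiting the fact that both are built on the \emph{same} generating set $\mathcal{T}(B(\DD))$. First I would record, from the proof of Theorem~\ref{thm:main}, that $(C(\DD),\partial^{\Omega})$ is the $E_2$ page of the twisted cube of resolutions spectral sequence $\SS^{\Omega}_{\FF}$ together with its differential $d_2$, that it splits as $\bigoplus_{s\in\mathcal{T}(B(\DD))}(\FF^{2^{m-1}})_{s}$ with the summand for $s$ concentrated in a single $\delta$-grading $\delta_{\mathrm{HFK}}(s)$, and that its homology has rank $2^{m-|L|}\cdot\rk_{\F}\HFK_{\delta}(L)$ in $\delta$-grading $\delta$. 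Since homology cannot exceed the chain complex in any fixed grading, this already gives
\[
\rk_{\F}\HFK_{\delta}(L)\ \le\ 2^{|L|-1}\cdot\#\{s\in\mathcal{T}(B(\DD)):\delta_{\mathrm{HFK}}(s)=\delta\}.
\]
On the Khovanov side the analogous retraction of Khovanov's cube onto spanning trees produces a complex computing the reduced Khovanov homology $\Kh(L)$ with one generator per tree, with $s$ in a $\delta$-grading $\delta_{\mathrm{Kh}}(s)$, whence $\rk_{\F}\Kh^{\delta}(L)\le\#\{s:\delta_{\mathrm{Kh}}(s)=\delta\}$.

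The bridge between the two sides is a purely combinatorial grading comparison: for every spanning tree $s$, the gradings $\delta_{\mathrm{Kh}}(s)$ and $\delta_{\mathrm{HFK}}(s)$ should agree, up to a global shift fixed by the normalization under which Conjecture~\ref{conj:khhfk} is stated. Both are extracted from the Kauffman-state data underlying \eqref{eq:kauffman} and \eqref{eq:deltagradedhfk} --- the same data behind \eqref{eq:geuler} --- so the comparison amounts to tracking the writhe and checkerboard-colouring conventions in Manolescu's triangle \cite{ManolescuSkein} against those in Khovanov's cube. Granting this, $C(\DD)$ and the Khovanov spanning tree complex are supported over the same multiset of $\delta$-gradings, and the conjecture becomes the statement that, upon passing to homology, the rank in each $\delta$-grading drops at least as fast on the $\HFK$ side --- once the factors $2^{m-|L|}$, $2^{|L|-1}$ and $2^{-\eta(L)}$ are reconciled --- as on the Khovanov side. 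Here the normalization is transparent in spirit: working over $\FF$ annihilates exactly the disconnected complete resolutions and the auxiliary $V^{\otimes(m-|L|)}$ factor cancels from the inequality, so the surviving arithmetic is routine once the twisted-coefficient conventions are pinned down.

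The main obstacle --- and the reason the conjecture remains open --- is that the displayed inequality on the $\HFK$ side is genuinely lossy: in general $\partial^{\Omega}$ has large rank (already for a connected diagram of a two-component unlink the naive bound $2^{|L|-1}\#\{s\}$ overshoots $\rk_{\F}\HFK_{\delta}(L)$ by a factor of two in the relevant $\delta$-grading), so the true $\rk_{\F}\HFK_{\delta}(L)$ is much smaller than the tree count, and capturing this drop requires understanding $\partial^{\Omega}$ --- the $d_2$ of $\SS^{\Omega}_{\FF}$, obtained by iterating Manolescu's unoriented skein triangle in the manner of Ozsv\'ath--Szab\'o \cite{OSzDouble} --- well enough to compare it with the Khovanov spanning tree differential, which arises from Gaussian elimination on Khovanov's cube. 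Closing the gap really seems to demand a chain map, or a filtered complex, interpolating between the two, defined on the common basis $\mathcal{T}(B(\DD))$ and respecting the $\delta$-grading --- equivalently, a spectral sequence from a Khovanov-type complex to $(C(\DD),\partial^{\Omega})$ --- and constructing such an object is where I expect the difficulty to lie. Failing that, one still recovers the conjecture for thin links, where thinness pins both homologies into a single $\delta$-grading and the inequality reduces to comparing the values of the Jones and Alexander polynomials at $-1$.
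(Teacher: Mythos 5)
This is Conjecture~\ref{conj:khhfk}, not a theorem: the paper offers no proof, and neither do you --- your proposal ends by acknowledging that ``the conjecture remains open.'' So there is no proof in the paper to compare against; what you have written is a plan of attack, and your final diagnosis of where it stalls (one needs explicit control of both spanning tree differentials, or a filtered chain map or spectral sequence interpolating between $(C(\DD),\partial^\Omega)$ and a Khovanov spanning tree complex) matches the paper's own commentary almost verbatim, including its warning that the Champanerkar--Kofman/Wehrli differentials ``are not known explicitly.''

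Two points in the sketch are more optimistic than warranted. First, the asserted grading comparison $\delta_{\mathrm{Kh}}(s)=\delta_{\mathrm{HFK}}(s)+\mathrm{const}$ is a genuine claim, not a matter of normalization: on the $\HFK$ side the grading of the tree $\gamma_I$ is $(\abs{I}-n_-(\DD))/2$ (Definition~\ref{def:complex}), a linear function of the number of $1$-resolutions, while on the Khovanov side the $\delta$-grading of a spanning tree generator is read off from the activity word of $s$ relative to an edge ordering in the Thistlethwaite/Champanerkar--Kofman formalism --- a combinatorially distinct statistic. Whether these coincide up to a shift on an arbitrary diagram is not established here or in the paper, and no amount of convention-chasing in Manolescu's triangle versus Khovanov's cube forces it. Second, the factor $2^{-\eta(L)}$ in the conjectured inequality is not accounted for by ``homology has smaller rank than the chain group'' plus tidying of $2^{m-|L|}$ and $2^{|L|-1}$; it records which twisted resolutions survive (equivalently the rank of the Alexander module), and your remark that the ``surviving arithmetic is routine'' glosses over exactly the content that distinguishes the conjecture from a tautology. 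The rest of your analysis --- recognizing that the naive rank bound on the $\HFK$ side is lossy, and that a filtered comparison map between the two spanning tree complexes is the missing ingredient --- is sound, and is precisely where the paper leaves the problem open.
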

A proof of this conjecture would imply that Khovanov homology detects not only
the unknot, a fact recently established by Kronheimer and Mrowka
\cite{KronheimerMrowkaUnknot} using instanton Floer homology, but also the
trefoils and unlinks.\footnote{Using \cite{KronheimerMrowkaUnknot}, Hedden and Ni showed that the total rank of $\Kh$ detects the 2-component unlink \cite{HeddenNiRanks} and that $\Kh$, equipped with some additional algebraic structure, detects all unlinks \cite{HeddenNiUnlink}.}

Our new description for knot Floer homology bears an intriguing resemblance to recent work by Roberts \cite{RobertsTwisted} and Jaeger \cite{JaegerTwisted} that provides a spanning tree model for reduced Khovanov homology. Specifically, Roberts defines a complex $(C'(\DD),\partial')$ whose generators (over a field $\FF'$ of rational functions in several variables) corresponding to spanning trees, with the same grading as in our complex $C^\Omega(\DD)$. Moreover, the component of our differential $\partial^\Omega$ from the summand corresponding to a spanning tree $T$ to the summand corresponding to $T'$ is nonzero precisely when the same is true in $\partial'$. Jaeger then proves that when $L$ is a knot, the homology of $(C'(\DD), \partial')$ is precisely $\Kh(L) \otimes \FF'$ with its $\delta$ grading.\footnote{Note that Champanerkar--Kofman \cite{ChampanerkarKofmanSpanning} and Wehrli \cite{WehrliSpanning} independently discovered a different spanning tree model for Khovanov homology. However the differential on this complex is not known explicitly in terms of spanning trees; to compute it, one must effectively compute the entire Khovanov complex. An advantage of their model, however, is that it provides the entire bigrading on $\Kh$, not just the $\delta$ grading.} Because of this similarity, we hope that our new model for knot Floer homology may shed some light on Conjecture \ref{conj:khhfk}. For a simple example in this vein, see Corollary \ref{cor:disjointtrees} below.

Many of the ideas in this paper can be traced to work of Ozsv{\'a}th and Szab{\'o} \cite{OSzDouble}, who discovered a spectral sequence relating $\Kh(L)$ to the Heegaard Floer homology of $\Sigma(\ol L)$, the double cover of
$S^3$ branched along the mirror of $L$. Generalizations and applications of this spectral sequence have made for an active area of reseach in recent years; see, e.g., \cite{BaldwinPlamenevskaya,  BloomMonopole, GrigsbyWehrliColored, HeddenWatson, RobertsDouble, KronheimerMrowkaUnknot}. In forthcoming work, Ozsv{\'a}th, Szab{\'o}, and the first author define an analogous construction with twisted coefficients, the result of which is a spectral sequence $\SS$, converging to the twisted Heegaard Floer homology of $\Sigma(\ol L)$, whose $E_2$ page is a spanning tree complex that formally resembles both our complex $C^\Omega(\DD)$ and Roberts' $C'(\DD)$.\footnote{Kriz and Kriz \cite{KrizKrizInvariant} have proven that the homology of $(E_2(\SS), d_2(\SS))$ is a link invariant.} In contrast with our setup, it is not clear whether $\SS$ collapses at its $E_3$ page. However, the similarities between $(E_2(\SS),d_2(\SS))$ and $(C^\Omega(\DD),\partial^{\Omega})$ suggest that one might hope to prove a relationship between $\HFK(L)$ and $\HF(\Sigma(L))$, as was also proposed by Greene \cite{GreeneSpanning}. Available evidence suggests the following.

\begin{conjecture}
\label{conj:hfkbdc} For any link $L\subset S^3$, \[\rk_{\F}
\HFK_{*+(|L|-1)/2}(L) \ge 2^{\abs{L}-1-\eta(L)}\cdot \rk_{\F}
\HF_{*}(\Sigma(L)),\] where the two gradings above are the mod $2$ $\delta$-
and Maslov gradings, respectively.
\end{conjecture}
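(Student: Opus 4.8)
The plan is to compare the spectral sequence $\SS^{\Omega}_{\FF}$ of Theorem~\ref{thm:main} with the twisted branched-double-cover spectral sequence $\SS$ outlined by Ozsv\'ath--Szab\'o, exploiting that both arise by iterating an exact triangle over the cube of resolutions of a diagram $\DD$ for $L$ and that both have $E_2$ pages built from the spanning trees of $B(\DD)$. First I would make $\SS$ precise: iterate the branched-double-cover surgery exact triangle of \cite{OSzDouble} with coefficients in $\FF$, check that disconnected complete resolutions contribute nothing --- the branched double cover of a $k$-component unlink is $\#^{k-1}(S^1\times S^2)$, whose $\FF$-twisted $\HF$ vanishes once $k\ge 2$, exactly as knot Floer homology of disconnected resolutions vanishes here --- and conclude that $E_2(\SS)$ is supported on the connected resolutions, i.e.\ on the spanning trees, with $d_1(\SS)=0$. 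This places $E_2(\SS)$ and the spanning-tree module underlying $C(\DD)$ on the same footing, differing only by a tensor factor that records the $m$ marked points.

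The heart of the matter is then a comparison of the secondary differentials $d_2(\SS)$ and $\partial^{\Omega}$. Each is the $d_2$ of an iterated mapping cone, hence is assembled from length-two paths in the cube --- pairs of crossing resolutions whose intermediate term is disconnected --- with each path weighted by the edge maps of the relevant exact triangle (Manolescu's skein triangle for $\SS^{\Omega}_{\FF}$, its branched-cover analogue for $\SS$), the planar embedding of $B(\DD)$, and the twisting function $\Omega$. The aim is to show that these two differentials agree closely enough that, upon taking homology and applying Theorem~\ref{thm:main}, the page $E_3(\SS)$ is pinned down in terms of $\HFK_*(L)$ with its $\delta$-grading. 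Reading the comparison from the branched-cover side, $\SS$ converges to a twisted Heegaard Floer homology of $\Sigma(L)$: when $\Sigma(L)$ is a rational homology sphere the twisting is inert and one recovers $\rk_{\F}\HF_*(\Sigma(L))$ directly, but when $b_1(\Sigma(L))>0$ the twisted groups are strictly smaller, and passing back to the untwisted ranks that appear in the conjecture should introduce a correction factor measured by the rank $\eta(L)$ of the Alexander module --- this is how the $2^{\abs{L}-1-\eta(L)}$ ought to emerge. With all of this in hand, the inequality is simply the universal estimate $\rk_{\FF}E_{\infty}(\SS)\le\rk_{\FF}E_3(\SS)$, applied after the higher differentials $d_3,d_4,\dots$ have been taken into account, while the mod-$2$ grading statement follows from identifying $\delta$ with the Maslov grading as in \cite{OSzDouble}.

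The main obstacle is the identification of $d_2(\SS)$ with $\partial^{\Omega}$. The paper is careful to claim only that $(E_2(\SS),d_2(\SS))$ and $(C(\DD),\partial^{\Omega})$ are ``similar enough'' to hope for a relationship, so the genuine work is to rule out any discrepancy between them that is invisible to the present combinatorial description --- and this requires an explicit formula for $d_2(\SS)$ that the (unpublished) branched-cover construction does not yet provide. Two further points need care. First, the twisted branched-double-cover spectral sequence must be set up so that disconnected resolutions really do vanish \emph{and} so that its coefficient bookkeeping --- which is where the $\eta(L)$ correction lives, and the delicate point when $b_1(\Sigma(L))>0$ --- is compatible with, or at least rank-comparable to, that of $\SS^{\Omega}_{\FF}$; the unlink, for which the conjecture is sharp, is a good stress test for getting these factors right. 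Second, unlike $\SS^{\Omega}_{\FF}$, the spectral sequence $\SS$ has no grading reason to degenerate at $E_3$, so one cannot expect an isomorphism: an inequality is the best possible, and all of its content is concentrated in the comparison of $E_3$ pages.
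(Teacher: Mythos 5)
This statement is Conjecture~\ref{conj:hfkbdc}, not a theorem: the paper states it as an open problem (``available evidence suggests the following'') and supplies no proof. There is therefore no argument in the paper to compare yours against, and your write-up --- which you yourself frame as a plan rather than a proof, flagging the identification of $d_2(\SS)$ with $\partial^{\Omega}$ as the ``main obstacle'' and noting that the branched-cover construction ``does not yet provide'' the needed formula --- is a research program, not a demonstration.

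Beyond that honest self-assessment, a few of the steps you treat as plausible are genuinely nontrivial and would each constitute a substantial result on its own. The vanishing of $\FF$-twisted $\HF$ for $\#^{k-1}(S^1\times S^2)$ parallels Proposition~\ref{prop:unlink} in spirit, but the branched-cover spectral sequence $\SS$ is only sketched in unpublished work, and nothing in the paper pins down its twisted coefficient system, its $E_2$ page, or the form of $d_2(\SS)$. The claim that the untwisting correction produces exactly the factor $2^{\abs{L}-1-\eta(L)}$ is asserted rather than argued, and this is precisely the delicate point when $b_1(\Sigma(L))>0$. Finally, even if $d_2(\SS)$ were shown to match $\partial^{\Omega}$ on the nose, the paper explicitly remarks that $\SS$ has no reason to collapse at $E_3$, so the passage from $E_3(\SS)$ to $\rk_{\F}\HF_*(\Sigma(L))$ only yields an inequality in one direction; one must still verify that the $E_3$ page of $\SS$ computes $\HS(L,m)$ up to the stated stabilization, which is what Theorem~\ref{thm:main} gives for $\SS^{\Omega}_{\FF}$ but is unproven for $\SS$. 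In short, your outline is a reasonable strategy and consistent with what the authors hint at, but it does not and cannot settle the conjecture with the tools established in the paper.
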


A third potential application of our construction has to do with
\emph{mutation}, an operation on planar link diagrams in which one removes a
4-strand tangle and reglues it after a half-rotation, as in the figure below.
Mutation leaves all classical link polynomials unchanged and preserves the
homeomorphism type of the branched double cover. Moreover, Wehrli
\cite{WehrliMutation} and Bloom \cite{BloomMutation} have shown that it
preserves reduced Khovanov homology (with coefficients in $\F$).
\begin{figure}[!htbp]

\begin{center}
 \labellist
 \hair 2pt
 \pinlabel $R$ at 68 72
  \pinlabel \rotatebox{180}{$R$} at 273 72

\endlabellist
\includegraphics[width=4.8cm]{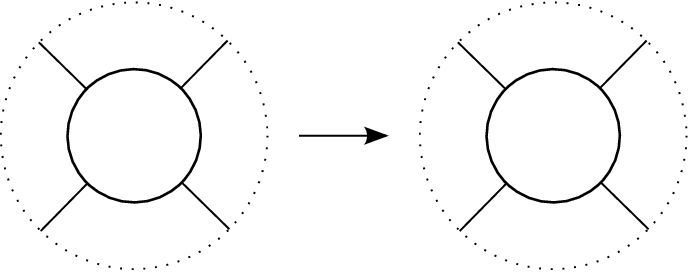}
%\caption{Mutation flips the tangle $180^{\circ}$ about the vertical axis while leaving the rest of the diagram unchanged.}
% \label{fig:mutant}
\end{center}
\end{figure}
In contrast, mutation can change the bigraded knot Floer homology of a knot
since it need not preserve Seifert genus \cite{OSzMutation}. Somewhat
surprisingly, however, the computations in \cite{BaldwinGillam} support the
following conjecture:
\begin{conjecture}
\label{conj:mutation} If $L'$ is obtained from $L$ by mutation, then
$\HFK_{\delta}(L) \cong \HFK_{\delta}(L')$.
\end{conjecture}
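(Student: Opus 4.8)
The plan is to attack the conjecture through the combinatorial model of Theorem~\ref{thm:main}, by producing a chain homotopy equivalence between the spanning tree complexes of a diagram and of its mutant. \emph{First}, fix a connected planar diagram $\DD$ for $L$ in which the mutation tangle $R$ occupies a round disk $D\subset\R^2$ meeting $\partial D$ transversely in four points, and let $\DD'$ be the mutant, obtained by replacing $R$ by its half-rotation. Mutation leaves the crossings outside $D$ untouched and permutes those inside $D$ via the rotation, so we may choose the $m$ marked points and the generic function $\Omega\co\{\text{crossings of }\DD\}\to\Z$ on $\DD$ and transport both to $\DD'$; this yields a diagram for $L'$ with the same number of marked points $m$ and, for an orientation-preserving mutation, $|L'|=|L|$. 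By Theorem~\ref{thm:main} it then suffices to prove $\HS_*(L,m)\cong\HS_*(L',m)$ as $\delta$-graded $\FF$-vector spaces, i.e. that $(C(\DD),\partial^{\Omega})$ and $(C(\DD'),\partial^{\Omega'})$ have isomorphic homology. (One might alternatively hope to deduce this from a positive resolution of Conjecture~\ref{conj:hfkbdc} together with the invariance of the branched double cover under mutation, but absent that input I would argue directly with the spanning tree complex.)

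\emph{Second}, I would understand the effect of mutation on the black graph and its spanning trees. Write $B(\DD)=B_{\mathrm{out}}\cup B_{\mathrm{in}}$, where $B_{\mathrm{in}}$ records the black regions and crossings inside $D$; the homeomorphism of $D$ realizing the mutation carries $B_{\mathrm{in}}$ to the corresponding subgraph $B'_{\mathrm{in}}$ of $B(\DD')$, while $B_{\mathrm{out}}$ is literally unchanged. Since mutation preserves $\Delta_L(t)$, the Kauffman state sum~\eqref{eq:kauffman} for $\DD$ and $\DD'$ agree term by term after a bijection $\mathcal{T}(B(\DD))\to\mathcal{T}(B(\DD'))$ respecting the quantity $A(s)-M(s)$, hence the $\delta$-grading; one expects this bijection to be realized geometrically by cutting a spanning tree along $\partial D$ and regluing its inside piece through the rotation. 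This identifies $C(\DD)$ with $C(\DD')$ as $\delta$-graded $\FF$-vector spaces, so the content of the conjecture is entirely in the differential.

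\emph{Third}, I would compare $\partial^{\Omega}$ and $\partial^{\Omega'}$ under this identification. Because $\partial^{\Omega}$ is given explicitly in terms of the planar embedding of $B(\DD)$, the marked points, and $\Omega$, the matrix entry between two spanning trees depends only on local data along the unique cycle they span together. Entries between trees whose symmetric difference is disjoint from $\partial D$, and entries supported entirely inside $D$ (using that $\Omega'$ is the transport of $\Omega$), are unchanged. The remaining entries correspond to cycles crossing $\partial D$, where the half-rotation reroutes the black-region adjacencies across the tangle, so the two differentials need not coincide. To control these I would follow the template of Wehrli~\cite{WehrliMutation} and Bloom~\cite{BloomMutation} for Khovanov homology: realize the mutation as an involution of the twisted cube of resolutions of $\DD$ over $\FF$ (note $\FF$ is an $\F$-algebra, so Bloom's $\Z/2$ techniques are available), and construct an $\FF$-linear chain homotopy, built from this symmetry, intertwining the boundary-crossing parts of $\partial^{\Omega}$ and $\partial^{\Omega'}$. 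Pushing the resulting equivalence through the spectral sequence $\SS^{\Omega}_{\FF}$ — which has $d_1=0$, $d_2=\partial^{\Omega}$, and collapses at $E_3$ — would yield the isomorphism on homology, and hence $\HFK_\delta(L)\cong\HFK_\delta(L')$ after cancelling the common factor $V^{\otimes(m-|L|)}\otimes\FF$.

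\emph{The main obstacle} is exactly the last step. The underlying graded vector space is manifestly mutation-invariant because it is governed by the Alexander polynomial, but the explicit differential $\partial^{\Omega}$ genuinely sees the global planar embedding of $B(\DD)$, and a half-rotation of a four-ended tangle changes which black regions meet across $\partial D$ in an essentially nontrivial way; one cannot expect an on-the-nose identification of the differentials. A chain homotopy must therefore be produced, and this seems to require a conceptual input rather than bookkeeping: either a tangle-level invariance statement for the local contribution of $R$ to $C(\DD)$, or an equivariant refinement of the iterated unoriented skein exact triangle in the spirit of Bloom's $\Z/2$-localization argument. A secondary difficulty is handling mutations for which $|L'|\neq|L|$, where the grading shifts and the stabilization exponents $m-|L|$ and $m-|L'|$ must be reconciled before the homologies can be compared.
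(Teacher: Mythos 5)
You have chosen to ``prove'' a statement that the paper itself labels a conjecture and does not prove: Conjecture~\ref{conj:mutation} is open, and the paper offers only a remark explaining why the spanning-tree model $(C(\DD),\partial^{\Omega})$ is a \emph{plausible starting point} for it (namely that the complex is built largely from black-graph data, and spanning trees of $B(\DD)$ and $B(\DD')$ are in canonical bijection for any mutant $\DD'$). Your proposal is likewise not a proof, and to your credit you say so explicitly: the final paragraph correctly identifies the missing step. The appropriate comparison, then, is not to the paper's proof but to the paper's discussion, and there your outline is essentially an expansion of the authors' own remark.

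Concretely, your first two steps are sound and are exactly what the paper observes: Theorem~\ref{thm:main} reduces the question to $\HS_*(L,m)$, the spanning trees of $B(\DD)$ and $B(\DD')$ match up under mutation, and the per-tree summands $\Lambda^*(\YY_I)$ sit in $\delta$-grading $(\abs{I}-n_-(\DD))/2$, which is preserved, so the underlying $\delta$-graded vector spaces agree. The genuine gap is, as you say, all in the third step: you would need to produce an $\FF$-linear chain homotopy equivalence between $(C(\DD),\partial^\Omega)$ and $(C(\DD'),\partial^{\Omega'})$, and the differential does depend on the planar embedding of $B(\DD)$ near $\partial D$ and on the special indices/weights, which mutation genuinely scrambles. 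Invoking Wehrli--Bloom is a reasonable thing to try, but their arguments are specific to the Frobenius-algebraic Khovanov cube and to a $\Z/2$-localization structure that has no known analogue in the twisted Floer cube; transporting it would itself be the theorem. Likewise, a ``tangle-level invariance'' of the local contribution of $R$ would suffice, but the paper provides no tangle decomposition of $(C(\DD),\partial^\Omega)$, and constructing one is nontrivial precisely because the weights $r_i$ and the relation~\eqref{eq:YIrelation} are global. So: right problem reduction, honest identification of the obstruction, but no proof — and none exists in the paper either.
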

Indeed, if Conjectures \ref{conj:khhfk} and \ref{conj:hfkbdc} hold, then
mutation cannot have too drastic an effect on these $\delta$-graded groups.
Moreover, since the Alexander polynomial is mutation-invariant, a proof of this
conjecture would imply that for thin knots, mutation preserves genus,
fiberedness and the $\tau$ invariant.

Our model provides a reasonable starting point from which to approach Conjecture \ref{conj:mutation} since $(C^\Omega(\DD), \partial^\Omega)$ is formulated largely in terms of black graph data, much of which is preserved by mutation. In particular, spanning trees of $B(\DD)$ are in 1-to-1 correspondence with spanning trees of $B(\DD')$ for any mutant $\DD'$ of $\DD$.

%We plan to revisit this in a future paper.

One of the most compelling features of our construction is that the complex $(C^\Omega(\DD), \partial^\Omega)$ is largely determined by formal properties; very little direct computation is required. This suggests that our approach might be
used to give an axiomatic characterization of knot Floer homology or to prove that $\HFK$ is isomorphic to other knot homology theories, such Kronheimer and Mrowka's monopole knot homology \cite{KronheimerMrowkaSutures}. It is known (or
soon will be) that monopole knot homology agrees with knot Floer homology, as a result of nearly one thousand pages of work of Taubes \cite{TaubesECH=SW1, TaubesECH=SW2, TaubesECH=SW3, TaubesECH=SW4, TaubesECH=SW5}, Kutluhan--Lee--Taubes
\cite{KutluhanLeeTaubes1, KutluhanLeeTaubes2, KutluhanLeeTaubes3, KutluhanLeeTaubes4, KutluhanLeeTaubes5} and Colin--Ghiggini--Honda \cite{ColinGhigginiHonda1, ColinGhigginiHonda2}, combined with work of Lekili \cite{LekiliThesis}. Still, it would be nice to prove this equivalence (and to find a combinatorial formulation of monopole knot homology) without resorting to their SW = ECH = HF machinery. The key will be to define an analogue of Manolescu's exact triangle in the monopole setting; if done correctly, almost everything should follow from purely formal considerations.

%An exact triangle of this sort already exists in the setting of instanton knot homology, and can be iterated to obtain a cube of resolutions spectral sequence \cite{KronheimerMrowkaUnknot}. It would be interesting to see whether a construction like ours can be used to give an combinatorial description of instanton knot homology. A step in this direction would be to impose some extra structure on Kronheimer-Mrowka's spectral sequence so that the terms in the $E_1$ page associated to disconnected resolutions vanish. This cannot be done with a naive system of twisted coefficients as the instanton knot homology of an unlink is supported in even $\mathbb{Z}/4\mathbb{Z}$ gradings (and therefore cannot be killed by any differential which comes from twisting). Perhaps the correct approach is to incorporate the degree $2$ operator $\sigma$ defined in \cite[Section 8.3]{KronheimerMrowkaUnknot}.

Finally, it is worth mentioning some advantages of our model over the other combinatorial formulations of knot Floer homology. For an $n$-crossing projection with $2n$ marked points, the dimension of our complex (over $\FF$) is $s(\DD) \cdot 2^{2n-1}$, where $s(\DD) \leq 2^n$ is the number of spanning trees of $B(\DD)$, whereas the dimension of Manolescu--Ozsv\'ath--Sarkar's grid complex is on the order of $n!$ (albeit over the simpler field $\F$). Thus, our theory should be more computable for large knots. Furthermore, in contrast with Ozsv{\'a}th and Szab{\'o}'s singular braid model \cite{OSzCube}, our construction does not require a braid projection, and it applies to arbitrary links rather than just knots. (Of course, the main drawback is that our complex computes only $\HFK(L)$ with its $\delta$ grading, not the more robust version $\HFKm(L)$ or the bigrading on $\HFK(L)$.)

\subsection*{Organization}
In Section \ref{sec:complex}, we define the complex $(C^\Omega(\DD),\partial^{\Omega})$. In Section \ref{sec:background}, we provide background on knot Floer homology with twisted coefficients and we introduce an action on knot Floer homology defined by counting disks which pass over basepoints. In Section \ref{sec:unlinks}, we compute the twisted knot Floer
homologies of unknots and unlinks in terms of this action. In Section \ref{sec:ss}, we iterate Manolescu's exact triangle with twisted coefficients in $\FF$. The result of this iteration is a filtered cube of resolutions complex that computes knot Floer homology. In Section \ref{sec:delta}, we determine the $\delta$-grading shifts of the maps in this filtered complex and show that the associated spectral sequence $\SS^{\Omega}_{\FF}$ collapses at its $E_3$ page. In Section \ref{sec:twisted}, we compute the $(E_2,d_2)$ page of $\SS^{\Omega}_\FF$ and show that it is isomorphic to $(C^\Omega(\DD), \partial^{\Omega})$, proving Theorem \ref{thm:main}.

\subsection*{Acknowledgements}
The authors thank Jon Bloom, Josh Greene, Eli Grigsby, Ciprian Manolescu, Peter
Ozsv{\'a}th and Zolt{\'a}n Szab{\'o} for helpful conversations. In particular,
many of the ideas found in this text had their origins in work of Peter and
Zolt{\'a}n. The first author is also grateful for Josh's 4:00 AM nightmare
which led to a breakthrough in this project.

\section{Definition of the complex}
\label{sec:complex}
Fix an oriented, connected planar projection $\DD$ of $L$. Let $c_1,\dots,c_n$ denote the crossings of $\DD$, and let $\p=\{p_1,\dots,p_m\}$ be a set of marked points on the edges of $\DD$ so that every edge is marked, and so that $p_1$ lies on an outermost edge of $\DD$. Let $n_+(\DD)$ and $n_-(\DD)$ denote the numbers of positive and negative crossings in $\DD$, respectively. Additionally, we fix an arbitrary orientation on the edges of $B(\DD)$.

The $0$- and $1$-resolutions of $\DD$ at a crossing $c_j$ are the diagrams obtained from $\DD$ by smoothing $c_j$ according to the convention in Figure \ref{fig:resolutions}. Taking the $\infty$-resolution of $c_j$ means leaving
the crossing unchanged. For each $I = (I_1,\dots,I_n) \in \{0,1\}^n$, let $\DD_I$ be the \emph{complete resolution} of $\DD$ gotten by replacing $c_j$ with its $I_j$-resolution. $\DD_I$ is a planar unlink, and we shall orient its
components as the boundaries of the black regions. (This orientation is not, in general, consistent with any orientation on $L$.) Let $\abs{\DD_I}$ denote the number of components of $\DD_I$, and let $\abs{I}=I_1+\dots+I_n$.

\begin{figure}[!htbp]
 \labellist
 \small\hair 2pt \pinlabel $0$ at 27 8 \pinlabel $1$ at 91 8
 \pinlabel $\infty$ at 156 8
\endlabellist
\begin{center}
\includegraphics[width=6.5cm]{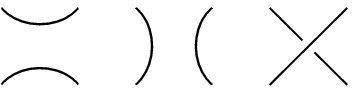}
\caption{\quad The $0$-, $1$- and $\infty$- resolutions of the crossing on the
right.} \label{fig:resolutions}
\end{center}
\end{figure}

For $j=1,\dots,n$, let $e_j$ denote the edge of $B(\DD)$ which corresponds to
the crossing $c_j$. Given a spanning subgraph $\gamma\subset B(\DD)$ --- i.e.,
a subgraph containing all vertices of $B(\DD)$ --- one obtains a complete
resolution of $\DD$ by smoothing each crossing $c_j$ in such a way as to join
the black regions incident to $c_j$ if and only if $e_j$ is contained in
$\gamma$; see Figure \ref{fig:spanning}(b). Let $\gamma_I$ denote the subgraph
corresponding to the resolution $\DD_I$. It is not hard to see that $\DD_I$ is
connected if and only if $\gamma_I$ is a spanning tree.

%We therefore have the natural bijections,
%\[
%\left\{ \text{Kauffman states of } \DD \right\} \Longleftrightarrow
%\left\{ \text{spanning trees of }B(\DD)\right\} \Longleftrightarrow
%\left\{ \text{connected resolutions of } \DD \right\}.
%\]

\begin{figure}[!htbp]
\labellist
 \pinlabel (a) at 40 182
 \pinlabel (b) at 241 182
 \pinlabel (c) at 442 182
 \small \hair 2pt
 \pinlabel $c_1$ at 123 152
 \pinlabel $c_2$ at 123 95
 \pinlabel $c_3$ at 86 39
 \pinlabel $c_4$ at 161 39
 \tiny \hair 2pt
 \pinlabel $\Omega(1)$ at 431 109
 \pinlabel $-\Omega(1)$ at 573 109
 \pinlabel $\Omega(2)$ at 486 109
 \pinlabel $-\Omega(2)$ at 562 73
 \pinlabel $\Omega(3)$ at 526 6
 \pinlabel $-\Omega(3)$ at 488 73
 \pinlabel $\Omega(4)$ at 526 37
 \pinlabel $-\Omega(4)$ at 626 109
\endlabellist
\begin{center}
\includegraphics[width=13.5cm]{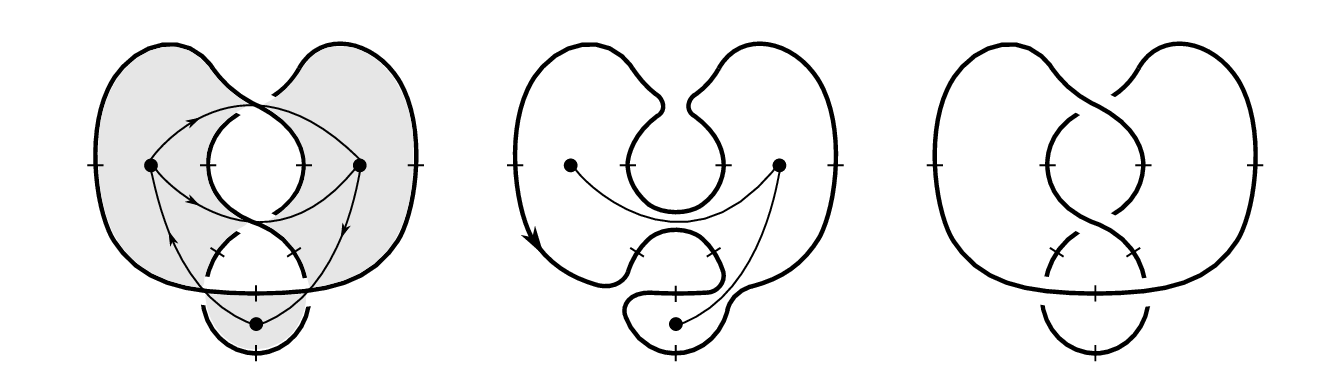}
\caption{\quad (a) A pointed diagram $\DD$ for the unknot, along with its black graph and a choice of orientations on the edges of $B(\DD)$. The marked points are indicated by dashes. (b) A spanning tree of $B(\DD)$ and the corresponding resolution of $\DD$. (c) The values $r_i$ associated to the marked points, as in Definition \ref{def:special}.} \label{fig:spanning}
\end{center}
\end{figure}

In order to work with twisted coefficients, we need to specify certain cohomology classes via the following definition.

\begin{definition} \label{def:weights}
A \emph{system of weights} is a tuple $\r = (r_1, \dots, r_m)$ satisfying $r_1 + \dots + r_m = 0$, which are associated with the marked points $p_1, \dots, p_m$. Given $\r$, let $\omega_\r \in H^2(S^3 \minus L;\Z)$ be the cohomology class whose evaluation on the boundary torus of a tubular neighborhood of each component $L_j$ of $L$ equals the sum of the weights on $L_j$. (Note that the sum of these tori equals zero in homology, so the condition that $r_1 + \dots + r_m = 0$ is needed.) A system of weights is called \emph{generic} if for every $I \in \{0,1\}^n$ for which the resolution $\DD_I$ is disconnected, the sum of the weights on each component of $\DD_I$ is nonzero.
\end{definition}

We shall often make use of systems of weights coming from the following construction.

\begin{figure}[!htbp]
\labellist \small \pinlabel $p_{i_2}$ at 18 53 \pinlabel $p_{i_4}$ at 44 5
\pinlabel $p_{i_1}$ at 43 53 \pinlabel $p_{i_3}$ at 18 5 \pinlabel $e_j$ at 58
28 \pinlabel $p_{i_2}$ at 94 53 \pinlabel $p_{i_4}$ at 120 5 \pinlabel
$p_{i_1}$ at 118 53 \pinlabel $p_{i_3}$ at 94 5 \pinlabel $e_j$ at 134 28
\endlabellist
\begin{center}
\includegraphics[width=4.6cm]{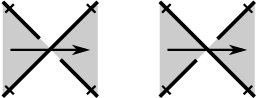}
\caption{\quad Two possibilities for the neighborhood of $c_j$.} \label{fig:convention}
\end{center}
\end{figure}

\begin{definition} \label{def:special}
A function $\Omega\co \{1,\dots,n\}\rightarrow \mathbb{Z}$ is called \emph{generic} if the values $\Omega(1), \dots, \Omega(n)$ do not satisfy any nontrivial linear relation with coefficients in $\{-1,0,1\}$. (For instance, the function $\Omega(i)=2^i$ is generic.) Such a function (generic or not) determines a system of weights $\r_\Omega = (r_1, \dots, r_m)$ by the following construction. For each $j=1,\dots,n$, view the crossing $c_j$ so that the oriented edge $e_j$ points from left to right. If $p_{i_1}$, $p_{i_2}$, $p_{i_3}$, and $p_{i_4}$ are the closest marked points to $c_j$ on the four edges of $\DD$ incident to $c_j$, starting in the upper right and going counterclockwise, define $r_{i_2} = \Omega(j)$ and $r_{i_4} = -\Omega(j)$. This convention determines $2n$ of the integers $r_1, \dots, r_m$. Define the remaining ones to be zero. (See Figure \ref{fig:spanning}(c) for an example.) Additionally, we call $i_1$ and $i_3$ the \emph{special indices} associated to $c_j$.
\end{definition}

\begin{lemma} \label{lemma:generic}
If $\r = \r_\Omega$ for a function $\Omega \co \{1,\dots, n\} \to \Z$, then $\omega_{\r} = 0$ in $H^2(S^3 \minus L;\Z)$. Moreover, if $\Omega$ is generic in the sense of Definition \ref{def:special}, then $\r$ is generic in the sense of Definition \ref{def:weights}.
\end{lemma}

\begin{proof}[Proof of Lemma \ref{lemma:generic}]
The first statement is true because for each $j = 1, \dots, n$, the two marked points with weights $\pm \Omega(j)$ lie on the same component of $L$, so the sum of the weights on each component is $0$.

For the second statement, let $I \in \{0,1\}^n$ be such that $\DD_I$ is disconnected, and call its components $\DD_I^1, \dots, \DD_I^{\ell_I}$. Let $i \in \{1, \dots, \ell_I\}$, and suppose that $p_{a_1}, \dots, p_{a_k}$ are the marked points on $\DD_I^i$. Suppose, toward a contradiction, that
\begin{equation} \label{eq:componentweight}
r_{a_1} + \dots + r_{a_k} = 0.
\end{equation}
By Definition \ref{def:special}, the nonzero terms on the left-hand side of \eqref{eq:componentweight} are distinct elements of the set $\{\pm \Omega(1), \dots, \pm\Omega(n)\}$, so \eqref{eq:componentweight} gives a linear relation among $\Omega(1), \dots, \Omega(n)$ with coefficients in $\{-1,0,1\}$. Because the diagram $\DD$ is connected, there is some crossing $c_j$ which connects $\DD_I^i$ with some other component of $\DD_I$. By Definition \ref{def:special}, one of the two marked points with weight $\pm \Omega(j)$ is on $\DD_I^i$ and one is not. Therefore, the coefficient of $\Omega(j)$ in \eqref{eq:componentweight} is nonzero, which contradicts the genericity of $\Omega$.
\end{proof}

Henceforth, we fix a generic system of weights $\r$, not necessarily arising from Definition \ref{def:special}.

Let $\mathbb{F}[\mathbb{Z}]$ denote the mod-2 group ring of the integers, which we think of as the ring of Laurent polynomials in $T$ with coefficients in $\F$. As in the Introduction, let $\FF = \F(T)$ denote the field of rational functions in $T$ over $\F$; this equals the fraction field of $\F[\Z]$. Let $\YY$ denote the vector space over $\FF$ generated freely by $y_1, \dots, y_m$.

Let $\RR(\DD)$ denote the set of $I\in \{0,1\}^n$ for which $\DD_I$ is
connected. For each $I \in \RR(\DD)$, let $\sigma_I \in \mathfrak{S}_{m}$ be
the permutation of $\{1, \dots, m\}$ such that $\sigma_I(1)=1$ and such that
the marked points are ordered $p_{\sigma_I(1)}, \dots, p_{\sigma_I(m)}$
according to the orientation on $\DD_I$. Let $\YY_I$ be the quotient of $\YY$ by the relation
\begin{equation} \label{eq:YIrelation}
\sum_{i=1}^m T^{r_{\sigma_I}(1) + \dots + r_{\sigma_I}(i)} y_{\sigma_I(i)} = 0,
\end{equation}
so that $\dim_\FF(\YY_I) = m-1$. That is, the power of $T$ in the coefficient of
$y_j$ is the sum of the weights of the marked points on the oriented segment of
$\KK_I$ from $p_1$ to $p_j$, including the endpoints. Note that the coefficient of
$y_{\sigma_I(m)}$ in \eqref{eq:YIrelation} is always $1$, since $\sum_{i=1}^m r_i=0$.

For $I,I'' \in \RR(\DD)$, we say that $I''$ is a \emph{double successor} of $I$
if it is obtained from $I$ by changing two $0$s to $1$s. For every such pair
$I,I''$, we shall define a linear map
\[
d_{I,I''}\co \Lambda^*(\YY_I)\rightarrow \Lambda^*(\YY_{I''}),
\]
as follows. Suppose $j_1$ and $j_2$ are the two coordinates in which $I$ and
$I''$ differ, and let $I^{1}$ (resp. $I^{2}$) be the tuple obtained from $I$ by
changing its $j_1\Th$ (resp. $j_2\Th$) coordinate from a $0$ to a $1$. Without
loss of generality, let us assume that $\sigma_I$ is the identity. Choose
$1\leq a <b<c<d\leq m$ so that $a,c$ are the special indices associated to
$c_{j_1}$ and $b,d$ are the special indices associated to $c_{j_2}$. In
particular, this establishes which crossing is $c_{j_1}$ and which is
$c_{j_2}$; see Figure \ref{fig:ResUnlink} for an example.

\begin{figure}[!htbp]
\labellist \pinlabel $\DD_I$ at 22 195 \pinlabel $\DD_{I^2}$ at 152 108
\pinlabel $\DD_{I^1}$ at 152 281 \pinlabel $\DD_{I''}$ at 281 195

\small \pinlabel $p_m$ at 11 168 \pinlabel $p_1$ at 11 143 \pinlabel $p_a$ at 61 97
\pinlabel $p_b$ at 56 178 \pinlabel $p_c$ at 93 118 \pinlabel $p_d$ at 89 200
\pinlabel $p_{a+1}$ at 53 118 \pinlabel $p_{b+1}$ at 99 178 \pinlabel $p_{c+1}$
at 96 97 \pinlabel $p_{d+1}$ at 65 200

\pinlabel $p_m$ at 141 81 \pinlabel $p_1$ at 141 56 \pinlabel $p_a$ at 191 10
\pinlabel $p_b$ at 186 91 \pinlabel $p_c$ at 224 31 \pinlabel $p_d$ at 222 113
\pinlabel $p_{a+1}$ at 183 31 \pinlabel $p_{b+1}$ at 229 91 \pinlabel $p_{c+1}$
at 226 10 \pinlabel $p_{d+1}$ at 192 113

\pinlabel $p_m$ at 141 254 \pinlabel $p_1$ at 141 229 \pinlabel $p_a$ at 191 183
\pinlabel $p_b$ at 186 264 \pinlabel $p_c$ at 224 204 \pinlabel $p_d$ at 222 288
\pinlabel $p_{a+1}$ at 183 204 \pinlabel $p_{b+1}$ at 229 264 \pinlabel $p_{c+1}$
at 226 183 \pinlabel $p_{d+1}$ at 192 286

\pinlabel $p_m$ at 270 168 \pinlabel $p_1$ at 270 143 \pinlabel $p_a$ at 320 97
\pinlabel $p_b$ at 316 178 \pinlabel $p_c$ at 352 118 \pinlabel $p_d$ at 348 200
\pinlabel $p_{a+1}$ at 312 118 \pinlabel $p_{b+1}$ at 358 178 \pinlabel $p_{c+1}$
at 355 97 \pinlabel $p_{d+1}$ at 324 200

\endlabellist
\begin{center}
\includegraphics[scale=0.9]{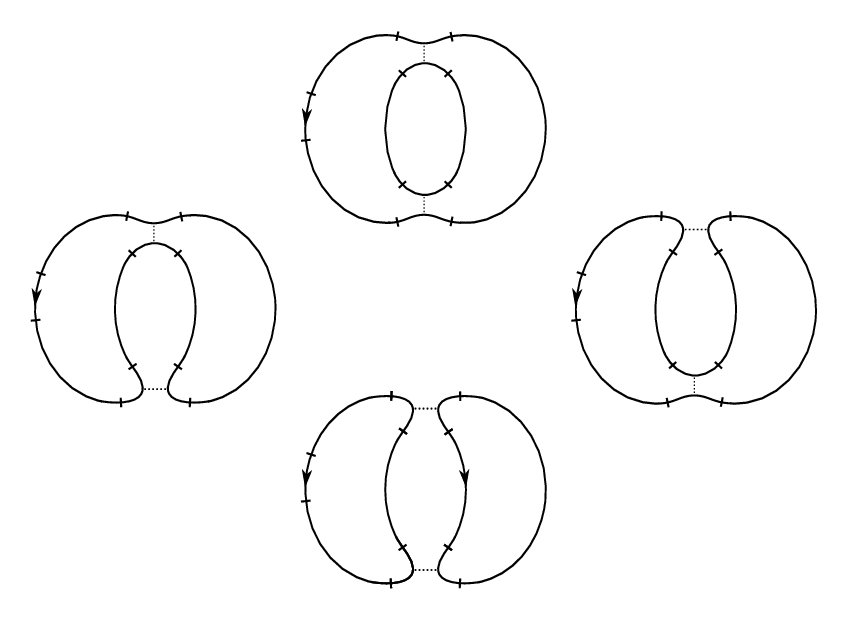}
\caption{\quad The resolutions $\DD_I$, $\DD_{I^1}$, $\DD_{I^2}$ and $\DD_{I''}$ in the case that $\gamma_{I^1} = \gamma_I \cup e_{j_1}$, along with the marked points $p_i$. The dotted lines indicate the traces of the crossings $c_{j_1}$ (bottom) and $c_{j_2}$ (top). (If $m = d$, then $p_1$ plays the role of $p_{d+1}$.)} \label{fig:ResUnlink}
\end{center}
\end{figure}

In $\DD_{I^1}$, the marked points on one component are
$p_1,\dots,p_a,p_{c+1},\dots,p_m$, and those on the other are $p_{a+1},
\dots,p_c$, ordered according to the orientation of $\DD_{I^1}$. Likewise, the
marked points on the two components of $\DD_{I^2}$ are
$p_1,\dots,p_b,p_{d+1},\dots,p_m$ and $p_{b+1},\dots,p_d$. Let
\[
A = \sum_{i=1}^a r_i, \quad B = \sum_{i=a+1}^b r_i,\quad C = \sum_{i=b+1}^c
r_i, \quad D = \sum_{i=c+1}^d r_i.
\]
The weights of the components of $\DD_{I^1}$ and $\DD_{I^2}$ that do not contain $p_1$ are $B+C$ and $C+D$,
respectively. The genericity of $\r$ guarantees that these two numbers are are nonzero.

%Finally, let $v = v_{I,I''} \co \{1,\dots,m\} \to \Z$ be the function defined
%by
%
%\begin{equation} \label{eq:v(i)}
% v(i) = \begin{cases} 0 & i \in \{1, \dots, a, d+1, \dots, m\} \\ -C-D & i
%\in \{a+1, \dots, b\} \\ B-D & i \in \{b+1, \dots, c\} \\ B+C & i \in \{c+1,
%\dots, d\}. \end{cases}
%\end{equation}
%Observe that the traces of $c_{j_1}$ and $c_{j_2}$ divide $\DD_I$ into four arcs,
%and $v(i)$ depends only on which of these four arcs contains the point $p_i$.

In defining the map $d_{I,I''}$, there are two cases to consider; either
\[
\gamma_{I^1} = \gamma_I \cup e_{j_1} \ \text{ or } \ \gamma_{I^1}  =
\gamma_I \minus e_{j_1}.
\] We shall distinguish these cases
with a number $\nu = \nu_{I,I''} \in \{0,1\},$ defined to be $1$ in the first
case and $0$ in the second.

\begin{definition} \label{def:d}
The map $d_{I,I''}$ is the sum
\[
d_{I,I''} = d_{I,I''}^{1,1} + d_{I,I''}^{1,2} + d_{I,I''}^{2,1} +
d_{I,I''}^{2,2},
\]
where $d_{I,I''}^{k,l}\co \Lambda^*(\YY_I) \rightarrow \Lambda^*(\YY_{I''})$ are the
$\FF$-linear maps defined by the rules (omitting the subscripts for
convenience)
\begin{align}
\label{eq:d11(1)} d^{1,1}(1) &= 0 \\
\label{eq:d12(1)} d^{1,2}(1) &= \frac{T^{\nu C}}{1+T^{C+D}} \\
\label{eq:d21(1)} d^{2,1}(1) &= \frac{T^{B + \nu C}}{1+T^{B+C}} \\
\label{eq:d22(1)} d^{2,2}(1) &= \frac{T^{-A + \nu C}}{ (1+T^{B+C})(1+T^{C+D}) }
\sum_{i=1}^{m} T^{r_1 + \dots + r_i} y_i,
\end{align}
and for any monomial $x$ in $y_1, \dots, y_m$, any $i=1, \dots, m$, and any $k,l \in \{1,2\}$,
\begin{equation} \label{eq:dkl(xyi)}
d^{k,l}(x y_i) = \begin{cases}
 d^{1,l}(x) y_i  + d^{2,l}(x) & \text{if } k=1 \text{ and } i\in \{a,c\} \\
 d^{k,1}(x) y_i  + d^{k,2}(x) & \text{if } l=1 \text{ and } i\in \{b,d\} \\
 d^{k,l}(x) y_i  & \text{otherwise}.
\end{cases}
\end{equation}
\end{definition}

To be more precise, viewing $\Lambda^*(\YY_I)$ and $\Lambda^*(\YY_{I''})$ as modules over the exterior algebra $\Lambda^*(\YY)$, $d^{2,2}$ is defined to be the $\Lambda^*(\YY)$-module homomorphism determined by \eqref{eq:d22(1)}. Since the right-hand side of \eqref{eq:d22(1)} is a multiple of the defining relator for $\YY_I$ given in \eqref{eq:YIrelation}, $d^{2,2}$ is well-defined. Next, $d^{1,2}$ and $d^{2,1}$ are defined on all monomials by induction on degree using \eqref{eq:d12(1)}, \eqref{eq:d21(1)}, and \eqref{eq:dkl(xyi)}. To check that these are well-defined --- i.e., that they vanish on multiples of the defining relator for $\YY_I$ --- note that the values of $d^{1,2}(1)$ and $d^{2,1}(1)$ are chosen such that
\begin{align}
\label{eq:d12welldefined}
d^{1,2}\left( \sum_{i=1}^m T^{r_1 + \dots + r_i} y_i \right) &= d^{1,2}(1) \sum_{i=1}^m T^{r_1 + \dots + r_i} y_i + (T^{A} + T^{A+B+C}) d^{2,2}(1) = 0 \\
\label{eq:d21welldefined}
d^{2,1}\left( \sum_{i=1}^m T^{r_1 + \dots + r_i} y_i \right) &= d^{2,1}(1) \sum_{i=1}^m T^{r_1 + \dots + r_i} y_i + (T^{A+B} + T^{A+B+C+D}) d^{2,2}(1) = 0.
\end{align}
Induction using \eqref{eq:dkl(xyi)} then shows that $d^{1,2}$ and $d^{2,1}$ vanish on any expression of the form
\[
y_{i_1} \cdots y_{i_k} \sum_{i=1}^m T^{r_1 + \dots + r_i} y_i,
\]
as required. Finally, $d^{1,1}$ is defined on all monomials by induction on degree using \eqref{eq:d11(1)} and \eqref{eq:dkl(xyi)}, and the proof of well-definedness goes through in the same way.

\begin{remark}
The map $d^{1,1}$ decreases degree (of polynomials in the $y_i$) by one, $d^{1,2}$ and $d^{2,1}$ preserve degree, and $d^{2,2}$ increases degree by one. Knowing just this, the total map $d_{I,I''}$ is determined up to an overall scalar by \eqref{eq:YIrelation} and \eqref{eq:dkl(xyi)}, since the value of $d^{2,2}(1)$ is forced to be a multiple of the relator on $\YY_I$, and the values of $d^{1,2}(1)$ and $d^{2,1}(1)$ are forced in order for \eqref{eq:d12welldefined} and \eqref{eq:d21welldefined} to hold. In particular, the maps in the two cases distinguished by $\nu$ differ only by an overall factor of $T^C$. (Compare Section \ref{sec:commutation}.)
\end{remark}

We now define the complex $(C^\r(\DD),\partial^{\r})$ as follows:
\begin{definition} \label{def:complex}
Define
\[
C^\r(\DD) = \bigoplus_{I\in\RR(\DD)} \Lambda^*(\YY_I),
\]
where $\Lambda^*(\YY_I)$ is supported in the grading $(\abs{I}-n_-(\DD))/2$, and let $\partial^\r$ be the direct sum of the maps $d_{I,I''}\co \Lambda^*(\YY_I) \rightarrow \Lambda^*(\YY_{I''})$. If $\r = \r_\Omega$ as in Definition \ref{def:special}, we denote $(C^\r(\DD), \partial^\r)$ by $(C^\Omega(\DD), \partial^\Omega)$ as in the Introduction.
\end{definition}
The fact that $\partial^\r$ squares to zero will be established at the end of Section \ref{sec:twisted}, when we identify $(C^\r(\DD), \partial^\r)$ with the $E^2$ page of the cube of resolutions spectral sequence that we construct below. A more general version of Theorem \ref{thm:main} is then as follows.

\begin{theorem} \label{thm:main2}
The homology of $(C^\r(\DD),\partial^{\r})$ is isomorphic as a graded $\FF$-vector space to
\[
\ul\HFK(L, \omega_\r; \FF) \otimes_\FF (V^{\otimes(m-|L|)} \otimes_{\F} \FF),
\]
where $\ul\HFK(L, \omega_\r; \FF)$ denotes the twisted knot Floer homology of $L$ with perturbation $\omega_\r$, equipped with its $\delta$-grading. (See Proposition \ref{prop:invariancetwisted} for a precise definition of this invariant.)
\end{theorem}

When $\r = \r_\Omega$, we have $\omega_\r=0$, so $\ul\HFK(L, \omega_\r; \FF)$ is simply the untwisted knot Floer homology, tensored with $\FF$, giving Theorem \ref{thm:main}.

\begin{figure}
\labellist
 \pinlabel $p_1$ at 8 60
 \pinlabel $p_2$ at 67 60
 \pinlabel $p_3$ at 85 60
 \pinlabel $p_4$ at 144 60
 \pinlabel $c_1$ at 75 10
 \pinlabel $c_2$ at 75 111
\endlabellist
\includegraphics[scale=0.8]{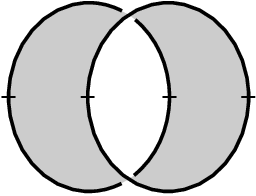}
\caption{Diagram for a two-component unlink whose cube of resolutions is Figure \ref{fig:ResUnlink}.} \label{fig:2compunlink}
\end{figure}

\begin{example} \label{ex:2compunlink}
Let $\DD$ be the diagram for the two-component unlink $L$ shown in Figure \ref{fig:2compunlink}, whose cube of resolutions is precisely Figure \ref{fig:ResUnlink} with $a=1$, $b=2$, $c=3$, and $d=m=4$. The connected resolutions of $\DD$ correspond to $I = (0,0)$ and $I'' = (1,1)$; note that $\nu_{I,I''} = 1$. For ease of notation, define $r = r_1 = A$, $s = r_2 = B$, $t = r_3 = C$, and $u = r_4 = D$. The defining relations on $\YY_I$ and $\YY_{I''}$ give:
\[
\begin{aligned}
T^r y_1 + T^{r+s} y_2 + T^{r+s+t}y_3 + T^{r+s+t+u} y_4 &=0 & \text{in }&\YY_{I} \\
T^r y_1 + T^{r+u} y_4 + T^{r+t+u}y_3 + T^{r+s+t+u} y_2 &=0 & \text{in }&\YY_{I''}. \\
\end{aligned}
\]
We shall use these relations and the fact that $r + s + t + u = 0$ to eliminate $y_4$ wherever it appears.
For conciseness, we define
\[
\lambda = \frac{1}{1+T^{t+u}} \quad \text{and} \quad \mu = \frac{T^{s+t}}{1+T^{s+t}},
\]
so that
\[
\begin{aligned}
d^{2,2}(1) &= T^{t} \lambda \mu (y_1 + T^{s} \, y_2 + T^{s + t} \, y_3 + T^{s + t + u} \, y_4) \\
&= T^t \lambda \mu (y_1 + T^{s} \, y_2 + T^{s + t} \, y_3 + T^{s + t + u} (T^t \, y_3 + T^{s+t} \, y_2 + T^{r+s+t}\, y_1)) \\
&= T^t \lambda \mu ( (1+T^{s+t}) \, y_1 + T^s(1 + T^{s+2t+u}) \, y_2 + T^{s + t}(1+T^{t+u}) \, y_3 )\\
&= T^t \lambda \mu ( \mu^{-1} \, y_1 + T^{2s+t}(\mu^{-1} + \lambda^{-1}) \, y_2 + T^{s + t}\lambda^{-1} \, y_3 )\\
&= T^t (\lambda \, y_1 + T^s(\lambda + \mu) \, y_2 + \mu \, y_3).
\end{aligned}
\]
Using the inductive procedure described above, we can see that the values of the four functions $d^{1,1}$, $d^{1,2}$, $d^{2,1}$, and $d^{2,2}$ on a basis for $\Lambda^*(Y_I)$ are as follows:

\[
\begin{array}{|c|c|} \hline
x & d^{2,2}(x) \\ \hline
1 & T^t \lambda \, y_1 + T^{s+t} (\lambda + \mu) \, y_2 + T^t \mu \, y_3 \\
y_1 & T^{s+t}(\lambda + \mu) \, y_1 y_2 + T^t \mu \, y_1 y_3 \\
y_2 & T^t \lambda \, y_1 y_2 + T^t \mu \, y_2 y_3 \\
y_3 & T^t \lambda \, y_1 y_3 + T^{s+t}(\lambda + \mu) \, y_2 y_3  \\
y_1 y_2 & T^t \mu \, y_1 y_2 y_3 \\
y_1 y_3 & T^{s+t} (\lambda + \mu) \, y_1 y_2 y_3   \\
y_2 y_3 & T^t \lambda \, y_1 y_2 y_3  \\
y_1 y_2 y_3 & 0 \\ \hline
\end{array}
\]

\[
\begin{array}{|c|c|c|} \hline
x & d^{1,2}(x) & d^{2,1}(x) \\ \hline
1
 & T^t \lambda
 & \mu \\
y_1
 & T^{s+t}(\lambda + \mu) \, y_2 + T^t \mu \, y_3
 & \mu \, y_1 \\
y_2
 & T^t \lambda \, y_2
 & T^t \lambda \, y_1 + T^{s+t} (1+\lambda) \, y_2 + T^t \mu \, y_3 \\
y_3
 & T^t \lambda \, y_1 + T^{s+t}(\lambda + \mu) \, y_2 + T^t(\lambda +\mu) \, y_3
 & \mu \, y_3 \\
y_1 y_2
 & T^t \mu \, y_2 y_3
 & T^{s+t} (1+\lambda) \, y_1 y_2 + T^t \mu \, y_1 y_3 \\
y_1 y_3
 & T^{s+t}(\lambda + \mu) \, y_1 y_2 + T^t \mu \, y_1 y_3 + T^{s+t}(\lambda + \mu) \, y_2 y_3
 & \mu \, y_1 y_3 \\
y_2 y_3
 & T^t \lambda \, y_1 y_2 + T^t(\lambda +\mu) \, y_2 y_3
 & T^t \lambda \, y_1 y_3 + T^{s+t} (1+\lambda) \, y_2 y_3 \\
y_1 y_2 y_3
 & T^t \mu \, y_1 y_2 y_3
 & T^{s+t} (1+\lambda) \, y_1 y_2 y_3  \\ \hline
\end{array}
\]

\[
\begin{array}{|c|c|} \hline
x & d^{1,1}(x) \\ \hline
1 & 0 \\
y_1 & \mu \\
y_2 & T^t \lambda \\
y_3 & \mu \\
y_1 y_2 & T^{s+t} (1+\lambda) \, y_2 + T^t \mu \, y_3 \\
y_1 y_3 & \mu \, y_1 + \mu \, y_3 \\
y_2 y_3 & T^t \lambda \, y_1 + T^{s+t} (1+\lambda) \, y_2 + T^t (\lambda + \mu) \, y_3 \\
y_1 y_2 y_3 & T^{s+t} (1+\lambda) \, y_1 y_2 + T^t \mu \, y_1 y_3 + T^{s+t} (1+\lambda) \, y_2 y_3
\\ \hline
\end{array}
\]

If the weights are determined by a generic function $\Omega\co \{1,2\} \to \Z$ as in Definition \ref{def:special}, we have $r = -t$ and $s = -u$, while $s \ne \pm t$ and both $s$ and $t$ are nonzero. In this case, some linear algebra shows that $d$ has rank $4$, with kernel generated by the following four elements of $\Lambda^*(\YY_I)$:
\[
\begin{gathered}
(1+T^s + T^t + T^{s+t}) + (T^{-t}+T^s) \, y_1 + (T^{s-t} + T^{s+t}) \, y_2 + (T^s + T^t) \, y_3 \\
(T^s + T^t) + (1 + T^s + T^t + T^{s+t}) \, y_1 + (T^{s-t} + T^{s+t}) \, y_1 y_2 + (T^s + T^t) \, y_1 y_3 \\
(1+T^{s+t}) + (1 + T^s + T^t + T^{s+t}) \, y_2 + (T^{-t} + T^s) \, y_1 y_2 + (T^s + T^t) \, y_2 y_3 \\
(1+T^{s+t}) \, y_1 + (T^s + T^t) \, y_2 + (1 + T^s + T^t + T^{s+t}) \, y_1 y_2 + (T^s + T^t) \, y_1 y_2 y_3.
\end{gathered}
\]
Thus, $H_*(C(\DD), \partial^\Omega)$ has dimension $8$, supported in gradings $\pm 1/2$, which agrees with the fact that $\HFK(L)$ is two-dimensional, supported in $\delta$ gradings $\pm 1/2$. On the other hand, if the weights are chosen such that $r \ne -t$ or $s \ne -u$, while $r + s + t + u = 0$, it is not hard to show that $d$ is an isomorphism, so the homology vanishes. This is consistent with the fact that by Proposition \ref{prop:unlink}, the twisted knot Floer homology group $\ul\HFK(L,\omega_\r;\FF)$ vanishes since the cohomology class $\omega_\r$ is nonzero in this case.
\end{example}

The preceding example can be generalized to show that the maps that make up $\partial^\Omega$ are almost always isomorphisms, as follows.

\begin{lemma} \label{lemma:isomorphism}
Let $\DD$ be a diagram with $n \ge 3$ crossings for a knot or a nonsplit link, and let $\r$ be the system of weights coming from a generic function $\Omega\co \{1, \dots, n\}$. For any double successor pair $I$, $I''$, the map $d^{I,I''}\co \Lambda^*(\YY_I) \to \Lambda^*(\YY_{I''})$ is an isomorphism.
\end{lemma}

\begin{proof}
Without loss of generality, assume that $\gamma_{I^1} = \gamma_I \cup e_{j_1}$ and $\gamma_{I^2} = \gamma_I \minus e_{j_2}$ and that $\sigma_I$ is the identity permutation. Just as in Example \ref{ex:2compunlink}, the mapping cone of $d^{I,I''}$ can be identified with the complex associated to a two-crossing diagram of the two-component unlink $Q$ with the same $m$ marked points, using the same choice of weights $\r$. By Theorem \ref{thm:main2} and Proposition \ref{prop:unlink}, it suffices to show that the associated cohomology class $\omega_\r$ has nonzero value on a generator of $H_2(S^3 \minus Q;\Z) \cong \Z$.

Suppose, toward a contradiction, that
\begin{equation} \label{eq:componentweight2}
r_1 + \dots + r_a + r_{b+1} + \dots + r_c + r_{d+1} + \dots + r_m  = 0.
\end{equation}
Note that the left-hand side of this equation automatically equals
\[
-r_{a+1} - \dots - r_b - r_{c+1} - \dots -r_d.
\]
Just as in the proof of Lemma \ref{lemma:generic}, \eqref{eq:componentweight2} is a linear relation among $\Omega(1), \dots, \Omega(n)$ with coefficients in $\{-1, 0 ,1\}$, and we must show that at least one of these coefficients is nonzero, which will contradict the genericity of $\Omega$. Because $\DD$ represents a nonsplit link, there is some crossing $c_{j_3}$ in $\DD$ whose trace connects the two components of $Q$. Therefore, the marked points with weights $\pm \Omega(j_3)$ are on different components of $S$. It follows that the sum on the left-hand side of \eqref{eq:componentweight2} includes a non-canceling $\pm \Omega(j_3)$ term.
\end{proof}

As a corollary, we may describe a family of knots whose $\delta$-graded knot Floer homology and reduced Khovanov homology are isomorphic. For a projection $\DD$, let $\Gamma_\DD$ denote the directed graph with vertices corresponding to $\RR(\DD)$ and with an edge from $I$ to $I''$ whenever $I''$ is a double successor of $I$.

\begin{corollary} \label{cor:disjointtrees}
Let $K$ be a knot, and suppose that $K$ admits a projection $\DD$ such that $\Gamma_\DD$ is a disjoint union of trees. Then $\HFK(K)$ and $\Kh(K)$, equipped with their $\delta$ gradings, are isomorphic.
\end{corollary}

\begin{proof}
Say that $\DD$ has $n$ crossings, and put exactly one marked point on each of the $2n$ edges of $\DD$. Choose a generic function $\Omega \co \{1, \dots, n\} \to \Z$ and consider the complex $(C(\DD), \partial^\Omega)$. If $\Gamma_\DD$ is a disjoint union of trees, then we may inductively find bases for the vector spaces $\Lambda^*(\YY_I)$ with respect to which each map $d_{I,I''}$ is represented by the $2^{2n-1} \times 2^{2n-1}$ identity matrix. Thus, $(C(\DD), \partial^\Omega)$ splits as a direct sum of $2^{2n-1}$ copies of $X \otimes_\F \FF$, where $X$ is a complex generated freely over $\F$ by $\RR(\DD)$ in which the differential of $I \in \RR(\DD)$ is equal to the sum of the double successors of $I$. (Although we could define $X$ in this manner for any link projection, in general the differential may not square to zero.)

The same argument can be used to show that Roberts' spanning tree complex $(C'(\DD),\partial')$ is isomorphic to $X \otimes_\F \FF'$, where $\FF'$ is the field of rational functions in multiple indeterminates over which $C'(\DD)$ is defined \cite{RobertsTwisted}. By the universal coefficient theorem, we have
\[
H_*(C^\Omega(\DD), \partial^\Omega) \cong \bigoplus^{2^{2n-1}} H_*(X) \otimes_\F \FF
\quad
\text{and}
\quad
H_*(C'(\DD), \partial') \cong H_*(X) \otimes_\F \FF'.
\]
Since these homology groups are isomorphic to $\HFK(K) \otimes_\F V^{\otimes 2n-1}\otimes_\F \FF$ and $\Kh(K) \otimes_\F \FF'$, respectively, the result follows.
\end{proof}

Via the Gordon--Litherland signature fomula \cite{GordonLitherlandSignature}, Corollary \ref{cor:disjointtrees} can be used to give a new proof of the fact that for an alternating knot $K$, $\Kh(K)$ and $\HFK(K)$ are both thin and supported in $\delta$ grading $-\sigma(K)/2$.

\section{Background on knot Floer homology}
\label{sec:background}
In this section, we review the construction of knot Floer homology with twisted
coefficients and multiple basepoints, and we describe the maps on knot Floer
homology induced by counting pseudo-holomorphic polygons. In Section
\ref{sec:psi}, we describe some additional algebraic structure which comes from
counting disks that pass over basepoints. We shall assume throughout that the
reader has some familiarity with knot Floer homology; for a more basic
treatment, see \cite{OSzKnot, OSzLink} and \cite[Section 8]{OSz3Manifold}.

\subsection{Multiple basepoints and twisted coefficients}  \label{sec:hfktwisted}

Recall that a \emph{multi-pointed} Heegaard diagram is a tuple $\mathcal{H} = (\Sigma,\bm
\alpha, \bm \beta, \O, \X)$, where
\begin{itemize}
\item $\Sigma$ is an Riemann surface of genus $g$,
\item $\bm\alpha = \{\alpha_1,\dots,\alpha_{g+m-1}\}$ and $\bm\beta =
\{\beta_1,\dots,\beta_{g+m-1}\}$ are sets of pairwise disjoint, simple closed
curves on $\Sigma$ which span $g$-dimensional subspaces of
$H_1(\Sigma;\mathbb{Z})$, and
\item $\O = (O_1,\dots,O_m)$ and $\X = (X_1,\dots,X_m)$ are tuples of basepoints
such that every component of $\Sigma \setminus \bm\alpha$ and $\Sigma \setminus
\bm\beta$ contains exactly one point of $\O$ and one of $\X$.
\end{itemize}
The sets $\bm\alpha$ and $\bm\beta$ specify handlebodies $U_\alpha$ and
$U_\beta$ with $\partial U_\alpha = \Sigma = -\partial U_\beta$. Let $Y$ denote
the 3-manifold with Heegaard decomposition $U_\alpha\cup_{\Sigma}U_\beta$.
$\mathcal{H}$ determines an oriented link $L\subset Y$ according to the
following procedure. Fix $m$ disjoint, oriented, embedded arcs in
$\Sigma\setminus \bm\alpha$ from points in $\O$ to points in $\X$, and form
$\xi^\alpha_1,\dots,\xi^\alpha_m$ by pushing their interiors into $U_\alpha$.
Similarly, define pushoffs $\xi^\beta_1,\dots,\xi^\beta_m$ in $U_\beta$ of
oriented arcs in $\Sigma \setminus \bm \beta$ from points in $\X$ to points in
$\O$. $L$ is the union
\[
L = \xi^\alpha_1\cup\dots\cup\xi^\alpha_m
\cup\xi^\beta_1\cup\dots\cup\xi^\beta_m.
\]
The tuple $\X$ also determines an ordered marking $\p = (p_1,\dots,p_m)$ on
$L$.

The pair $\LL=(L,\p)$ is called an \emph{$m$-pointed link}, and we say that $\mathcal{H}$ is a
\emph{compatible} Heegaard diagram for $\LL$. More generally, an $m$-pointed link is an oriented link together with a marking $\p$ such that every component contains some $p_i$. We consider two such links $(L,\p)$ and $(L',\p')$ to be equivalent if there is an orientation-preserving diffeomorphism of $Y$ sending $L$ to $L'$ and $\p$ to $\p'$. A standard Morse-theoretic argument shows that every pointed link arises from a Heegaard diagram as above, and that compatible
Heegaard diagrams for equivalent pointed links can be connected via a sequence
of index one/two (de)stabilizations, and isotopies and handleslides avoiding
$\O\cup\X$.

%\begin{definition}
%$(L,\p)$ is called an \emph{$m$-pointed link}. %Two $m$-pointed links $(L,\p)$ and $(L',\p')$ are equivalent if there is an orientation-preserving diffeomorphism of $Y$ sending $L$ to $L'$ and $\p$ to $\p'$.
%\end{definition}

Following \cite{OSz3Manifold}, we view
\[\Ta=\alpha_1\times\dots\times\alpha_{g+m-1} \ \text{ and } \
\Tb = \beta_1\times\dots\times\beta_{g+m-1}\] as tori in the symmetric product
$\Sym^{g+m-1}(\Sigma)$. For $\x$ and $\y$ in $\Ta\cap\Tb$, we denote by
$\pi_2(\x,\y)$ the set of homotopy classes of Whitney disks from $\x$ to $\y$.
For $\phi\in \pi_2(\x,\y)$ and $a\in \Sigma \setminus (\bm\alpha \cup \bm
\beta)$, let $a(\phi)$ be the algebraic intersection number
\[
\#(\phi\,\cap\,(\{a\}\times \Sym^{g+m-2}(\Sigma))).
\]
Label the regions of $\Sigma \setminus (\bm\alpha \cup \bm \beta)$ by
$D_1,\dots,D_k$ and choose a point $z_i$ in each $D_i$. The \emph{domain} of
$\phi$ is the formal $\mathbb{Z}$-linear combination \[D(\phi) = \sum_{i=1}^k
z_i(\phi)D_i.\] More generally, we refer to any linear combination \[D =
\sum_{i=1}^k a_iD_i\] as a domain, and we define $a(D)$ to be $a_i$ if $a$ and
$z_i$ are in the same component of $\Sigma \setminus (\bm\alpha \cup \bm
\beta)$.

A \emph{periodic domain} is a domain whose boundary is a union of closed curves
in $\bm\alpha$ and $\bm\beta$. Periodic domains form a group
$\Pi_{\alpha\beta}$ under addition. The subgroup $\Pi^0_{\alpha\beta}$ of
$\Pi_{\alpha\beta}$ consisting of periodic domains which avoid $\O\cup\X$ is
isomorphic to $H_2(Y\setminus L;\mathbb{Z})$. The diagram $\mathcal{H}$ is said
to be \emph{admissible} if every nontrivial element of $\Pi^0_{\alpha\beta}$
has both positive and negative coefficients.

To define a system of twisted coefficients, we fix a collection $\mathbb{A}$ of
points in $\Sigma \setminus (\bm\alpha \cup \bm \beta)$ together with a
function $\omega:\mathbb{A}\rightarrow\mathbb{\Z}$, and we let
\begin{equation}
\label{eqn:pairing}\gen{\omega,\phi}  = \sum_{a \in \mathbb{A}}a(\phi)\omega(a)
\end{equation}
for any $\phi\in\pi_2(\x,\y)$. The map $\gen{\omega,\cdot}$ restricts to a
linear functional on $\Pi^0_{\alpha\beta}$ and therefore determines a
cohomology class $[\omega] \in H^2(Y\setminus L;\mathbb{Z})$.

Now, suppose that $\mathcal{H}$ is admissible and let $\MM$ be a module over
$\mathbb{F}[\mathbb{Z}]$. The twisted knot Floer complex with coefficients in
$\MM$ is defined as
\[
\CFKtt(\HH,\omega;\MM) = \F[\Z]\gen{\T_\alpha \cap \T_\beta} \otimes_{\F[\Z]}
\MM,
\]
with differential given by
\[
\partial (\x) = \sum_{\y \in \Ta \cap \Tb}
\sum_{\substack{\phi \in \pi_2(\x,\y) \\ \mu(\phi)=1 \\
O_i(\phi) =X_i(\phi) = 0 \,\,\,\forall i}} \#(\mathcal{M}(\phi)/\mathbb{R})
\cdot T^{\gen{\omega, \phi}} \y.
\]
Here, $\mu(\phi)$ is the Maslov index of $\phi$ and ${\mathcal{M}}(\phi)$ is
the moduli space of pseudo-holomorphic representatives of $\phi$.

Henceforth, we shall assume that $L$ is null-homologous. Define
\[
O(\phi) = O_1(\phi) + \dots + O_{m}(\phi),\quad X(\phi) = X_1(\phi) + \dots +
X_{m}(\phi), \quad P(\phi) = O(\phi) + X(\phi).
\]
If $\x$ represents a torsion $\Spin^c$ structure on $Y$, then it has an
\emph{Alexander} grading $A(\x)\in\mathbb{Z}$ and a \emph{Maslov} grading
$M(\x)\in \mathbb{Q}$. Following \cite{ManolescuOzsvathQA,RasmussenHomologies},
we define the $\delta$-grading of $\x$ to be $\delta(\x) = a(\x)-m(\x)$. If
$\x$ and $\y$ represent the same torsion $\Spin^c$ structure on $Y$, then their
gradings are related as follows,
\begin{align}
\label{eqn:maslovgrading} M(\x)-M(\y) &= \mu(\phi)-2O(\phi) \\
\label{eqn:alexandergrading} A(\x)-A(\y) &= X(\phi)-O(\phi) \\
\label{eqn:grading}\delta(\x)-\delta(\y)
&=P(\phi)-\mu(\phi),
\end{align} for any $\phi \in \pi_2(\x,\y)$.

\begin{remark}\label{rmk:orientationgradings}
Note that the relative $\delta$-grading in \eqref{eqn:grading} does not depend
on which basepoints are in $\OO$ and which are in $\XX$, which is to say, on
the orientation of $L$. In contrast, the relative Maslov and Alexander gradings
and the absolute $\delta$-grading \emph{do} generally depend on the orientation
of $L$.
\end{remark}

\begin{remark} \label{rmk:twistings}
For $\MM=\mathbb{F}[\mathbb{Z}]/(T-1)\cong \F$, the complex
$\ul\cfkt(\mathcal{H},\omega;\MM)$ does not depend on the marking
$(\A,\omega)$. We refer to this as the \emph{untwisted} knot Floer complex,
$\cfkt(\mathcal{H})$.\footnote{When $\HH$ is a grid diagram for a link $L$ in
$S^3$, $\cfkt(\mathcal{H})$ is just the complex $\widetilde{CL}(\HH)$ defined
in \cite{ManolescuOzsvathSzaboThurston}.}
\end{remark}

The following is a straightforward adaptation of \cite[Lemma 2.2]{OSzCube}.

\begin{lemma} \label{cohomologyclass}
For markings $(\mathbb{A},\omega)$ and $(\mathbb{A}',\omega')$ such that
$[\omega] = [\omega']$ in $H^2(Y\setminus L;\mathbb{Z})$, the complexes
$\ul\cfkt(\mathcal{H},\omega;\MM)$ and $\ul\cfkt(\mathcal{H},\omega';\MM)$ are
isomorphic.
\end{lemma}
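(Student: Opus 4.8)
The plan is to follow the standard template for showing that twisted Heegaard Floer complexes depend only on the cohomology class of the twisting data, adapting the argument of \cite[Lemma 2.2]{OSzCube} to the multi-pointed knot Floer setting. The key observation is that if $[\omega]=[\omega']$ in $H^2(Y\minus L;\Z)$, then the linear functionals $\gen{\omega,\cdot}$ and $\gen{\omega',\cdot}$ agree on the subgroup $\Pi^0_{\alpha\beta}$ of periodic domains avoiding $\OO\cup\XX$ (since this subgroup is identified with $H_2(Y\minus L;\Z)$, and the pairing of $[\omega]$ with a periodic domain is computed by $\gen{\omega,\cdot}$). Consequently, for any two intersection points $\x,\y$ in the same $\Spin^c$ class, the difference $\gen{\omega,\phi}-\gen{\omega',\phi}$ is independent of the choice of $\phi\in\pi_2(\x,\y)$ among those $\phi$ with $O_i(\phi)=X_i(\phi)=0$ for all $i$: any two such classes differ by an element of $\Pi^0_{\alpha\beta}$ (together possibly with a class whose domain is a sum of periodic domains — but the ones meeting $\OO\cup\XX$ are excluded by the condition that $O_i(\phi)=X_i(\phi)=0$).

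First I would fix, for each $\Spin^c$ structure $\s$ and a chosen base generator $\x_0$ in that class, the common value $f(\x) := \gen{\omega,\phi}-\gen{\omega',\phi}$ for any admissible $\phi\in\pi_2(\x_0,\x)$; this is a well-defined function $f\colon \Ta\cap\Tb \to \Z$. Then I would define the candidate isomorphism
\[
\Phi\co \ul\cfkt(\HH,\omega;\MM)\to \ul\cfkt(\HH,\omega';\MM), \qquad \Phi(\x) = T^{f(\x)}\cdot\x,
\]
extended $\MM$-linearly. Checking that $\Phi$ is a chain map is a direct computation: for a differential term $\#(\MM(\phi)/\R)\cdot T^{\gen{\omega,\phi}}\,\y$ appearing in $\partial\x$ on the $\omega$-side, one uses the cocycle-type identity $f(\x) + \gen{\omega',\phi} = f(\y) + \gen{\omega,\phi}$, which follows from the additivity of the domain under concatenation of Whitney disks and the definition of $f$. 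Since $\Phi$ is visibly invertible (with inverse $\x\mapsto T^{-f(\x)}\x$), it is an isomorphism of complexes, and it manifestly respects the $\Spin^c$-decomposition and the $\F[\Z]$-module structure.

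The main subtlety — not really an obstacle, but the point requiring care — is the well-definedness of $f$, i.e.\ verifying that $\gen{\omega,\phi}-\gen{\omega',\phi}$ genuinely does not depend on the choice of $\phi$ in the relevant class. This requires knowing that two homotopy classes in $\pi_2(\x_0,\x)$ both satisfying the $O_i(\phi)=X_i(\phi)=0$ constraint differ by a periodic domain lying in $\Pi^0_{\alpha\beta}$, and that $\gen{\omega,\cdot}$ and $\gen{\omega',\cdot}$ agree there. The first point is standard once one recalls that $\pi_2(\x_0,\x)$ is a torsor over $\Pi_{\alpha\beta}\oplus\Z$ (the $\Z$ coming from the ambient class $[\Sym^{g+m-2}(\Sigma)]$, which is killed since all $z_i(\phi)$ are controlled once enough coefficients vanish in an admissible diagram), and the constraint pins down the difference to lie in $\Pi^0_{\alpha\beta}$; the second point is exactly the hypothesis $[\omega]=[\omega']$ unwound through the isomorphism $\Pi^0_{\alpha\beta}\cong H_2(Y\minus L;\Z)$ and the definition \eqref{eqn:pairing} of the pairing. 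With these in hand the rest is routine bookkeeping, and I would present it at roughly the level of detail of \cite[Lemma 2.2]{OSzCube}, to which we are adapting.
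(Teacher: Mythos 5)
Your proposal is correct and follows essentially the same route as the paper: decompose generators by their relative $\Spin^c$ structure on $Y\minus L$, fix a base generator in each class, define the exponent via $\gen{\omega,\phi}-\gen{\omega',\phi}$ for any connecting disk avoiding $\OO\cup\XX$, verify independence of the choice of $\phi$ using that $\gen{\omega,\cdot}$ and $\gen{\omega',\cdot}$ agree on $\Pi^0_{\alpha\beta}$, and then show $\x\mapsto T^{f(\x)}\x$ is a chain isomorphism. One small imprecision: you should speak of \emph{relative} $\Spin^c$ structures on $Y\minus L$ (rather than $\Spin^c$ structures on $Y$), since a disk with $O_i(\phi)=X_i(\phi)=0$ for all $i$ exists precisely when $\x_0$ and $\x$ represent the same relative $\Spin^c$ class; this is exactly how the paper phrases it.
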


%We call the process of replacing $(\A,\omega)$ with $(\A',\omega')$, where $[\omega]=[\omega']$, a \emph{marking change}.

\begin{proof}[Proof of Lemma \ref{cohomologyclass}]
For each relative $\Spin^c$ structure $\ul\s$ on $Y\minus L$, fix some generator
$\x_{\ul\s}\in\T_\alpha\cap \T_\beta$ which represents $\ul\s$. For any other
generator $\x$ representing $\ul\s$, there exists a Whitney disk
$\phi\in\pi_2(\x_{\ul\s},\x)$ which avoids $\X\cup \O$. Let
\begin{equation}\label{eqn:epsilon}\epsilon_{\ul\s}(\x) = \gen{\omega',\phi} -
\gen{\omega,\phi}.\end{equation} Since $[\omega] = [\omega']$,
$\gen{\omega,D}=\gen{\omega',D}$ for all periodic domains
$D\in\Pi^0_{\alpha,\beta}$, which implies that the quantity in
(\ref{eqn:epsilon}) does not depend on our choice of $\phi$. Finally, let \[f:
\ul\cfkt(\mathcal{H},\omega;\MM)\rightarrow \ul\cfkt(\mathcal{H},\omega';\MM)\]
be the linear map which sends a generator $\x$ representing $\ul\s$ to $f(\x) =
T^{\epsilon_{\ul\s}(\x)}\cdot\x.$ It is easy to check that $f$ is a chain map,
and it is obviously an isomorphism.
\end{proof}

%We shall use the phrase \emph{marking change} to refer to the process of replacing $(\A,\omega)$ with a marking $(\A',\omega')$ such that $[\omega]=[\omega']$.
Suppose that $\HH$ and $\HH'$ are compatible Heegaard diagrams for $\LL$, with markings $(\A,\omega)$ and $(\A',\omega')$, respectively. As mentioned above, $\HH$ and $\HH'$ are related by a sequence
of index one/two (de)stabilizations, and isotopies and handleslides avoiding
$\O\cup\X$. These Heegaard moves induce a bijection $\rho$ between periodic domains of $\HH$ and those of $\HH'$ (which restricts to a bijection between periodic domains that avoid $\X\cup\O$). %For markings $(\A,\omega)$ and $(\A',\omega')$ on $\HH$ and $\HH'$, respectively, we write $[\omega]=[\omega']$ if $\gen{\omega,P}= \gen{\omega',\phi(P)}$ for every $P\in\Pi_{\alpha,\beta}^0$.

\begin{proposition} \label{prop:invariancetwisted}
If $\gen{\omega,P} = \gen{\omega',\rho(P)}$ for
all periodic domains $P$ in $\Pi_{\alpha,\beta}^0$, then the complexes
$\ul\cfkt(\mathcal{H},\omega;\MM)$ and $\ul\cfkt(\mathcal{H'},\omega';\MM)$ are
quasi-isomorphic. Therefore, the homology
\begin{equation*}\label{eqn:homo}
\ul\HFKt(\LL,[\omega];\MM) = H_*(\ul \cfkt(\mathcal{H},\omega;\MM),\partial)
\end{equation*} depends only on
the $m$-pointed link $\LL$ and $[\omega]$. (When each component of $\LL$ has a single basepoint, we denote this group by $\ul\HFK(L,[\omega];\MM)$.)
\end{proposition}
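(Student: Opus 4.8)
The plan is to prove invariance in two stages, following the standard template for Heegaard Floer invariance but keeping careful track of the twisting data. First I would verify that the hypothesis $\gen{\omega,P}=\gen{\omega',\rho(P)}$ for all $P\in\Pi^0_{\alpha,\beta}$ lets us reduce to the situation where the diagrams literally coincide and only the markings differ: by Lemma \ref{cohomologyclass}, if $\HH=\HH'$ then any two markings inducing the same cohomology class in $H^2(Y\minus L;\Z)$ give isomorphic complexes, and the hypothesis is exactly the statement that the two markings are identified (via $\rho=\mathrm{id}$) at the level of periodic domains, hence of cohomology classes. So the content is really that the twisted complex is unchanged, up to quasi-isomorphism, under each of the elementary Heegaard moves (isotopy, handleslide, index one/two (de)stabilization) supported away from $\O\cup\X$, provided the marking is carried along in a way compatible with $\rho$.

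The main step, then, is to treat the three types of Heegaard moves one at a time. For an isotopy of one of the curves not crossing $\O\cup\X$, the chain complexes are literally identified, and since the isotopy induces the identity on periodic domains one checks $\gen{\omega,\phi}=\gen{\omega',\phi}$ for the relevant disks, so the identity map is a chain isomorphism. For a handleslide, the standard argument produces a quasi-isomorphism by counting pseudo-holomorphic triangles in a triple diagram $(\Sigma,\bm\alpha,\bm\alpha',\bm\beta)$ after pushing $\A$ to a collection of points in the triple-diagram complement; the key point is that the triangle-counting map, when weighted by $T^{\gen{\omega,\psi}}$ for triangle classes $\psi$, is a chain map over $\F[\Z]$, and that it is a quasi-isomorphism because its leading term (the ``top'' intersection point of $\bm\alpha$ with $\bm\alpha'$ together with the small triangles) is an isomorphism and the rest is filtered. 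The stabilization case is easiest: an index one/two stabilization away from the basepoints introduces a new pair $\alpha_i,\beta_i$ meeting in a single point $x_0$ in a new genus, the complex is tensored with the one-dimensional $\F[\Z]$-module generated by $x_0$ with trivial differential, and since the stabilization region can be taken disjoint from $\A$, the twisting is unaffected — so the complexes are isomorphic. Throughout, I would use the hypothesis precisely to guarantee that for each move the pushed-forward marking $\omega'$ satisfies $\gen{\omega',\rho(P)}=\gen{\omega,P}$, which is what makes the chain-homotopy identities (pentagon relations for the triangle maps, etc.) hold over $\F[\Z]$ rather than merely over $\F$.

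I expect the main obstacle to be the handleslide step, specifically the bookkeeping needed to show that the triangle-counting map respects the twisting coefficients. One has to choose the intermediate curves $\bm\alpha'$ and a Hamiltonian isotopy so that $\bm\alpha'$ avoids $\O\cup\X$, push the marked points $\A$ to the triple-diagram complement, and verify that the assignment $\psi\mapsto\gen{\omega,\psi}$ on triangle homotopy classes is additive under the splicing of a triangle with a disk (so that $T^{\gen{\omega,\cdot}}$ times the triangle count is genuinely a chain map), and that the composite of the two handleslide maps (one in each direction) is chain-homotopic to a map weighted by $T^{\gen{\omega,\theta}}$ for the periodic ``tetragon'' domains $\theta$ — which is chain-homotopic to the identity precisely because of the hypothesis relating $\omega$ and $\omega'$. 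This is exactly the adaptation of \cite[Lemma 2.2]{OSzCube} and the twisted invariance results of \cite{OSz3Manifold} to the multi-pointed, null-homologous setting, so I would cite those and indicate the modifications rather than rewrite the holomorphic-triangle machinery from scratch. Finally, once each move is handled, composing the quasi-isomorphisms along the sequence of moves connecting $\HH$ to $\HH'$ gives the quasi-isomorphism $\ul\cfkt(\HH,\omega;\MM)\simeq\ul\cfkt(\HH',\omega';\MM)$, and hence the well-definedness of $\ul\HFKt(\LL,[\omega];\MM)$.
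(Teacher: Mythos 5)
Your overall strategy --- reduce to a sequence of elementary Heegaard moves, and use Lemma \ref{cohomologyclass} to absorb differences that are purely at the level of markings --- is the same as the paper's. But there are two concrete gaps.

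First, your treatment of isotopies is wrong. You write that ``the chain complexes are literally identified'' under an isotopy of a curve that avoids $\O\cup\X$. This is false in general: an isotopy of an $\alpha$- or $\beta$-curve across $\Sigma$ can create or cancel pairs of intersection points with curves of the other family, so the generating sets of the two complexes can differ. Isotopies are handled either by continuation maps for a time-dependent almost-complex structure or, as the paper notes (citing Lipshitz and Roberts), by triangle-counting maps just as for handleslides. Only ambient isotopies of the whole diagram, or isotopies of a curve within a small neighborhood disjoint from the other family, give literal identifications.

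Second, and more importantly, you never address what happens when a Heegaard move cannot be performed in the complement of $\A$. You write that the marking should be ``carried along in a way compatible with $\rho$'' and later ``push the marked points $\A$ to the triple-diagram complement,'' but these phrases paper over exactly the subtlety the proof needs to confront: an isotopy can get \emph{stuck} on a point $a\in\A$, and pushing $a$ across a curve changes the evaluation $\gen{\omega,\cdot}$ on periodic domains, hence changes the chain complex. The resolution in the paper is a local modification of the marking --- replace the offending point with nearby points carrying compensating $\omega$-values (the paper's Figure \ref{fig:changemarking}) --- chosen so that $[\omega]$ is preserved. This is then an \emph{isomorphism} by Lemma \ref{cohomologyclass}, and it clears the path so the Heegaard move can proceed in the complement of the new marking. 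Without this step your argument doesn't go through: you cannot in general arrange for the sequence of Heegaard moves from $\HH$ to $\HH'$ to avoid a fixed $\A$, so you must interleave marking changes (each an isomorphism preserving $[\omega]$) with Heegaard moves (each a quasi-isomorphism). Your proposal recognizes that Lemma \ref{cohomologyclass} is relevant but uses it only at the endpoints, not in the middle of the sequence where it is actually needed.
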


\begin{proof}[Proof of Proposition \ref{prop:invariancetwisted}] It is not always possible to perform the above Heegaard moves while avoiding $\A$ --- an isotopy might get ``stuck" on a point of $\A$ as in Figure \ref{fig:changemarking}(a). Modifying the marking as in Figure \ref{fig:changemarking}(b) does not change the associated cohomology class, but allows one to proceed with the isotopy in the complement of the new marking. In this way, the triple $(\HH',\A',\omega')$ may be obtained from $(\HH,\A,\omega)$ via a combination of marking changes which preserve cohomology class, and Heegaard moves which avoid the basepoints and the markings. These marking changes induce isomorphisms, by Lemma \ref{cohomologyclass}. Moreover, the standard Heegaard Floer arguments \cite{OSz3Manifold} show that these Heegaard moves induce quasi-isomorphisms, and that the chain homotopy type of $\ul\cfkt(\mathcal{H},\omega;\MM)$ is invariant under changes of almost-complex structure.
\end{proof}

\begin{figure}
\labellist
 \pinlabel (a) at 11 71
 \pinlabel (b) at 145 71
\small
 \pinlabel $a$ at 43 36
 \pinlabel $b$ at 70 64
 \pinlabel $c$ at 70 36
 \pinlabel $d$ at 70 10
 \pinlabel $e$ at 98 36
 \pinlabel $a+c$ at 172 36
 \pinlabel $b-c$ at 205 64
 \pinlabel $d$ at 205 10
 \pinlabel $e+c$ at 237 36
\endlabellist
\begin{center}
\includegraphics[width=7.5cm]{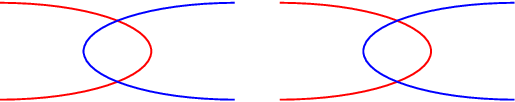}
\caption{\quad We have labeled markings by the values that $\omega$ takes on
them. In (a), the isotopy gets stuck at the point labeled $c$. In (b), we have
removed this point and adjusted the values of $\omega$ on the four nearby
points.} \label{fig:changemarking}
\end{center}
\end{figure}

When $[\omega] = 0$, as for a knot $L\subset S^3$, we may choose $\mathbb{A}$
to be the empty set. Therefore,
\begin{equation} \label{eqn:err}
\ul\HFKt(\LL,0;\MM) \, \cong \,\HFKt(\LL)\otimes_{\mathbb{F}}\MM,
\end{equation}
where $\HFKt(\LL)$ denotes the homology of $\cfkt(\mathcal{H})$. Moreover, it is well-known that
\begin{equation} \label{eqn:err2}
\HFKt(\LL)\,\cong\, \HFK(L) \otimes_{\mathbb{F}} V^{\otimes (m-|L|)},
\end{equation}
where $V$ is a 2-dimensional vector space over $\F$ supported in the $(m,a)$-bigradings $(0,0)$ and $(-1,-1)$ (see, e.g., \cite{ManolescuOzsvathSzaboThurston} for links in $S^3$). Combining the isomorphisms in \eqref{eqn:err} and
\eqref{eqn:err2}, we see that
\begin{equation} \label{eqn:err3}
\ul\HFKt(\LL,0;\MM) \,\cong \,\HFK(L) \otimes_{\mathbb{F}} V^{\otimes (m-|L|)} \otimes_{\mathbb{F}} \MM.
\end{equation}
Furthermore, it is not hard to see that a twisted version holds as well:
\begin{equation} \label{eqn:err4}
\ul\HFKt(\LL, \omega; \MM) \,\cong\, \ul\HFK(L, \omega; \MM) \otimes_\F V^{\otimes (m-|L|)} .
\end{equation}

We shall generally suppress $\MM$ from our notation unless we wish to emphasize
the module we are working over. When we state a result about
$\ul\cfkt(\mathcal{H},\omega)$ or $\ul\HFKt(\LL,[\omega])$, we shall mean that
it holds with coefficients in any $\MM$. Also, we shall often use
$\ul\cfkt(\bm\alpha,\bm\beta)$ to denote $\ul\cfkt(\mathcal{H},\omega)$, as
long as $\Sigma$, $\OO$, $\XX$ and $(\A,\omega)$ are clear from the context.

\subsection{Pseudo-holomorphic polygons} \label{sec:polygon}
A multi-pointed Heegaard \emph{multi-diagram} is a tuple
\[
\mathcal{H} = (\Sigma,\bm\eta^1,\dots,\bm \eta^n, \O,\X)
\] for which each
sub-tuple $(\Sigma,\bm \eta ^i,\bm \eta ^j,\O,\X)$ is a multi-pointed Heegaard
diagram of the sort described in \ts \ref{sec:hfktwisted}. Fix a marking
$(\mathbb{A},\omega)$ on $\mathcal{H}$. For distinct indices $i_1, \dots, i_k$
and intersection points $\x_1 \in \T_{\eta ^{i_1}}\cap\T_{\eta ^{i_2}}, \dots,
\x_{k-1} \in \T_{\eta^{i_{k-1}}}\cap\T_{\eta^{i_k}}$ and $\x_k \in \T_{\eta
^{i_1}}\cap\T_{\eta^{i_k}}$, we denote by $\pi_2(\x_1,\dots,\x_k)$ the set of
homotopy classes of Whitney $k$-gons connecting them. For
$\phi\in\pi_2(\x_{1},\dots,\x_{k})$ and $a\in
\Sigma\setminus(\bm\eta^1\cup\dots\cup\bm\eta^n)$, let $a(\phi)$ denote the
intersection of $\phi$ with $\{a\}\times\Sym^{g+m-2}(\Sigma)$, and define the
pairing $\langle \omega, \phi \rangle$ as in \eqref{eqn:pairing}.

A \emph{multi-periodic domain} is a formal $\Z$-linear combination of the
regions in $\Sigma\setminus(\bm\eta^1\cup\dots\cup\bm\eta^n)$ whose boundary is
a union of curves among the sets $\bm\eta^1,\dots,\bm\eta^n$. Let $\Pi_{\eta^1
\dots \eta^n}$ denote the group of multi-periodic domains, and let
$\Pi^0_{\eta^1 \dots \eta^n}$ denote the subgroup of $\Pi_{\eta^1 \dots
\eta^n}$ consisting of multi-periodic domains that avoid $\O\cup\X$. As before,
we say that $\mathcal{H}$ is admissible if every nontrivial element of
$\Pi^0_{\eta^1 \dots \eta^n}$ has both positive and negative coefficients.
%Add something about the ranks

Suppose that $\mathcal{H}$ is admissible, and let
$\ul\cfkt(\bm\eta^{i_s},\bm\eta^{i_t})$ denote the complex associated to
$(\Sigma,\bm\eta^{i_s},\bm\eta^{i_t},\O,\X)$ and $(\mathbb{A},\omega)$. For
$k\geq 3$, we define a map
\[
F_{\eta^{i_1}\dots\eta^{i_k}}: \ul \cfkt(\bm
\eta^{i_1},\bm \eta^{i_2}) \otimes\dots\otimes \ul \cfkt(\bm \eta^{i_{k-1}},\bm
\eta^{i_k}) \rightarrow \ul \cfkt(\bm \eta^{i_1},\bm \eta^{i_k})
\]
by
\begin{equation}\label{eqn:kgon}
F_{\eta^{i_1}\dots\eta^{i_k}}(\x_{1}\otimes\dots\otimes\x_{k-1}) = \sum_{\x_{k}
\in \T_{\eta^{i_1}} \cap \T_{\eta^{i_k}}}\,\,
\sum_{\substack{\phi \in \pi_2(\x_{1},\x_{2},\dots,\x_{k}) \\ \mu(\phi)=3-k \\
O_i(\phi) =X_i(\phi) = 0 \ \forall i}} \#({\mathcal{M}}(\phi)) \cdot
T^{\langle\omega, \phi\rangle} \x_{k}.
\end{equation} Here, ${\mathcal{M}}(\phi)$ is the moduli space pseudo-holomorphic representatives of $\phi$, where the conformal structure on the source is allowed to vary. For a $k$-gon, this set of conformal structures forms an associahedron of dimension $k-3$, so ${\mathcal{M}}(\phi)$ has expected dimension zero when $\mu(\phi) = 3-k$.

These $F_{\eta^{i_1}\dots\eta^{i_k}}$ are chain maps when $k=3$. Counting the
ends of the 1-dimensional moduli spaces $\mathcal{M}(\phi)$, for all Whitney
$k$-gons $\phi$ with $\mu(\phi)=4-k$ and $O_i(\phi) = X_i(\phi)=0$ for all $i$,
one obtains the $\mathcal{A}_{\infty}$ relation
\begin{equation}\label{eqn:Ainfty}
\sum_{1\leq s<t\leq k}
F_{\eta^{i_1}\dots\eta^{i_s}\eta^{i_t}\dots\eta^{i_k}}(\x_{1}\otimes\dots\otimes\x_{s-1}\otimes
F_{\eta^{i_s}\dots\eta^{i_t}}(\x_{s}\otimes\dots\otimes\x_{t-1})\otimes\x_{t}\otimes\dots\otimes\x_{k})
= 0,
\end{equation}
where $F_{\eta^{i_s}\eta^{i_t}}$ is understood to mean the differential on the
complex $\ul\cfkt(\bm\eta^{i_s},\bm\eta^{i_t})$.

\subsection{The basepoint action} \label{sec:psi}
Let $\mathcal{H} = (\Sigma,\bm\alpha,\bm\beta,\O,\X)$ be an admissible
multi-pointed Heegaard diagram with marking $(\mathbb{A},\omega)$. For each $i=1,\dots,m$, let
\[
\Psi_i\co \ul\cfkt(\mathcal{H},\omega) \rightarrow \ul
\cfkt(\mathcal{H},\omega)
\]
be the map given by \begin{equation}\label{eqn:psi}\Psi_i (\x) = \sum_{\y \in
\Ta \cap \Tb}
\sum_{\substack{\phi \in \pi_2(\x,\y) \\ \mu(\phi)=1 \\
O_j(\phi) = 0 \,\,\,\forall j\\
X_j(\phi) = 0 \,\,\, \forall j\neq i\\
X_i(\phi) = 1}} \#(\mathcal{M}(\phi)/\mathbb{R}) \cdot T^{\langle\omega,
\phi\rangle} \y.\end{equation}
Counting the ends of the moduli spaces $\mathcal{M}(\phi)/\mathbb{R}$, for all
Whitney disks $\phi$ satisfying the basepoint conditions in \eqref{eqn:psi} but
with $\mu(\phi)=2$, we find that
\[
\Psi_i\circ\partial+\partial\circ\Psi_i=0.
\]
%(Each component of $\Sigma \minus \bm\alpha$ and $\Sigma\minus \bm\beta$ contains a point of $\O$, which rules out boundary bubbling.)
Therefore, $\Psi_i$ is a chain map and induces a map $\psi_i$ on homology.
Similar degeneration arguments show that $\psi_i^2=0$ and that
$\psi_i\psi_j=\psi_j\psi_i$. Thus, we have an action of the exterior algebra
$\Lambda^*(\F[\Z]\langle\psi_1,\dots,\psi_m\rangle)$ on
$H_*(\ul\cfkt(\mathcal{H},\omega),\partial)$. Moreover, a straightforward
generalization of \cite[Lemma 6.2]{OSz3Manifold} shows that $\psi_i$ does not
depend on our choices of analytic data. Note that $\psi_i$ lowers Alexander and
Maslov gradings by $1$ and therefore preserves the $\delta$-grading.

The following is an immediate analogue of Lemma \ref{cohomologyclass}.

\begin{lemma}
\label{cohomologyclass2} Suppose $(\A,\omega)$ and $(\A',\omega')$ are markings
on $\HH$ such that $\gen{\omega,P} = \gen{\omega',P}$ for \emph{every} periodic
domain $P$ of $\HH$. Then there is an isomorphism from
$\ul\cfkt(\mathcal{H},\omega)$ to $\ul\cfkt(\mathcal{H},\omega')$ which
commutes with the action of
$\Lambda^*(\F[\Z]\langle\psi_1,\dots,\psi_m\rangle)$. \qed
\end{lemma}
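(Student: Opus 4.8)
The plan is to mimic the proof of Lemma \ref{cohomologyclass} almost verbatim, keeping careful track of the commutation with the $\psi_i$. Given markings $(\A,\omega)$ and $(\A',\omega')$ on $\HH$ with $\gen{\omega,P}=\gen{\omega',P}$ for \emph{every} periodic domain $P$ of $\HH$, I would for each relative $\Spin^c$ structure $\ul\s$ on $Y\minus L$ fix a generator $\x_{\ul\s}\in\T_\alpha\cap\T_\beta$ representing $\ul\s$, and for any other generator $\x$ representing $\ul\s$ choose a Whitney disk $\phi\in\pi_2(\x_{\ul\s},\x)$ avoiding $\X\cup\O$. As in \eqref{eqn:epsilon}, set $\epsilon_{\ul\s}(\x)=\gen{\omega',\phi}-\gen{\omega,\phi}$; this is independent of $\phi$ since two such choices differ by a periodic domain in $\Pi^0_{\alpha\beta}$ and $\gen{\omega,\cdot}$ and $\gen{\omega',\cdot}$ agree there. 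Let $f$ send $\x$ representing $\ul\s$ to $T^{\epsilon_{\ul\s}(\x)}\x$. Lemma \ref{cohomologyclass} already shows $f$ is a chain isomorphism (the hypothesis here is stronger than there, so all of that applies).

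The new content is that $f$ intertwines each $\Psi_i$, hence the induced $\psi_i$. First I would observe that $\Psi_i$ preserves the relative $\Spin^c$ decomposition: if $\phi\in\pi_2(\x,\y)$ contributes to $\Psi_i(\x)$, then $\x$ and $\y$ represent relative $\Spin^c$ structures differing by the class dual to $\phi$, and since the only nonzero multiplicity of $\phi$ at a basepoint is $X_i(\phi)=1$, the difference of relative $\Spin^c$ structures is the same fixed class for all such $\phi$; call the target class $\ul\s'$ when the source is $\ul\s$. Then for $\x$ representing $\ul\s$ and $\y$ representing $\ul\s'$ appearing in $\Psi_i(\x)$, I would compare $\epsilon_{\ul\s'}(\y)-\epsilon_{\ul\s}(\x)$ with $\gen{\omega',\phi}-\gen{\omega,\phi}$: concatenating the reference disk from $\x_{\ul\s}$ to $\x$, then $\phi$, then the reverse of the reference disk from $\x_{\ul\s'}$ to $\y$ gives a disk in $\pi_2(\x_{\ul\s},\x_{\ul\s'})$, and since any two such disks differ by an element of $\Pi_{\alpha\beta}$ (not just $\Pi^0_{\alpha\beta}$!), the quantity $\gen{\omega',\cdot}-\gen{\omega,\cdot}$ on it is well-defined and equals a fixed constant $\kappa_{\ul\s,\ul\s'}$ depending only on $\ul\s,\ul\s'$. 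This yields $\epsilon_{\ul\s'}(\y)-\epsilon_{\ul\s}(\x)=\gen{\omega',\phi}-\gen{\omega,\phi}+\kappa_{\ul\s,\ul\s'}$, and a direct expansion gives $f\circ\Psi_i=T^{\kappa_{\ul\s,\ul\s'}}\,\Psi_i\circ f$ on the $\ul\s$-summand mapping to the $\ul\s'$-summand. But the constant is unimportant: $f$ is an isomorphism on each summand, so up to rescaling the basis of each summand by $T^{c_{\ul\s}}$ for a suitable constant $c_{\ul\s}$ (which does not change the chain isomorphism type of $\ul\cfkt$, only recoordinatizes it), one arranges $\kappa_{\ul\s,\ul\s'}=0$ for all $i$ simultaneously, since $\Psi_i$ for different $i$ shift $\Spin^c$ by classes that are determined by $[\omega]-[\omega']$ being trivial on $\Pi^0_{\alpha\beta}$ but the obstruction lives in $H^1$ of the relative $\Spin^c$ torsor with the stronger hypothesis forcing consistency.

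The main obstacle is precisely this bookkeeping of the constants $\kappa_{\ul\s,\ul\s'}$: one must verify they assemble into something that can be absorbed by a global rescaling, i.e.\ that the assignment $(\ul\s,\ul\s')\mapsto\kappa_{\ul\s,\ul\s'}$ is a coboundary. The clean way to see this is to use the hypothesis $\gen{\omega,P}=\gen{\omega',P}$ on \emph{all} of $\Pi_{\alpha\beta}$: this says $[\omega]=[\omega']$ not merely in $H^2(Y\minus L;\Z)$ but in a lift to $H^2(Y,L;\Z)$-type data, which is exactly what is needed to make $\epsilon$ extendable to a function on generators that is additive under the $\Spin^c$-changes induced by the $\psi_i$; equivalently, one defines $\epsilon_{\ul\s}(\x)$ using reference disks that need not avoid $\X\cup\O$, with $\gen{\omega',\phi}-\gen{\omega,\phi}$ still well-defined by the stronger hypothesis, and then $\kappa\equiv 0$ by construction and $f\circ\Psi_i=\Psi_i\circ f$ on the nose. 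With that choice, the commutation is immediate from comparing the defining sums for $f\circ\Psi_i$ and $\Psi_i\circ f$ term by term, and since each $\Psi_i$ commutes with $f$, so do the induced $\psi_i$ and hence the whole exterior-algebra action. This completes the proof.
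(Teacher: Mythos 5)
Your final version of the argument is correct and is essentially the intended proof (the paper records the lemma with a bare \qed as an immediate analogue of Lemma~\ref{cohomologyclass}): the stronger hypothesis on all of $\Pi_{\alpha\beta}$ lets you define $\epsilon(\x)=\gen{\omega',\phi}-\gen{\omega,\phi}$ for any disk $\phi$ from a single global reference generator to $\x$, without requiring $\phi$ to avoid $\X\cup\O$, and then the identity $\epsilon(\y)-\epsilon(\x)=\gen{\omega',\phi}-\gen{\omega,\phi}$ for every Whitney disk $\phi\in\pi_2(\x,\y)$ (not just basepoint-avoiding ones) makes $f(\x)=T^{\epsilon(\x)}\x$ commute on the nose with every $\Psi_i$ as well as with $\partial$. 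The middle portion of your write-up, which first sets up the weaker Lemma~\ref{cohomologyclass} machinery per relative $\Spin^c$ structure and then introduces the constants $\kappa_{\ul\s,\ul\s'}$ and speculates about coboundaries and $H^2(Y,L;\Z)$, is an unnecessary detour: once you allow the reference disks to cross basepoints you no longer need a $\Spin^c$-indexed family of reference generators at all, and the $\kappa$'s never arise. It would be cleaner to state the global construction directly rather than presenting it as a repair of the $\Spin^c$-decomposed version.
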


These $\psi_i$ interact nicely with the maps defined by counting higher
polygons, as follows. Given an admissible multi-diagram $\mathcal{H} =
(\Sigma,\bm\eta^1,\dots,\bm \eta^n, \O,\X)$, we let
\[
\Psi_i^{\eta^{i_1}\dots\eta^{i_k}}\co \ul \cfkt(\bm \eta^{i_1},\bm \eta^{i_2})
\otimes\dots\otimes \ul \cfkt(\bm \eta^{i_{k-1}},\bm \eta^{i_k}) \rightarrow
\ul \cfkt(\bm \eta^{i_1},\bm \eta^{i_k})
\]
be the map which counts pseudo-holomorphic $k$-gons that pass once over $X_i$ and avoid all other basepoints, in analogy with \eqref{eqn:Ainfty}. When $k=2$,
$\Psi_i^{\eta^{i_1} \eta^{i_2}}$ is just the map on $\CFKtt(\bm\eta^{i_2},
\bm\eta^{i_2})$ defined in \eqref{eqn:psi}. These maps fit into an $\AA_\infty$
relation,
\begin{multline} \label{eqn:Ainfty2}
\sum_{1\leq s<t\leq k}
 F_{\eta^{i_1}\dots\eta^{i_s}\eta^{i_t}\dots\eta^{i_k}}(\x_{1}\otimes\dots\otimes\x_{s-1}\otimes
 \Psi_j^{\eta^{i_s}\dots\eta^{i_t}}(\x_{s}\otimes\dots\otimes\x_{t-1})\otimes\x_{t}\otimes\dots\otimes\x_{k})
 \\
+\sum_{1\leq s<t\leq k}
 \Psi_j^{\eta^{i_1}\dots\eta^{i_s}\eta^{i_t}\dots\eta^{i_k}}(\x_{1}\otimes\dots\otimes\x_{s-1}\otimes
 F_{\eta^{i_s}\dots\eta^{i_t}}(\x_{s}\otimes\dots\otimes\x_{t-1})\otimes\x_{t}\otimes\dots\otimes\x_{k})
 = 0.
\end{multline}
When $k=3$, writing $(\bm\alpha, \bm\beta, \bm\gamma) = (\bm\eta^{i_1},
\bm\eta^{i_2}, \bm\eta^{i_3})$, this becomes
\begin{multline*} F_{\alpha\beta\gamma} (\Psi_j^{\alpha\beta}(\x) \otimes \y) +
F_{\alpha\beta\gamma} (\x \otimes \Psi_j^{\beta\gamma}(\y) ) +
\Psi_j^{\alpha\gamma} ( F_{\alpha\beta\gamma} (\x \otimes \y)) \\
+ \Psi_j^{\alpha\beta\gamma}(\partial_{\alpha\beta}(\x) \otimes \y) +
\Psi_j^{\alpha\beta\gamma}(\x \otimes \partial_{\beta\gamma}(\y)) +
\partial_{\alpha\gamma} (\Psi_j^{\alpha\beta\gamma}(\x \otimes \y)) =0.
\end{multline*}
In particular, if $\y$ is a cycle in $\ul \cfkt(\bm\beta,\bm\gamma)$ and $y$ is
its homology class, then the maps $f_{y}$ and $f_{\psi_j^{\beta\gamma}(y)}$, induced on homology by $F_{\alpha\beta\gamma}(\cdot \otimes \y)$ and
$F_{\alpha\beta\gamma}(\cdot \otimes \Psi_i^{\beta\gamma}(\y))$, satisfy
\begin{equation} \label{eqn:psif}
f_y(\psi^{\alpha\beta}_j(x)) + \psi_j^{\alpha\gamma}(f_y(x)) +
f_{\psi^{\beta\gamma}_j(y)}(x)=0
\end{equation}
for any $x\in H_*(\ul\cfkt(\bm\alpha,\bm\beta),\partial_{\alpha\beta})$.

\begin{proposition}\label{prop:invcofpsi}
Suppose $\mathcal{H}'$ is obtained from $\mathcal{H}$ via an isotopy,
handleslide or index one/two (de)stabilization in the complement of
$\mathbb{A}\cup\O\cup\X$. Then the induced isomorphism
\[
\Phi\co H_*(\ul\cfkt(\mathcal{H},\omega),\partial) \rightarrow
H_*(\ul\cfkt(\mathcal{H}',\omega),\partial)
\]
satisfies $\Phi\circ\psi_i = \psi_i\circ\Phi$.
\end{proposition}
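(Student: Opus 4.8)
The plan is to follow the standard template for invariance of Heegaard Floer maps under Heegaard moves, as in \cite[Section 6]{OSz3Manifold}, adapted to the twisted setting and to the basepoint action. The key point is that each of the three types of Heegaard move (isotopy, handleslide, index one/two (de)stabilization) induces the map $\Phi$ on homology in a way that is itself described by counting pseudo-holomorphic polygons, and that the commutativity $\Phi \circ \psi_i = \psi_i \circ \Phi$ then follows from the $\AA_\infty$ relation \eqref{eqn:Ainfty2} involving the maps $\Psi_j^{\eta^{i_1}\dots\eta^{i_k}}$. Concretely, I would first reduce to the case of a single elementary move in the complement of $\A \cup \O \cup \X$; since $\psi_i$ does not depend on the choice of analytic data (by the generalization of \cite[Lemma 6.2]{OSz3Manifold} cited above), it suffices to treat each move in turn.

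For an isotopy, $\Phi$ is the continuation map, which is chain homotopic to the identity through a count of disks; the commutation with $\Psi_i$ follows by the usual bifurcation analysis on a one-parameter family, exactly as $\Psi_i$ was shown to be a chain map in \ts\ref{sec:psi}. For a handleslide, $\Phi$ is induced by the triangle-counting map $F_{\alpha\beta\gamma}(\cdot \otimes \Theta)$, where $\Theta$ is the top generator of $\ul\cfkt(\bm\beta,\bm\gamma)$ for a suitable small-triangle diagram, and this $\Theta$ can be taken to be a cycle with $\psi_j^{\beta\gamma}(\Theta) = 0$ (since the $\bm\beta$--$\bm\gamma$ diagram is a connect-sum of standard genus-one pieces on which $\psi_j$ vanishes for grading reasons). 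Then the $k=3$ case of \eqref{eqn:Ainfty2}, in the form \eqref{eqn:psif}, gives
\[
f_\Theta(\psi_j^{\alpha\beta}(x)) + \psi_j^{\alpha\gamma}(f_\Theta(x)) + f_{\psi_j^{\beta\gamma}(\Theta)}(x) = 0,
\]
and the last term vanishes, yielding $\Phi \circ \psi_j = \psi_j \circ \Phi$. For an index one/two (de)stabilization in the complement of $\A \cup \O \cup \X$, the stabilization introduces a new pair of curves meeting in two points far from all basepoints and all marked points, $\Phi$ is the obvious inclusion of generators, and since the new domains contributing to $\Psi_i$ are supported away from the stabilization region, $\Phi$ visibly intertwines the two $\Psi_i$ maps at the chain level.

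The main obstacle is the handleslide case: one must ensure both that the small-triangle diagram $(\Sigma, \bm\alpha, \bm\beta, \bm\gamma, \O, \X)$ with its marking $(\A,\omega)$ is admissible (so that the polygon counts and the $\AA_\infty$ relations are well-defined), and that the destabilizing generator $\Theta \in \ul\cfkt(\bm\beta,\bm\gamma)$ can be chosen to be annihilated by every $\psi_j^{\beta\gamma}$ rather than merely to be a cycle. Admissibility can be arranged by winding the $\bm\gamma$ curves, as usual, without disturbing the markings; and the vanishing $\psi_j^{\beta\gamma}(\Theta) = 0$ holds because in the relevant genus-one Heegaard piece the generator $\Theta$ is the unique element of its $\delta$-grading, while $\psi_j$ preserves the $\delta$-grading and strictly lowers the Maslov grading, so its image must be zero. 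With these two points in hand, the rest is a routine combination of the arguments already used to establish that the $\Psi_i$ are chain maps and the invariance Proposition \ref{prop:invariancetwisted}.
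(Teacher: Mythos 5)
Your proposal is correct and follows the paper's high-level strategy---reduce the handleslide case to the $\mathcal{A}_\infty$ relation \eqref{eqn:psif} and show the error term $f_{\psi_j^{\beta\gamma}(\Theta)}$ vanishes---but it diverges in two places. For isotopies, you propose a direct continuation-map/bifurcation argument, whereas the paper invokes the fact (due to Lipshitz, and Roberts) that the isotopy map can itself be defined by counting triangles, so the handleslide argument applies verbatim; the paper's route avoids carrying out any new moduli-space analysis. For the handleslide, the crucial point is that $\psi_j^{\beta\gamma}(\Theta) = 0$. You argue by gradings: $\Theta$ is isolated in its $\delta$-grading, while $\psi_j$ preserves $\delta$ and strictly lowers Maslov. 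This is valid, since all the thin bigons in a small-translate diagram miss basepoints and so each pair of intersection points carries a $\delta$-shift of one, making the top-Maslov generator unique in its $\delta$-grading. The paper instead observes the stronger chain-level fact that $\Psi_j^{\beta\gamma}\equiv 0$: every $X_i$ shares a region of $\Sigma\setminus(\bm\beta\cup\bm\gamma)$ with some $O_{i'}$, so no disk can cross $X_i$ while avoiding all $O$'s. That version is cleaner---it needs no analysis of the $\delta$-distribution and kills the error term at the chain level rather than just on homology. One small slip: for an index one/two stabilization the new $\alpha$ and $\beta$ curves meet in a single point, not two; the conclusion is unchanged.
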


\begin{proof}[Proof of Proposition \ref{prop:invcofpsi}]
The isomorphism on knot Floer homology associated to a handleslide is defined
by counting pseudo-holomorphic triangles. Consider, for example, the set
$\bm\beta' = \{\beta_1',\dots,\beta_{g+m-1}'\}$, where $\beta_1'$ is obtained
by handlesliding $\beta_1$ over some $\beta_i$, and $\beta_j'$ is the image of
$\beta_j$ under a small Hamiltonian isotopy for $j=2,\dots,g+m-1$. Since this
handleslide takes place in the complement of $\mathbb{A}$, there is a unique
top-dimensional generator $\Theta^{\beta\beta'}$ of
$H_*(\ul\cfkt(\bm\beta,\bm\beta'),\partial_{\beta\beta'})$, and the associated
isomorphism $\Phi$ is just the map $f_{\Theta^{\beta\beta'}}$. It is easy to
see that each $X_i$ is in the same region of
$\Sigma\setminus(\bm\beta\cup\bm\beta')$ as some $O_j$. Therefore, the map
$\psi_i^{\beta\beta'}$ is identically zero, and (\ref{eqn:psif}) implies that
\[f_{\Theta^{\beta\beta'}}(\psi^{\alpha\beta}_i(x)) +
\psi_i^{\alpha\beta'}(f_{\Theta^{\beta\beta'}}(x))=0.\] Handleslides among the
$\bm\alpha$ curves are treated in the same manner.

The isomorphism on knot Floer homology associated to an isotopy may also be
defined by counting pseudo-holomorphic triangles \cite{LipshitzCylindrical,
RobertsDouble} (though it was not originally defined in this way). The above
reasoning then proves Proposition \ref{prop:invcofpsi} in this case.

The proof of Proposition \ref{prop:invcofpsi} for index one/two (de)stabilization
is immediate.
%Suppose $\mathcal{H}'$ is obtained from $\mathcal{H}$ by connect-summing $\Sigma$ with a torus $T^2$ in a region of $\Sigma\setminus (\bm\alpha \cup \bm \beta)$ containing some $O_i$. Let $\beta_{g+m}$ and $\alpha_{g+m}$ denote the meridian and longitude of $T^2$, and let $c$ denote their point of intersection. Then the isomorphism $\Phi$ is induced by the chain map from $\ul\cfkt(\mathcal{H},\omega)$ to $\ul\cfkt(\mathcal{H}',\omega)$ which sends $\x$ to $\x \times \{c\}$. Since no disk which contributes to $\psi_i'$ can enter the connected sum region (as this region contains $O_i$), it is clear that $\Phi\circ\psi_i =\psi'_i\circ\Phi$.
\end{proof}

Now, suppose that
$\HH$ and $\HH'$ are compatible diagrams for the pointed link $\LL$, with
markings $(\A,\omega)$ and $(\A',\omega')$, respectively. As before, $\HH$ and
$\HH'$ are related by a sequence of Heegaard moves which avoid the basepoints. Let $\rho$
denote the induced bijection between the periodic domains of $\HH$ and those of
$\HH'$. The combination of Proposition \ref{prop:invcofpsi} and Lemma
\ref{cohomologyclass2} implies the following immediate analogue of Proposition
\ref{prop:invariancetwisted}.

\begin{proposition}
\label{prop:invcofpsi2} If $\gen{\omega,P} = \gen{\omega',\rho(P)}$ for
\emph{every} periodic domain $P$ of $\HH$, then there is a quasi-isomorphism
from $\ul\cfkt(\mathcal{H},\omega)$ to $\ul\cfkt(\mathcal{H}',\omega')$ which
commutes with the action of
$\Lambda^*(\F[\Z]\langle\psi_1,\dots,\psi_m\rangle)$. \qed
\end{proposition}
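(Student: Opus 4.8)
The statement to prove is Proposition~\ref{prop:invcofpsi2}: if $\HH,\HH'$ are compatible diagrams for $\LL$ with markings $(\A,\omega)$, $(\A',\omega')$ satisfying $\gen{\omega,P}=\gen{\omega',\rho(P)}$ for every periodic domain $P$, then there is a quasi-isomorphism $\ul\cfkt(\HH,\omega)\to\ul\cfkt(\HH',\omega')$ commuting with the $\Lambda^*(\F[\Z]\gen{\psi_1,\dots,\psi_m})$-action.

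The plan is to mimic the proof of Proposition~\ref{prop:invariancetwisted}, but keeping track of the $\psi_i$-action at every stage. First I would recall from that proof that the passage from $(\HH,\A,\omega)$ to $(\HH',\A',\omega')$ can be factored into two kinds of elementary moves: (i) \emph{marking changes} that preserve the cohomology class $[\omega]\in H^2(Y\minus L;\Z)$ (as in Figure~\ref{fig:changemarking}(b), used to let a ``stuck'' isotopy proceed), and (ii) \emph{Heegaard moves}---isotopies, handleslides, index one/two (de)stabilizations---performed in the complement of the basepoints \emph{and} the current marking $\A$. This factorization is exactly the one established in the proof of Proposition~\ref{prop:invariancetwisted}, so I can simply invoke it. For moves of type (ii), Proposition~\ref{prop:invcofpsi} already gives that the induced isomorphism $\Phi$ on homology satisfies $\Phi\circ\psi_i=\psi_i\circ\Phi$. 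For moves of type (i), I need a marking-change isomorphism commuting with the $\psi_i$; this is precisely Lemma~\ref{cohomologyclass2}, \emph{provided} the hypothesis of that lemma is met, namely that $\gen{\omega,P}=\gen{\omega',P}$ on \emph{every} periodic domain of $\HH$, not just those avoiding $\X\cup\O$.

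Here is where I expect the one subtle point to lie, and it is the main thing to verify carefully. Lemma~\ref{cohomologyclass} (the $[\omega]=[\omega']$ version) only needs agreement on $\Pi^0_{\alpha\beta}$, but Lemma~\ref{cohomologyclass2} demands agreement on \emph{all} of $\Pi_{\alpha\beta}$; the hypothesis of the present proposition, $\gen{\omega,P}=\gen{\omega',\rho(P)}$, is correspondingly phrased over all periodic domains rather than just those avoiding $\X\cup\O$. I need to check that the marking changes of Figure~\ref{fig:changemarking}(b) can be arranged to preserve the pairing on \emph{every} periodic domain, or at least that the global hypothesis propagates correctly through the factorization. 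The point is that the local modification in Figure~\ref{fig:changemarking}(b)---deleting one marked point valued $c$ and adjusting the four nearby points---changes $\gen{\omega,\cdot}$ only on periodic domains that have nonzero local multiplicity near that configuration, and the displayed adjustment ($b\mapsto b-c$, $e\mapsto e+c$, $a\mapsto a+c$) is designed so that the total contribution is unchanged for \emph{any} periodic domain, since crossing the $\beta$-arc through that region contributes with canceling signs; thus these marking changes preserve $\gen{\omega,\cdot}$ on all of $\Pi_{\alpha\beta}$, meeting the hypothesis of Lemma~\ref{cohomologyclass2} exactly. (Equivalently, one observes that the difference $\omega-\omega'$ as a $1$-cocycle relative to the basepoints is exact, and the chain-level isomorphism $\x\mapsto T^{\epsilon(\x)}\x$ of Lemma~\ref{cohomologyclass2} is visibly compatible with the polygon counts defining $\Psi_i$.)

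Finally I would assemble the pieces: composing the isomorphisms from the type-(i) marking changes (each commuting with all $\psi_i$ by Lemma~\ref{cohomologyclass2}) with the quasi-isomorphisms from the type-(ii) Heegaard moves (each commuting with all $\psi_i$ by Proposition~\ref{prop:invcofpsi}) yields a quasi-isomorphism $\ul\cfkt(\HH,\omega)\to\ul\cfkt(\HH',\omega')$ that commutes with every $\psi_i$, hence with the full exterior-algebra action, as claimed. The only genuinely nontrivial input beyond bookkeeping is the verification in the previous paragraph that the marking changes can be taken to preserve the pairing on the \emph{full} periodic-domain group; everything else is a direct combination of Proposition~\ref{prop:invcofpsi}, Lemma~\ref{cohomologyclass2}, and the factorization already used to prove Proposition~\ref{prop:invariancetwisted}. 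As the text itself notes, this proposition is then ``an immediate analogue'' of Proposition~\ref{prop:invariancetwisted}, so the write-up can be correspondingly brief.
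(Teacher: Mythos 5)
Your proposal is correct and takes essentially the same route as the paper: the paper offers no explicit proof, stating only (just before the proposition) that it is the immediate analogue of Proposition~\ref{prop:invariancetwisted} obtained by combining Proposition~\ref{prop:invcofpsi} with Lemma~\ref{cohomologyclass2}, and your write-up unpacks exactly that argument via the same factorization into marking changes and basepoint/marking-avoiding Heegaard moves. The one place where you add genuine content is the observation that to invoke Lemma~\ref{cohomologyclass2} (rather than the weaker Lemma~\ref{cohomologyclass}) one must check that the local marking changes of Figure~\ref{fig:changemarking} preserve $\gen{\omega,\cdot}$ on \emph{all} of $\Pi_{\alpha\beta}$ and not merely on $\Pi^0_{\alpha\beta}$; the paper elides this, but the adjustment displayed there ($a\mapsto a+c$, $b\mapsto b-c$, $e\mapsto e+c$ after deleting $c$) is indeed cooked up so that the telescoping cancels the contribution of $c$ for \emph{any} domain whose boundary lies on $\bm\alpha\cup\bm\beta$, so your argument closes the gap correctly.
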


In particular, the actions of $\psi_1,\dots,\psi_m$ on
$H_*(\ul\cfkt(\mathcal{H},\omega),\partial)$ satisfy the same linear relations as those on $H_*(\ul\cfkt(\mathcal{H'},\omega'),\partial)$.

\section{Unknots and unlinks}
\label{sec:unlinks}
In this section, we prove a few results about the twisted knot Floer homologies of
unknots and unlinks that will be useful later on. We start with a result about gradings. According to Remark \ref{rmk:orientationgradings}, the absolute $\delta$-grading on the chain complex $\ul\cfkt(\HH,\omega)$ for a pointed link $\LL=(L,\p)$ generally depends on the orientation of $L$. The lemma below says that this is not the case if $L$ is an unlink.

\begin{lemma}\label{lem:orientationunlink}
If $L$ is an unlink in $S^3$, then $\ul\cfkt(\HH,\omega)$ has a canonical absolute $\delta$-grading, independent of the orientation of $L$.
\end{lemma}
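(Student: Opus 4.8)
The plan is to exploit the structure of the $\delta$-grading together with the ambient isotopy type of the unlink. Recall from Remark \ref{rmk:orientationgradings} that the \emph{relative} $\delta$-grading on $\ul\cfkt(\HH,\omega)$, given by $\delta(\x)-\delta(\y) = P(\phi)-\mu(\phi)$ for $\phi\in\pi_2(\x,\y)$, is already independent of the orientation of $L$; the only thing that depends on the orientation is the overall additive constant pinning down the absolute grading. So the task reduces to producing a normalization of this additive constant that is orientation-independent. The key observation is that for the unlink $L = U^{\sqcup k}$ in $S^3$, all $2^k$ orientations of $L$ are related by ambient isotopies of $S^3$ (reversing the orientation of an unknot component, supported in a ball disjoint from the rest of the link, is realized by rotating that ball), and these isotopies carry a pointed Heegaard diagram for one orientation to a pointed Heegaard diagram for another while respecting all the structure used to define the absolute gradings.

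First I would fix a convenient model: take the standard genus-$(k-1+m-1)$ multi-pointed Heegaard diagram $\HH_0$ for $(U^{\sqcup k},\p)$ obtained by connect-summing $k$ copies of a standard diagram for the unknot (with appropriately many basepoints so that $\p$ has $m$ points), and take $\A = \emptyset$ since $[\omega]=0$ for an unlink in $S^3$ (as $H^2(S^3\minus L;\Z)$ is free and any marking is cohomologous to the empty one by Lemma \ref{cohomologyclass}, using $H_2(S^3\minus L)\cong\Z^{k-1}$ — actually here one should be slightly careful and simply observe $\ul\cfkt(\HH_0,\omega)$ is then the untwisted complex $\cfkt(\HH_0)\otimes\MM$ by Remark \ref{rmk:twistings}). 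Next I would recall that the absolute Maslov and Alexander gradings on $\cfkt$ are defined (as in \cite{OSzKnot, OSzLink}) via a canonical normalization that does depend on the orientation of $L$ — through the Alexander grading in particular — but that the \emph{$\delta$-grading}, being $A - M$, can be renormalized: I would propose to fix the absolute $\delta$-grading on $\ul\cfkt(\HH,\omega)$ by declaring that the generator(s) of the top-dimensional homology of the untwisted complex of a single unknot summand sit in $\delta$-grading $0$, extended additively across the connect sum (each extra basepoint contributing a $V$ with $\delta$-grading $0$, cf. \eqref{eqn:err2}). Since a single unknot has an orientation-reversing diffeomorphism of $S^3$ fixing it setwise, this normalization is manifestly orientation-independent on each summand.

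The main step — and the place I expect the real work to be — is to check that this normalization is \emph{well-defined}, i.e. independent of the choice of compatible diagram and consistent with the relative $\delta$-grading already in place, and that it agrees with the existing absolute $\delta$-grading for any fixed orientation up to the expected shift. For invariance under Heegaard moves I would invoke Proposition \ref{prop:invariancetwisted}: the quasi-isomorphisms there are built from (de)stabilizations, handleslides and isotopies, each of which preserves relative $\delta$-gradings (being induced by counts of polygons, via \eqref{eqn:grading}), so they preserve the absolute $\delta$-grading once it is pinned at one generator. For the orientation-independence I would argue that given two orientations $\o$, $\o'$ of $L$, the ambient isotopy of $S^3$ relating them carries a compatible pointed diagram $\HH$ for $(L,\o,\p)$ to a compatible pointed diagram $\HH'$ for $(L,\o',\p)$, inducing an isomorphism of complexes that preserves the relative $\delta$-grading; since it also identifies the distinguished top generators of the unknot summands used in the normalization (the isotopy is supported away from the other components and fixes the chosen summand-level data up to isotopy), it preserves the absolute $\delta$-grading. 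The one subtlety to handle carefully is that reversing a component's orientation swaps the roles of some $\OO$- and $\XX$-basepoints on that component; here I would point out that \eqref{eqn:grading} only involves $P(\phi) = O(\phi)+X(\phi)$, which is symmetric under this swap, so the relative grading is genuinely unchanged, and the normalizing generator is characterized purely by lying in top homological degree of a summand, a condition insensitive to the $\OO$/$\XX$ labeling. Assembling these observations gives a canonical absolute $\delta$-grading independent of the orientation, which is the claim.
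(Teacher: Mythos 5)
Your proposal captures the right ingredients --- that the relative $\delta$-grading on $\ul\cfkt(\HH,\omega)$ is orientation-independent (Remark~\ref{rmk:orientationgradings}), and that any two oriented $k$-component unlinks in $S^3$ are isotopic as oriented links --- but the way you assemble them leaves a gap. You introduce a \emph{new} normalization $\delta^*$ by placing a distinguished generator in $\delta^*$-degree $0$ and argue that $\delta^*$ is orientation-independent. However, the lemma concerns the \emph{standard} absolute $\delta$-grading $\delta_\o$ coming from the usual Alexander and Maslov conventions for the orientation $\o$; this is the grading used throughout Section~\ref{sec:delta} (for instance in the Kauffman-state calculations of Proposition~\ref{prop:triple}). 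You acknowledge that $\delta^*$ ``agrees with the existing absolute $\delta$-grading for any fixed orientation up to the expected shift,'' but proving that the shift $c_\o := \delta_\o - \delta^*$ is \emph{independent of} $\o$ is exactly the content of the lemma, and your proposal never addresses it; orientation-independence of $\delta^*$ by itself says nothing about $\delta_\o$. Nor is it clear that your additive-over-summands rule reproduces the standard grading even for one fixed orientation, since the standard $\delta$-range of $\HFKtil$ of a $k$-component pointed unlink is not concentrated at $0$.

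The paper sidesteps the auxiliary normalization and compares $\delta_\o$ and $\delta_{\o'}$ directly on the same complex $\cfkt(\HH)$. Because $(L,\o)$ and $(L,\o')$ are isotopic as oriented links, the graded groups $(\HFKtil(\LL),\delta_\o)$ and $(\HFKtil(\LL),\delta_{\o'})$ have the same graded isomorphism type; in particular the maximal $\delta$-grading carrying nontrivial homology is the same number for both. A cycle $\x$ generating that top grading with respect to $\delta_\o$ also generates it with respect to $\delta_{\o'}$, since the relative gradings coincide, so $\delta_\o(\x)=\delta_{\o'}(\x)$; combined again with agreement of relative gradings this forces $\delta_\o=\delta_{\o'}$. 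The missing step in your version is precisely this observation that the isotopy of oriented unlinks identifies the graded isomorphism types of the homologies, thereby pinning down the numerical value of the top grading for each orientation; once you include it, the detour through $\delta^*$ becomes unnecessary.
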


\begin{proof}[Proof of Lemma \ref{lem:orientationunlink}]
Let $\o$ and $\o'$ be two orientations of $L$, and let $\delta_{\o}$ and $\delta_{\o'}$ denote the corresponding absolute $\delta$-gradings on the untwisted complex $\cfkt(\HH)$. Since any two $k$-component oriented unlinks are isotopic as oriented links, the $\delta$-gradings on $\HFKtil(\LL)$ induced by $\delta_{\o}$ and $\delta_{\o'}$ are the same (this homology is non-trivial). Suppose that $\x$ is a cycle in $\cfkt(\HH)$ which generates the maximal $\delta$-grading of $\HFKtil(\LL)$ with respect to $\delta_{\o}$. Since the relative $\delta$-gradings induced by $\delta_{\o}$ and $\delta_{\o'}$ are the same, $\x$ generates the maximal $\delta$-grading of $\HFKtil(\LL)$ with respect to $\delta_{\o'}$ as well. As this maximal $\delta$-grading is independent of orientation, $\delta_{\o}(\x) = \delta_{\o'}(\x)$, which implies that $\delta_{\o} = \delta_{\o'}$.
\end{proof}

For the proposition below, let $\LL = (L,\p)$ be a pointed unlink in $S^3$ with $k$ components, and
denote the marked points on the $i\Th$ component of $L$ by
$p^i_1,\dots,p^i_{s_i}$, according to its orientation.

\begin{proposition} \label{prop:unlink} 
If $k>1$ and $[\omega]\neq 0$, then $\ul\HFKt(\LL,[\omega];\FF)=0$.
\end{proposition}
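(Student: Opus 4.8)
The plan is to reduce the statement to a computation for a single split unknot component and then use a K\"unneth-type argument for disjoint unions. First I would choose a particularly convenient multi-pointed Heegaard diagram $\HH$ for $\LL$: namely, take the standard genus-zero diagram $(S^2, \bm\alpha, \bm\beta, \OO, \XX)$ in which each component $L_i$ of the unlink is presented by $s_i$ parallel $\alpha$-circles and $s_i$ parallel $\beta$-circles cutting out a chain of bigons, so that $\CFKtt(\HH, \omega)$ is a tensor product (over $\F[\Z]$) of the complexes for the individual components. Since $L$ is an unlink, $H_2(S^3\minus L;\Z) \cong \Z^{k-1}$, generated by the tori separating the components; a class $[\omega]\in H^2(S^3\minus L;\Z)$ is nonzero precisely when it pairs nontrivially with at least one such separating torus, i.e., when the ``total weight'' linking one of the components is nonzero. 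By Proposition~\ref{prop:invariancetwisted} the homology depends only on $[\omega]$, so I may assume $\omega$ is supported on a single arc crossing between two groups of components, realizing this nonzero periodic-domain pairing.

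The key step is then the following local computation. Group the components into two nonempty collections $L'$ and $L''$ with $L = L' \sqcup L''$ and the separating torus $T$ between them satisfying $\langle\omega, [T]\rangle = N \neq 0$. Writing the diagram as a connected sum (disjoint union on $S^2$, really) of diagrams for $L'$ and $L''$, there is a periodic domain $P$ which is the ``difference'' of the two planar regions separating the collections; after an isotopy of one basepoint across the relevant $\beta$-circle one sees a pair of generators $\x, \y$ joined by two bigon classes $\phi_1, \phi_2 \in \pi_2(\x,\y)$ with $\mu(\phi_i)=1$, avoiding all basepoints, and with $\langle\omega,\phi_1\rangle - \langle\omega,\phi_2\rangle = \pm N$. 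Concretely, in the model diagram the complex for the disjoint union contains a direct summand isomorphic to the two-step complex $\F[\Z] \xrightarrow{\,1 + T^{N}\,} \F[\Z]$ (tensored with the rest), and since $1 + T^N$ is a unit in $\FF$ — indeed invertible in $\F[\Z]$ localized appropriately — this summand is acyclic over $\FF$. More precisely, I would argue that the whole complex $\CFKtt(\HH,\omega;\FF)$ acquires, from the periodic domain $P$ with $\langle\omega,P\rangle = N\neq 0$, a chain-level deformation retract onto an acyclic complex: this is the twisted-coefficients analogue of the standard fact that a separating sphere in the link complement with nontrivial $\omega$-pairing kills twisted Heegaard Floer homology (cf.\ the vanishing results of Ozsv\'ath--Szab\'o for twisted $\HF$ of manifolds with an essential sphere, adapted to the knot-complement setting). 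The cleanest route is to exhibit an explicit diagram in which two $\alpha$- and two $\beta$-circles bound an annular region realizing $P$, compute the two-generator subcomplex directly, and invoke a Gaussian elimination / change-of-basis over $\FF$ to cancel it.

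After establishing vanishing for a two-block split, the general case $k>1$ follows immediately: if $[\omega]\neq 0$ then it pairs nontrivially with some separating torus, giving a decomposition $L = L' \sqcup L''$ of the above form with nonzero pairing, hence $\ul\HFKt(\LL,[\omega];\FF)=0$. I would phrase the tensor-decomposition carefully using the disjoint-union formula for $\CFKtt$ over $\F[\Z]$ (the diagram on $S^2$ is a disjoint union of the two sub-diagrams, so the chain complex is the tensor product, and a tensor product with an $\FF$-acyclic factor is $\FF$-acyclic), together with Lemma~\ref{cohomologyclass} and Proposition~\ref{prop:invariancetwisted} to reduce to the model diagram and the convenient $\omega$.

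The main obstacle I anticipate is the bookkeeping in the local model: one must pin down an explicit multi-pointed diagram for the split unlink in which the relevant periodic domain $P$ is visibly an embedded annulus or bigon carrying exactly the weight $\langle\omega,P\rangle=N$, verify admissibility (so that the sums defining $\partial$ are finite), and check that the putative pair of canceling generators $\x,\y$ are not connected to the rest of the complex by other basepoint-avoiding Maslov-index-one classes that would obstruct the Gaussian elimination. Handling admissibility in the twisted setting — and making sure the winding needed to achieve admissibility does not disturb the computation of $\langle\omega,\phi_i\rangle$ — is the delicate point; I would deal with it by choosing the $\omega$-marking supported away from the wound regions and citing the twisted admissibility discussion in \S\ref{sec:hfktwisted}.
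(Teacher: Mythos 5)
Your proposal follows essentially the same route as the paper: take an explicit admissible genus-zero multi-pointed diagram for the split unlink, identify a periodic domain separating one component from the rest with $\langle\omega,P\rangle \neq 0$, and observe that the complex is built from a two-term piece $\F[\Z] \xrightarrow{1+T^{N}} \F[\Z]$ (tensored with, or in the paper's phrasing, the mapping cone of an isomorphism times the rest), which is acyclic over $\FF$ since $1+T^N$ is a unit. The paper's Figure~\ref{fig:unlink} and the mapping cone of $(1+T^{\omega(a_1)})\cdot\tau$ realize precisely the concrete model and Gaussian-elimination step you describe.
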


\begin{proof}[Proof of Proposition \ref{prop:unlink}]
Figure \ref{fig:unlink} shows an admissible multi-pointed Heegaard diagram
$\mathcal{H} = (S^2,\bm\alpha,\bm\beta,\OO,\XX)$ for $\LL$, with the points of
$\X$ labeled just like those of $\p$. Let $\mathbb{A} = \{a_1,\dots,a_{k-1}\}$
as shown. For each $i=1,\dots,k-1$, there is a unique periodic domain $P_i$ which is
bounded by the curves $\beta^i_{s_i}, \alpha^i_1,\dots,\alpha^i_{s_i}$ and
contains $a_i$, obtained as the difference of the light and dark regions in
Figure \ref{fig:unlink}. These domains correspond to generators of
$H_2(S^3\setminus L;\mathbb{Z})$; thus, we may obtain any cohomology class $[\omega]$ by defining $\omega(a_i)$ to be the evaluation of the desired class on $P_i$. Thus, the above choice of $\A$ suffices. Since $[\omega] \neq 0$, we may assume, without loss of generality, that $\omega(a_1)\neq 0$.

\begin{figure}[!htbp]
 \labellist
 \small
 \pinlabel {{\color{red} $\alpha^i_1$}} at 97 237
 \pinlabel {{\color{blue} $\beta^i_1$}} at 160 237
 \pinlabel {{\color{blue} $\beta^i_{s_i-1}$}} at 285 237
 \pinlabel {{\color{red} $\alpha^i_{s_i}$}} at 347 237
 \pinlabel {{\color{blue} $\beta^i_{s_i}$}} at 373 134
 \pinlabel {{\color{blue} $\beta^k_1$}} at 160 103
 \pinlabel {{\color{blue} $\beta^k_{s_k-1}$}} at 282 103
 \pinlabel {{\color{red} $\alpha^k_1$}} at 347 103
 \pinlabel $a_i$ at 236 145
 \pinlabel $O^i_1$ at 83 193
 \pinlabel $X^i_1$ at 128 193
 \pinlabel $O^i_2$ at 191 193
 \pinlabel $O^i_{s_i}$ at 315 193
 \pinlabel $X^i_{s_i}$ at 359 193
 \pinlabel $O^k_1$ at 83 60
 \pinlabel $X^k_1$ at 128 60
 \pinlabel $O^k_2$ at 191 60
 \pinlabel $O^k_{s_k}$ at 315 60
 \pinlabel $X^k_{s_k}$ at 359 60
 \tiny\hair 2pt
 \pinlabel $X^i_{s_i-1}$ at 254 193
 \pinlabel $X^k_{s_k-1}$ at 254 60
 \pinlabel $e^i_1$ at 128 227
 \pinlabel $d^i_1$ at 128 162
 \pinlabel $e^i_{s_i-1}$ at 256 229
 \pinlabel $d^i_{s_i-1}$ at 256 159
 \pinlabel $e^i_{s_i}$ at 386 228
 \pinlabel $d^i_{s_i}$ at 386 158
 \pinlabel $c^k_1$ at 191 94
 \pinlabel $b^k_1$ at 191 27
 \pinlabel $c^k_{s_k-1}$ at 315 97
 \pinlabel $b^k_{s_k-1}$ at 320 25
 \pinlabel {{\color{red} $\alpha^i_2$}} at 207 234
 \pinlabel {{\color{red} $\alpha^k_2$}} at 207 100
 \pinlabel {{\color{red} $\alpha^k_{s_k-1}$}} at 242 100
\endlabellist
\begin{center}
\includegraphics[width=10cm]{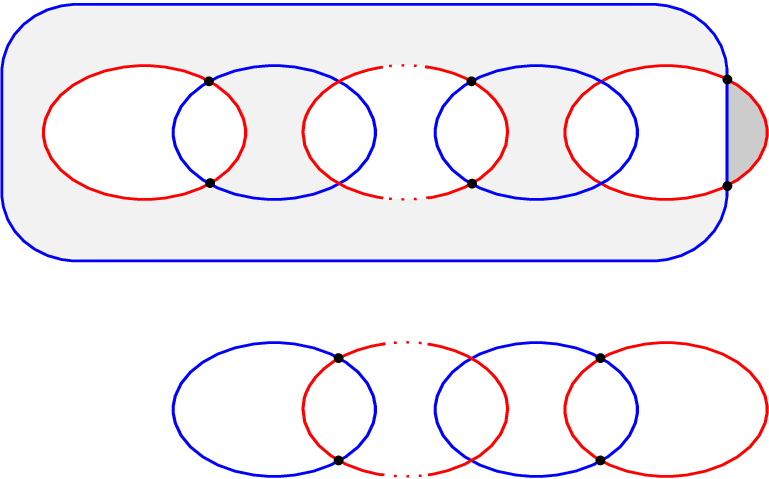}
\caption{\quad A Heegaard diagram for $\LL$. There is a copy of the upper
portion for each $i=1,\dots,k-1$.} \label{fig:unlink}
\end{center}
\end{figure}

A generator $\x$ of $\ul \cfkt(\mathcal{H},\omega)$ consists of a choice of
$d^i_j$ or $e^i_j$ for each $i=1,\dots,k-1$ and $j = 1,\dots,s_i$ as well as a
choice of $b^k_j$ or $c^k_j$ for each $j=1,\dots,s_k-1$. In particular, the
rank of the \emph{untwisted} complex $\CFKtil(\HH)$ is $2^{s_1+\dots+s_k-1}$
over $\F$, which agrees with the rank of its homology. Therefore, the
pseudo-holomorphic disks which count for the differential on $\ul
\cfkt(\mathcal{H},\omega)$ come in canceling pairs. Their domains are the
heavily-shaded bigons and the lightly-shaded punctured bigons in Figure
\ref{fig:unlink}, with vertices at $d^i_{s_i}$ and $e^i_{s_i}$.

Let $C_e$ denote the subcomplex of $\ul \cfkt(\mathcal{H},\omega;\FF)$ consisting
of intersection points which contain $e^1_{s_1}$, and let $C_d$ be its quotient
complex. Let $\tau: C_d\rightarrow C_e$ be the map which, on generators,
replaces $e^1_{s_1}$ with $d^1_{s_1}$; note that $\tau$ is an isomorphism of vector
spaces. The discussion above implies that $(\ul \cfkt(\mathcal{H},\omega;\FF),
\partial)$ is isomorphic to the mapping cone of
$(1+T^{\omega(a_1)})\cdot\tau$. Since $1+T^{\omega(a_1)} \ne 0$ and $\FF$ is a
field, we have $H_*(\ul \cfkt(\mathcal{H},\omega;\FF),\partial) = 0$.
\end{proof}

Next, we describe the structure of $H_*(\ul\cfkt(\HH,\omega),\partial)$ as a
module over $\Lambda^*(\F[\Z]\langle \psi_1,\dots,\psi_m \rangle)$ for a
particular class of Heegaard diagrams and markings compatible with the unknot.

%Suppose that $\HH$ is a Heegaard diagram compatible with $\LL$, with marking $(\A,\omega)$. Since $H_1(S^3,L) = 0$, the isomorphism class of $H_*(\ul\cfkt(\HH,\omega),\partial)$ is independent of $(\A,\omega)$, by Lemma \ref{cohomologyclass}. In general, however, the maps
%$\psi_1,\dots,\psi_m$ \emph{do} depend on $(\A, \omega)$.

\begin{proposition} \label{prop:twistedunknot}
Let $\HH = (\Sigma,\bm\alpha, \bm\beta, \O,\X)$ be a Heegaard diagram for an
$m$-pointed unknot in $S^3$, such that $O_i$ and $X_i$ are in the same
component of $\Sigma \minus \bm\alpha$, and $X_i$ and $O_{i+1}$ are in the same
component of $\Sigma \minus \bm\beta$. Let $(\A,\omega)$ be a marking on $\HH$
such that
\begin{inparaenum}
\item all points of $\A$ are contained in a single component of $\Sigma \minus
\bm\alpha$, and
\item for each $i=1, \dots, m$, the component of $\Sigma \minus \bm\beta$ containing $O_i$ contains
a single point $a_i\in\A$ with $\omega(a_i) = r_i$. \end{inparaenum} Let $\YY$
denote the module over $\F[\Z]$ generated by $y_1, \dots, y_m$ modulo the
relation
\begin{equation} \label{eq:twistedunknot}
\sum_{j=1}^m T^{r_1 + \dots + r_j} y_j = 0.
\end{equation}
Then $\HFKtt(\HH,\omega)$ can be identified with $\Lambda^*(\YY)
\otimes_{\F[\Z]} \MM$, such that each map $\psi_i$ is given by multiplication
by $y_i$.
\end{proposition}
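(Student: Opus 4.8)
The plan is to reduce to an explicit, completely concrete Heegaard diagram for the $m$-pointed unknot satisfying the stated hypotheses, compute the untwisted homology $\HFKt(\LL)$ by hand, and then track the twisting coefficients and the basepoint maps $\Psi_i$ directly on the chain level. First I would observe that the hypotheses on $\HH$ pin down its combinatorics almost completely: since each pair $O_i,X_i$ lies in a common $\bm\alpha$-component and each pair $X_i,O_{i+1}$ lies in a common $\bm\beta$-component, the diagram looks (up to stabilization/destabilization moves avoiding $\A\cup\O\cup\X$, which are harmless by Propositions \ref{prop:invariancetwisted} and \ref{prop:invcofpsi2}) like a genus-zero diagram consisting of $m$ ``small'' $\alpha$--$\beta$ pairs arranged in a cycle, with $2m$ bigon regions; each $\beta$-circle bounds with a string of $\alpha$-circles a region carrying one point $a_i$ of $\A$ with weight $r_i$. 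The intersection points of $\T_\alpha\cap\T_\beta$ are then indexed by making, for each $i$, one of two choices (``left'' vs.\ ``right'' intersection at the $i$-th pair), so $\rk_{\F}\CFKt(\HH) = 2^m$, and — as in the proof of Proposition \ref{prop:unlink} — the holomorphic disks counting for $\partial$ are exactly the small bigons.

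Next I would compute the homology. The crucial point is that the untwisted $\HFKt(\LL)$ of an $m$-pointed unknot is $V^{\otimes(m-1)}$ by \eqref{eqn:err2}, so it has rank $2^{m-1}$; on this particular diagram the differential must therefore cancel exactly half of the $2^m$ generators in canceling pairs of bigons, leaving the homology free of rank $2^{m-1}$. Passing to the twisted complex over $\F[\Z]$, the same bigons count but now carry monomials $T^{\pm r_i}$, so each cancellation contributes a factor $(1+T^{r_i})$ (or a suitable monomial multiple thereof). Taking the total homology, I expect to find $\HFKtt(\HH,\omega)$ naturally identified with the exterior algebra $\Lambda^*(\YY)$ over $\F[\Z]$, where $\YY$ is the rank-$(m-1)$ module presented by \eqref{eq:twistedunknot}: the degree-one part is spanned by classes $y_1,\dots,y_m$ subject to exactly the linear relation \eqref{eq:twistedunknot}, the relation being forced by the structure of the cycles representing homology classes (a cycle must be a combination of the two choices at each pair, weighted by the running product of the $T^{r_i}$). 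Tensoring with an arbitrary module $\MM$ over $\F[\Z]$ then gives $\Lambda^*(\YY)\otimes_{\F[\Z]}\MM$, as claimed.

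Finally I would identify the basepoint action. By Proposition \ref{prop:invcofpsi2}, the $\psi_i$ are intrinsic (independent of the diagram within this class), so it suffices to compute $\Psi_i$ on the model diagram. A disk contributing to $\Psi_i$ passes once over $X_i$, avoids all other basepoints, and has $\mu=1$; on the model diagram the only such domains are the small bigons adjacent to $X_i$, and their effect on a generator is precisely to toggle the $i$-th local choice — which on homology is exactly multiplication by $y_i$ in $\Lambda^*(\YY)$. One must check this is consistent with the relation \eqref{eq:twistedunknot} (i.e.\ that $\sum_j T^{r_1+\dots+r_j}\psi_j$ acts as zero), which follows because the corresponding chain-level sum of bigon moves is null-homotopic, the homotopy again supplied by the other family of bigons; and one must check $\psi_i^2=0$ and $\psi_i\psi_j=\psi_j\psi_i$, which is automatic from $\Lambda^*$.

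The main obstacle I anticipate is pinning down the model Heegaard diagram rigidly enough — and keeping careful track of the powers of $T$ attached to each bigon and each generator — so that the identification of the homology with $\Lambda^*(\YY)$ \emph{as a module}, with the relation appearing exactly as \eqref{eq:twistedunknot} and with $\psi_i$ acting exactly by $y_i$ (not by $y_i$ times some unit $T^{c_i}$), is on the nose rather than merely up to rescaling the generators. Getting the normalization right will require choosing the representative cycles $y_i$ consistently (e.g.\ fixing $y_m$ and reading off the others via the running products of weights from $p_m$), exactly as in the definition of $\YY_I$ in \eqref{eq:YIrelation}; once that bookkeeping is set up, the verification that $\Psi_i$ induces multiplication by $y_i$ is a short bigon count.
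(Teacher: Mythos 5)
Your overall strategy matches the paper's: reduce via Propositions~\ref{prop:invariancetwisted} and~\ref{prop:invcofpsi2} to a concrete model Heegaard diagram, compute the $\psi_i$ action directly by counting small bigons, and build the identification with $\Lambda^*(\YY)$ by sending $1$ to the top generator $\x_0$ and $y_{i_1}\cdots y_{i_k}$ to $(\psi_{i_1}\circ\cdots\circ\psi_{i_k})(\x_0)$. However, your description of the model complex contains a substantive error that would derail the computation.

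You assert that the model diagram has $2^m$ generators, with a nontrivial differential that cancels half of them, and that after twisting each such cancellation carries a coefficient $(1+T^{r_i})$. This is wrong in two ways. The correct model (the paper's Figure~\ref{fig:unknot}) is a genus-$0$ diagram with $m-1$ $\alpha$-curves and $m-1$ $\beta$-curves, so $\T_\alpha\cap\T_\beta$ has exactly $2^{m-1}$ points --- there is no factor of two to cancel. Moreover the differential is identically zero: every region other than the small bigons contains a basepoint, and each small bigon contains an $X_j$ and hence counts for $\Psi_j$, not for $\partial$. This is not a bookkeeping quibble. If the twisted differential genuinely carried a $(1+T^{r_i})$ coefficient over $\F[\Z]$, the homology would acquire $\F[\Z]$-torsion and would vanish after $\otimes_{\F[\Z]}\FF$; but for the unknot in $S^3$ one has $[\omega]=0$, so by~\eqref{eqn:err3} the homology is the \emph{free} $\F[\Z]$-module $\HFK(U)\otimes V^{\otimes(m-1)}\otimes\F[\Z]$ of rank $2^{m-1}$. (The cancelling-pair phenomenon you describe is what occurs for the disconnected unlink in Proposition~\ref{prop:unlink}, where $[\omega]\neq 0$; it does not occur here.)

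Because of this, your mechanism for the relation~\eqref{eq:twistedunknot} is also off. You claim $\sum_j T^{r_1+\dots+r_j}\Psi_j$ is null-homotopic ``via the other family of bigons,'' but in the correct model there is no other family and no homotopy is needed: the relation holds on the nose at the chain level. Writing $\tau_j$ for the operator that toggles the $j$-th choice (and kills generators already in the toggled state), the bigon count gives $\psi_1 = T^{r_2}\tau_1$, $\psi_j = \tau_{j-1} + T^{r_{j+1}}\tau_j$ for $1<j<m$, and $\psi_m = \tau_{m-1}$, and the telescoping sum vanishes identically. This also resolves the normalization worry you raise at the end: once these formulas are in hand, $\rho$ is well-defined because~\eqref{eq:twistedunknot} is the only relation in $\YY$, surjective because every generator is obtained from $\x_0$ by the $\psi_i$'s, and hence an isomorphism since both sides are free $\F[\Z]$-modules of rank $2^{m-1}$.
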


(Compare the definition of $\YY_I$ in Section \ref{sec:complex}.)

\begin{proof}[Proof of Proposition \ref{prop:twistedunknot}]
It suffices to take $\MM = \F[\Z]$. Since $\Pi_{\alpha,\beta}$ is generated by
the components of $\Sigma\minus\bm\alpha$ and $\Sigma\minus\bm\beta$ (see
\cite{ManolescuOzsvathQA} or Section \ref{sec:pdomains}), hypotheses (1) and
(2) determine the evaluations of $\omega$ on all periodic domains. By
Proposition \ref{prop:invcofpsi2}, we may assume that $\HH$ and $(\A, \omega)$
are the diagram and marking shown in Figure \ref{fig:unknot}.

\begin{figure}
 \labellist
 \small
 \pinlabel {{\color{blue} $\beta_1$}} at 103 91
 \pinlabel {{\color{blue} $\beta_{s_m-1}$}} at 229 91
 \pinlabel {{\color{red} $\alpha_1$}} at 291 91
 \pinlabel {{\color{red} $\alpha_2$}} at 150 88
 \pinlabel {{\color{red} $\alpha_{m-1}$}} at 184 88
 \pinlabel $O_1$ at 10 50
 \pinlabel $X_1$ at 77 50
 \pinlabel $O_2$ at 133 50
 \pinlabel $X_{m-1}$ at 198 50
 \pinlabel $O_m$ at 259 50
 \pinlabel $X_m$ at 305 50
 \pinlabel $c_1$ at 133 85
 \pinlabel $b_1$ at 133 15
 \pinlabel $c_{m-1}$ at 259 85
 \pinlabel $b_{m-1}$ at 261 15
 \pinlabel $r_1$ at 38 50
 \pinlabel $r_2$ at 105 50
 \pinlabel $r_m$ at 229 50
\endlabellist
\includegraphics[scale=0.9]{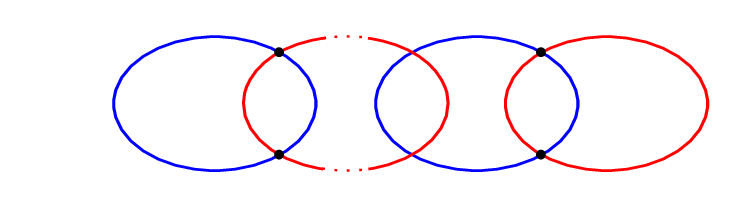}
\caption{Heegaard diagram $\HH$ for the unknot, with twisting as prescribed in
Proposition \ref{prop:twistedunknot}. Points of $\A$ are labeled with their
values of $\omega$.} \label{fig:unknot}
\end{figure}

Generators of the complex $\ul\cfkt(\HH,\omega)$ consist of a choice of $c_j$
or $b_j$ for each $j=1,\dots,m-1$; therefore, $\ul\cfkt(\HH,\omega)$ has rank
$2^{m-1}$ over $\F[\Z]$. It is easy to see that the differential vanishes, so
we may identify $\ul\cfkt(\HH,\omega)$ with its homology. For $j=1,\dots, m-1$,
consider the linear operator $\tau_j$ on $\ul\cfkt(\HH,\omega)$ defined on
generators by
\[
\tau_j(\x) = \begin{cases} \x \setminus \{b_j\}\cup\{c_j\} &
b_j \in \x \\
0 &  b_j \notin \x. \end{cases}
\]
The only domain of $\HH$ that counts for $\psi_1$ is the small bigon containing
$X_1$ with vertices at $b_1$ and $c_1$. For $j=2,\dots,m-1$, the only domains
that count for $\psi_j$ are the two small bigons containing $X_j$ with vertices
at $b_{j-1}$ and $c_{j-1}$, and $b_j$ and $c_j$. Similarly, the only domain
that counts for $\psi_m$ is the small bigon containing $X_m$ with vertices at
$b_{m-1}$ and $c_{m-1}$. Therefore,
\begin{equation}
\begin{aligned}
\label{eq:psis}
\psi_1 (\x) &= T^{r_2} \tau_1 (\x), && \\
\psi_j (\x) &= \tau_{j-1}(\x) + T^{r_{j+1}} \tau_{j}(\x) & (j&=2, \dots, m-1), \\
\psi_{m} (\x) &= \tau_{m-1}(\x), && \\
\end{aligned}
\end{equation}
which implies that
\begin{equation}\label{eq:R=0}
\sum_{j=1}^m T^{r_1 + \dots + r_j} \psi_j (\x)=0.
\end{equation}

Let $\x_0$ denote the generator consisting of all the intersection points
$\{b_j\}$. There is a well-defined linear map
\[
\rho: \Lambda^*(\YY) \to \HFKtt(\HH, \omega)
\]
taking $1$ to $\x_0$ and $y_{i_1} \cdots y_{i_k}$ to $(\psi_{i_1} \circ \dots
\circ \psi_{i_k})(\x_0)$. Moreover, by \eqref{eq:psis}, every element of
$\ul\cfkt(\HH,\omega)$ can be obtained from $\x_0$ by a composition of the
$\psi_i$ maps, so $\rho$ is surjective. As both $\Lambda^*(\YY)$ and
$\ul\cfkt(\HH,\omega)$ are both free $\F[\Z]$-modules of rank $2^{m-1}$, $\rho$
is an isomorphism.
\end{proof}

\section{A cube of resolutions for \texorpdfstring{$\HFKtt$}{HFK}}
\label{sec:ss}
In this section, we show that Manolescu's unoriented skein exact triangle
\cite{ManolescuSkein} holds with twisted coefficients in any
$\mathbb{F}[\Z]$-module $\MM$, and can be iterated in the manner of
Ozsv\'ath-Szab\'o \cite{OSzDouble}.

\subsection{A Heegaard multi-diagram for a link and its resolutions} \label{sec:heegaard}

Fix a connected projection $\DD$ of $L$. Let $c_1,\dots,c_n$ denote the crossings of $\DD$, and let $\p=\{p_1,\dots,p_m\}$ be a set of markings on the edges of $\DD$ so that every edge is marked and $p_1$ is assigned to an outermost edge, as in Section \ref{sec:complex}. This marking specifies an $m$-pointed link $\LL=(L,\p)$. For $I\in
\{0,1,\infty\}^n$, let $\DD_I$ denote the diagram obtained from $\DD$ by taking the $I_j$-resolution of $c_j$, as prescribed in Figure \ref{fig:resolutions}. $\DD_I$ is called a \emph{partial resolution} of $\DD$, and represents an
$m$-pointed link $\LL_I=(L_I,\p)$. In this subsection we construct an admissible multi-pointed Heegaard multi-diagram which encodes all partial resolutions of $\DD$, following \cite{OSzAlternating,ManolescuSkein}.

Let $U_\beta$ denote the closure of a regular neighborhood of $\DD$, and let $U_\alpha = S^3 \minus \operatorname{int} U_\beta$. This determines a genus $(n+1)$ Heegaard splitting $S^3 = U_{\alpha} \cup_{\Sigma} U_{\beta}$, where $\Sigma$ is the oriented boundary of $U_{\alpha}$. The handlebody $U_{\alpha}$ is specified by curves $\alpha_1, \dots, \alpha_{n+1}$ that are the intersections of $\Sigma$ with the bounded regions of $\mathbb{R}^2\minus \DD$.

\begin{figure}
\labellist 
 \pinlabel (a) at 7 198
 \pinlabel (b) at 176 198
 \pinlabel (c) at 342 198
 \small \hair 2pt
 \pinlabel $\beta_j$ at 25 75
 \pinlabel $\gamma_j$ at 75 125 
 \pinlabel $\delta_j$ at 57 19 
 \pinlabel $\beta_j$ at 243 127
 \pinlabel $\gamma_j$ at 194 75
 \pinlabel $\delta_j$ at 264 19
 \pinlabel \rotatebox{50}{$X_{i_1}$} at 117 116
 \pinlabel \rotatebox{-40}{$O_{i_2}$} at 35 116
 \pinlabel \rotatebox{50}{$X_{i_3}$} at 35 34
 \pinlabel \rotatebox{-40}{$O_{i_4}$} at 117 33
 \pinlabel $a_{i_2}$ at 54 96
 \pinlabel $a_{i_4}$ at 96 54
 \pinlabel \rotatebox{50}{$X_{i_1}$} at 285 116
 \pinlabel \rotatebox{-40}{$O_{i_2}$} at 203 116
 \pinlabel \rotatebox{50}{$X_{i_3}$} at 203 34
 \pinlabel \rotatebox{-40}{$O_{i_4}$} at 285 33
 \pinlabel $a_{i_2}$ at 222 96
 \pinlabel $a_{i_4}$ at 264 54
 \pinlabel $p_i$ at 380 178
 \pinlabel $\mu_i$ at 342 79
 \pinlabel $a_i$ at 366 40
 \pinlabel $O_i$ at 366 65
 \pinlabel $X_i$ at 366 79
 \pinlabel $p_{i_2}$ at 63 202 \pinlabel $p_{i_1}$ at 88 202 
 \pinlabel $p_{i_3}$ at 63 154 \pinlabel $p_{i_4}$ at 88 154
 \pinlabel $e_j$ at 103 178 
 \pinlabel $p_{i_2}$ at 232 202 \pinlabel $p_{i_1}$ at 255 202
 \pinlabel $p_{i_3}$ at 232 154 \pinlabel $p_{i_4}$ at 255 154
 \pinlabel $e_j$ at 272 178
\endlabellist
\includegraphics{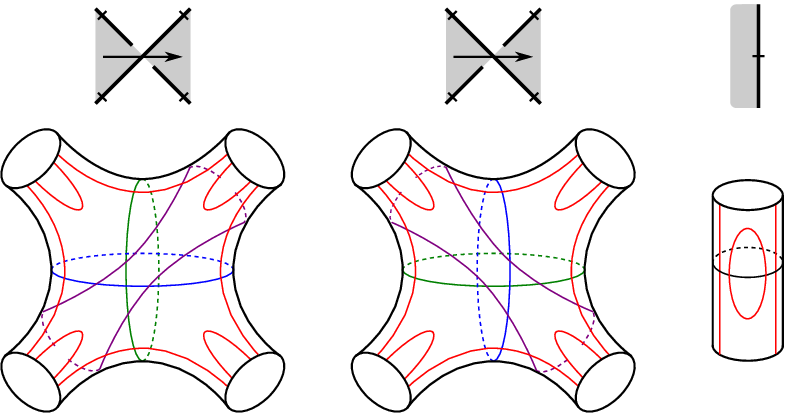}
\caption{(The portions of $\Sigma$ near a crossing $c_j$ (a--b) or a marked point $p_i$ (c). The labeling conventions in (a--b) are the same as in Figure \ref{fig:convention}.} \label{fig:crossing}
\end{figure}

Near each marked point $p_i$, let $\mu_i$ be the boundary of a meridional disk of $U_\beta$. Let $\zeta_i$ be an short arc on the upper half of $\Sigma$ meeting $\mu_i$ once transversally. Orient the edge of $\DD$ containing $p_i$ as the boundary of the black region that it abuts, and orient $\zeta_i$ in the same direction. Let $a_i$ and $X_i$ be the initial and final points of $\zeta_i$, and let $O_i$ be a point on $\zeta_i$ between $a_i$ and $\zeta_i \cap \mu_i$. For $i=2,\dots,m$, let $\alpha_{p_i}$ be the boundary of a disk that contains $X_i$ and $O_i$ but not $a_i$, chosen such that $\alpha_{p_i}$ and $\mu_i$ meet transversally in a pair of points. (See Figure \ref{fig:crossing}(c).) We refer to the configuration $\alpha_{p_i} \cup \mu_i \cup \{O_i, X_i\}$ as a \emph{ladybug}. Set $\O = \{O_1, \dots, O_m\}$, $\X = \{X_1, \dots, \X_m\}$, $\A = \{a_1, \dots, a_m\}$, and $\P = \O \cup \X$.

As shown in Figure \ref{fig:crossing}, the component of $\Sigma \minus (\mu_1 \cup \dots \cup \mu_m)$ corresponding to $c_j$ is a sphere with four punctures. If we view $c_j$ with the incident black regions on the left and right and the white regions on the top and bottom, and the adjacent marked points labeled $p_{i_1}$, $p_{i_2}$, $p_{i_3}$, and $p_{i_4}$ just as in Definition \ref{def:special}, this component contains the basepoints $X_{i_1}$, $O_{i_2}$, $X_{i_3}$, and $O_{i_4}$, as well as the marked points $a_{i_2}$ and $a_{i_4}$. As will be seen below, the positions of $X_{i_1}$, $a_{i_2}$, $X_{i_3}$, and $a_{i_4}$ will motivate the conventions of Definition \ref{def:special}. Let $\beta_j$, $\gamma_j$ and $\delta_j$ be curves on $\Sigma$ as shown in Figure \ref{fig:crossing}(a--b).

For each $I\in\{0,1,\infty\}^n$, let
\[
\bm\eta(I) = \{\eta_{c_1}(I), \dots, \eta_{c_n}(I), \eta_{p_1}(I), \dots, \eta_{p_m}(I)\},
\]
where $\eta_{p_i}(I)$ is a small Hamiltonian translate of $\mu_i$, and $\eta_{c_j}(I)$ is a small Hamiltonian translate of $\beta_j$, $\gamma_j$ or $\delta_j$, according to whether $I_j$ is $0$, $1$ or $\infty$, respectively. We choose these curves so that $\eta_{p_i}(I)$ and $\eta_{p_i}(I')$ (resp.~$\eta_{c_j}(I)$ and $\eta_{c_j}(I')$) meet transversely in exactly two points for each $I \ne I'$, and so that no three curves intersect in the same point. Let
\[
\tilde{\bm\alpha} = \{\alpha_1, \dots, \alpha_{n+1}, \alpha_{p_2}, \dots, \alpha_{p_m}\}.
\]
The Heegaard diagram $\tilde\HH_I = (\Sigma, \tilde{\bm\alpha}, \bm\eta(I), \P)$ then specifies the unoriented $m$-pointed link $\DD_I$. Moreover, for any orientation $\o$ of $\DD_I$, one can partition $\P$ into subsets $\O_{I,\o}$ and $\X_{I,\o}$ of equal size so that $(\Sigma, \tilde{\bm\alpha}, \bm\eta(I), \O_{I,\o}, \X_{I,\o})$ encodes $\DD_I$ with orientation $\o$. In particular, if $I \in \{0,1\}^n$ and $\o$ is the orientation that $\DD_I$ inherits as the boundary of the black regions, then $\O_{I,\o} = \O$ and $\X_{I,\o} = \X$. (On the other hand, if $I \not\in \{0,1\}^\infty$, there is no orientation $\o$ on $\DD_I$ for which this statement holds.)
The multi-diagram
\[
\tilde\HH=(\Sigma, \tilde{\bm\alpha}, \{\bm\eta(I)\}_{I\in\{0,1,\infty\}^n},\P)
\]
thus encodes all unoriented partial resolutions of $\DD$. Note, however, that we cannot partition $\P$ to describe orientations on all the resolutions $\DD_I$ (for $I \in \{0,1,\infty\}^n$) simultaneously. (We do not need to distinguish between $\O$ and $\X$ again until the end of Section \ref{sec:cube}.)

In order to define systems of twisted coefficients, fix a system of weights $\r = (r_1, \dots, r_m)$ as in Definition \ref{def:weights}, which at this point need not be generic. Define $\omega_\r\co \A \to \Z$ by $\omega_\r(a_i) = r_i$.

Note that $\tilde\HH_I$ (and, hence, $\tilde\HH$) is inadmissible when $\DD_I$ has more than one component. Following \cite{ManolescuSkein}, we may achieve admissibility by stretching the tips of the $\tilde{\bm\alpha}$ curves used in
the ladybugs until they reach regions containing $O_1$ or $X_1$. We require that these isotopies avoid the points in $\A\cup\P$. Let $\bm\alpha$ denote the resulting set of curves, and let $\HH_I$ and $\HH$ denote the corresponding
admissible diagrams. We shall henceforth use $\ul\cfkt(\bm\alpha,\bm\eta(I))$ to denote the complex $\ul\cfkt(\Sigma, \bm\alpha, \bm\eta(I), \P, \omega_\r)$, with its relative $\delta$-grading given by (\ref{eqn:grading}), and coefficients in an arbitrary $\mathbb{F}[\Z]$-module $\MM$. For $I\neq I'$, we shall likewise use $\ul\cfkt(\bm\eta(I),\bm\eta(I'))$ to denote the relatively $\delta$-graded complex $\ul\cfkt(\Sigma, \bm\eta(I), \bm\eta(I'), \P, \omega_\r)$. We close this section with a description of the latter.

Each curve in $\bm\eta(I)$ intersects one curve in $\bm\eta(I')$ in exactly two points and is disjoint from all others; therefore, $\abs{\T_{\eta(I)} \cap \T_{\eta(I')}} = 2^{n+m}$. For $I_j=I'_j$, the curves $\eta_{c_j}(I)$ and $\eta_{c_j}(I')$ are related by a small Hamiltonian isotopy, so the two points of $\eta_{c_j}(I)\cap\eta_{c_j}(I')$ differ in their $\delta$-grading contributions by $1$; let $\theta^{I,I'}_{c_j}$ be the point with the smaller contribution. Similarly, let $\theta^{I,I'}_{p_i}$ denote the intersection point of $\eta_{p_i}(I)\cap\eta_{p_i}(I')$ with the smaller $\delta$-grading contribution, for $i=1,\dots,m$. Suppose $I$ and $I'$ differ in $\epsilon(I,I')$ entries. Then there are $2^{\epsilon(I,I')}$ generators in $\T_{\eta(I)}\cap\T_{\eta(I')}$ that use all of the points
$\theta^{I,I'}_{c_j}$ and $\theta^{I,I'}_{p_i}$; we denote these generators by $\Theta^{I,I'}_1, \dots, \Theta^{I,I'}_{2^{\epsilon(I,I')}}$, indexed arbitrarily.

\begin{lemma} \label{lem:isom}
There is an isomorphism of relatively graded $\F[\Z]$-modules,
\begin{equation*} \label{eqn:isom}
\ul \cfkt(\bm\eta(I), \bm\eta(I')) \cong (H^1(S^1;\mathbb{F}))^{\otimes
(m+n-\epsilon(I,I'))} \otimes_\F V^{\otimes \epsilon(I,I')} \otimes_\F \MM,
\end{equation*}
and the summand of $\CFKtt(\bm\eta(I), \bm\eta(I'))$ in the minimal $\delta$-grading is generated by the points $\Theta^{I,I'}_1, \dots, \Theta^{I,I'}_{2^{\epsilon(I,I')}}$. Moreover, the differential $\partial$ on $\ul \cfkt(\bm\eta(I), \bm\eta(I'))$ is zero.
\end{lemma}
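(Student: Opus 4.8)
The plan is to reduce everything to a direct computation with the explicit local models of the curves $\bm\eta(I)$ and $\bm\eta(I')$ near each crossing $c_j$ and each marked point $p_i$, exploiting the fact that the whole multi-diagram is built by gluing standard pieces.

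\textbf{Step 1: reduce to a bigon count.} Each curve in $\bm\eta(I)$ meets exactly one curve in $\bm\eta(I')$ (its Hamiltonian partner or, at a crossing where $I_j\neq I'_j$, its geometric partner $\beta_j,\gamma_j,\delta_j$) in exactly two points, and is disjoint from the rest. Thus $\Ta_{\eta(I)}\cap\Ta_{\eta(I')}$ is a product of two-point sets, of size $2^{m+n}$, which already pins down the rank. To compute the relative $\delta$-grading and the differential I would note that, because the curves split into $m+n$ pairs each living in its own annular region of $\Sigma$, every Whitney disk $\phi\in\pi_2(\x,\y)$ decomposes as a sum of local bigons, one in each such annulus. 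A Maslov-index-one disk therefore has domain a single embedded bigon in one of these annuli, connecting the two intersection points of one pair. For the pairs coming from a Hamiltonian translate (all the $\eta_{p_i}$ pairs and the $\eta_{c_j}$ pairs with $I_j=I'_j$) this is the standard ``small bigon'' picture, and one checks that the two intersection points differ in $\delta$-grading by $1$, recovering the $H^1(S^1;\F)\cong V$ tensor factor (with $\theta^{I,I'}$ the lower generator); for the $\epsilon(I,I')$ pairs where $I_j\neq I'_j$ one uses Manolescu's computation in \cite{ManolescuSkein} that the two intersection points of $\beta_j$ with $\gamma_j$ (or with $\delta_j$, etc.) again differ in $\delta$-grading by $1$, giving the remaining $V^{\otimes\epsilon(I,I')}$ factors and, across all pairs, singling out the $2^{\epsilon(I,I')}$ generators $\Theta^{I,I'}_1,\dots,\Theta^{I,I'}_{2^{\epsilon(I,I')}}$ in the minimal $\delta$-grading.

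\textbf{Step 2: show the differential vanishes.} The differential counts $\mu(\phi)=1$ disks with $O_i(\phi)=X_i(\phi)=0$ for all $i$. By Step 1 such a $\phi$ is a single local bigon. For a bigon in one of the annuli coming from a Hamiltonian translate, exactly one of the two small bigons between the paired curves avoids all basepoints while the other one covers a basepoint; but the ladybug/marked-point configuration of the basepoints $A_i,B_i$ (and the fact, already used in the proof of Proposition~\ref{prop:invcofpsi}, that each basepoint lies in a specified region) forces \emph{both} small bigons in each relevant annulus to contain a basepoint of $\P$, so neither contributes. (Equivalently: after stretching the $\bm\alpha$ tips to achieve admissibility, the only periodic-domain-free bigons are the ones with nonzero $O_i$ or $X_i$.) For the $\epsilon(I,I')$ pairs with $I_j\neq I'_j$, the vanishing of the differential is exactly the statement proved by Manolescu in the untwisted setting, and twisting by $T^{\langle\omega_\Omega,\phi\rangle}$ changes only the coefficients of disks, not whether they are counted, so the count stays zero. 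Hence $\partial=0$ and the module is identified, as a relatively graded $\F[\Z]$-module, with the stated tensor product, after tensoring with $\MM$ (which is harmless since everything in sight is free over $\F[\Z]$).

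\textbf{Step 3: assemble.} Combining Steps 1 and 2, $\ul\cfkt(\bm\eta(I),\bm\eta(I'))$ is a free $\F[\Z]$-module whose generators are products of pairwise local choices, graded so that the $m+n-\epsilon(I,I')$ Hamiltonian pairs contribute $H^1(S^1;\F)$ factors, the $\epsilon(I,I')$ genuine pairs contribute $V$ factors, and the minimal-$\delta$ summand is spanned by the $\Theta^{I,I'}_k$; tensoring with $\MM$ gives the displayed isomorphism, and $\partial=0$.

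\textbf{Main obstacle.} The delicate point is Step 2: verifying that after the admissibility-achieving isotopies of the $\bm\alpha$-curve tips (which are \emph{not} present in $\bm\eta(I),\bm\eta(I')$ themselves, but govern which diagram one is really working in), no disk with $\mu(\phi)=1$ escapes the basepoint constraints — i.e., that every candidate local bigon genuinely passes over some $A_i$ or $B_i$. This is the part that requires carefully reading off the local pictures in Figure~\ref{fig:twisting} and invoking Manolescu's analysis \cite{ManolescuSkein} for the crossing-type pairs; once that bookkeeping is in place, the rest is the routine ``product of annuli'' argument.
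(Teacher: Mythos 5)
Your proposal has a genuine gap, and in fact your Step 1 proves the opposite of what the lemma asserts. You claim that for the $\epsilon(I,I')$ crossings with $I_j\neq I'_j$, the two points of $\eta_{c_j}(I)\cap\eta_{c_j}(I')$ ``again differ in $\delta$-grading by $1$,'' and that this produces the $V^{\otimes\epsilon(I,I')}$ factors. But $V$ is two-dimensional and concentrated in a \emph{single} $\delta$-grading (it is bigraded by $(0,0)$ and $(-1,-1)$, both with $\delta=0$); it is not the same graded module as $H^*(S^1;\F)$. More importantly, your claim is inconsistent with the conclusion you then draw: if all $m+n$ pairs contributed a $\delta$-shift of $1$, there would be a unique minimal-$\delta$ generator, not $2^{\epsilon(I,I')}$ of them. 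The content of the lemma is precisely that the two intersection points at a ``genuine'' crossing pair lie in the \emph{same} $\delta$-grading. The paper proves this by exhibiting a Whitney disk $\phi\in\pi_2(\Theta^{I,I'}_r,\Theta^{I,I'}_s)$ whose domain is an annulus, one boundary lying on $\eta_{c_j}(I)\cup\eta_{c_j}(I')$ and the other equal to a full $\eta_{p_i}$ curve; this annulus covers exactly one basepoint, and Lipshitz's formula gives $\mu(\phi)=1$, so $P(\phi)-\mu(\phi)=0$. Your assertion that ``a Maslov-index-one disk therefore has domain a single embedded bigon'' misses exactly this class: the key index-one class is an annulus, not a bigon, and it is essential for the grading computation.

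Your Step 2 argument for $\partial=0$ is also off. The thin bigons between a Hamiltonian pair $\eta_{p_i}(I), \eta_{p_i}(I')$ (or $\eta_{c_j}(I), \eta_{c_j}(I')$ with $I_j=I'_j$) do \emph{not} contain points of $\P$; that is precisely why they could potentially contribute to the differential. The reason $\partial$ vanishes is that these basepoint-free bigons come in canceling pairs (the two thin bigons on either side of a given pair of isotopic curves), and the count is mod $2$. Your claim that ``both small bigons in each relevant annulus contain a basepoint of $\P$'' is false and is not the mechanism at work. You happen to land on the correct conclusion $\partial=0$, but the geometric reasoning behind it does not hold up, and the self-contradiction in that sentence (you first assert exactly one bigon avoids basepoints, then assert both contain basepoints) signals the confusion.
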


\begin{proof}[Proof of Lemma \ref{lem:isom}]
It suffices to take $\MM = \F[\Z]$. Both modules above are free of rank $2^{n+m}$; we must simply show that the generators $\Theta^{I,I'}_1, \dots, \Theta^{I,I'}_{2^{\epsilon(I,I')}}$ have the same $\delta$-gradings. For this,
suppose $\Theta^{I,I'}_a$ and $\Theta^{I,I'}_b$ differ near a single crossing $c_j$. As in \cite[Lemma 11] {ManolescuOzsvathQA}, there is class $\phi\in\pi_2(\Theta^{I,I'}_a,\Theta^{I,I'}_b)$ whose domain is an annulus
containing a single point of $\P$, with one boundary component comprised of segments of the curves $\eta_{c_j}(I)$ and $\eta_{c_j}(I')$, and the other boundary component equal to $\eta_{p_i}(I)$ (or $\eta_{p_i}(I')$) for some $p_i$ near $c_j$. By Lipshitz's formula for the Maslov index \cite{LipshitzCylindrical}, $\mu(\phi)=1$, so
\[
\delta(\Theta^{I,I'}_a) - \delta(\Theta^{I,I'}_b) = P(\phi)-\mu(\phi)=0.
\]
Finally, the only regions of $\Sigma \minus (\bm\eta(I)\cup\bm\eta(I'))$ that do not contain basepoints are thin bigons bounded by the curves $\eta_{c_j}(I)$ and $\eta_{c_j}(I')$ with $I_j=I'_j$ or the curves $\eta_{p_i}(I)$ and $\eta_{p_i}(I')$. These bigons come in pairs, so the differential $\partial$ is zero.
\end{proof}

\subsection{Periodic domains}
\label{sec:pdomains}

In this subsection, we describe the multi-periodic domains of $\HH$, generalizing the results of Manolescu--Ozsv\'ath \cite[Section 3.1]{ManolescuOzsvathQA}.

Let $\Pi_{\alpha}$ and $\Pi_{\eta(I)}$ denote the groups of $\mathbb{Z}$-linear combinations of the components of $\Sigma\minus\bm\alpha$ and $\Sigma \minus \bm\eta(I)$, respectively. Since $(\Sigma, \bm\alpha, \bm\eta(I))$ is a
Heegaard diagram for $S^3$, the curves in $\bm\alpha$ and $\bm\eta(I)$ span $H_1(\Sigma;\Z)$. Therefore,
\begin{equation} \label{eq:PialphaetaI}
\Pi_{\alpha,\eta(I)} = \Pi_{\alpha} + \Pi_{\eta(I)},
\end{equation}
by \cite[Corollary 7]{ManolescuOzsvathQA}; that is, any periodic domain of
$(\Sigma, \bm\alpha, \bm\eta(I))$ is a sum of components of $\Sigma\minus
\bm\alpha$ with components of $\Sigma\minus \bm\eta(I)$. Note that the latter
are either annuli or pairs of pants.

Now, consider distinct tuples $I, I'$. For $i=1,\dots, m$, there is a
periodic domain $D_{p_i}^{\eta(I), \eta(I')}$ with boundary $\eta_{p_i}(I) -
\eta_{p_i}(I')$, formed as the sum of two thin bigons with opposite signs.
Likewise, for $I_j=I'_j$, there is a periodic domain
$D_{c_{j}}^{\eta(I),\eta(I')}$ with boundary $\eta_{c_{j}}(I) -
\eta_{c_{j}}(I')$. The lemma below is an analogue of \cite[Lemma
9]{ManolescuOzsvathQA}.

\begin{lemma} \label{lem:PietaIetaI'}
The group $\Pi_{\eta(I), \eta(I')}$ is spanned by $\Pi_{\eta(I)}$,
$\Pi_{\eta(I')}$, and periodic domains of the forms $D_{c_{j}}^{\eta(I),
\eta(I')}$ and $D_{p_i}^{\eta(I), \eta(I')}$.
\end{lemma}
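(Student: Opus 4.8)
The plan is to adapt the argument for \cite[Lemma 9]{ManolescuOzsvathQA}, reducing the statement to linear algebra in $H_1(\Sigma;\Z)$. Recall first that a periodic domain is determined by its boundary up to adding multiples of $[\Sigma]$, and that a $\Z$-linear combination $c$ of the curves in $\bm\eta(I)\cup\bm\eta(I')$ arises as the boundary of a periodic domain precisely when $[c]=0$ in $H_1(\Sigma;\Z)$. Writing $K$ for the group of all such null-homologous combinations and $K_I$ (resp.\ $K_{I'}$) for those supported on $\bm\eta(I)$ (resp.\ $\bm\eta(I')$) alone, the boundary map induces isomorphisms $\Pi_{\eta(I),\eta(I')}/\Z[\Sigma]\cong K$ and $\Pi_{\eta(I)}/\Z[\Sigma]\cong K_I$, $\Pi_{\eta(I')}/\Z[\Sigma]\cong K_{I'}$. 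Since $[\Sigma]\in\Pi_{\eta(I)}$, it therefore suffices to show that $K$ is generated by $K_I$, $K_{I'}$, and the boundaries $\partial D_{p_i}^{\eta(I),\eta(I')}=\eta_{p_i}(I)-\eta_{p_i}(I')$ $(1\le i\le m)$ and $\partial D_{c_j}^{\eta(I),\eta(I')}=\eta_{c_j}(I)-\eta_{c_j}(I')$ (for $j$ with $I_j=I'_j$).

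The key structural input is that $(\Sigma,\bm\alpha,\bm\eta(I))$ and $(\Sigma,\bm\alpha,\bm\eta(I'))$ are Heegaard diagrams for $S^3$, so by \cite[Corollary 7]{ManolescuOzsvathQA} the curves in $\bm\eta(I)$ span a direct summand $L_I\subseteq H_1(\Sigma;\Z)$ of rank equal to the genus of $\Sigma$, and likewise for $L_{I'}$. The curves $\eta_{p_i}(I)$ and $\eta_{p_i}(I')$ are Hamiltonian translates and hence homologous, as are $\eta_{c_j}(I)$ and $\eta_{c_j}(I')$ whenever $I_j=I'_j$; let $W$ denote the span of these common classes, so $W\subseteq L_I\cap L_{I'}$. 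The crux of the argument is a local homology computation — the step that genuinely uses the explicit geometry of the curves $\beta_j,\gamma_j,\delta_j$ and the nearby meridians of Figure \ref{fig:twisting} — showing that, at each crossing $c_j$ with $I_j\ne I'_j$, the class $[\eta_{c_j}(I)]$ (resp.\ $[\eta_{c_j}(I')]$) contributes exactly one new generator, so that $L_I=W\oplus E_I$ and $L_{I'}=W\oplus E_{I'}$ with $E_I=\Span\{[\eta_{c_j}(I)]:I_j\ne I'_j\}$ and $E_{I'}=\Span\{[\eta_{c_j}(I')]:I_j\ne I'_j\}$ each free of rank $\epsilon(I,I')$, and moreover $W$, $E_I$, $E_{I'}$ are jointly independent. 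In particular $L_I\cap L_{I'}=W$ and $W\cap E_{I'}=0$.

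Granting this, the lemma follows by a routine manipulation. Given $c\in K$, decompose $c=c_I+c_{I'}$ with $c_I\in\Z\langle\bm\eta(I)\rangle$ and $c_{I'}\in\Z\langle\bm\eta(I')\rangle$. Adding suitable integer multiples of the $\partial D_{p_i}^{\eta(I),\eta(I')}$ and of the $\partial D_{c_j}^{\eta(I),\eta(I')}$ with $I_j=I'_j$, I can remove from $c_{I'}$ every term $\eta_{p_i}(I')$ and every term $\eta_{c_j}(I')$ with $I_j=I'_j$, producing $c'=c'_I+e'$ with $c'_I\in\Z\langle\bm\eta(I)\rangle$, $e'\in\Z\langle\{\eta_{c_j}(I'):I_j\ne I'_j\}\rangle$, and still $[c']=0$. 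Then $[c'_I]=-[e']\in L_I\cap L_{I'}=W$, while also $[c'_I]=-[e']\in E_{I'}$; since $W\cap E_{I'}=0$ this forces $[c'_I]=[e']=0$, i.e.\ $c'_I\in K_I$ and $e'\in K_{I'}$. Hence $c$ lies in the subgroup generated by $K_I$, $K_{I'}$, and the boundaries $\partial D_\bullet^{\eta(I),\eta(I')}$, and lifting back through $\partial$ (absorbing the $\Z[\Sigma]$ ambiguity into $\Pi_{\eta(I)}$) finishes the proof. The main obstacle is the local homology computation of the second paragraph: verifying that the $\epsilon(I,I')$ differing crossings contribute exactly $\epsilon(I,I')$ independent classes to each of $L_I$ and $L_{I'}$, independently of $W$ and of one another. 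This is the one point where more than formal reasoning is required, exactly as in the corresponding step of \cite{ManolescuOzsvathQA}.
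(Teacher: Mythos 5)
Your translation of the lemma into linear algebra in $H_1(\Sigma;\Z)$ is a reasonable reformulation of the problem, but the ``local homology computation'' you flag as the one substantive step is not merely unverified --- it is false, and this breaks the argument. The components of $\Sigma \minus \bm\eta(I)$ containing a curve $\eta_{c_j}(I)$ on their boundary are pairs of pants whose other two boundary circles are meridian curves $\eta_{p_i}(I), \eta_{p_{i'}}(I)$; taking the boundary of such a component gives the relation $[\eta_{c_j}(I)] = \pm[\eta_{p_i}(I)] \pm [\eta_{p_{i'}}(I)]$ in $H_1(\Sigma;\Z)$, and likewise with $I'$ in place of $I$. Thus \emph{every} class $[\eta_{c_j}(I)]$ and $[\eta_{c_j}(I')]$, regardless of whether $I_j = I'_j$, already lies in the span of the meridian classes $[\eta_{p_i}]$, so in your notation $E_I \subseteq W$ and $E_{I'} \subseteq W$ rather than complementing $W$. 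The claimed decompositions $L_I = W \oplus E_I$, $L_{I'} = W \oplus E_{I'}$ and the conclusion $W \cap E_{I'} = 0$ therefore fail, and the final inference --- ``$[c'_I] = -[e'] \in W \cap E_{I'} = 0$ forces $[c'_I] = [e'] = 0$'' --- is unjustified: in general $e'$ is not null-homologous by itself and need not lie in $K_{I'}$.

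The real issue is the order of operations. You apply the domains $D_{p_i}^{\eta(I),\eta(I')}$ and the matching $D_{c_j}^{\eta(I),\eta(I')}$ first and then try to argue that the leftover $e'$, supported on the mismatched $\eta_{c_j}(I')$, is already a cycle; it need not be. The paper's proof does the opposite: it first eliminates each mismatched curve $\eta_{c_j}(I)$ (resp.\ $\eta_{c_j}(I')$) with $I_j \ne I'_j$ by adding the pair-of-pants component of $\Sigma \minus \bm\eta(I)$ (resp.\ $\Sigma \minus \bm\eta(I')$) that it bounds together with two meridians, and only then --- once the boundary is supported on the $\eta_{p_i}$ and the matching $\eta_{c_{j'}}$ curves --- applies the $D_\bullet^{\eta(I),\eta(I')}$ domains to land in $\Pi_{\eta(I)}$. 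In your framework the fix is the same: after reaching $c' = c'_I + e'$, subtract from $e'$ the boundaries of the relevant pairs of pants in $K_{I'}$ to convert each mismatched $\eta_{c_j}(I')$ into a combination of $\eta_{p_i}(I')$'s, then apply $\partial D_{p_i}^{\eta(I),\eta(I')}$ once more; the result lies entirely in $\Z\langle\bm\eta(I)\rangle$ and hence in $K_I$. No direct-sum decomposition or independence computation is needed --- the argument is purely formal once one records that the non-meridian components of $\Sigma \minus \bm\eta(I)$ are pairs of pants bounded by one crossing curve and two meridians.
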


\begin{proof}[Proof of Lemma \ref{lem:PietaIetaI'}]
Let $D$ be a domain in $\Pi_{\eta(I), \eta(I')}$ and suppose that, for some
$I_j\neq I_j'$, $\eta_{c_j}(I)$ appears with non-zero multiplicity in the
boundary of $D$. There is a pair of pants in $\Pi_{\eta(I)}$ bounded by
$\eta_{c_j}(I)$ and two curves, $\eta_{p_i}(I)$ and $\eta_{p_{i'}}(I)$. Adding
some multiple of this pair of pants, we obtain a domain whose boundary does not
contain any multiple of $\eta_{c_j}(I)$. Iterating this sort of procedure, we
can write $D$ as the sum of domains in $\Pi_{\eta(I)}$ and $\Pi_{\eta(I')}$
with a domain $D'$ whose boundary consists of curves of the forms
$\eta_{p_i}(I)$, $\eta_{p_i}(I')$ and $\eta_{c_{j'}}(I)$, $\eta_{c_{j'}}(I')$
for $I_{j'}=I'_{j'}$. We may then write $D'$ as the sum of a domain in
$\Pi_{\eta(I)}$ with domains of the forms $D_{p_i}^{\eta(I), \eta(I')}$ and
$D_{c_{j'}}^{\eta(I), \eta(I')}$. This proves Lemma \ref{lem:PietaIetaI'}.
\end{proof}

The following result generalizes \cite[Lemma 10]{ManolescuOzsvathQA}.

\begin{lemma} \label{lem:doublyperiodic}
Suppose $I^0, \dots, I^k \in \{0,1,\infty\}^n$ is a sequence of distinct
tuples, and $k\geq 1$. Then \[ \Pi_{{\alpha}, \eta(I^0), \dots, \eta(I^k)} =
\Pi_{{\alpha}} + \Pi_{\eta(I^0), \eta(I^1)} + \dots + \Pi_{\eta(I^{k-1}),
\eta(I^k)}.
\]
\end{lemma}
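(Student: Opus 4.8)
The plan is to prove the statement by induction on $k$, following the template of \cite[Lemma 10]{ManolescuOzsvathQA}. The base case $k=1$ is exactly \eqref{eq:PialphaetaI}, which gives $\Pi_{\alpha,\eta(I^0)} = \Pi_\alpha + \Pi_{\eta(I^0)}$, combined trivially with Lemma \ref{lem:PietaIetaI'} to absorb $\Pi_{\eta(I^0),\eta(I^1)}$; more precisely, one observes that $\Pi_{\eta(I^0),\eta(I^1)}$ already contains $\Pi_{\eta(I^0)}$ and $\Pi_{\eta(I^1)}$, so the $k=1$ assertion reduces to showing that any periodic domain for the multi-diagram $(\Sigma,\bm\alpha,\bm\eta(I^0),\bm\eta(I^1))$ decomposes as a sum of an $\bm\alpha$-piece, an $\bm\eta(I^0)$-piece, and an $\bm\eta(I^0)\bm\eta(I^1)$-piece. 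For the inductive step, suppose the formula holds for sequences of length $k$, and take $I^0,\dots,I^{k+1}$ distinct.

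First I would set up the restriction/forgetting maps between period domain groups. A multi-periodic domain $D$ for $(\Sigma,\bm\alpha,\bm\eta(I^0),\dots,\bm\eta(I^{k+1}))$ has boundary consisting of curves drawn from all these families. The strategy is to peel off the curves in $\bm\eta(I^{k+1})$ one family at a time. Since $(\Sigma,\bm\eta(I^k),\bm\eta(I^{k+1}))$ is obtained from Hamiltonian translates, for each index (either $c_j$ or $p_i$) the curve $\eta(I^{k+1})$ and its counterpart in some $\bm\eta(I^\ell)$ — or in $\bm\alpha$ — cobound a combination from $\Pi_{\eta(I^k),\eta(I^{k+1})}$ together with pair-of-pants relations, exactly as in the proof of Lemma \ref{lem:PietaIetaI'}. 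Concretely: if a curve $\eta_{c_j}(I^{k+1})$ or $\eta_{p_i}(I^{k+1})$ appears with nonzero multiplicity in $\partial D$, I would add an appropriate multiple of a pair of pants in $\Pi_{\eta(I^{k+1})}$ (bounded by $\eta_{c_j}(I^{k+1})$ and two curves $\eta_{p_i}(I^{k+1})$, $\eta_{p_{i'}}(I^{k+1})$) to kill the $c_j$-contributions, then add multiples of the bigon domains $D_{p_i}^{\eta(I^k),\eta(I^{k+1})}$ (which lie in $\Pi_{\eta(I^k),\eta(I^{k+1})}$) to kill each remaining $\eta_{p_i}(I^{k+1})$-contribution. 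After this, one is left with $D$ written as a sum of a domain in $\Pi_{\eta(I^{k+1})} \subseteq \Pi_{\eta(I^k),\eta(I^{k+1})}$, a domain in $\Pi_{\eta(I^k),\eta(I^{k+1})}$, and a residual domain $D'$ whose boundary uses only curves from $\bm\alpha,\bm\eta(I^0),\dots,\bm\eta(I^k)$. By the inductive hypothesis applied to $D'$, we conclude $D \in \Pi_\alpha + \Pi_{\eta(I^0),\eta(I^1)} + \dots + \Pi_{\eta(I^k),\eta(I^{k+1})}$, which is the desired formula for length $k+1$. The reverse inclusion is obvious since each summand on the right is visibly a subgroup of the left-hand side.

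The main obstacle I anticipate is bookkeeping: making sure the elimination of the $\bm\eta(I^{k+1})$ curves really can be carried out using only domains that lie in the \emph{two-diagram} groups $\Pi_{\eta(I^k),\eta(I^{k+1})}$ (or in $\Pi_{\eta(I^{k+1})}$), rather than genuinely multi-diagram period domains, and that no $\bm\alpha$-curves or curves from earlier $\bm\eta(I^\ell)$ get disturbed in the process. This is where one must use the specific geometry of the ladybugs and the crossing curves — that $\eta_{p_i}(I^{k+1})$ meets only $\eta_{p_i}(I^\ell)$ among the marked-point curves (in exactly two points) and that $\eta_{c_j}(I^{k+1})$ meets only the curves $\eta_{c_j}(I^\ell)$ among crossing curves — so the peeling is "local" in each index. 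A secondary subtlety is the order of elimination within a single crossing neighborhood when $I^{k+1}_j$ differs from $I^\ell_j$ versus when it agrees; this is handled exactly as in Lemma \ref{lem:PietaIetaI'}, first clearing the $c_j$-curve by a pair of pants and then clearing the resulting meridional curves. Once these local facts are in hand, the induction goes through mechanically, so the real content is verifying the local structure, which is inherited from \cite{ManolescuOzsvathQA} and the explicit diagrams of Section \ref{sec:heegaard}.
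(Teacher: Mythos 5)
Your proposal is correct and follows essentially the same route as the paper: induct on $k$, with base case supplied by \eqref{eq:PialphaetaI}, and in the inductive step peel off the $\bm\eta(I^k)$ boundary curves by adding pair-of-pants domains in $\Pi_{\eta(I^k)}$ and bigon-difference domains in $\Pi_{\eta(I^{k-1}),\eta(I^k)}$, reducing to the inductive hypothesis. The only cosmetic difference is that the paper applies pair-of-pants only to the $\eta_{c_j}(I^k)$ with $I^k_j \ne I^{k-1}_j$ and uses the $D_{c_{j'}}^{\eta(I^{k-1}),\eta(I^k)}$ bigons for the rest, whereas you use pair-of-pants uniformly and clear the resulting $\eta_{p_i}(I^k)$ curves with $D_{p_i}^{\eta(I^{k-1}),\eta(I^k)}$ — both work equally well.
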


\begin{proof}[Proof of Lemma \ref{lem:doublyperiodic}]
We claim that
\[
\Pi_{{\alpha}, \eta(I^0), \dots, \eta(I^k)} = \Pi_{{\alpha}, \eta(I_0), \dots,
\eta(I^{k-1})} + \Pi_{\eta(I^{k-1}), \eta(I^k)}
\] for $k\geq 1$. This claim, together with \eqref{eq:PialphaetaI}, implies Lemma \ref{lem:doublyperiodic} by induction.
Let $D$ be a domain in $\Pi_{{\alpha}, \eta(I^0), \dots, \eta(I^k)}$ and
suppose that, for some $I^k_j\neq I^{k-1}_j$, the curve $\eta_{c_{j}}(I^k)$
appears with nonzero multiplicity in the boundary of $D$. As above, there is a
pair of pants in $\Pi_{\eta(I^k)}$ bounded by $\eta_{c_{j}}(I^k)$ and two
curves, $\eta_{c_{p_i}}(I^k)$ and $\eta_{c_{p_{i'}}}(I^k)$. Adding some
multiple of this pair of pants, we obtain a domain whose boundary does not
contain any multiple of $\eta_{c_j}(I^k)$. Iterating this procedure, and adding
domains of the forms $D_{p_i}^{\eta(I^{k-1}), \eta(I^k)}$ and
$D_{c_{j'}}^{\eta(I^{k-1}), \eta(I^k)},$ we obtain a domain in $\Pi_{{\alpha},
\eta(I_0), \dots, \eta(I^{k-1})}$. Reversing this process proves the claim.
\end{proof}

We shall use the following proposition in many places throughout this paper;
compare with \cite[Lemma 11]{ManolescuOzsvathQA}.

\begin{proposition} \label{prop:mu=P}
Suppose $\phi$ and $\phi'$ are two Whitney polygons for which
$D(\phi)-D(\phi')$ is a multi-periodic domain of $\HH$. Then
\[
P(\phi)-\mu(\phi) = P(\phi')-\mu(\phi')
\]
where $P(\phi)$ denotes the total multiplicity of $\phi$ at all the basepoints.
\end{proposition}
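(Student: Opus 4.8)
The plan is to reduce the statement to the additivity of the quantity $P(\phi)-\mu(\phi)$ under "splicing" of polygons together with its vanishing on periodic domains, and to establish the latter by decomposing an arbitrary multi-periodic domain into the generators described in Lemmas \ref{lem:PietaIetaI'}, \ref{lem:doublyperiodic}, and \eqref{eq:PialphaetaI}. First I would observe that if $D(\phi)-D(\phi') = P$ is multi-periodic, then one can find a (possibly broken, but that does not matter for the count) Whitney polygon $\phi_P$ with $D(\phi_P) = P$ obtained by juxtaposing $\phi$ with $-\phi'$ in the appropriate symmetric-product sense, and both $P(\cdot)$ and $\mu(\cdot)$ are additive under this juxtaposition: $P(\phi) = P(\phi') + P(\phi_P)$ and, by the additivity of the Maslov index (Sarkar's formula, or Lipshitz's formula in the cylindrical setting), $\mu(\phi) = \mu(\phi') + \mu(\phi_P)$. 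Hence it suffices to show $P(\phi_P) = \mu(\phi_P)$ for every multi-periodic domain $P$, i.e. that $P(\cdot) - \mu(\cdot)$ vanishes on $\Pi_{\alpha,\eta(I^0),\dots,\eta(I^k)}$.

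Next I would use linearity: both $P$ and $\mu$ (the latter via Lipshitz's formula $\mu(P) = e(P) + 2\sum_x n_x(P)$, which is quadratic but whose values on sums reduce to a bilinear "intersection" correction, and in fact for the specific generators appearing here one checks $\mu$ is linear because the relevant domains have disjoint or boundary-shared supports) are determined on a spanning set. By Lemma \ref{lem:doublyperiodic} together with \eqref{eq:PialphaetaI} and Lemma \ref{lem:PietaIetaI'}, every multi-periodic domain is a $\Z$-linear combination of: (i) components of $\Sigma \minus \bm\alpha$; (ii) components of $\Sigma \minus \bm\eta(I)$, which are annuli (around the $\eta_{p_i}$ or $\eta_{c_j}$) or pairs of pants (one $\eta_{c_j}$ with two $\eta_{p_i}$'s); and (iii) the "thin" periodic domains $D_{p_i}^{\eta(I),\eta(I')}$ and $D_{c_j}^{\eta(I),\eta(I')}$, each a sum of two bigons with opposite signs. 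For each of these explicit pieces I would directly verify that $P(\cdot) = \mu(\cdot)$: a component of $\Sigma \minus \bm\alpha$ contains equal numbers of $A$- and $B$-points by construction of the ladybugs (so its $P$-value records $O(\cdot) + X(\cdot)$ which one matches against its Euler measure / $\mu$), an annulus contributes $P = \mu = $ twice the number of enclosed basepoints, a pair of pants encloses basepoints consistent with its Euler measure contribution, and the thin bigon-pairs have $P$-value equal to (number of basepoints in the positive bigon) minus (number in the negative bigon) which matches $\mu$ of the signed sum by the same local bigon count that appears in the proof of Lemma \ref{lem:isom}.

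The main obstacle I expect is bookkeeping the basepoint count $P(P) = O(P) + X(P)$ against the Maslov index for the periodic domains coming from $\Sigma \minus \bm\alpha$, since the $\bm\alpha$ curves have been stretched (the "ladybug" tips extended to regions containing $A_m$ or $B_m$) to achieve admissibility, so the combinatorial picture is no longer purely local. The key point that rescues this is that stretching an $\alpha$ curve across a region changes the domain by adding copies of that region, and one only needs the \emph{relative} statement $P - \mu$ is unchanged; alternatively, since $P(\cdot) - \mu(\cdot)$ is a homomorphism $\Pi_{\alpha,\eta(I^0),\dots,\eta(I^k)} \to \Z$ that factors through the cohomology class data (as in the relation \eqref{eqn:grading}, the $\delta$-grading change depends only on $[\omega]$-independent topology), it is determined by its restriction to $\Pi^0$, on which it manifestly vanishes because there $P(\cdot) = 0$ and periodic domains in a Heegaard diagram for $S^3$ avoiding all basepoints have $\mu = 0$. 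I would present the argument via this second route when possible, falling back on the explicit case check above for the pieces that do pass over basepoints, which are precisely the annuli, pairs of pants, and thin bigon-pairs enumerated by Lemmas \ref{lem:PietaIetaI'} and \ref{lem:doublyperiodic}.
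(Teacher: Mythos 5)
Your proposal follows essentially the same strategy as the paper's proof: decompose the multi-periodic domain $D(\phi)-D(\phi')$ using Lemmas~\ref{lem:PietaIetaI'} and \ref{lem:doublyperiodic} into a linear combination of components of $\Sigma\minus\bm\alpha$, components of $\Sigma\minus\bm\eta(I)$, and the thin domains $D_{p_i}^{\eta(I),\eta(I')}$, $D_{c_j}^{\eta(I),\eta(I')}$; verify $P=\mu$ on each generator; and appeal to additivity. The paper dispatches the generator-by-generator verification with a citation to \cite[Lemma 11]{ManolescuOzsvathQA}, whereas you attempt it directly --- that is fine, though your account of the components of $\Sigma\minus\bm\alpha$ is rather vague (each such component contains exactly one $O_i$ and one $X_i$, giving $P=2$, and the matching $\mu=2$ follows from the pair-of-pants / disk structure of these regions; this is precisely what Manolescu--Ozsv\'ath check).

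Two minor but genuine missteps in your write-up. First, the Maslov index of a domain with prescribed corner points is \emph{linear} in the domain, not quadratic; Lipshitz's formula $\mu(D)=e(D)+n_{\x}(D)+n_{\y}(D)$ has no cross term once $\x,\y$ are fixed, so the extra justification you offer (``disjoint or boundary-shared supports'') is unnecessary and would anyway not be the right reason. Second, your proposed ``second route'' does not hold up: a homomorphism $\Pi_{\alpha,\eta(I^0),\dots,\eta(I^k)}\to\Z$ is certainly not determined by its restriction to the proper subgroup $\Pi^0$, and moreover the components of $\Sigma\minus\bm\alpha$ all \emph{do} pass over basepoints (one $O$ and one $X$ each), so they are not covered by a $\Pi^0$-argument at all. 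You implicitly concede this by ``falling back on the case check,'' which is in fact the whole content of the proof --- the second route should simply be dropped.
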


\begin{proof}[Proof of Proposition \ref{prop:mu=P}]
By Lemmas \ref{lem:PietaIetaI'} and \ref{lem:doublyperiodic}, the difference
$D(\phi)-D(\phi')$ is a linear combination of components of
$\Sigma\minus\bm\alpha$, components of the complements
$\Sigma\minus\bm\eta(I)$, and domains of the forms $D_{c_{j}}^{\eta(I),
\eta(I')}$ and $D_{p_i}^{\eta(I), \eta(I')}$. It is easy to verify that these
domains all satisfy $P= \mu$, exactly as in the proof of \cite[Lemma
11]{ManolescuOzsvathQA}. Proposition \ref{prop:mu=P} then follows from the
additivity of $P$ and $\mu$.
\end{proof}

Next, we describe the periodic domains of $\HH_I$ that avoid the basepoints in $\P$. %It is convenient to first consider the corresponding domains of $\tilde\HH_I$.
Let $S_I^1,\dots,S_I^{k_I}$ denote the components of $L_I$, labeled so that $p_1$ lies on $S_I^{k_I}$. Then $H_2(S^3 \minus L_I;\Z)$ is freely generated by the homology classes of tori $T_I^1, \dots, T_I^{k_I-1}$ obtained as the boundaries of regular neighborhoods of $S_I^1, \dots, S_I^{k_I-1}$. These tori correspond to positive periodic domains $\tilde P_I^1, \dots, \tilde P_I^{k_I}$ in $\Pi^0_{\tilde\alpha,\eta(I)}$, where the boundary of $\tilde P_I^{\ell}$ consists of \begin{inparaenum}\item the $\tilde{\bm\alpha}$ circles of the ladybugs associated to the points of $\p$ on
$\DD_I^{\ell}$, and \item a copy of $\eta_{c_j}(I)$ for every crossing $c_j$ such that $S_I^{\ell}$ enters and leaves a neighborhood of $c_j$ exactly once. \end{inparaenum} The torus $T_I^{\ell}$ can then be recovered by capping off the boundary components of $\tilde P_I^{\ell}$ with disks. Finally, let $P_I^{\ell}$ be the domain in $\Pi^0_{\alpha,\eta(I)}$ corresponding to $\tilde P_I^{\ell}$; although $P_I^{\ell}$ has both positive and negative multiplicities in general, its boundary multiplicities and its multiplicities at points of $\A\cup \P$ agree with those of $\tilde P_I^{\ell}$.

Let $[\omega_\r]_I$ denote the element of $H^2(S^3 \minus L_I)$ associated to the marking $(\A,\omega_{\Omega})$. The previous paragraph implies that the evaluation of $[\omega_\r]_I$ on $[T_I^{\ell}]$ is equal to the sum of the weights of the marked points on $S_I^\ell$. The following proposition then follows from the genericity of $\r$ together with Lemma \ref{lemma:generic}, Proposition \ref{prop:unlink}, and equation \eqref{eqn:err3}.

\begin{proposition} \label{prop:generic}
\begin{enumerate}
\item For any $I \in \{0,1\}$ for which $\DD_I$ is disconnected, the cohomology class $[\omega_\r]_I$ is nonzero, so the complex $\CFKtt(\bm\alpha, \bm\eta(I))$ is acyclic.

\item Let $I^\infty = (\infty, \dots, \infty)$, so that $\DD_{I^\infty} = \DD$. If $\r = \r_\Omega$ for some function $\Omega\co \{1, \dots, n\} \to \Z$, then the cohomology class $[\omega_\r]_{I^\infty}$ is zero, so
\[
\CFKtt(\bm\alpha, \bm\eta(I^\infty)) \cong \CFKtil( \bm\alpha, \bm\eta(I^\infty)) \otimes_{\F} \MM.
\]
\end{enumerate}
\end{proposition}

\subsection{Construction of the cube of resolutions}\label{sec:cube}
For distinct tuples $I,I'\in \{0,1,\infty\}^n,$ we write $I<I'$ if $I_j\leq
I'_j$ for $j=1,\dots,n$. If $I'$ is obtained from $I$ by changing a single
entry from $0$ to $1$, from $1$ to $\infty$, or from $\infty$ to $0$, we say
that $I'$ is a \emph{cyclic successor} of $I$. In the first two cases, $I'$ is
called an \emph{immediate successor} of $I$. A \emph{successor sequence} (resp.
\emph{cyclic successor sequence}) is a sequence of tuples $I^0,\dots,I^k$ such
that each $I^j$ is an immediate (resp. cyclic) successor of $I^{j-1}$. For any
cyclic successor sequence $I^0, \dots, I^k$, let
\[
f_{I^0, \dots, I^k}\co \CFKtt(\bm\alpha, \bm\eta(I^0)) \to \CFKtt(\bm\alpha,
\bm\eta(I^k))
\]
be the map defined by
\begin{equation*} \label{eq:fI0Ik}
f_{I^0, \dots, I^k}(\x) = F_{\alpha, \eta(I^0), \dots, \eta(I^k)} \left(\x
\otimes \left(\Theta^{I^0,I^1}_1 + \Theta^{I^0,I^1}_2\right) \otimes \dots
\otimes \left(\Theta^{I^{k-1},I^k}_1 + \Theta^{I^{k-1},I^k}_2\right)\right).
\end{equation*}
We shall eventually incorporate these maps into a cube of resolutions complex
which is quasi-isomorphic to $\CFKtt(\bm\alpha,\bm\eta(I^{\infty})).$ First, we
prove an analogue of Manolescu's unoriented skein exact triangle for
coefficients in an arbitrary $\F[Z]$-module $\MM$.

\begin{theorem} \label{thm:manolescu}
Suppose $I^0,I^1,I^2$ is a cyclic successor sequence of tuples in
$\{0,1,\infty\}^n$ which differ in only one coordinate. Then, the triangle
\[
\xymatrix@!0@C=6.1pc@R=5pc{
\HFKtt(\LL_{I^0},[\omega_{\Omega}]_{I^0}) \ar[rr]^{(f_{I^0,I^1})_*} && \HFKtt(\LL_{I^1},[\omega_{\Omega}]_{I^1}) \ar[dl]^{(f_{I^1,I^2})_*} \\
& \HFKtt(\LL_{I^2},[\omega_{\Omega}]_{I^2}) \ar[ul]^{(f_{I^2,I^0})_*} }
\]
is exact.
\end{theorem}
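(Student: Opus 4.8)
The plan is to follow Manolescu's original argument \cite{ManolescuSkein} for the unoriented skein exact triangle, adapting each step to the twisted setting over $\F[\Z]$. The main structural input is that the three resolutions $\DD_{I^0}$, $\DD_{I^1}$, $\DD_{I^2}$ differ at a single crossing, so near that crossing the relevant curves are $\beta_j$, $\gamma_j$, $\delta_j$ (up to Hamiltonian translation), and away from it the diagrams agree. First I would verify that the compositions $f_{I^1,I^2}\circ f_{I^0,I^1}$, $f_{I^2,I^0}\circ f_{I^1,I^2}$, and $f_{I^0,I^1}\circ f_{I^2,I^0}$ are chain-homotopic to zero; this is the standard consequence of the $\AA_\infty$ relation \eqref{eqn:Ainfty} applied to the quadrilateral-counting maps $F_{\alpha,\eta(I^a),\eta(I^{a+1}),\eta(I^{a+2})}$, once one checks that the relevant quadrilateral maps built from the $\Theta$ generators land in the expected $\delta$-gradings. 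Here Proposition \ref{prop:mu=P} is what guarantees that the polygon maps are $\delta$-graded with the correct shifts, so the twisted coefficients $T^{\langle\omega_\Omega,\phi\rangle}$ do not disturb the grading bookkeeping.

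The heart of the proof is the exactness at each vertex, for which I would use the algebraic mapping-cone / iterated-cone lemma (Osv\'ath--Szab\'o's Lemma on exact triangles, as in \cite{OSzDouble, ManolescuSkein}): it suffices to exhibit a chain homotopy equivalence between, say, the mapping cone of $f_{I^0,I^1}$ and the complex $\CFKtt(\bm\alpha,\bm\eta(I^2))$, compatibly with the remaining map. The homotopy equivalence is produced by counting pseudo-holomorphic triangles and quadrilaterals in the multi-diagram $(\Sigma,\bm\alpha,\bm\eta(I^0),\bm\eta(I^1),\bm\eta(I^2))$, together with a fourth set of curves $\bm\eta(I^3)$ obtained by a further Hamiltonian translation (so that $I^3$ is a cyclic successor of $I^2$ representing the same resolution as $I^0$). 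The key geometric fact, exactly as in \cite{ManolescuSkein}, is a local computation near the crossing $c_j$: the three small curves $\beta_j,\gamma_j,\delta_j$ pairwise intersect in two points and bound a small triangle, so the triangle maps $f_{I^a,I^{a+1}}$ are ``as close to isomorphisms as possible,'' and the higher homotopies can be controlled by an area/energy filtration argument. One must check that introducing the twisting function $\omega_\Omega$ changes nothing essential: the markings in $\A$ near $c_j$ contribute powers of $T^{\pm\Omega(j)}$ to the triangle counts, but since $\F[\Z]$ is a flat (indeed free) coefficient ring and the local model is unchanged, the same cancellations occur. This is where Proposition \ref{prop:invcofpsi2}-style invariance and the additivity of $\langle\omega_\Omega,\cdot\rangle$ over domains get used to keep the coefficients consistent across the diagram.

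The step I expect to be the main obstacle is the local analysis near the crossing combined with the twisted coefficients: one has to re-run Manolescu's ``neck-stretching'' or holomorphic-triangle count in the neighborhood of $c_j$ and confirm that each holomorphic polygon contributing to the homotopy carries the power of $T$ dictated by its intersections with $\A$, so that the algebraic identities (the homotopies witnessing $f\circ f\simeq 0$ and the quasi-isomorphism of the cone) hold verbatim over $\F[\Z]$ rather than just over $\F$. Because $\Omega$ is generic, the relevant coefficients $1+T^{\Omega(j)}$, $1+T^{\pm\Omega(j)}$, etc., are nonzero, but they are \emph{not} units in $\F[\Z]$; so one must be careful that the exact-triangle lemma is being applied over a ring where the needed maps are genuine homotopy equivalences and not merely equivalences after inverting something. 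In practice this means checking that the single-crossing local model produces, after the area filtration, a map whose leading term is multiplication by $1$ (not by a non-unit), which is exactly Manolescu's observation that the ``small triangle'' count contributes $1$; the twisting only affects higher-order terms. Once this local input is in place, the rest is the formal homological algebra of iterated mapping cones, carried out as in \cite[Section 4]{ManolescuSkein} and \cite[Section 4]{OSzDouble}.
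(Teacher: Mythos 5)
Your overall structure matches the paper's: both reduce the exact triangle to the two conditions of the Ozsv\'ath--Szab\'o mapping-cone lemma (the paper's Proposition~\ref{prop:homalgebra}: $f_{I^0,I^1}\circ f_{I^2,I^0}$ is null-homotopic via $f_{I^2,I^0,I^1}$, and $f_{I^0,I^1,I^2}\circ f_{I^2,I^0} + f_{I^1,I^2}\circ f_{I^2,I^0,I^1}$ is a quasi-isomorphism) and both then point to Manolescu's Lemmas~6 and~7. However, you miss the one observation that makes the twisted adaptation \emph{immediate}, and in its place you introduce complications that are not needed and that would not actually close the argument as you describe it.

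The paper's proof of Proposition~\ref{prop:homalgebra} is a single sentence: the polygons that matter in Manolescu's proofs --- namely the $\bm\eta$-only polygons with $\Theta$-corners whose counts feed into the exact-triangle lemma --- avoid the markings in $\A$, because every point of $\A$ lies in the same component of $\Sigma\setminus\bigl(\bm\eta(I^0)\cup\bm\eta(I^1)\cup\bm\eta(I^2)\bigr)$ as some basepoint, and these polygons avoid basepoints by definition. Consequently $\langle\omega_\Omega,\phi\rangle=0$ for every such polygon, and the twisted counts literally coincide with the untwisted ones; Manolescu's identities carry over verbatim over any $\F[\Z]$-module $\MM$. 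You instead assert that ``the markings in $\A$ near $c_j$ contribute powers of $T^{\pm\Omega(j)}$ to the triangle counts'' and then worry about whether factors like $1+T^{\Omega(j)}$, being non-units in $\F[\Z]$, would obstruct the mapping-cone argument, proposing to resolve this by an area/energy filtration showing the ``leading term is multiplication by $1$.'' This is a genuine gap: no such non-unit coefficients arise in the polygons that verify the hypotheses of the exact-triangle lemma (they would make the lemma fail over a general $\MM$ such as $\F[\Z]$ itself, which the theorem allows), and the filtration argument you invoke is not present and not needed. You should instead observe that the $\A$-markings sit in basepoint-containing regions of the $\bm\eta$-multidiagram, after which the proof collapses to a citation of Manolescu.
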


As in \cite{ManolescuSkein, OSzDouble}, Theorem \ref{thm:manolescu} follows
immediately from the proposition below.

\begin{proposition}
\label{prop:homalgebra} Suppose $I^0,I^1,I^2$ is a cyclic successor sequence of
tuples in $\{0,1,\infty\}^n$ which differ in only one coordinate. Then,
\begin{enumerate}
\item the composite $f_{I^{0},I^{1}}\circ f_{I^{2},I^{0}}$ is chain homotopic to zero, \[f_{I^{0},I^{1}}\circ f_{I^{2},I^{0}} = \partial\circ f_{I^{2},I^{0},I^{1}} + f_{I^{2},I^{0},I^{1}}\circ\partial;\]
\item the map \[f_{I^{0},I^{1},I^{2}}\circ f_{I^{2},I^{0}}+ f_{I^{1},I^{2}}\circ f_{I^{2},I^{0},I^{1}}:\ul\cfkt(\bm\alpha,\bm\eta(I^{2}))\rightarrow \ul\cfkt(\bm\alpha,\bm\eta(I^{2}))\] is a quasi-isomorphism.
\end{enumerate}
\end{proposition}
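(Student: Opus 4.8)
The plan is to follow Manolescu's strategy \cite{ManolescuSkein}, adapting it to our twisted setting by working on the multi-diagram $\HH$ and tracking only the $\delta$-grading (which, by Proposition \ref{prop:mu=P}, is exactly the quantity $P(\phi)-\mu(\phi)$, hence is unaffected by adding multi-periodic domains to a polygon class). Since all three partial resolutions $\DD_{I^0},\DD_{I^1},\DD_{I^2}$ differ in a single coordinate $j$, we may work locally near the crossing $c_j$, where the three relevant $\eta$-curves are small Hamiltonian translates of $\beta_j$, $\gamma_j$, $\delta_j$; everything else in the diagram is held fixed. Write $\bm\eta^0,\bm\eta^1,\bm\eta^2$ for $\bm\eta(I^0),\bm\eta(I^1),\bm\eta(I^2)$ and $\Theta^{01},\Theta^{12},\Theta^{20}$ for the chosen sums $\Theta^{I^r,I^{r+1}}_1+\Theta^{I^r,I^{r+1}}_2$ appearing in the definition of the $f$-maps.

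First I would prove part (1). The composite $f_{I^0,I^1}\circ f_{I^2,I^0}$ counts, by associativity of the polygon maps, broken configurations; these are the ends of the moduli spaces of rectangles $\phi\in\pi_2(\x,\Theta^{20},\Theta^{01},\y)$ with $\mu(\phi)=-1$ and $O_i(\phi)=X_i(\phi)=0$. Counting all ends of these one-dimensional moduli spaces gives, via the $\AA_\infty$ relation \eqref{eqn:Ainfty}, the chain-homotopy identity
\[
f_{I^0,I^1}\circ f_{I^2,I^0} + \partial\circ f_{I^2,I^0,I^1} + f_{I^2,I^0,I^1}\circ\partial = F_{\alpha,\eta^2,\eta^0,\eta^1}(\,\cdot\otimes \mu_{\eta^2\eta^0\eta^1}(\Theta^{20}\otimes\Theta^{01})\,),
\]
where $\mu_{\eta^2\eta^0\eta^1}=F_{\eta^2\eta^0\eta^1}$ counts holomorphic triangles among the $\eta$-curves. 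So it suffices to show this last term vanishes, i.e. that $F_{\eta^2\eta^0\eta^1}(\Theta^{20}\otimes\Theta^{01})$ is a boundary in $\ul\cfkt(\bm\eta^2,\bm\eta^1)$ (in fact the differential there is zero by Lemma \ref{lem:isom}, so we need it to be literally zero). This is the same local triangle-count Manolescu carries out: near $c_j$ the three curves $\beta_j,\gamma_j,\delta_j$ bound a single small triangle, and the count of the relevant holomorphic triangles — now weighted by $T^{\gen{\omega_\Omega,\phi}}$ — must be shown to vanish. The key point is that such triangles come in pairs differing by a periodic domain (a thin annulus or pair-of-pants supported in the ladybugs), and by the discussion in Section \ref{sec:pdomains} that periodic domain can be chosen to pass over equally-weighted points of $\A$, so the two members of each pair carry the same power of $T$ and cancel over $\F$. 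The $\delta$-grading bookkeeping that forces $F_{\eta^2\eta^0\eta^1}$ to land in the minimal-$\delta$ summand (generated by the $\Theta$'s) is identical to the untwisted case, using \eqref{eqn:grading} and Lemma \ref{lem:isom}.

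Next, part (2): the map $g = f_{I^0,I^1,I^2}\circ f_{I^2,I^0} + f_{I^1,I^2}\circ f_{I^2,I^0,I^1}$ on $\ul\cfkt(\bm\alpha,\bm\eta^2)$ should be shown to be chain homotopic to the identity. Again by the $\AA_\infty$ relation (now for pentagons), $g + \partial h + h\partial$ equals the map $F_{\alpha,\eta^2,\eta^0,\eta^1,\eta^2}(\,\cdot\otimes\Theta^{20}\otimes\Theta^{01}\otimes\Theta^{12}\,)$ plus a term $f_{z}(\cdot)$ where $z = F_{\eta^2\eta^0\eta^1\eta^2}(\Theta^{20}\otimes\Theta^{01}\otimes\Theta^{12})\in\ul\cfkt(\bm\eta^2,\bm\eta^2)$ is a count of holomorphic quadrilaterals among the $\eta$-curves, together with the triangle term from part (1) which we have just killed. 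The strategy, exactly as in \cite{ManolescuSkein}, is to compute $z$ directly: in a suitable stretched/wound model for the four $\eta$-curves $\bm\eta^2,\bm\eta^0,\bm\eta^1,\bm\eta^2$ one shows that $z = \Theta^{22}$, the top generator of $\ul\cfkt(\bm\eta^2,\bm\eta^2)$ that induces the identity map; the pentagon term contributes only higher-$\delta$ (and can be absorbed), while the basepoint-winding region ensures the count is a single quadrilateral rather than a cancelling pair. Weighting by $T^{\gen{\omega_\Omega,\cdot}}$ changes nothing here because the distinguished quadrilateral has a domain that is, up to a periodic domain evaluating trivially under $\omega_\Omega$, disjoint from $\A$ — this uses that the local picture at $c_j$ has $\A$-points of opposite weight $\pm\Omega(j)$ on the two tubes, so that any closed region swept out has total $\omega_\Omega$-weight zero.

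The main obstacle I anticipate is the second half of part (2): pinning down that the quadrilateral count $z$ is exactly $\Theta^{22}$ (and not merely $\Theta^{22}$ plus something in a higher $\delta$-grading that happens to act trivially for degree reasons). In the untwisted setting Manolescu handles this with an explicit winding argument and a careful area/index filtration; the delicate part for us is checking that introducing the $T$-weights does not create new holomorphic quadrilaterals or destroy the cancellation of the ``error'' quadrilaterals. Concretely I would argue: (i) any quadrilateral contributing to $z$ has $\delta$-grading contribution $\ge$ that of $\Theta^{22}$, by \eqref{eqn:grading} together with Lemma \ref{lem:isom}; (ii) the top-$\delta$ contribution is a single small quadrilateral whose domain misses $\A$ entirely, so it carries $T^0$ and reproduces the untwisted answer; (iii) lower-$\delta$ contributions act trivially on $\delta$-graded homology for the same grading reasons as in \cite{ManolescuSkein, OSzDouble}. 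Granting this, $g$ is chain homotopic to $f_{\Theta^{22}}=\id$ plus null-homotopic terms, hence is a quasi-isomorphism, completing part (2) and, with part (1), the proposition.
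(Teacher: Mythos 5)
Your overall plan — reduce via the $\mathcal{A}_\infty$ relation \eqref{eqn:Ainfty} to the polygon computations $F_{\eta(I^2)\eta(I^0)\eta(I^1)}(\Theta^{20}\otimes\Theta^{01})=0$ and $F_{\eta(I^2)\eta(I^0)\eta(I^1)\eta(I^2)}(\Theta^{20}\otimes\Theta^{01}\otimes\Theta^{12})=$ the top class of $\ul\cfkt(\bm\eta(I^2),\bm\eta(I^2))$, then import Manolescu's untwisted computations — matches the structure of the paper's argument. The gap is in how you justify that the $T$-weights do not disturb these counts.

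You assert that the triangles contributing to $F_{\eta(I^2)\eta(I^0)\eta(I^1)}$ ``come in pairs differing by a periodic domain'' of zero $\omega_\Omega$-evaluation, so each pair carries equal powers of $T$ and cancels. But the two triangles landing on a given $z\in\T_{\eta(I^2)}\cap\T_{\eta(I^1)}$ in Manolescu's cancellation have \emph{different} $\Theta$-corners, so their domains do not differ by a multi-periodic domain; to compare them one must also insert the domains of Whitney disks in $\pi_2(\Theta_1,\Theta_2)$ correcting the corners, and those disks pass over basepoints. Your periodic-domain bookkeeping therefore does not actually establish that the two $T$-weights agree, and the analogous step in your part (2) (``up to a periodic domain evaluating trivially under $\omega_\Omega$,'' then invoking the balance of $\pm\Omega(j)$) has the same defect.

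The paper's own observation is simpler and makes the issue disappear: every point of $\A$ lies in the same component of $\Sigma\setminus(\bm\eta(I^0)\cup\bm\eta(I^1)\cup\bm\eta(I^2))$ as a basepoint of $\O\cup\X$. Hence any polygon among the $\bm\eta$-curves with $O_i(\phi)=X_i(\phi)=0$ for all $i$ automatically satisfies $a(\phi)=0$ for every $a\in\A$, so $\langle\omega_\Omega,\phi\rangle=0$. The twisted triangle and quadrilateral counts then coincide literally with the untwisted ones, and Manolescu's Lemmas 6 and 7 apply verbatim — no supplementary pairing-up argument is needed. You should replace the periodic-domain discussion with this direct observation. (A minor slip: rectangles with $\mu=-1$ are the rigid ones contributing to $F_{\alpha\eta(I^2)\eta(I^0)\eta(I^1)}$; the one-dimensional moduli spaces whose ends produce the chain-homotopy identity have $\mu=4-k=0$.)
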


\begin{proof}[Proof of Proposition \ref{prop:homalgebra}]
This is a straightforward adaptation of Manolescu's proofs of \cite[Lemmas 6 and 7]{ManolescuSkein}. Simply note that the relevant polygons in Manolescu's proofs avoid the markings in $\A$ since every such marking lies in the same component of $\Sigma\minus(\bm\eta(I^0)\cup\bm\eta(I^1)\cup\bm\eta(I^2))$ as a basepoint.%For the first statement of Lemma \ref{prop:homalgebra}, it suffices, by the $\mathcal{A}_{\infty}$ relation (\ref{eqn:Ainfty}), to show that \[F_{\eta(I^0),\eta(I^1),\eta(I^2)}\left( (\Theta^{I^0,I^1}_1 + \Theta^{I^0,I^1}_2)
%\otimes (\Theta^{I^1,I^2}_1 + \Theta^{I^1,I^2}_2) \right) = 0.\] But this is guaranteed by Lemma \ref{lem:triangles}. For the second statement, simply note that the relevant polygons in Manolescu's proof avoid the markings in $\A$ since every such marking lies in the same component of $\Sigma\minus(\bm\eta(I^0)\cup\bm\eta(I^1)\cup\bm\eta(I^2))$ as a basepoint.
\end{proof}

For tuples $I<I'$ in $\{0,1,\infty\}^n$, let
\[D_{I,I'}:\ul\cfkt(\bm\alpha,\bm\eta(I))\rightarrow
\ul\cfkt(\bm\alpha,\bm\eta(I'))\] denote the sum, over all successor sequences
$I=I^0<\dots<I^k=I'$, of the maps $f_{I^0,\dots,I^k}$, and let $D_{I,I}$ denote
the differential $\partial$ on $\ul\cfkt(\bm\alpha,\bm\eta(I))$. For $S
\subset \{0,1,\infty\}^n$, let
\[
X(S) = \bigoplus_{I \in S} \ul\CFKtil(\bm\alpha, \bm\eta(I)),
\]
and set $X = X(\{0,1,\infty\}^n)$. We define a map $D:X\rightarrow X$ by \[D =
\bigoplus_{I\leq I'}D_{I,I'}.\] Below, we show that $D$ is a differential. As a
warmup, we prove the following.

\begin{lemma} \label{lem:triangles}
Suppose $I^0,I^1,I^2$ is a cyclic successor sequence of tuples in
$\{0,1,\infty\}^n$. If these tuples differ in only one coordinate, then
\begin{equation*}\label{eqn:Fthetaone}F_{\eta(I^0),\eta(I^1),\eta(I^2)}\left( (\Theta^{I^0,I^1}_1 + \Theta^{I^0,I^1}_2)
\otimes (\Theta^{I^1,I^2}_1 + \Theta^{I^1,I^2}_2) \right) = 0.\end{equation*}
Otherwise,
\begin{equation*}\label{eqn:Fthetatwo}F_{\eta(I^0),\eta(I^1),\eta(I^2)}\left( (\Theta^{I^0,I^1}_1 + \Theta^{I^0,I^1}_2)
\otimes (\Theta^{I^1,I^2}_1 + \Theta^{I^1,I^2}_2) \right)=
\Theta^{I^0,I^2}_1+\Theta^{I^0,I^2}_2+\Theta^{I^0,I^2}_3+\Theta^{I^0,I^2}_4.\end{equation*}
\end{lemma}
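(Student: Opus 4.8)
The plan is to compute $F_{\eta(I^0),\eta(I^1),\eta(I^2)}$ on these symmetrized inputs by localizing the triangle count near the crossings $c_j$ and marked points $p_i$ of $\DD$, following Manolescu's analysis of the corresponding triple diagrams in his proof of the unoriented skein exact triangle \cite{ManolescuSkein} (cf.\ also \cite{OSzDouble}). Every curve in $\bm\eta(I^0),\bm\eta(I^1),\bm\eta(I^2)$ is supported in a small neighborhood of some $c_j$ or $p_i$, and these neighborhoods are disjoint. At the $n+m-\epsilon(I^0,I^2)$ coordinates and marked points where all three tuples agree, the three relevant curves are mutually small Hamiltonian translates of a single curve among $\{\mu_i,\beta_j,\gamma_j,\delta_j\}$; distinct members of $\{\beta_j,\gamma_j,\delta_j\}$ occur only at the one changing coordinate $j_0$ in the first case of the lemma, or at the two changing coordinates $j_1\neq j_2$ in the second. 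I would choose the Hamiltonian translates small enough and arrange admissibility of the triple as in \cite{ManolescuSkein}; a standard neck-stretching argument then shows that every rigid holomorphic triangle contributing to $F_{\eta(I^0),\eta(I^1),\eta(I^2)}$ is concentrated in one of these neighborhoods, so the triangle count factors as a product of local counts, with the output generator determined by its restriction to each neighborhood.

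There are then only two local models to understand. Near a marked point $p_i$, or near a crossing $c_j$ at which $I^0,I^1,I^2$ all agree, the three curves are small mutually isotopic translates of one of $\{\mu_i,\beta_j,\gamma_j,\delta_j\}$; the unique rigid triangle with corners among the theta intersection points sends the two minimal-$\delta$-contribution corners to the minimal-$\delta$-contribution corner of the third pairwise intersection with multiplicity one, and for a suitable choice of translates it avoids the basepoints $A_i,B_i$ --- this is the local computation behind isotopy invariance \cite{LipshitzCylindrical}. Near a changing crossing there are two possibilities. If the three tuples differ only at $c_{j_0}$, the three relevant curves are small translates of $\beta_{j_0},\gamma_{j_0},\delta_{j_0}$ in some cyclic order, as in Figure \ref{fig:twisting}; a direct enumeration of the small triangles in this ``theta-graph'' configuration, carried out exactly as in \cite{ManolescuSkein}, shows that after summing over the two corner choices in each of the two input factors every candidate output generator is hit an even number of times, so this single local factor already annihilates $(\Theta^{I^0,I^1}_1+\Theta^{I^0,I^1}_2)\otimes(\Theta^{I^1,I^2}_1+\Theta^{I^1,I^2}_2)$. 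If instead $I^1$ is obtained from $I^0$ by changing coordinate $j_1$ and $I^2$ from $I^1$ by changing coordinate $j_2\neq j_1$, then near $c_{j_1}$ the curves $\eta_{c_{j_1}}(I^0)$ and $\eta_{c_{j_1}}(I^1)$ are distinct members of $\{\beta_{j_1},\gamma_{j_1},\delta_{j_1}\}$ while $\eta_{c_{j_1}}(I^2)$ is a small translate of $\eta_{c_{j_1}}(I^1)$; for small translates the nearest-point argument identifies the local map $F(\,\cdot\,\otimes\theta^{I^1,I^2}_{c_{j_1}})$ with the canonical bijection from $\eta_{c_{j_1}}(I^0)\cap\eta_{c_{j_1}}(I^1)$ to $\eta_{c_{j_1}}(I^0)\cap\eta_{c_{j_1}}(I^2)$, so the sum of both corners is sent to the sum of both corners, and the situation near $c_{j_2}$ is symmetric.

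Assembling the local contributions completes the argument. In the first case one factor is $0$, so the product vanishes. In the second, each agreeing crossing and each marked point contributes its unique minimal corner, while $c_{j_1}$ and $c_{j_2}$ each contribute the sum of both of their corners; the product therefore ranges over the $2\times 2$ choices of corner at $c_{j_1}$ and $c_{j_2}$, which are exactly the four generators $\Theta^{I^0,I^2}_1,\dots,\Theta^{I^0,I^2}_4$, each with multiplicity one.

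I expect the main obstacle to be the ``theta-graph'' local count in the first case: checking the mod-$2$ cancellation requires care with the precise isotopy classes of $\beta_{j_0},\gamma_{j_0},\delta_{j_0}$ and with the shapes of the relevant small triangles. The factorization in the first paragraph, though routine, also needs a genuine degeneration argument rather than a formal one. Both points are essentially contained in \cite{ManolescuSkein}; the only genuinely new feature here is that the ladybug curves $\eta_{p_i}$ enter only through the parallel local model, where they contribute the expected minimal corner.
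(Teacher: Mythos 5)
Your proposal reaches the correct local model analysis, but the global-to-local reduction is done by a heavier mechanism than the paper uses, and the simpler mechanism would have saved you the step you yourself flag as delicate. Where you invoke a neck-stretching degeneration argument to show that contributing holomorphic triangles localize, the paper observes a purely combinatorial fact about domains: in the triple diagram $(\Sigma,\bm\eta(I^0),\bm\eta(I^1),\bm\eta(I^2))$, every tubular region connecting the crossing neighborhoods and ladybug neighborhoods contains a basepoint of $\P$, so the domain of any Whitney triangle avoiding $\P$ is automatically zero there --- hence a disjoint union of small triangles, one per crossing and per ladybug, with no degeneration analysis required. Once this is in hand, each such class has $\mu=0$ and a unique holomorphic representative, and the count is exactly the combinatorial enumeration you describe: a $2$-to-$1$ map on corner choices at the single changing crossing $c_{j_0}$ (giving the mod-$2$ cancellation), versus a bijection at each of $c_{j_1},c_{j_2}$ (hitting all four $\Theta^{I^0,I^2}_k$). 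So the two routes agree once localized; the paper's domain observation is simply a shortcut around the ``genuine degeneration argument'' you anticipated needing, and it also sidesteps the admissibility subtleties you raised at the outset.
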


\begin{proof}[Proof of Lemma \ref{lem:triangles}]
Let $\z\in\T_{\eta(I^0)}\cap\T_{\eta(I^2)}$. Each of the four tubular regions
of $\Sigma\minus(\bm\eta(I^0)\cup\bm\eta(I^1)\cup\bm\eta(I^2))$ in the
neighborhood of a crossing contains a basepoint, as does the tubular region on
each side of a ladybug. Therefore, the domain of any Whitney triangle
$\psi\in\pi_2(\Theta^{I^0,I^1}_r,\Theta^{I^1,I^2}_s,\z)$ which avoids $\P$ is a
union of small triangles. Suppose $I^0,$ $I^1$ and $I^2$ differ only in their
$j\Th$ coordinates. Near $p_i$ and $c_{j'}$ for $j'\neq j$, these triangles
look like those shaded in Figure \ref{fig:smalltriangles}(a) and (b). Near
$c_j$, the domain of $\psi$ looks like one of the four triangles shaded in (d).
Thus, $\z$ is of the form $\Theta^{I^0,I^2}_{\kappa(a,b)}$, for some $2:1$ map
\[\kappa:\{1,2\}\times\{1,2\}\rightarrow \{1,2\}.\] Moreover, $\mu(\psi)=0$ and
$\psi$ has a unique holomorphic representative. The first statement of Lemma
\ref{lem:triangles} follows immediately.

Now, suppose $I^0,I^1$ and $I^1,I^2$ differ in their $j_1\Th$ and $j_2\Th$
coordinates, respectively. Near $p_i$ and $c_{j'}$ for $j'\neq j_1,j_2$, the
domain of $\psi$ looks like the shaded triangles in (a) and (b). Near
$c_{j_1}$, the domain of $\psi$ is a small triangle with vertices at
intersection points between the curves $\eta_{c_{j_1}}(I^0),$
$\eta_{c_{j_1}}(I^1)$ and $\eta_{c_{j_1}}(I^2)$. Figure
\ref{fig:smalltriangles}(c) shows a picture of this triangle when
$\eta_{c_{j_1}}(I^0)$ is isotopic to $\beta_{j_1}$ and $\eta_{c_{j_1}}(I^1),
\eta_{c_{j_1}}(I^2)$ are isotopic to $\gamma_{j_1}$. The same reasoning applies
near $c_{j_2}$. Therefore, $\z$ is of the form $\Theta^{I^0,I^2}_{\nu(a,b)}$
for some $1:1$ map
\[\nu:\{1,2\}\times\{1,2\}\rightarrow \{1,2,3,4\}.\] As above, $\mu(\psi)=0$
and $\psi$ has a unique holomorphic representative. The second statement of
Lemma \ref{lem:triangles} follows immediately. \end{proof}

\begin{figure}
\labellist \pinlabel (a) at 4 110 \pinlabel (b) at 65 110 \pinlabel (c) at 196
110 \pinlabel (d) at 328 110
\endlabellist
 \includegraphics[width=14cm]{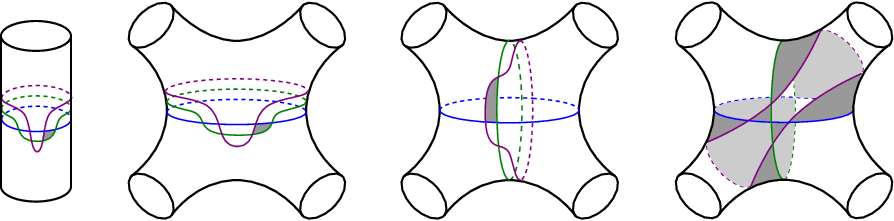}
\caption{Some of the possible
$(\bm\eta(I^0),\bm\eta(I^1),\bm\eta(I^2))$-triangles.}
\label{fig:smalltriangles}
\end{figure}

\begin{proposition} \label{prop:d^2=0}
For tuples $I < I'$ in $\{0,1,\infty\}^n$,
\begin{equation}\label{eqn:D^2=0}
\sum_{\substack{I = I^0 < \dots < I^k = I' \\ \text{succ. seq.}}} F_{\eta(I^0),
\dots, \eta(I^k)}\left(( \Theta_1^{I^0,I^1} + \Theta_2^{I^0,I^1}) \otimes \dots
\otimes (\Theta_1^{I^{k-1},I^k} + \Theta_2^{I^{k-1},I^k})\right) = 0,
\end{equation}
It then follows from the $\mathcal{A}_{\infty}$ relation (\ref{eqn:Ainfty})
that $D^2=0$.
\end{proposition}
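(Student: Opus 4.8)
The plan is to establish the displayed identity \eqref{eqn:D^2=0} first, by induction on the common length $k$ of all successor sequences from $I$ to $I'$, and then to deduce $D^2=0$ formally from \eqref{eqn:D^2=0} and the $\AA_\infty$ relation \eqref{eqn:Ainfty}. Note that \eqref{eqn:D^2=0} is an identity purely among counts of pseudo-holomorphic polygons in the multi-diagram spanned by the $\bm\eta$ curves; for brevity write $\Psi^{J,J'} = \Theta^{J,J'}_1 + \Theta^{J,J'}_2$ for an immediate successor pair $J<J'$, and recall from Lemma \ref{lem:isom} that every $\ul\cfkt(\bm\eta(J),\bm\eta(J'))$ has vanishing differential, so each $\Psi^{J,J'}$ is a cycle. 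For $k=1$ the left side of \eqref{eqn:D^2=0} is simply the differential on $\ul\cfkt(\bm\eta(I),\bm\eta(I'))$, which is zero. For $k=2$, the pair $I,I'$ either differs in a single coordinate, in which case the unique successor sequence contributes a triangle count equal to zero by the first part of Lemma \ref{lem:triangles}, or differs in two coordinates, in which case each of the two successor sequences contributes $\Theta^{I,I'}_1+\Theta^{I,I'}_2+\Theta^{I,I'}_3+\Theta^{I,I'}_4$ by the second part of Lemma \ref{lem:triangles}, and the two copies cancel since we work over $\F$.

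For the inductive step, assume \eqref{eqn:D^2=0} holds for every pair of tuples whose successor sequences have length less than $k$, and let $I<I'$ have length $k\ge 3$. The idea is to run the $\AA_\infty$ relation \eqref{eqn:Ainfty} in the multi-diagram on the curve systems $\{\bm\eta(J) : I\le J\le I'\}$, with the cycles $\Psi^{J,J'}$ inserted along successor steps, and to sum the resulting identities over all successor sequences from $I$ to $I'$. Every term containing a bigon drops out, since the differentials on the $\ul\cfkt(\bm\eta(J),\bm\eta(J'))$ all vanish and the $\Psi^{J,J'}$ are cycles. Each remaining term factors as an outer polygon map applied to the output of an inner polygon map $F_{\eta(J^a),\dots,\eta(J^b)}(\Psi^{J^aJ^{a+1}}\otimes\cdots\otimes\Psi^{J^{b-1}J^b})$ taken along a successor subsequence of length $b-a<k$; holding the rest of the sequence fixed and summing over all successor sequences from $J^a$ to $J^b$, this inner factor sums to zero by the inductive hypothesis (using the second part of Lemma \ref{lem:triangles} to evaluate the length-two pieces), so those terms contribute nothing. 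What survives reassembles into the left side of \eqref{eqn:D^2=0}, which is therefore zero. This step is where the real work lies: one must organize the successor sequences and intermediate tuples so that the surviving terms are exactly the asserted sum, and track the mod-$2$ cancellations carefully. This is entirely parallel to the arguments of Ozsv\'ath--Szab\'o \cite{OSzDouble} and Manolescu \cite{ManolescuSkein}, with Lemma \ref{lem:triangles} playing the role of their triangle computations; I expect this bookkeeping to be the main obstacle.

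Granting \eqref{eqn:D^2=0}, the vanishing of $D^2$ follows formally. Fix $I\le I''$; the component of $D^2$ from the $I$-summand to the $I''$-summand is $\sum_{I\le I'\le I''} D_{I',I''}\circ D_{I,I'}$, a sum of composites of the maps $f_{I^0,\dots,I^k}$ and the differentials $\partial$. Applying the $\AA_\infty$ relation \eqref{eqn:Ainfty} to the multi-diagram $(\bm\alpha,\bm\eta(I^0),\dots,\bm\eta(I^k))$ with inputs $\x\otimes\Psi^{I^0I^1}\otimes\cdots\otimes\Psi^{I^{k-1}I^k}$, for each successor sequence $I=I^0<\dots<I^k=I''$, the terms whose inner polygon map involves $\bm\alpha$ are precisely these composites, while the terms whose inner polygon map lies among the $\bm\eta$ curves have the form $F_{\alpha,\eta(I^0),\dots}(\x\otimes\Psi\otimes\cdots\otimes F_{\eta(I^a),\dots,\eta(I^b)}(\Psi\otimes\cdots)\otimes\cdots)$. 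Summing over all successor sequences from $I$ to $I''$ while fixing the data outside the subsequence $I^a<\dots<I^b$, the factor $F_{\eta(I^a),\dots,\eta(I^b)}(\Psi\otimes\cdots\otimes\Psi)$ sums to zero by \eqref{eqn:D^2=0} applied to the pair $(I^a,I^b)$, and since the outer map is linear and unaffected by this choice, these terms contribute nothing. Hence the sum over all successor sequences of the $\bm\alpha$-involving terms vanishes, which is exactly the statement $D^2=0$.
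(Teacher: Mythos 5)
Your treatment of the base cases $k=1,2$ agrees exactly with the paper's, and your deduction of $D^2=0$ from \eqref{eqn:D^2=0} via \eqref{eqn:Ainfty} (applied to the multi-diagrams containing $\bm\alpha$) is correct and fleshes out what the paper just asserts. The problem is the inductive step for $k>2$, which as written cannot close.

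The difficulty is structural, not merely a matter of bookkeeping. Fix a successor sequence $I=I^0<\dots<I^k=I'$ and write down the $\AA_\infty$ relation \eqref{eqn:Ainfty} for the $k+1$ curve systems $\bm\eta(I^0),\dots,\bm\eta(I^k)$ applied to the inputs $\Psi^{I^0I^1}\otimes\cdots\otimes\Psi^{I^{k-1}I^k}$. Every term of that relation factors as an outer polygon map applied to an inner one; the full count $F_{\eta(I^0),\dots,\eta(I^k)}(\Psi\otimes\cdots\otimes\Psi)$ only ever appears as the \emph{inner} piece of the term $(s,t)=(1,k+1)$, where the outer piece is the differential on $\ul\cfkt(\bm\eta(I^0),\bm\eta(I^k))$ --- which vanishes by Lemma \ref{lem:isom}. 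So that term is $0$, and there is no standalone occurrence of the full $(k+1)$-gon count anywhere in the relation. After you kill the bigon terms and apply the inductive hypothesis to the shorter inner factors, every term of the summed relation is zero and you are left with the tautology $0=0$; nothing ``survives'' to reassemble into the left side of \eqref{eqn:D^2=0}. (To see why this cannot possibly work: in an $\AA_\infty$ algebra with vanishing differential, the $\AA_\infty$ relations are perfectly consistent with nonvanishing higher products $\mu_k$, so no amount of such manipulation alone can force a top polygon count to vanish.)

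The paper instead proves \eqref{eqn:D^2=0} for $k>2$ by a direct $\delta$-grading argument, and this is the ingredient you are missing. Any $(k+1)$-gon $\psi'$ contributing a nonzero count satisfies $P(\psi')-\mu(\psi')=k-2>0$, since it avoids the basepoints and has generic Maslov index $2-k$. One then exhibits, by concatenating the small triangles from Lemma \ref{lem:triangles} and the disks from Lemma \ref{lem:isom}, a reference polygon $\psi$ ending at some $\Theta^{I^0,I^k}_s$ with $P(\psi)-\mu(\psi)=0$; since $\Theta^{I^0,I^k}_s$ sits in the minimal $\delta$-grading of $\ul\cfkt(\bm\eta(I^0),\bm\eta(I^k))$, one can further concatenate with a disk $\phi$ with $P(\phi)-\mu(\phi)\leq 0$ to reach the output $\Theta$ of $\psi'$. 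As $D(\psi*\phi)-D(\psi')$ is multi-periodic, Proposition \ref{prop:mu=P} forces $P(\psi')-\mu(\psi')=P(\psi*\phi)-\mu(\psi*\phi)\leq 0$, contradicting $k-2>0$. This is the key step your proof lacks; the $\AA_\infty$ machinery alone will not yield it.
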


\begin{proof}[Proof of Proposition \ref{prop:d^2=0}]
For $k=1$, this is just the statement that $\Theta_1^{I^0,I^1} +
\Theta_2^{I^0,I^1}$ is a cycle in $\ul\cfkt(\bm\eta(I^0),\bm\eta(I^1))$.

Suppose $k=2$. If $I=I^0$ and $I'=I^2$ differ in only one coordinate, then the
proposition follows from Lemma \ref{lem:triangles}. Otherwise, there are
exactly two tuples, $I^1$ and $J^1$, with $I^0<I^1<I^2$ and $I^0<J^1<I^2$. By
Lemma \ref{lem:triangles}, the contributions of these two successor sequences
to the sum (\ref{eqn:D^2=0}) cancel.

Now, suppose $k>2$. For any $a_1,\dots,a_k \in \{1,2\}$ and $b \in \{1, \dots,
2^{\epsilon(I^0, I^k})\}$, there exists a class
\[
\psi \in \pi_2(\Theta^{I^0,I^1}_{a_1}, \dots, \Theta^{I^{k-1},I^k}_{a_k}, \Theta^{I^0,I^k}_{b})
\]
with $P(\psi)-\mu(\psi) = 0$, gotten by concatenating the Whitney triangles
described in the proof of Lemma \ref{lem:triangles} with the Whitney disks in
$\pi_2(\Theta^{I^i,I^j}_p,\Theta^{I^i,I^j}_q)$ described in the proof of Lemma
\ref{lem:isom}. Suppose, for a contradiction, that the coefficient of some
$\Theta\in \T_{\eta(I^0)}\cap\T_{\eta(I^k)}$ in the sum (\ref{eqn:D^2=0}) is
nonzero. Then there is a Whitney $(k+1)$-gon
\[
\psi' \in \pi_2(\Theta^{I^0,I^1}_{a_1}, \dots, \Theta^{I^{k-1},I^k}_{a_k}, \Theta)
\]
with $P(\psi')-\mu(\psi') = k-2.$ Since $\Theta^{I^0,I^k}_{b}$ has the minimal
$\delta$-grading among all generators of $\ul\cfkt(\bm\eta(I^0),\bm\eta(I^k))$,
there is a class $\phi\in\pi_2(\Theta^{I^0,I^k}_{b}, \Theta)$ with
$P(\phi)-\mu(\phi)\leq 0$. Then $P(\psi*\phi)-\mu(\psi*\phi) \leq 0$ as well.
On the other hand, $D(\psi*\phi)-D(\psi')$ is a multi-periodic domain, so
Proposition \ref{prop:mu=P} implies that $P(\psi*\phi)-\mu(\psi*\phi) =
P(\psi')-\mu(\psi')$, a contradiction.
\end{proof}

The main theorem of this section is as follows.

\begin{theorem}
\label{thm:cubeofres} The complex $(X(\{0,1\}^{n}),D)$ is quasi-isomorphic to
$(\ul\cfkt(\bm\alpha,\bm\eta(I^\infty),\partial).$
\end{theorem}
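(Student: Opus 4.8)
The plan is to realize $(X(\{0,1,\infty\}^n), D)$ as an iterated mapping cone and then prune away the acyclic pieces. First I would observe that $D$ respects the partial order on $\{0,1,\infty\}^n$ in the sense that $X$ is filtered by the ``number of $\infty$'s'' — or, more precisely, by the position of each coordinate in the cyclic order $0 < 1 < \infty$. The key input is Theorem \ref{thm:manolescu} (equivalently Proposition \ref{prop:homalgebra}): for each coordinate $j$ and each way of fixing the remaining coordinates, the three complexes indexed by $I_j = 0, 1, \infty$ sit in an exact triangle with maps $f_{I^0,I^1}$, $f_{I^1,I^2}$, $f_{I^2,I^0}$, and the homotopies witnessing this are exactly the length-two maps $f_{I^0,I^1,I^2}$. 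Because $D^2 = 0$ has already been established in Proposition \ref{prop:d^2=0}, the whole of $(X,D)$ is built by iterating this triangle $n$ times, one coordinate at a time.

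Concretely, I would proceed by induction on $n$, or rather by peeling off one crossing at a time. Fix the last crossing $c_n$. Partition $\{0,1,\infty\}^n = S_0 \sqcup S_1 \sqcup S_\infty$ according to the value of the $n$-th coordinate; each $S_{\epsilon}$ is a copy of $\{0,1,\infty\}^{n-1}$, and $(X(S_\epsilon), D|_{S_\epsilon})$ is the analogous cube-of-resolutions complex for the diagram $\DD$ with $c_n$ resolved to $\epsilon$. The maps $D_{I,I'}$ with $I \in S_0$, $I' \in S_1$ assemble into a chain map $F_{01}\co X(S_0) \to X(S_1)$; similarly $F_{1\infty}\co X(S_1) \to X(S_\infty)$ and $F_{\infty 0}\co X(S_\infty) \to X(S_0)$ (this last one raises the $n$-th coordinate cyclically, $\infty \to 0$, which is allowed since $D$ was defined using cyclic successor sequences — though note $X(\{0,1\}^n)$ only uses the non-cyclic part). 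The content of Theorem \ref{thm:manolescu}, applied ``in families'' over $\{0,1,\infty\}^{n-1}$, is that any two of these three maps compose to something null-homotopic via the length-two polygon maps, and that the third map on the cone is a quasi-isomorphism — i.e. $X(S_1 \cup S_\infty)$ (with its internal differential including $F_{1\infty}$) together with the map $F_{\infty 0}$ composed appropriately gives a mapping cone quasi-isomorphic to $X(S_0)$, and cyclically. Iterating: the full complex $(X(\{0,1,\infty\}^n), D)$ is quasi-isomorphic to $(\ul\cfkt(\bm\alpha, \bm\eta(I^\infty)), \partial)$, because repeatedly collapsing the ``$0,1$ at coordinate $j$'' pair via the exact triangle leaves only the all-$\infty$ vertex.

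One has to be slightly careful about what ``$(X(\{0,1\}^n), D)$'' means versus the full cyclic cube. The statement of Theorem \ref{thm:cubeofres} uses $X(\{0,1\}^n)$, the sub-object on the non-cyclic sub-cube, but $D$ restricted there is genuinely a differential (it is the sum of $D_{I,I'}$ over honest successor sequences $I < I'$ inside $\{0,1\}^n$). The cleanest route is the one taken by Ozsv\'ath--Szab\'o in \cite{OSzDouble}: treat $X(\{0,1,\infty\}^n)$ with its full differential as an $n$-fold iterated cone built from the exact triangles, argue that it is acyclic would be false — rather, argue that the full cyclic complex is quasi-isomorphic to zero is \emph{also} false; instead the right statement is that $X(\{0,1,\infty\}^n)$ with all three of $F_{01}, F_{1\infty}, F_{\infty 0}$ present at each stage is quasi-isomorphic to $X(\{0,1\}^n)$ (by cancelling the $\infty$-vertex against, say, the $1 \to \infty$ maps via Proposition \ref{prop:homalgebra}(2)), and simultaneously $X(\{0,1,\infty\}^n)$ is quasi-isomorphic to the single complex $\ul\cfkt(\bm\alpha, \bm\eta(I^\infty))$ by cancelling the $0$- and $1$-vertices at each coordinate against each other. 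I would make this precise by an induction that at each step replaces $\{0,1,\infty\}$ in one coordinate by $\{\infty\}$, using that a mapping cone of a quasi-isomorphism is acyclic together with the homological-algebra lemma (standard, as in \cite[Lemma 4.2]{OSzDouble}) that an exact triangle of complexes realized by explicit chain maps and homotopies lets one replace the triangle by any one of its vertices.

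The main obstacle is bookkeeping rather than a genuinely hard idea: one must check that the higher polygon maps $F_{\eta(I^0), \dots, \eta(I^k)}$ appearing in $D$ for $k \geq 2$ are exactly the homotopies and higher homotopies needed to make the iterated-cone description valid, and that these are compatible across the different coordinates — i.e. that $D$ really is the ``totalization'' of an $n$-dimensional grid of exact triangles. This is where the $\AA_\infty$ relation \eqref{eqn:Ainfty} and Proposition \ref{prop:mu=P} do the work, and it is essentially a repackaging of Proposition \ref{prop:d^2=0} together with the quasi-isomorphism statement in Proposition \ref{prop:homalgebra}(2); the argument is the direct analogue of the proof that the cube of resolutions computes $\widehat{HF}(\Sigma(L))$ in \cite{OSzDouble}, with the only new feature being the presence of the twisted coefficients $T^{\gen{\omega_\Omega, \phi}}$, which are harmless since all the relevant polygons and periodic domains were already analyzed in Section \ref{sec:pdomains} and Proposition \ref{prop:homalgebra} was noted to go through verbatim with twisting.
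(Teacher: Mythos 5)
Your overall plan --- peel off one crossing at a time, use Proposition \ref{prop:homalgebra}(2) ``in families'' to collapse each $\{0,1\}$-edge onto the $\infty$-vertex, and iterate --- is essentially the paper's argument. The paper formalizes the ``in families'' step as Lemma \ref{lem:acyclic}: $X(\{0,1\}^{n-k-1}\times\{0,1,\infty\}\times\{\infty\}^k)$ is acyclic, which implies that the collapse map $G_k\co X(\{0,1\}^{n-k}\times\{\infty\}^k) \to X(\{0,1\}^{n-k-1}\times\{\infty\}^{k+1})$ is a quasi-isomorphism since that acyclic complex is precisely the cone of $G_k$; composing the $G_k$ gives the quasi-isomorphism $G$.

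That said, there are two things to correct. First, a factual error: the differential $D$ is \emph{not} built from cyclic successor sequences, only from honest ones $I < I'$ in the linear order $0 < 1 < \infty$, so the maps $F_{\infty 0}$ do not live in $X$ at all. Cyclic sequences appear only in the proof of the exact triangle (Theorem \ref{thm:manolescu} via Proposition \ref{prop:homalgebra}); once that triangle is in hand, the complex $X$ is an ordinary, non-cyclic cube. Your proposal to ``cancel the $\infty$-vertex against the $1\to\infty$ maps'' to identify $X(\{0,1,\infty\}^n)$ with $X(\{0,1\}^n)$ therefore does not parse as stated, and is not needed. Second, a genuine gap: Proposition \ref{prop:homalgebra}(2) is a statement about a single coordinate with the others held fixed, and to apply it ``in families'' one must filter $X(\{0,1\}^{n-k-1}\times\{0,1,\infty\}\times\{\infty\}^k)$ by $I_1 + \dots + I_{n-k-1}$ so that the associated graded splits into summands $X(J\times\{0,1,\infty\}\times\{\infty\}^k)$ for $J\in\{0,1\}^{n-k-1}$, each of which is literally the cone of the quasi-isomorphism in Proposition \ref{prop:homalgebra}(2). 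Your proposed filtration by ``number of $\infty$'s'' will not isolate a single coordinate in this way; you need the weight in the first $n-k-1$ coordinates instead. Once that filtration argument is in place and the acyclicity of the associated graded (hence of the total complex) is established, the rest of your sketch goes through as stated.
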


This theorem follows rather quickly from the lemma below.

\begin{lemma}
\label{lem:acyclic} For $0\leq k \leq n$, consider the complex
$X(\{0,1\}^{n-k-1}\times\{0,1,\infty\}\times\{\infty\}^k)$, with its
differential induced by $D$. Then,
\begin{equation}\label{eqn:acyclic}H_*(X(\{0,1\}^{n-k-1}\times\{0,1,\infty\}\times\{\infty\}^k),D)=0.\end{equation}
\end{lemma}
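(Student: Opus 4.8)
The plan is to filter away the ``binary'' directions of the cube and thereby reduce the statement to the single case with no binary coordinates, which is nothing but the chain-level form of Manolescu's skein exact triangle. Set $Z = X(\{0,1\}^{n-k-1}\times\{0,1,\infty\}\times\{\infty\}^{k})$ and, for $I$ in this index set, let $\ell(I) = I_{1} + \dots + I_{n-k-1}$ be the sum of its binary coordinates. Since every $D_{I,I'}$ has $I \le I'$ and the binary coordinates of a tuple in $Z$ can only move from $0$ to $1$, the differential $D$ never decreases $\ell$, so $Z$ is filtered by $\ell$. Moreover, if $I \le I'$ and $\ell(I) = \ell(I')$ then $I$ and $I'$ have the same binary part $b \in \{0,1\}^{n-k-1}$. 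Hence the associated graded differential splits $Z$ into a direct sum, over all $b$, of the three-term complexes
\[
X_b = \bigl(\ul\cfkt(\bm\alpha,\bm\eta(I_b^0)) \xrightarrow{f_{I_b^0,I_b^1}} \ul\cfkt(\bm\alpha,\bm\eta(I_b^1)) \xrightarrow{f_{I_b^1,I_b^\infty}} \ul\cfkt(\bm\alpha,\bm\eta(I_b^\infty))\bigr),
\]
where $I_b^{j}$ is the tuple with binary part $b$, $(n-k)$-th coordinate $j \in \{0,1,\infty\}$, and last $k$ coordinates $\infty$; here $X_b$ also carries the internal differentials of its summands together with the term $f_{I_b^0,I_b^1,I_b^\infty}$ coming from the unique successor sequence $I_b^0 < I_b^1 < I_b^\infty$. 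Because the filtration is finite, it suffices to show that $H_*(X_b) = 0$ for every $b$; then the first page of the associated spectral sequence vanishes and $H_*(Z) = 0$. Equivalently, $Z$ is an iterated mapping cone, over the cube $\{0,1\}^{n-k-1}$, of the complexes $X_b$, so acyclicity of each $X_b$ forces that of $Z$.

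Now fix $b$. First, $X_b$ is a genuine chain complex: the $(I_b^0, I_b^\infty)$-component of the relation $D^2 = 0$ from Proposition \ref{prop:d^2=0} states precisely that $f_{I_b^0,I_b^1,I_b^\infty}$ is a null-homotopy of $f_{I_b^1,I_b^\infty}\circ f_{I_b^0,I_b^1}$. Next, $I_b^0, I_b^1, I_b^\infty$ is a cyclic successor sequence whose three members differ only in the $(n-k)$-th coordinate, so Proposition \ref{prop:homalgebra} applies to it and to its two cyclic permutations. The resulting six identities say exactly that the three consecutive compositions of $f_{I_b^0,I_b^1}$, $f_{I_b^1,I_b^\infty}$, $f_{I_b^\infty,I_b^0}$ are null-homotopic through the corresponding length-two maps, and that the combinations appearing in part (2) of Proposition \ref{prop:homalgebra} are quasi-isomorphisms. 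These are exactly the hypotheses of the homological-algebra lemma underlying the skein exact triangle (see \cite{ManolescuSkein} and \cite{OSzDouble}), whose conclusion is that the three-term total complex assembled from two of these maps and one of these homotopies is acyclic. That complex is $X_b$, so $H_*(X_b) = 0$, which finishes the argument.

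With Propositions \ref{prop:d^2=0} and \ref{prop:homalgebra} already in hand, the only substantive ingredient is the homological-algebra lemma behind the skein triangle, and after that the reduction is formal. The one point that needs a little care is the identification of the associated graded: one must confirm that within a single filtration level the associated graded differential cannot alter any binary coordinate, so that it really does decompose as $\bigoplus_b X_b$ and not into some coarser grouping, but this is immediate from the observation that $I \le I'$ together with $\ell(I) = \ell(I')$ forces $I$ and $I'$ to agree in every binary coordinate.
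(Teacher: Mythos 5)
Your proof is correct and follows essentially the same route as the paper: both filter by the sum of the binary coordinates, identify the associated graded as a direct sum of three-term complexes indexed by $\{0,1\}^{n-k-1}$, and then invoke the homological-algebra lemma underlying the skein exact triangle together with Proposition~\ref{prop:homalgebra} to conclude each such complex is acyclic. The paper packages the last step as the statement that $f_{I^0,I^1,I^2}+f_{I^1,I^2}\colon MC(f_{I^0,I^1})\to\ul\cfkt(\bm\alpha,\bm\eta(I^2))$ is a quasi-isomorphism, which is equivalent to your formulation that the three-term total complex is acyclic.
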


\begin{proof}[Proof of Lemma \ref{lem:acyclic}]
Consider the decreasing filtration of
$X(\{0,1\}^{n-k-1}\times\{0,1,\infty\}\times\{\infty\}^k)$ induced by the
grading which assigns to an element $\x$ in the summand
$\ul\cfkt(\bm\alpha,\bm\eta(I))$ the number $I_1+\dots + I_{n-k-1}$. The
homology of the associated graded object is a direct sum
\begin{equation}\label{eqn:assgraded}\bigoplus_{J\in\{0,1\}^{n-k-1}}
H_*(X(\{J\times\{0,1,\infty\}\times\{\infty\}^k),D).\end{equation} Each complex
in (\ref{eqn:assgraded}) is the mapping cone of a map
\begin{equation}\label{eq:mappingcones}X(J\times\{0,1\}\times\{\infty\}^k) \rightarrow
X(J\times\{\infty\}\times\{\infty\}^k).\end{equation} Let \[I^0 = J\times\{0\}\times\{\infty\}^k,\quad I^1 = J\times\{1\}\times\{\infty\}^k, \quad I^2 = J\times\{\infty\}\times\{\infty\}^k.\] Then $X(J\times\{0,1\}\times\{\infty\}^k)$ is the mapping cone, $MC(f_{I^0,I^1})$, of \[f_{I^0,I^1}: \ul\cfkt(\bm\alpha,\bm\eta(I^0))\rightarrow \ul\cfkt(\bm\alpha,\bm\eta(I^1)),\] and the map in \eqref{eq:mappingcones} is \[f_{I^0,I^1,I^2} + f_{I^1,I^2}: MC(f_{I^0,I^1})\rightarrow \ul\cfkt(\bm\alpha,\bm\eta(I^2)).\]
The quasi-isomorphism in part $(2)$ of Proposition \ref{prop:homalgebra} factors through this map, which implies that $f_{I^0,I^1,I^2} + f_{I^1,I^2}$ is also a quasi-isomorphism. The terms in (\ref{eqn:assgraded}) are therefore zero, which implies (\ref{eqn:acyclic}).
\end{proof}

\begin{proof}[Proof of Theorem \ref{thm:cubeofres}]
For $0\leq k \leq n$, the complex
$X(\{0,1\}^{n-k-1}\times\{0,1,\infty\}\times\{\infty\}^k)$ is the mapping cone
of \[G_k:X(\{0,1\}^{n-k}\times\{\infty\}^{k})\rightarrow
X(\{0,1\}^{n-k-1}\times\{\infty\}^{k+1}),\] where $G_k$ is the sum, over all
$I\in\{0,1\}^{n-k}\times\{\infty\}^k$ and $I'\in
\{0,1\}^{n-k-1}\times\{\infty\}^{k+1}$, of the maps $D_{I,I'}$. By Lemma
\ref{lem:acyclic}, $G_k$ must be a quasi-isomorphism. The composition
\[G=G_0\circ\dots G_{n-1}:X(\{0,1\}^{n})\rightarrow X(\{\infty\}^n)\] is
therefore a quasi-isomorphism, proving Theorem \ref{thm:cubeofres}.
\end{proof}

Proposition \ref{prop:generic} and equations \eqref{eqn:err3} and \eqref{eqn:err4} immediately imply the following
corollary.

\begin{corollary} \label{cor:iso}
For any system of weights $\r$,
\[
H_*(X(\{0,1\}^n),D) \cong \HFKtt(\LL,[\omega_\r]_{I^\infty}; \MM) \cong \ul\HFK(L, [\omega_\r]_{I^\infty}; \MM) \otimes_\F (V^{\otimes (m-|L|)}.
\]
In particular, if $\r = \r_\Omega$ for a function $\Omega\co \{1, \dots, n\} \to \Z$, then
\[
H_*(X(\{0,1\}^n),D) \cong \HFK(L) \otimes_{\F} (V^{\otimes (m-|L|)} \otimes_\F \MM. \qedhere
\]
\end{corollary}

Note that $X(\{0,1\}^n)$ has a decreasing filtration induced by the grading $Q$ which assigns to any element of the summand $\ul\cfkt(\bm\alpha,\bm\eta(I))$ the number $|I|$. We shall refer to $Q$ as the \emph{filtration grading}. This
filtration gives rise to a spectral sequence $\SS_\MM^\r$. (If $\r = \r_\Omega$, we may denote this spectral sequence $\SS_\MM^\Omega$ as in the Introduction.) The $E_1$ page of $\SS_\MM^\r$ is the direct sum
\[
\bigoplus_{I \in \{0,1\}^n} \HFKtt(\LL_I, [\omega_\r]_I; \MM),
\]
and its $d_1$ differential is the sum of the maps $(f_{I,I'})_*$, over immediate successors $I'$ of $I$. We shall be interested in the case that $\r$ is generic and $\MM=\FF$. In this case, the $E_1$ page of $\SS_\FF^\r$ is a sum over connected resolutions,
\begin{equation}\label{eq:e2twisted}
\bigoplus_{I\in\RR(D)} \HFKt(\LL_I)\otimes_{\F}\FF,
\end{equation}
since $\HFKtt(\LL_I, [\omega_{\Omega}]_I;\FF)$ vanishes if $\DD_I$ is disconnected, by Proposition \ref{prop:generic}, and is isomorphic to $\HFKt(\LL_I)\otimes_{\F}\FF$ if $\DD_I$ is connected, by \eqref{eqn:err}. Since no edge in the cube of resolutions of $\DD$ can join two connected resolutions, the $d_1$ differential of $\SS_\FF^\r$ is zero. Therefore,
$E_2(\SS_\FF^\r) \cong E_1(\SS_\FF^\r)$. In Section \ref{sec:delta}, we prove that $\SS_\FF^\r$ collapses at its $E_3$ page. Since $\FF$ is a field, Corollary \ref{cor:iso} implies that
\[
H_*(E_2(\SS_\FF^\r),d_2(\SS_\FF^\r)) \cong \ul\HFK(L, [\omega_\r]) \otimes_{\F} V^{\otimes(m-|L|)};
\]
if $\r = \r_\Omega$, then
\[
H_*(E_2(\SS_\FF^\r),d_2(\SS_\FF^\r)) \cong \HFK(L) \otimes_{\F} V^{\otimes(m-|L|)} \otimes_\F \FF.
\]
In Section \ref{sec:twisted}, we show that
$(E_2(\SS_\FF^\r), d_2(\SS_\FF^\r))$ is isomorphic to the complex $(C^\r(\DD), \partial^\r)$ defined in Section \ref{sec:complex}. Combined with the grading calculations in Section \ref{sec:delta}, this proves Theorem
\ref{thm:main}.

We end this section with a brief discussion of orientations and gradings.

Recall that for $I \in \{0,1\}^n$, the Heegaard diagram $(\Sigma, \bm\alpha, \bm\eta(I), \O,\X)$ determines $L_I$ as an oriented link, where $L_I$ is oriented as the boundary of the black regions in $\DD_I$. Therefore, for $I,I' \in \{0,1\}^n$, the complexes $\CFKtt(\bm\alpha, \bm\eta(I))$ and $\CFKtt(\bm\eta(I), \bm\eta(I'))$ come equipped with Maslov and Alexander gradings.

Suppose $I'$ is an immediate successor of $I$, differing in the $j\Th$ entry. The Maslov and Alexander gradings of $\Theta^{I,I'}_1$ differ from those of $\Theta^{I,I'}_2$ by 1 (in the same direction); from now on, we shall
assume that $\Theta^{I,I'}_1$ is the unique element of $\T_{\eta(I)} \cap \T_{\eta(I')}$ in the maximal Maslov grading. Furthermore, we may consider the chain maps
\[
\begin{gathered}
\Psi^{\alpha,\eta(I)}_i \co  \CFKtt(\bm\alpha, \bm\eta(I)) \to
\CFKtt(\bm\alpha,\bm\eta(I)) \\
\Psi^{\eta(I), \eta(I')}_i \co  \CFKtt(\bm\eta(I), \bm\eta(I')) \to
\CFKtt(\bm\eta(I), \bm\eta(I'))
\end{gathered}
\]
defined in Section \ref{sec:psi}, which count disks that go over the basepoints in $\X$. We shall use these maps to describe the differentials in the spectral sequence $\SS^{\r}_\FF$. The following lemma will be useful.

\begin{lemma} \label{lem:psitheta1}
Suppose $I'$ is an immediate successor of $I$ which differs from $I$ in its $j\Th$ coordinate. Let $i_1$ and $i_3$ be the special indices associated to the crossing $c_j$, as shown in Figure \ref{fig:convention}. Then
\[
\Psi^{\eta(I),\eta(I')}_{i_1} (\Theta^{I,I'}_1) = \Psi^{\eta(I),\eta(I')}_{i_3} (\Theta^{I,I'}_1) = \Theta^{I,I'}_2,
\]
while
$\Psi^{\eta(I),\eta(I')}_i(\Theta^{I,I'}_1)=0$ for $i \ne i_1, i_3$.
\end{lemma}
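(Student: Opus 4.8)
The plan is to read off $\Psi^{\eta(I),\eta(I')}_k(\Theta^{I,I'}_1)$ from the local pictures of the diagram $(\Sigma,\bm\eta(I),\bm\eta(I'))$ near $c_j$. Since $I'$ is an immediate successor of $I$ in the $j\Th$ coordinate, we may assume $I_j=0$ and $I'_j=1$, so $\bm\eta(I)$ and $\bm\eta(I')$ agree away from $c_j$ up to small Hamiltonian isotopy, while $\eta_{c_j}(I)$ and $\eta_{c_j}(I')$ are Hamiltonian translates of $\beta_j$ and $\gamma_j$ and meet in exactly two points. By the description preceding Lemma \ref{lem:isom}, $\Theta^{I,I'}_1$ and $\Theta^{I,I'}_2$ use the low-grading points $\theta^{I,I'}_{p_i}$ ($1\le i\le m$) and $\theta^{I,I'}_{c_{j'}}$ ($j'\ne j$) and differ only in which of the two points of $\eta_{c_j}(I)\cap\eta_{c_j}(I')$ they contain.

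\textbf{Grading argument.} By Lemma \ref{lem:isom} the points $\Theta^{I,I'}_1,\Theta^{I,I'}_2$ span the minimal-$\delta$ summand, and by the convention introduced above $\Theta^{I,I'}_1$ is the unique generator of maximal Maslov grading, so $M(\Theta^{I,I'}_1)=M(\Theta^{I,I'}_2)+1$ and $\delta(\Theta^{I,I'}_1)=\delta(\Theta^{I,I'}_2)$. Any class $\phi\in\pi_2(\Theta^{I,I'}_1,\z)$ contributing to $\Psi^{\eta(I),\eta(I')}_k$ has $\mu(\phi)=1$, $O(\phi)=0$ and $X(\phi)=1$, hence $P(\phi)-\mu(\phi)=0$; by \eqref{eqn:grading} and \eqref{eqn:maslovgrading} the generator $\z$ satisfies $\delta(\z)=\delta(\Theta^{I,I'}_1)$ and $M(\z)=M(\Theta^{I,I'}_1)-1$. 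Therefore $\z=\Theta^{I,I'}_2$, and $\Psi^{\eta(I),\eta(I')}_k(\Theta^{I,I'}_1)$ is a scalar multiple of $\Theta^{I,I'}_2$ for every $k$.

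\textbf{Identifying the domains.} Near $c_j$, the two bigons bounded by $\eta_{c_j}(I)$ and $\eta_{c_j}(I')$ lie in the two pair-of-pants components of $\Sigma\minus(\mu_1\cup\dots\cup\mu_m\cup\beta_1\cup\dots\cup\beta_n)$ at $c_j$, which by the convention on $\O$ and $\X$ fixed above contain exactly the basepoints $X_{k_1}$ and $X_{k_2}$; from Figure \ref{fig:twisting} these bigons avoid $\mathbb{A}$ (whose points at $c_j$ lie on the other two tubes) and run from $\Theta^{I,I'}_1$ to $\Theta^{I,I'}_2$. Each is an embedded bigon, so it has $\mu=1$ and a single holomorphic representative and contributes the coefficient $T^0=1$; this gives $\Psi^{\eta(I),\eta(I')}_{k_1}(\Theta^{I,I'}_1)=\Psi^{\eta(I),\eta(I')}_{k_2}(\Theta^{I,I'}_1)=\Theta^{I,I'}_2$, provided nothing else contributes. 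To rule out other contributions I would use the description of periodic domains from Section \ref{sec:pdomains}: two classes in $\pi_2(\Theta^{I,I'}_1,\Theta^{I,I'}_2)$ differ by an element of $\Pi_{\eta(I),\eta(I')}$, which by Lemmas \ref{lem:PietaIetaI'} and \ref{lem:doublyperiodic} is built from components of $\Sigma\minus\bm\eta(I)$ and $\Sigma\minus\bm\eta(I')$ together with the domains $D^{\eta(I),\eta(I')}_{p_i}$ and $D^{\eta(I),\eta(I')}_{c_{j'}}$. Using positivity of domains and the fact (from the proof of Lemma \ref{lem:isom}) that the only basepoint-free regions near each $p_i$ and each $c_{j'}$ with $j'\ne j$ are thin bigons — at which our domain has no corner, since $\Theta^{I,I'}_1$ and $\Theta^{I,I'}_2$ agree there — adding any nonzero periodic domain either makes some multiplicity negative or forces the class over an $O$ basepoint or over a second $X$. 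This shows the two bigons above are the only contributions, so in particular $\Psi^{\eta(I),\eta(I')}_k(\Theta^{I,I'}_1)=0$ for $k\ne k_1,k_2$.

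\textbf{Main obstacle.} The delicate point is the last step: the exhaustive verification that no periodic-domain modification of the two distinguished bigons yields another class with $\mu=1$ passing over exactly one basepoint. This is a multiplicity-bookkeeping argument in the local pictures near crossings and ladybugs, entirely parallel to the case analysis in the proof of Lemma \ref{lem:isom} and to the arguments of Manolescu--Ozsv\'ath \cite{ManolescuOzsvathQA}; everything else in the proof is routine.
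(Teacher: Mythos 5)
The geometric picture in your proposal is wrong, and this introduces a real gap. You assert that the relevant domains in $\pi_2(\Theta^{I,I'}_1,\Theta^{I,I'}_2)$ through $X_{k_1}$ and $X_{k_2}$ are embedded bigons, and then conclude that each has a unique holomorphic representative. But the curves $\eta_{c_j}(I)$ and $\eta_{c_j}(I')$ are translates of $\beta_j$ and $\gamma_j$, which are not isotopic, so they bound no bigons near $c_j$; the regions adjacent to the two intersection points of these curves run down the tubes of $\Sigma$ until they meet the ladybug curves $\eta_{p_i}(I),\eta_{p_i}(I')$. Because $\Theta^{I,I'}_1$ and $\Theta^{I,I'}_2$ agree at $\theta_{p_i}^{I,I'}$, a class with the correct corners must be capped off so that its boundary near $p_i$ is a full copy of one of $\eta_{p_i}(I)$ or $\eta_{p_i}(I')$. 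The resulting domain is an \emph{annulus} containing a single $X$ basepoint (this is exactly how the paper uses the description in the proof of Lemma~\ref{lem:isom}), not a bigon.

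This changes the count in two ways you have not addressed. First, there are \emph{two} such positive $\mu=1$ classes for each of $k_1$ and $k_2$: the two annuli differ by the periodic domain $D_{p_i}^{\eta(I),\eta(I')}$, which is a difference of the two thin bigons at $p_i$. Those thin bigons contain no basepoints, so both annuli miss $\OO$ and all $X_j$ with $j\neq k_1$ (resp.\ $k_2$), and adding $D_{p_i}^{\eta(I),\eta(I')}$ does not introduce negative multiplicities; hence the positivity/basepoint argument you propose to rule out ``other contributions'' fails precisely here. Second, because the domains are annuli rather than bigons, the existence and count of holomorphic representatives is not automatic; the crux of the paper's argument is that of the two $\mu=1$ annuli, exactly one admits a (unique) holomorphic representative — a moduli-of-annuli analysis in the style of Ozsv\'ath--Szab\'o. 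Your proposal replaces this with the statement that a bigon always has a single representative, which is true but inapplicable, and so the key step is missing. The rest of your setup (gradings forcing the image into $\Theta^{I,I'}_2$, the avoidance of $\A$ for the special indices, vanishing away from $c_j$) is fine and matches the paper.
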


\begin{proof}[Proof of Lemma \ref{lem:psitheta1}]
It is not hard to see that there is a unique $\z\in\T_{\eta(I)}\cap\T_{\eta(I')}$ such that there exists a Whitney disk
$\phi\in\pi_2(\Theta^{I,I'}_1,\z)$ with $X_{i_1}(\phi)=1$, which avoids $\O$ and all other $X_i$. Namely, $\z$ is the point $\Theta^{I,I'}_2$ and the domain of $D(\phi)$ is an annulus, as in the proof of Lemma \ref{lem:isom}. There are actually two such disks in $\pi_2(\Theta^{I,I'}_1,\Theta^{I,I'}_2)$ with $\mu=1$, exactly one of which admits a holomorphic representative. (Compare \cite[proof of Lemma 9.4]{OSz3Manifold}.) This proves that $\Psi^{\eta(I),
\eta(I')}_{i_1}(\Theta^{I,I'}_1) = \Theta^{I,I'}_2$. The other statements follow similarly.
\end{proof}

\section{On \texorpdfstring{$\delta$}{delta}-gradings}
\label{sec:delta}
The summands $\CFKtt(\bm\alpha,\bm\eta(I))$ of $X(\{0,1\}^n)$ are endowed with
canonical absolute $\delta$-gradings, by Lemma \ref{lem:orientationunlink}, and
the complex $X(\{\infty\}^n)=\CFKtt(\bm\alpha,\bm\eta(I^{\infty}))$ has an
absolute $\delta$-grading determined by the orientation of the original link
$L$. Let $\Delta$ denote the grading on $X(\{0,1\}^n)$ obtained by shifting the
$\delta$-grading on each summand $\CFKtt(\bm\alpha, \bm\eta(I))$ by $(\abs{I} -
n_-(\DD))/2$. The two main results of this section are as follows.

\begin{theorem} \label{thm:gradedcube}
With respect to $\Delta$,
\begin{enumerate}
\item the differential $D$ on $X(\{0,1\}^n)$ is homogeneous of degree $1$, and
\item the quasi-isomorphism
\[
G:X(\{0,1\}^n)\rightarrow X(\{\infty\}^n)
\]
coming from Theorem \ref{thm:cubeofres} is grading-preserving.
\end{enumerate}
\end{theorem}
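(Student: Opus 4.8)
The plan is to reduce both parts to one $\delta$-grading formula for the pseudo-holomorphic polygons defining the maps $f_{I^0,\dots,I^k}$, and then to evaluate a short list of constants. For part (1), each summand $\CFKtt(\bm\alpha,\bm\eta(I))$ of $X(\{0,1\}^n)$ carries a canonical absolute $\delta$-grading by Lemma \ref{lem:orientationunlink}, since $\DD_I$ is an unlink; so, as the $-n_-(\DD)/2$ in $\Delta$ is a harmless global constant here, the claim that $D$ is homogeneous of degree $1$ is equivalent to the claim that each $f_{I^0,\dots,I^k}$, viewed as a map $\CFKtt(\bm\alpha,\bm\eta(I^0))\to\CFKtt(\bm\alpha,\bm\eta(I^k))$ (where $\abs{I^k}-\abs{I^0}=k$), shifts that $\delta$-grading by exactly $1-k/2$. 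Such a map counts Whitney $(k+2)$-gons $\phi$ with $\mu(\phi)=1-k$ and $O_i(\phi)=X_i(\phi)=0$ for all $i$, so $P(\phi)=0$. First I would record that, by Proposition \ref{prop:mu=P}, $P(\phi)-\mu(\phi)$ is unchanged when $\phi$ is altered by a multi-periodic domain and is additive under concatenation with Whitney disks; together with \eqref{eqn:grading}, this shows that the $\delta$-grading change effected by $f_{I^0,\dots,I^k}$ is a single number depending only on the sequence $(I^0,\dots,I^k)$, of the form $\sum_{i=1}^k \delta(\Theta^{I^{i-1},I^i}_{r_i}) + \bigl(P(\phi)-\mu(\phi)\bigr) + c(I^0,\dots,I^k)$, where the $\Theta^{I^{i-1},I^i}_{r_i}$ lie in the minimal-$\delta$ summand of $\CFKtt(\bm\eta(I^{i-1}),\bm\eta(I^i))$ identified in Lemma \ref{lem:isom} and $c$ records the chosen normalizations.

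The second step is to evaluate these constants. Following the local analysis in Lemma \ref{lem:triangles} and the proof of Proposition \ref{prop:d^2=0}, the domain of a contributing polygon decomposes into pieces near the marked points and near the crossings; those near the marked points and near crossings whose resolution is constant along $(I^0,\dots,I^k)$ lie among small Hamiltonian translates and contribute nothing to $P-\mu$, so the $\delta$-grading change is determined by $k$ together with the local models at the $k$ changing crossings (Figures \ref{fig:twisting} and \ref{fig:smalltriangles}), which fix its dependence on $k$. Since $P(\phi)=0$ and $\mu(\phi)=1-k$, only an overall normalization remains, and the base case $k=1$ — where $f_{I^0,I^1}$ is one of the three maps in an instance of the skein exact triangle (Theorem \ref{thm:manolescu}), whose $\delta$-grading shift is computed from the local model and the known $\delta$-grading of $\Theta^{I^0,I^1}_1$, as in \cite{ManolescuSkein} — fixes it, giving the shift $1-k/2$. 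This proves part (1).

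For part (2), I would extend $\Delta$ to all complexes occurring in the proof of Theorem \ref{thm:cubeofres}: a summand $\CFKtt(\bm\alpha,\bm\eta(I))$ with $\DD_I$ an unlink again keeps its canonical absolute $\delta$-grading, now shifted by an amount $s(I)$ built from the numbers of $1$- and $\infty$-resolutions in $I$ and from $n_-(\DD)$, normalized so that $s(I)=(\abs{I}-n_-(\DD))/2$ for $I\in\{0,1\}^n$ and $s(I^\infty)=0$ (the latter matching the orientation-induced absolute $\delta$-grading on $X(\{\infty\}^n)=\CFKtt(\bm\alpha,\bm\eta(I^\infty))$). Recall that $G$ is a composite of the quasi-isomorphisms $G_k$, each a sum of maps $D_{I,I'}$ that collapse one coordinate from $\{0,1\}$ to $\infty$ (a $1\to\infty$ or a $0\to1\to\infty$ change, possibly together with some $0\to1$ changes in earlier coordinates). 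Applying the polygon grading formula of part (1) to these maps — now for cyclic rather than ordinary successor sequences, but with the identical local analysis at the changing crossings — I would compute their $\delta$-grading shifts and verify that, with the shift $s$, each $D_{I,I'}$-component carries $\Delta$-grading $g$ to $\Delta$-grading $g$; the only genuine input is the numerical identity that the shift for a $0\to\infty$ collapse equals the sum of the $0\to1$ and $1\to\infty$ shifts, plus the compatibility of $s$ at the two ends. It follows that each $G_k$, and hence $G$, is grading-preserving.

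The step I expect to be the main obstacle is the absolute normalization underlying both parts. The relative-grading invariance is automatic from Proposition \ref{prop:mu=P}; what requires real work is evaluating the absolute $\delta$-grading of $\Theta^{I,I'}_1$ — essentially the minimal $\delta$-grading of the Floer homology of an unlink in a connected sum of copies of $S^1\times S^2$, via Lemma \ref{lem:isom} — together with the local contributions at the changing crossings and the precise form of the shift $s(I)$ (in particular how $n_-(\DD)$ enters it), and then reconciling all of these with the conventions in \cite{ManolescuSkein}. Once those constants are in hand, the remainder is additivity bookkeeping.
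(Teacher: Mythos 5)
Your framework is right, and your conclusion for part (1) matches the paper's: $\delta(f_{I^0,\dots,I^k}) = (2-k)/2 = 1-k/2$, established via relative-grading invariance from Proposition~\ref{prop:mu=P} and additivity (Propositions~\ref{prop:welldefined} and \ref{prop:commute}), reducing everything to the $k=1$ base case. You correctly identify the absolute normalization as the crux, but the method you propose for computing it does not work as stated, and this is where the genuine gap lies. You suggest reading $\delta(f_{I^0,I^1})=1/2$ off ``the local model and the known $\delta$-grading of $\Theta^{I^0,I^1}_1$, as in \cite{ManolescuSkein}.'' The difficulty is that the shift of $f_{I^0,I^1}$ compares absolute $\delta$-gradings on \emph{two separate} unlink complexes $\CFKtt(\bm\alpha,\bm\eta(I^0))$ and $\CFKtt(\bm\alpha,\bm\eta(I^1))$, each normalized independently by Lemma~\ref{lem:orientationunlink}; the grading of $\Theta^{I^0,I^1}_1$ in the $\eta$-$\eta$ complex and the local polygon domain at the changing crossing do not by themselves pin down this comparison. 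The paper's Proposition~\ref{prop:triple} does the real work: it places the $k=1$ map inside a skein triangle with the original crossing restored, reduces to computing the shifts of $f_0$ and $f_2$ via Kauffman generators and the formula $\delta(\x)=\sum_c\delta(\x,c)$ of \cite{OSzAlternating}, handles disconnected resolutions by a finger move that creates Kauffman generators, and then deduces $\delta(H_1),\delta(H_2)$ from the fact that $f_2\circ H_1 + H_2\circ f_0$ is a grading-preserving quasi-isomorphism. Notably the intermediate shifts $\delta(f_0),\delta(f_2)$ differ according to the sign of the crossing (one is $1/2$, the other is a difference of crossing-sign counts), even though $\delta(f_1)=1/2$ in both cases; none of this is visible from the local polygon model alone.

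For part (2), postulating a shift function $s(I)$ interpolating between $(\abs{I}-n_-(\DD))/2$ and $0$ is a reasonable reformulation, but proving such an $s$ exists consistently is essentially the whole theorem, and your ``only genuine input'' does not suffice: the maps $D_{I,I'}$ appearing in the $G_k$ involve $1\to\infty$ and $0\to1\to\infty$ collapses whose shifts depend on whether the crossing is positive or negative (this is where $n_-(\DD)$ actually enters), and one must also pick arbitrary lifts of the relative $\delta$-grading on the partial resolutions with $\infty$ entries, where Lemma~\ref{lem:orientationunlink} no longer provides a canonical one. The paper sidesteps defining $s(I)$ for all $I$ by using additivity to show $\delta(G_I)=\abs{I}/2+C$ and then pinning the constant $C$ by evaluating $\delta(G_{I^0})$ at the oriented resolution, where the composition consists of $n_+(\DD)$ maps of type $f_2$ and $n_-(\DD)$ of type $H_2$; this gives $C=-n_-(\DD)$ via Proposition~\ref{prop:triple}. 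Your outline would need this same crossing-sign analysis to close the argument.
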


\begin{theorem} \label{thm:collapse}
If $\r$ is generic, then the differential $d_k(\SS_\FF^\r)$ vanishes for $k>2$. Therefore,
\[
H_*(E_2(\SS_\FF^\r),d_2(\SS_\FF^\r)) \cong \ul\HFK(L, [\omega]_\r) \otimes_{\F} V^{\otimes(m-|L|)};
\]
as graded vector spaces over $\FF$, with respect to the $\delta$-grading on $\HFK(L)$.
\end{theorem}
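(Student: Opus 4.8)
The plan is to prove Theorem \ref{thm:collapse} by a formal grading argument that plays the filtration grading $Q$ off against the grading $\Delta$ of Section \ref{sec:delta}. First I would pin down the grading of the $E_2$ page. Each connected resolution $\DD_I$ is a one-component diagram with $m$ marked points, so $\LL_I$ is an $m$-pointed unknot; by \eqref{eqn:err} and \eqref{eqn:err2}, $\HFKt(\LL_I)\otimes_\F\FF\cong V^{\otimes(m-1)}\otimes_\F\FF$, which is supported entirely in $\delta$-grading $0$ for the canonical absolute $\delta$-grading of Lemma \ref{lem:orientationunlink}. Consequently, on the $E_2$ page $\bigoplus_{I\in\RR(\DD)}\HFKt(\LL_I)\otimes_\F\FF$ of \eqref{eq:e2twisted}, every nonzero homogeneous class lying in the $I$-summand has filtration grading $Q=|I|$ and $\Delta$-grading $(|I|-n_-(\DD))/2$. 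In particular, every homogeneous class $x$ on $E_2$ satisfies the single linear relation $Q(x)=2\Delta(x)+n_-(\DD)$; since each later page $E_k$ is a subquotient of $E_2$, this relation persists on $E_k$ for all $k\ge 2$.

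Next I would apply Theorem \ref{thm:gradedcube}(1): the differential $D$ on $X(\{0,1\}^n)$ is homogeneous of degree $1$ with respect to $\Delta$, so the induced differential $d_k$ on $E_k$ raises $\Delta$ by exactly $1$, while by construction of the spectral sequence from the $Q$-filtration it raises $Q$ by exactly $k$. If $d_k(x)\ne 0$ for some homogeneous $x\in E_k$ with $k\ge 2$, then evaluating the relation $Q=2\Delta+n_-(\DD)$ on both $x$ and $d_k(x)$ gives $Q(x)+k=2\bigl(\Delta(x)+1\bigr)+n_-(\DD)=Q(x)+2$, forcing $k=2$. Hence $d_k=0$ for every $k>2$, which is the first assertion of the theorem.

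It then remains to identify $H_*(E_2,d_2)$. Since $\FF$ is a field and $d_k=0$ for $k>2$, we get $E_3=E_\infty$, and (the filtration being finite) $E_\infty$ is the associated graded of $H_*(X(\{0,1\}^n),D)$ for the filtration induced by $Q$; by Corollary \ref{cor:iso} this homology is $\HFK(L)\otimes_\F V^{\otimes(m-|L|)}\otimes_\F\FF$. Because the relation $Q=2\Delta+n_-(\DD)$ persists on $E_\infty$, the $Q$-filtration on $H_*(X(\{0,1\}^n),D)$ is refined by the $\Delta$-grading, so over the field $\FF$ there is no extension problem and $H_*(E_2,d_2)=E_3=E_\infty\cong H_*(X(\{0,1\}^n),D)$ as $\Delta$-graded $\FF$-vector spaces. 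Finally, Theorem \ref{thm:gradedcube}(2) identifies $\Delta$ on $X(\{0,1\}^n)$ with the $\delta$-grading on $\CFKtt(\bm\alpha,\bm\eta(I^\infty))$, which computes $\HFKt(\LL)\otimes_\F\FF\cong\HFK(L)\otimes_\F V^{\otimes(m-|L|)}\otimes_\F\FF$ with the factors $V$ in $\delta$-grading $0$; thus $\Delta$ restricts to the $\delta$-grading on $\HFK(L)$, and the theorem follows. I expect the only delicate point to be the bookkeeping in this last step --- checking that the defining shifts of $\Delta$, the absolute grading conventions on the various $\CFKtt$ complexes, and the quasi-isomorphism $G$ are all normalized consistently --- while the vanishing of the higher differentials is just the short grading argument above.
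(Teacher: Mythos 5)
Your proof is correct and follows essentially the same route as the paper: you establish that $Q = 2\Delta + n_-(\DD)$ on $E_2$ (using that each connected resolution gives a pointed unknot with $\HFKt$ concentrated in $\delta$-grading $0$), note that this relation persists to later pages, and then combine the $Q$-shift of $k$ with the $\Delta$-shift of $1$ (Theorem \ref{thm:gradedcube}) to force $k=2$. The closing identification via Corollary \ref{cor:iso} and Theorem \ref{thm:gradedcube}(2) is also what the paper does, just stated more briefly there.
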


\begin{proof} [Proof of Theorem \ref{thm:collapse}]
By definition, $d_k(\SS^\r_\FF)$ is homogeneous of degree $k$ with respect to the filtration grading $Q$, defined in Section \ref{sec:cube}. By Theorem \ref{thm:gradedcube}, $\Delta$ descends to a grading on the pages of $\SS_\FF^\r$. Recall, from the previous section, that $E_2(\SS_\FF^\r)$ consists of a copy of the group $\HFKtil(\LL_I)\otimes_{\F}\FF$ for each $I\in\RR(\DD)$. Since $\LL_I$ is a pointed unknot, this group is supported in the $\Delta$-grading $(\abs{I} -n_-(\DD))/2$; that is, the gradings $\Delta$ and $Q$ on $E_2(\SS^\r_\FF)$ are related by
\[
\Delta = (Q-n_-(\DD))/2.
\]
This relationship therefore holds for all $k\geq 2$. Suppose that $x$ is a nonzero, homogeneous element of $E_k(\SS_\FF^\r)$. If $d_k(\SS_\FF^\r)(x) = y \ne 0$, then
\[
\begin{aligned}
0 &= (2 \Delta(y) - Q(y)) - (2 \Delta(x) - Q(y)) \\
&= 2(\Delta(y) - \Delta(x)) - (Q(y) - Q(x)) \\
&= 2 - k.
\end{aligned}
\]
Thus, $d_k(\SS_\FF^\r)$ vanishes for $k>2$. The second statement follows immediately from Theorem \ref{thm:gradedcube} and Corollary \ref{cor:iso}.
\end{proof}

The rest of this section is devoted to proving Theorem \ref{thm:gradedcube}.

\subsection{The relative \texorpdfstring{$\delta$}{delta}-grading}
First, we show that the maps $f_{I^0,\dots,I^0}$ are homogeneous with respect to the relative $\delta$-grading. For a Whitney polygon $\psi$, let $\delta(\psi)$ denote the difference $P(\psi)-\mu(\psi)$. Note that this quantity is additive under concatenation of polygons.

\begin{proposition} \label{prop:welldefined}
Suppose $I^0<\dots<I^k$ is a successor sequence of tuples in $\{0,1,\infty\}^n$. For $i=0,\dots,k$, let $\delta_{I^i}$ be an arbitrary absolute lift of the relative $\delta$-grading on $\ul\cfkt(\bm\alpha,\bm\eta(I^i))$. Then, for each $i,j$ with $0\leq i< j \leq k$, there are constants $\delta(f_{I^i,\dots,I^j})$ such that $f_{I^i, \dots, I^j}$ is homogeneous of degree $\delta(f_{I^i,\dots,I^j})$ with respect to these absolute lifts, and
\begin{equation} \label{eq:compose}
\delta(f_{I^i,\dots,I^j}) = \delta(f_{I^i,\dots,I^l}) + \delta(f_{I^l,\dots,I^j}) - 1,
\end{equation}
for any $i < l < j$.
\end{proposition}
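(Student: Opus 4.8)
The plan is to isolate a single ``higher'' version of the relative $\delta$-grading relation \eqref{eqn:grading} and deduce both conclusions from it. Recall that $f_{I^0,\dots,I^k}$ counts Whitney $(k+2)$-gons $\psi$ for the curve systems $(\bm\alpha,\bm\eta(I^0),\dots,\bm\eta(I^k))$ with $O_i(\psi)=X_i(\psi)=0$ for all $i$ and $\mu(\psi)=3-(k+2)=1-k$, so every counted polygon has $\delta(\psi)=P(\psi)-\mu(\psi)=k-1$. I would first show that for \emph{any} $(k+2)$-gon homotopy class $\psi\in\pi_2(\x,\Theta_1,\dots,\Theta_k,\y)$ whose intermediate corners $\Theta_s$ lie in the minimal $\delta$-grading summand of $\ul\cfkt(\bm\eta(I^{s-1}),\bm\eta(I^s))$ --- which by Lemma \ref{lem:isom} is exactly where all the generators $\Theta^{I^{s-1},I^s}_r$ sit --- the quantity
\[
\Phi_{I^0,\dots,I^k}(\psi)\;:=\;\delta_{I^k}(\y)-\delta_{I^0}(\x)+\delta(\psi)
\]
does not depend on $\psi$ (this is the polygon analogue of \eqref{eqn:grading}, which asserts $\Phi\equiv 0$ when $k=0$). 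Granting this, every $\psi$ counted by $f_{I^0,\dots,I^k}$ satisfies $\delta_{I^k}(\y)-\delta_{I^0}(\x)=\Phi_{I^0,\dots,I^k}-(k-1)$ regardless of $\x,\y$, so $f_{I^0,\dots,I^k}$ is homogeneous of degree $\delta(f_{I^0,\dots,I^k}):=\Phi_{I^0,\dots,I^k}-(k-1)$; this formula also pins the constant down when $f_{I^0,\dots,I^k}$ vanishes, since $(k+2)$-gon homotopy classes of the relevant index always exist for these diagrams.

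To prove $\Phi_{I^0,\dots,I^k}$ well-defined I would compare two such classes $\psi\in\pi_2(\x,\bm\Theta,\y)$ and $\psi'\in\pi_2(\x',\bm\Theta',\y')$. Choose Whitney disks $\phi_0\in\pi_2(\x,\x')$ and $\phi_*\in\pi_2(\y',\y)$ in $(\Sigma,\bm\alpha,\bm\eta(I^0))$ and $(\Sigma,\bm\alpha,\bm\eta(I^k))$ --- both present $S^3$, so these exist --- together with the annular classes $\chi_s\in\pi_2(\Theta_s,\Theta_s')$ from the proof of Lemma \ref{lem:isom}. Concatenating $\phi_0$, the $\chi_s$, $\psi'$, and $\phi_*$ produces a $(k+2)$-gon $\psi''\in\pi_2(\x,\bm\Theta,\y)$ with the same corners as $\psi$, so $D(\psi'')-D(\psi)$ is multi-periodic and Proposition \ref{prop:mu=P} gives $\delta(\psi)=\delta(\psi'')$. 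Now I would invoke additivity of $\delta(\cdot)=P-\mu$ under concatenation, the vanishing $\delta(\chi_s)=0$ (its endpoints lie in a common $\delta$-grading), and \eqref{eqn:grading} applied to $\phi_0$ and $\phi_*$, to obtain $\delta(\psi'')=\bigl[\delta_{I^0}(\x)-\delta_{I^0}(\x')\bigr]+\delta(\psi')+\bigl[\delta_{I^k}(\y')-\delta_{I^k}(\y)\bigr]$; substituting this into $\Phi_{I^0,\dots,I^k}(\psi)$ and cancelling yields $\Phi_{I^0,\dots,I^k}(\psi)=\Phi_{I^0,\dots,I^k}(\psi')$.

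For the composition identity \eqref{eq:compose}, I would fix $i<l<j$, set $k_1=l-i$ and $k_2=j-l$, and choose a $(k_1+2)$-gon class $\psi_1\in\pi_2(\x,\bm\Theta^{(1)},\w)$ for $I^i<\dots<I^l$ and a $(k_2+2)$-gon class $\psi_2\in\pi_2(\w,\bm\Theta^{(2)},\y)$ for $I^l<\dots<I^j$, sharing the corner $\w\in\T_\alpha\cap\T_{\eta(I^l)}$. Their concatenation $\psi_1*\psi_2$ is a $(k_1+k_2+2)$-gon class for $I^i<\dots<I^j$, and $\delta(\psi_1*\psi_2)=\delta(\psi_1)+\delta(\psi_2)$ by additivity; substituting into the definition of $\Phi$ for the three sequences and telescoping the $\delta_{I^l}(\w)$ terms gives $\Phi_{I^i,\dots,I^j}=\Phi_{I^i,\dots,I^l}+\Phi_{I^l,\dots,I^j}$. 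Since these three maps count $(k_1+2)$-, $(k_2+2)$-, and $(k_1+k_2+2)$-gons, their degrees are $\Phi_{I^i,\dots,I^l}-(k_1-1)$, $\Phi_{I^l,\dots,I^j}-(k_2-1)$, and $\Phi_{I^i,\dots,I^j}-(k_1+k_2-1)$ respectively; subtracting and using additivity of $\Phi$ produces exactly $\delta(f_{I^i,\dots,I^j})=\delta(f_{I^i,\dots,I^l})+\delta(f_{I^l,\dots,I^j})-1$.

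The step I expect to be most delicate is the index accounting that produces the $-1$: that $f_{I^0,\dots,I^k}$ counts $(k+2)$-gons, that a counted such polygon has $\mu=1-k$ and hence $\delta(\psi)=k-1$, and that gluing a $p$-gon to a $q$-gon along an edge yields a $(p+q-2)$-gon on which $P$, $\mu$, and therefore $\delta$ are additive. The $-1$ in \eqref{eq:compose} is exactly the difference between the normalizing constant $-(k_1+k_2-1)$ for the long sequence and the sum $-(k_1-1)-(k_2-1)$ for its two halves, so a single off-by-one in this bookkeeping would change it. Everything else is formal: it rests only on Proposition \ref{prop:mu=P}, the additivity of $\delta(\cdot)$, and Lemma \ref{lem:isom} (which places all the $\Theta$-generators in a single $\delta$-grading, so the correction disks $\chi_s$ contribute nothing), and the one remaining technical point --- existence of the auxiliary disks $\phi_0,\phi_*,\chi_s$ --- is immediate since every pairwise diagram occurring here presents $S^3$.
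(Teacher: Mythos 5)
Your proposal is correct and follows essentially the same route as the paper's own proof: both establish that $\delta_{I^j}(\y)-\delta_{I^i}(\x)+\delta(\psi)$ is independent of the choice of polygon $\psi$ (and its corners) by concatenating with auxiliary Whitney disks and applying Proposition \ref{prop:mu=P}, and both deduce the degree formula and the composition identity \eqref{eq:compose} from additivity of $\delta(\cdot)=P-\mu$ under concatenation. One small imprecision: your claim that \emph{every} pairwise diagram appearing here presents $S^3$ is not quite right --- the pairs $(\bm\eta(I^{s-1}),\bm\eta(I^s))$ present connected sums of $S^2\times S^1$'s, not $S^3$; the correction disks $\chi_s$ nevertheless exist because they are exhibited explicitly (as annular domains) in the proof of Lemma \ref{lem:isom}, and the existence of the ambient polygon classes is justified in the paper by the fact that the relevant curve collections span $H_1(\Sigma;\Z)$.
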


\begin{proof}[Proof of Proposition \ref{prop:welldefined}]
Choose some $\x^s \in \Ta \cap \T_{\eta(I^i)}$ and $\y^s \in \Ta \cap \T_{\eta(I^j)}$ for $s=1,2$, and suppose $\psi_s$ is a Whitney $(j-i+2)$-gon in $ \pi_2(\x^s, \Theta^{I^i,I^{i+1}}_{e^s_i}, \dots,
\Theta^{I^{j-1},I^j}_{e^s_{j-1}}, \y^s),$ where $e_i^s, \dots, e_{j-1}^s \in \{1,2\}$. Such polygons always exist since the pairs $(\bm\alpha,\bm\eta(I^l))$ span $H_1(\Sigma;\mathbb{Z})$ for $l=i,\dots,j$ (see, e.g., \cite[Proposition 8.3]{OSz3Manifold}). We claim that
\begin{equation}\label{eqn:deltagrading}
\delta_{I^j}(\y^1) -\delta_{I^i}(\x^1) + \delta(\psi_1) =
\delta_{I^j}(\y^2) -\delta_{I^i}(\x^2) + \delta(\psi_2),
\end{equation}
which enables us to define the quantity
\begin{equation} \label{eqn:Deltaij}
\delta(f_{I^i,\dots,I^j}) =\delta_{I^j}(\y^1) -\delta_{I^i}(\x^1) +
\delta(\psi_1) + i-j+1
\end{equation}
independently of $\x^s$, $\y^s$ and $\psi_s$. To prove \eqref{eqn:deltagrading}, let $\psi_s'$ be the Whitney $(j-i+1)$-gon in
$\pi_2(\x^s, \Theta^{I^i,I^{i+1}}_1, \dots, \Theta^{I^{j-1},I^j}_1, \y^s)$
obtained by concatenating $\psi_s$ with Whitney disks $\phi_i \in
\pi_2(\Theta^{I^l,I^{l+1}}_1, \Theta^{I^l,I^{l+1}}_2)$ where necessary. Since
each $\phi_i$ satisfies $P(\phi_i) = \mu(\phi_i) = 1$, we have that
$\delta(\psi_s')=\delta(\psi_s)$. Choose some Whitney disks $\phi_x \in
\pi_2(\x^1,\x^2)$ and $\phi_y \in \pi_2(\y^1,\y^2)$, and consider the concatenation
\[
\psi_2''=\phi_x*\psi_2'*\overline{\phi}_y \, \in \,
\pi_2(\x^1,\Theta^{I^i,I^{i+1}}_1, \dots, \Theta^{I^{j-1},I^j}_1, \y^1)
\]
The difference $D(\psi_1')-D(\psi_2'')$ is a multi-periodic domain, so
\[
\delta(\psi_1')-\delta(\psi_2'') = 0, \] by Proposition \ref{prop:mu=P}. Thus,
\begin{align*}
\delta(\psi_1) = \delta(\psi_1') &= \delta(\psi_2'')\\
&= \delta(\psi_2') + \delta(\phi_x)- \delta(\phi_y)\\
&=\delta(\psi_2)+
(\delta_{I^i}(\x^1)-\delta_{I^i}(\x^2))-(\delta_{I^j}(\y^1)-\delta_{I^j}(\y^2)),
\end{align*}
from which (\ref{eqn:deltagrading}) follows. Now, if $\y \in \T_\alpha \cap \T_{\eta(I^j)}$ appears with nonzero coefficient in
$f_{I^i, \dots, I^j}(\x)$ for some $\x \in \T_\alpha \cap \T_{\eta(I^i)}$, then there exists a $(j-i+2)$-gon $\psi \in \pi_2(\x, \Theta^{I^i,I^{i+1}}_{e_i}, \dots,
\Theta^{I^{j-1},I^j}_{e_{j-1}}, \y)$
with \[\delta(\psi) = P(\psi)-\mu(\psi) = 0-(i-j+1) = j-i-1.\] By \eqref{eqn:Deltaij}, $\delta_{I^j}(\y) -\delta_{I^i}(\x) =\delta(f_{I^i,\dots,I^j})$. It follows that $f_{I^i,\dots,I^j}$ is homogeneous of degree $\delta(f_{I^i,\dots,I^j}).$

For the second part, let $\x$ and $\y$ be as above, and let $\z\in \T_\alpha
\cap \T_{\eta(I^l)}$. Choose Whitney polygons
\[
\psi_1 \in \pi_2(\x, \Theta^{I^i,I^{i+1}}_{e_i}, \dots,
\Theta^{I^{l-1},I^l}_{e_{l-1}}, \z) \quad \text{and} \quad \psi_2 \in \pi_2(\z,
\Theta^{I^l,I^{l+1}}_{e_i}, \dots, \Theta^{I^{j-1},I^j}_{e_{j-1}}, \y),
\]
and let $\psi = \psi_1 * \psi_2$. Then,
\[
\begin{aligned}
\delta(f_{I^i,\dots,I^l}) + \delta(f_{I^l,\dots,I^j}) &=
 (\delta_{I^l}(\z) -\delta_{I^i}(\x) + \delta(\psi_1) +
 i-l+1) \\
 &\quad+ (\delta_{I^j}(\y) -\delta_{I^l}(\z) + \delta(\psi_2) +
 l-j+1) \\
&=\delta_{I^j}(\y) -\delta_{I^i}(\x) + \delta(\psi) +
 i-j+2 \\
&= \delta(f_{I^i,\dots,I^j})+1,
\end{aligned}
\]
completing the proof of Proposition \ref{prop:welldefined}.
\end{proof}

\begin{remark}
Proposition \ref{prop:welldefined} shows that that the grading shifts
$\delta(f_{I^i,\dots,I^j})$ are well-defined and satisfy additivity properties
under composition even if some of the maps $f_{I^i,\dots,I^j}$ are zero.
\end{remark}

The next result shows that the maps $D_{I,I'}$ are homogeneous with respect to
the relative $\delta$-grading.

\begin{proposition} \label{prop:commute}
Suppose $\bar I= I^0<\dots<I^k$ and $\bar J=J^0<\dots<J^k$ are successor sequences
of tuples in $\{0,1,\infty\}^n$ with $I^0=J^0$ and $I^k=J^k$. For any absolute lifts $\delta_{I^0}$ and $\delta_{I^k}$ of the relative $\delta$-gradings on $\CFKtt(\bm\alpha,
\bm\eta(I^0))$ and $ \CFKtt(\bm\alpha, \bm\eta(I^k))$, the grading shifts
$\delta(f_{I^0,\dots, I^k})$ and $\delta(f_{J^0,\dots, J^k})$ are equal. In particular, the map
\[
D_{I^0,I^k}\co \CFKtt(\bm\alpha, \bm\eta(I^0)) \to (\bm\alpha, \bm\eta(I^k))
\]
is homogeneous of degree $\delta(D_{I^0,I^k}) = \delta(f_{I^0,\dots, I^k})$
with respect to these absolute lifts.
\end{proposition}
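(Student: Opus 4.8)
The plan is to read off the answer directly from formula \eqref{eqn:Deltaij} in the proof of Proposition \ref{prop:welldefined}, which expresses the grading shift $\delta(f_{I^0,\dots,I^k})$ in terms of $\delta_{I^k}(\y)-\delta_{I^0}(\x)$, a combinatorial constant, and the quantity $\delta(\psi)=P(\psi)-\mu(\psi)$ of \emph{any single} Whitney $(k+2)$-gon $\psi\in\pi_2(\x,\Theta^{I^0,I^1}_1,\dots,\Theta^{I^{k-1},I^k}_1,\y)$ --- note there is no constraint on $\mu(\psi)$, only on the tuple of corners. So it suffices to exhibit, for each successor sequence from $I^0$ to $I^k$, a polygon of the correct corner-type whose $P-\mu$ is the same for all such sequences.

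First I would fix generators $\x\in\Ta\cap\T_{\eta(I^0)}$ and $\y\in\Ta\cap\T_{\eta(I^k)}$ and a Whitney triangle $\psi_0\in\pi_2(\x,\Theta^{I^0,I^k}_1,\y)$ in the diagram $(\Sigma,\bm\alpha,\bm\eta(I^0),\bm\eta(I^k))$; such a triangle exists because the pairs $(\bm\alpha,\bm\eta(I^0))$ and $(\bm\alpha,\bm\eta(I^k))$ span $H_1(\Sigma;\Z)$, exactly as in the proof of Proposition \ref{prop:welldefined}. Crucially, this triangle depends only on the endpoints $I^0,I^k$ and on $\x,\y$, and not on any chain joining the endpoints. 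Next, for a given successor sequence $\bar I=(I^0<\dots<I^k)$, I would recycle the construction in the proof of Proposition \ref{prop:d^2=0}: concatenating the small triangles near the crossings and marked points (Lemma \ref{lem:triangles}) with the small bigons of Lemma \ref{lem:isom} produces a Whitney $(k+1)$-gon $\sigma_{\bar I}\in\pi_2(\Theta^{I^0,I^1}_1,\dots,\Theta^{I^{k-1},I^k}_1,\Theta^{I^0,I^k}_1)$ with $P(\sigma_{\bar I})-\mu(\sigma_{\bar I})=0$.

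Gluing $\psi_0$ and $\sigma_{\bar I}$ along the common corner $\Theta^{I^0,I^k}_1$ then yields a Whitney $(k+2)$-gon $\psi_0*\sigma_{\bar I}\in\pi_2(\x,\Theta^{I^0,I^1}_1,\dots,\Theta^{I^{k-1},I^k}_1,\y)$, and additivity of $P$ and $\mu$ under concatenation gives $\delta(\psi_0*\sigma_{\bar I})=\delta(\psi_0)+\delta(\sigma_{\bar I})=\delta(\psi_0)$. Feeding $\psi=\psi_0*\sigma_{\bar I}$ into \eqref{eqn:Deltaij} therefore gives
\[
\delta(f_{I^0,\dots,I^k}) = \delta_{I^k}(\y)-\delta_{I^0}(\x)+\delta(\psi_0)+1-k,
\]
whose right-hand side no longer involves the intermediate tuples. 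Hence $\delta(f_{I^0,\dots,I^k})=\delta(f_{J^0,\dots,J^k})$ for any two successor sequences $\bar I,\bar J$ between $I^0$ and $I^k$, and since $D_{I^0,I^k}$ is the sum of the maps $f_{\bar I}$ over all such sequences, it is homogeneous of that common degree, which is the assertion. The one step that requires care --- and is the likely main obstacle --- is verifying that the small polygon of Proposition \ref{prop:d^2=0} can be arranged to have precisely the corners $\Theta^{I^{l-1},I^l}_1$ and $\Theta^{I^0,I^k}_1$ and to satisfy $P-\mu=0$ no matter how long or intricate the sequence $\bar I$ is, together with the routine but bookkeeping-heavy check that $\psi_0*\sigma_{\bar I}$ is a genuine homotopy class of $(k+2)$-gon with domain $D(\psi_0)+D(\sigma_{\bar I})$. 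Everything else is formal, using only Proposition \ref{prop:welldefined} and the additivity of $P-\mu$ (Proposition \ref{prop:mu=P}).
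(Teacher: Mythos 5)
Your proof is correct and takes a genuinely different route from the paper's. The paper proves the proposition by reducing to the case where the two successor sequences differ in exactly one intermediate step (via \eqref{eq:compose} and a chain of ``adjacent'' sequences), and then runs an explicit pentagon-concatenation argument near a single square of the cube; the key invariance comes from Proposition \ref{prop:mu=P} applied to the difference of two pentagons' domains. Your argument instead attacks the statement directly: you fix a single triangle $\psi_0 \in \pi_2(\x, \Theta^{I^0,I^k}_1, \y)$ in the three-diagram $(\Sigma,\bm\alpha,\bm\eta(I^0),\bm\eta(I^k))$ (which visibly depends only on the endpoints), graft onto it the $(P-\mu)$-trivial $(k+1)$-gon $\sigma_{\bar I}$ from the proof of Proposition \ref{prop:d^2=0}, and then read off the shift from \eqref{eqn:Deltaij}, getting a formula whose right-hand side involves only $\psi_0$. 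Both arguments ultimately rest on Proposition \ref{prop:mu=P} and on the additivity of $P-\mu$ under concatenation, but they organize the bookkeeping quite differently: the paper's reduction is ``local'' and inductive, while yours is ``global'' and reads the path-independence directly out of the well-definedness formula. Your approach is arguably slicker and avoids the pentagon diagram chase; the one place you correctly flag as requiring care --- that $\sigma_{\bar I}$ can indeed be chosen with all $_1$-corners and terminal corner $\Theta^{I^0,I^k}_1$ while satisfying $P-\mu=0$ --- is exactly the content asserted in the proof of Proposition \ref{prop:d^2=0} (which holds for \emph{any} choice of $r_1,\dots,r_k$ and $s$), so there is no gap. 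The concatenation $\psi_0*\sigma_{\bar I}$ is a standard gluing along a common corner and yields a genuine $(k+2)$-gon with the required corners and domain $D(\psi_0)+D(\sigma_{\bar I})$, so that step is also sound.
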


\begin{proof}[Proof of Proposition \ref{prop:commute}]
The sequences $\bar I$ and $\bar J$ can be connected by an ordered list of
sequences in which one sequence in the list differs from the next in a single
place. It is therefore enough to prove Proposition \ref{prop:commute} for $\bar I$ and $\bar J$, where
\[
\bar J = I^0<\dots<I^{i-1}<J^i<I^{i+1}<\dots<I^k.
\]
For $i=0,\dots,k$, let $\delta_{I^i}$ and $\delta_{J^i}$ denote arbitrary
absolute lifts of the relative $\delta$-gradings on the complexes
$\ul\cfkt(\bm\alpha,\bm\eta(I^i))$ and $\ul\cfkt(\bm\alpha,\bm\eta(J^i))$. By
\eqref{eq:compose}, we need only show that
\[
\delta(f_{I^{i-1},J^i}) + \delta(f_{J^i,I^{i+1}}) = \delta(f_{I^{i-1},I^i}) +
\delta(f_{I^i,I^{i+1}}).
\]
It is helpful to have in mind the following diagram, which commutes up to
homotopy.
\[
\xymatrix @!0@C=11pc@R=5pc{
\ul\cfkt(\bm\alpha,\bm\eta(I^{i-1})) \ar[r]^-{f_{I^{i-1},I^i}} \ar[d]_-{f_{I^{i-1},J^i}}& \ul\cfkt(\bm\alpha,\bm\eta(I^i))\ar[d]^-{f_{I^i,I^{i+1}}}\\
\ul \cfkt(\bm\alpha,\bm\eta(J^i)) \ar[r]_-{f_{J^i,I^{i+1}}} &
\ul\cfkt(\bm\alpha,\bm\eta(I^{i+1})).}
\]
Choose generators
\[
\x^1\in \T_{\alpha}\cap \T_{\eta(I^{i-1})},
 \quad \y^1\in \T_{\alpha} \cap \T_{\eta(I^{i})},
 \quad \x^2\in \T_{\alpha} \cap \T_{\eta(J^{i})},
 \quad \y^2\in \T_{\alpha} \cap \T_{\eta(I^{i+1})},
\]
and Whitney triangles
\begin{align*}
\psi_{I^{i-1},I^i} & \in \pi_2(\x^1,\Theta_1^{I^{i-1},I^i},\y^1), &
\psi_{I^{i},I^{i+1}} & \in \pi_2(\y^1,\Theta_1^{I^{i},I^{i+1}},\y^2),\\
\psi_{I^{i-1},J^i} & \in \pi_2(\x^1,\Theta_1^{I^{i-1},J^i},\x^2), &
\psi_{J^{i},I^{i+1}} & \in \pi_2(\x^2,\Theta_1^{J^{i},I^{i+1}},\y^2).
\end{align*}
Let $\psi_1 \in
\pi_2(\x^1,\Theta_1^{I^{i-1},I^i},\Theta_1^{I^{i},I^{i+1}},\y^2)$ and $\psi_2
\in \pi_2(\x^1,\Theta_1^{I^{i-1},J^i},\Theta_1^{J^{i},I^{i+1}},\y^2)$ denote
the Whitney rectangles obtained by concatenation,
\[
\psi_1 = \psi_{I^{i-1},I^i}*\psi_{I^{i},I^{i+1}}, \quad \psi_2 =
\psi_{I^{i-1},J^i}*\psi_{J^{i},I^{i+1}}.
\]
As in the proof of Lemma \ref{lem:triangles}, there exists some generator
$\Theta^{J^i,I^i}_s \in \T_{\eta(J^i)} \cap \T_{\eta(I^i)}$ (one of the four
generators with minimal $\delta$-grading) such that there are Whitney triangles
\[
\tau_1 \in \pi_2(\Theta_1^{I^{i-1},J^i}, \Theta_s^{J^i,I^i},
 \Theta_1^{I^{i-1},I^i}) \quad \text{and} \quad
\tau_2 \in\pi_2(\Theta^{J^i,I^i}_s, \Theta_1^{I^i,I^{i+1}},
 \Theta_1^{J^i,I^{i+1}}),
\]
whose domains are disjoint unions of small triangles, so that
\[
P(\tau_1) = \mu(\tau_1) = P(\tau_2)= \mu(\tau_2)=0.
\]
Let $\phi_1$ and $\phi_2$ denote the Whitney pentagons in $\pi_2(\x^1,\Theta_1^{I^{i-1},J^i},\Theta^{J^i,I^i}_s,\Theta_1^{I^i,I^{i+1}},\y^2)$
obtained by concatenating $\tau_1$ with $\psi_1$ at
$\Theta^{I^{i-1}, I^i}_1$ and $\tau_2$ with $\psi_2$ at $\Theta^{J^i, I^{i+1}}_1$, respectively. 
The difference $D(\phi_1)-D(\phi_2)$ is a multi-periodic domain. Therefore,
\begin{align*}
0&=\delta(\phi_1) - \delta(\phi_2)\\
&=\delta(\psi_1)-\delta(\psi_2)\\
&=(\delta(\psi_{I^{i-1},I^i})+\delta(\psi_{I^{i},I^{i+1}}))
 - (\delta(\psi_{I^{i-1},J^i})+\delta(\psi_{J^{i},I^{i+1}}))\\
&=(\delta(f_{I^{i-1},I^i}) +\delta_{I^{i-1}}(\x^1)-\delta_{I^i}(\y^1)
 + \delta(f_{I^{i},I^{i+1}}) +\delta_{I^i}(\y^1) -\delta_{I^{i+1}}(\y^2)) \\
& \quad -(\delta(f_{I^{i-1},J^i})
+\delta_{I^{i-1}}(\x^1)-\delta_{J^i}(\x^2)
 + \delta(f_{J^{i},I^{i+1}}) +\delta_{J^i}(\x^2)-\delta_{I^{i+1}}(\y^2))\\
&=(\delta(f_{I^{i-1},I^i}) + \delta(f_{I^{i},I^{i+1}}))-(\delta(f_{I^{i-1},J^i})+
\delta(f_{J^{i},I^{i+1}})),
\end{align*}
completing the proof of Proposition \ref{prop:commute}.
\end{proof}

Before proceeding further, we pause to record a fact about Alexander and Maslov
gradings that will be useful in Section \ref{sec:twisted}. Recall that, for
$I\in\{0,1\}^n,$ we orient the diagrams $\DD_I$ as boundaries of the black
regions. These orientations determine absolute Maslov and Alexander gradings on
the complexes $\ul\cfkt(\bm\alpha,\bm\eta(I)),$ per the discussion in Section
\ref{sec:cube}.

\begin{proposition} \label{prop:maslovshift}
Suppose $I^0<\dots<I^k$ is a successor sequence of tuples in $\{0,1\}^n$. For any $e_1, \dots, e_k \in \{1,2\}$, the map
\[
F_{\alpha, \eta(I^0), \cdots, \eta(I^k)}(\cdot \otimes \Theta^{I^0, I^1}_{e_1}
\otimes \dots \otimes \Theta^{I^{k-1},I^k}_{e_k}):
\ul\cfkt(\bm\alpha,\bm\eta(I^0))\rightarrow\ul\cfkt(\bm\alpha,\bm\eta(I^k))
\]
is homogeneous with respect to both the Alexander and Maslov gradings.
Moreover, the Alexander and Maslov grading shifts
of
\[
F_{\alpha, \eta(I^0), \cdots, \eta(I^k)}(\cdot \otimes \Theta^{I^0, I^1}_{e_1}
\otimes \dots \otimes \Theta^{I^{i-1},I^i}_1 \otimes \dots \otimes
\Theta^{I^{k-1},I^k}_{e_k})
\]
are one greater than those of
\[
F_{\alpha, \eta(I^0), \cdots, \eta(I^k)}(\cdot \otimes \Theta^{I^0, I^1}_{e_1}
\otimes \dots \otimes \Theta^{I^{i-1},I^i}_2 \otimes \dots \otimes
\Theta^{I^{k-1},I^k}_{e_k}).
\]
\end{proposition}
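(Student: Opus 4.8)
The plan is to repeat, almost verbatim, the concatenation argument used in the proofs of Propositions~\ref{prop:welldefined} and~\ref{prop:commute}, with the relative $\delta$-grading identity $P(\phi)-\mu(\phi)$ replaced by the Maslov and Alexander identities~\eqref{eqn:maslovgrading} and~\eqref{eqn:alexandergrading}. First I would fix, for each $i=0,\dots,k$, an arbitrary absolute lift of the relative Maslov and Alexander gradings on $\ul\cfkt(\bm\alpha,\bm\eta(I^i))$; these relative gradings are defined because $(\Sigma,\bm\alpha,\bm\eta(I^i))$ is a Heegaard diagram for $S^3$, so any two generators are joined by a Whitney disk to which~\eqref{eqn:maslovgrading} and~\eqref{eqn:alexandergrading} apply. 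Throughout, $\P$ is partitioned into $\OO$ and $\XX$ as prescribed at the end of Section~\ref{sec:cube} (via the black-region orientations on the $\DD_{I^i}$); with respect to this partition we have, as recalled there, $M(\Theta^{I^{i-1},I^i}_1)-M(\Theta^{I^{i-1},I^i}_2)=A(\Theta^{I^{i-1},I^i}_1)-A(\Theta^{I^{i-1},I^i}_2)=1$.

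The one ingredient beyond Proposition~\ref{prop:mu=P} that I would establish is that every multi-periodic domain $D$ of $\HH$ satisfies $X(D)=O(D)$. By Lemmas~\ref{lem:PietaIetaI'} and~\ref{lem:doublyperiodic}, such a $D$ is a $\Z$-linear combination of components of $\Sigma\minus\bm\alpha$, components of the complements $\Sigma\minus\bm\eta(I)$, and domains of the forms $D^{\eta(I),\eta(I')}_{c_j}$ and $D^{\eta(I),\eta(I')}_{p_i}$. Each component of $\Sigma\minus\bm\alpha$ and of each $\Sigma\minus\bm\eta(I)$ contains exactly one point of $\OO$ and one of $\XX$ by the definition of a multi-pointed Heegaard diagram, and the thin-bigon domains $D^{\eta(I),\eta(I')}_{c_j}$, $D^{\eta(I),\eta(I')}_{p_i}$ avoid $\P=\OO\cup\XX$ altogether --- this is precisely the list of basepoint-free regions appearing in the proof of Lemma~\ref{lem:isom}. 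Hence $X-O$ vanishes on each generating domain, so $X(D)=O(D)$; combined with Proposition~\ref{prop:mu=P}, which gives $O(D)+X(D)=\mu(D)$, this also yields $\mu(D)=2O(D)$. Consequently, whenever $\psi$ and $\psi'$ are Whitney polygons with $D(\psi)-D(\psi')$ multi-periodic, $X(\psi)-O(\psi)=X(\psi')-O(\psi')$ and $\mu(\psi)-2O(\psi)=\mu(\psi')-2O(\psi')$ --- the analogues of Proposition~\ref{prop:mu=P} for the Alexander and Maslov gradings.

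With this in hand, the rest is the familiar bookkeeping. Fix $e_1,\dots,e_k\in\{1,2\}$. Since the relevant pairs of curve-systems all span $H_1(\Sigma;\Z)$, for any $\x\in\T_\alpha\cap\T_{\eta(I^0)}$ and $\y\in\T_\alpha\cap\T_{\eta(I^k)}$ there is a Whitney $(k+2)$-gon $\psi\in\pi_2(\x,\Theta^{I^0,I^1}_{e_1},\dots,\Theta^{I^{k-1},I^k}_{e_k},\y)$. By concatenating such a $\psi$ with basepoint-carrying disks in the $\x$-slot, the $\y$-slot, and each $\Theta$-slot --- exactly as the polygon $\psi_2''$ is assembled in the proof of Proposition~\ref{prop:welldefined} --- and invoking the previous paragraph, one checks that
\[
A(\y)-A(\x)-\sum_{i=1}^k A(\Theta^{I^{i-1},I^i}_{e_i})+\bigl(X(\psi)-O(\psi)\bigr)
\quad\text{and}\quad
M(\y)-M(\x)-\sum_{i=1}^k M(\Theta^{I^{i-1},I^i}_{e_i})+\bigl(\mu(\psi)-2O(\psi)\bigr)
\]
are independent of $\x$, $\y$, $\psi$, and $e_1,\dots,e_k$; call these constants $c_A$ and $c_M$ (they depend only on $I^0<\dots<I^k$ and the chosen lifts). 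Now if $\y$ occurs with nonzero coefficient in $F_{\alpha,\eta(I^0),\dots,\eta(I^k)}(\x\otimes\Theta^{I^0,I^1}_{e_1}\otimes\dots\otimes\Theta^{I^{k-1},I^k}_{e_k})$, the witnessing polygon has $O_i(\psi)=X_i(\psi)=0$ and $\mu(\psi)=1-k$, so $A(\y)=A(\x)+\sum_i A(\Theta^{I^{i-1},I^i}_{e_i})+c_A$ and $M(\y)=M(\x)+\sum_i M(\Theta^{I^{i-1},I^i}_{e_i})+c_M+(k-1)$. For fixed $e_i$ these shifts are constant, giving homogeneity; and replacing the $i$th slot $\Theta^{I^{i-1},I^i}_2$ by $\Theta^{I^{i-1},I^i}_1$ raises each of $\sum_i A(\Theta^{I^{i-1},I^i}_{e_i})$ and $\sum_i M(\Theta^{I^{i-1},I^i}_{e_i})$ by $1$, which is the ``moreover'' statement. (As in the remark following Proposition~\ref{prop:welldefined}, the constants $c_A$ and $c_M$ are defined even when the map in question vanishes, so the comparison of shifts is meaningful in that case too. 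Alternatively, once the Alexander statement is known the Maslov statement follows from it together with Proposition~\ref{prop:welldefined}, since $\delta=A-M$ and the $\delta$-shift of $F_{\alpha,\eta(I^0),\dots,\eta(I^k)}(\cdot\otimes\Theta^{I^0,I^1}_{e_1}\otimes\dots)$ equals $\delta(f_{I^0,\dots,I^k})$ independently of the $e_i$.)

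I do not expect a genuinely hard step: the proposition is a routine variant of Propositions~\ref{prop:welldefined} and~\ref{prop:commute}. The only point that requires care is the periodic-domain identity $X(D)=O(D)$ of the second paragraph, which is exactly what forces the statement to be read relative to the fixed $\OO/\XX$-partition and which rests on correctly sorting the basepoint-free regions of the multi-diagram; but that sorting has already been carried out in the proofs of Lemma~\ref{lem:isom} and Proposition~\ref{prop:mu=P}, so in practice the argument is mechanical.
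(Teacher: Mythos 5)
Your proof is correct and follows essentially the same route as the paper's: both reduce the claim to the multi-periodic-domain identities $O(D)=X(D)$ and $\mu(D)=2O(D)$ (via Proposition~\ref{prop:mu=P}) and then rerun the concatenation argument from Proposition~\ref{prop:welldefined}, with the second statement following from the known grading gap between $\Theta^{I^{i-1},I^i}_1$ and $\Theta^{I^{i-1},I^i}_2$. You merely make explicit, via Lemmas~\ref{lem:PietaIetaI'} and~\ref{lem:doublyperiodic}, what the paper dismisses as ``clearly true,'' which is a harmless elaboration.
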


\begin{proof}[Proof of Proposition \ref{prop:maslovshift}]
This follows from the same reasoning as was used in the proof of Proposition
\ref{prop:welldefined}. The key element in the latter was Proposition
\ref{prop:mu=P}, which, in turn, follows from the fact that any doubly-periodic
domain $D$ in the multi-diagram $\HH$ satisfies $\mu(D)=P(D)$. To prove that
\[F_{\alpha, \eta(I^0), \cdots, \eta(I^k)}(\cdot \otimes \Theta^{I^0,
I^1}_{e_0} \otimes \dots \otimes \Theta^{I^{k-1},I^k}_{e_{k-1}})\] is
homogeneous with respect to the Alexander grading, we simply need the
modification that $O(D) = X(D)$, which is clearly true. These two facts also
imply that $\mu(D)=2O(D)$, which is the modification we need for the
homogeneity statement about Maslov gradings. The second statement in
Proposition \ref{prop:maslovshift} follows from the fact that the Maslov and
Alexander gradings of $\Theta^{I^{i-1},I^i}_1$ are each $1$ greater than those
of $\Theta^{I^{i-1},I^i}_2$.
\end{proof}

\subsection{The absolute \texorpdfstring{$\delta$}{delta}-grading}
In this subsection, we compute certain absolute $\delta$-grading shifts. These
calculations, in conjunction with Propositions \ref{prop:welldefined} and
\ref{prop:commute}, complete the proof of Theorem \ref{thm:gradedcube}.

Suppose $I^0<I^1<I^2$ is a successor sequence of tuples in $\{0,1,\infty\}^n$
which differ only in their $j^{\text{th}}$ coordinates, and consider the maps
\begin{equation} \label{eq:triple}
\xymatrix {
\ul\cfkt(\bm\alpha,\bm\eta(I^{2})) \ar[r]^-{f_0} \ar@/_1.6pc/[rr]_{H_1}  &
\ul\cfkt(\bm\alpha,\bm\eta(I^0))\ar[r]^-{f_1} \ar@/_1.6pc/[rr]_{H_{2}} &
\ul\cfkt(\bm\alpha,\bm\eta(I^1)) \ar[r]^-{f_{2}} &
\ul\cfkt(\bm\alpha,\bm\eta(I^{2})),}
\end{equation}
where
\[
f_0=f_{I^2,I^0},\quad
f_1 = f_{I^0,I^1},\quad f_{2} = f_{I^1,I^2},\] and
\[
H_1=f_{I^2,I^0,I^1},\quad H_{2}= f_{I^0,I^1,I^2}.
\]
According to Proposition \ref{prop:homalgebra}, the sum $\Phi = f_2 \circ H_1 +
H_2 \circ f_0$ is a grading-preserving quasi-isomorphism. Now, fix an orientation on $\DD_{I^2}$. If the crossing $c_j$ is positive, then
$\DD_{I^1}$ naturally inherits an orientation from $\DD_{I^2}$. We
choose an orientation of $\DD_{I^0}$ that agrees with the orientation of $\DD_{I^2}$ on every
component of $\DD_{I^0}$ that does not pass through a neighborhood of $c_j$. Likewise, if $c_j$ is negative,
then $\DD_{I^0}$ inherits an orientation from $\DD_{I^2}$, and we choose an orientation of $\DD_{I^1}$ that agrees with that of $\DD_{I^2}$ on every
component of $\DD_{I^1}$ away from $c_j$. For $i=0,1,2$, let $n_{\pm}(\DD_{I^i})$ denote the number of $\pm$
crossings in $\DD_{I^i}$ with respect to these orientations.

\begin{proposition} \label{prop:triple}
If $c_j$ is positive, then $\delta(f_0) = n_-(\DD_{I^2}) - n_-(\DD_{I^0})$ and
$\delta(f_2) = \frac12$. If $c_j$ is negative, then $\delta(f_0) = \frac12$ and
$\delta(f_2) = n_+(\DD_{I^2}) - n_+(\DD_{I^1})$. In either case, $\delta(H_1) =
-\delta(f_2)$ and $\delta(H_2) = -\delta(f_0)$.
\end{proposition}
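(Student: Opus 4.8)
The plan is to compute each $\delta$-grading shift directly, by exhibiting a convenient Whitney polygon and applying the formula $\delta(f_{I^i,\dots,I^j}) = \delta_{I^j}(\y)-\delta_{I^i}(\x) + \delta(\psi) + i-j+1$ from \eqref{eqn:Deltaij}, together with the additivity relation \eqref{eq:compose} and the fact (Proposition \ref{prop:homalgebra}) that $\Phi = f_2\circ H_1 + H_2\circ f_0$ is a grading-preserving quasi-isomorphism. First I would treat the case $c_j$ positive; the negative case follows by the obvious symmetry (reversing the roles of the $0$-resolution and the $1$-resolution, i.e.\ reading the cyclic triangle backwards). The map $f_2 = f_{I^1,I^2}$ merges two components of the unlink $\DD_{I^1}$ into the single link $\DD_{I^2}$ at the crossing $c_j$; this is precisely the ``merge'' direction of Manolescu's triangle, and the relevant holomorphic triangle is the standard small one. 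A Maslov/Alexander grading bookkeeping (using Proposition \ref{prop:maslovshift} for the absolute Maslov and Alexander shifts, and the relation $\delta = A - M$) shows that the $\delta$-shift of this merge map is $\tfrac12$; this is the one genuinely local computation, carried out in a small model Heegaard picture near $c_j$ exactly as in Manolescu \cite{ManolescuSkein}. Since $\Phi$ preserves the $\Delta$-grading and $\Delta$ shifts the $\delta$-grading of the summand $\CFKtt(\bm\alpha,\bm\eta(I))$ by $(\abs I - n_-(\DD))/2$, the grading-preserving condition becomes, after unwinding the shifts,
\[
\delta(f_2)+\delta(H_1) = \delta(H_2)+\delta(f_0) = 0,
\]
which already gives $\delta(H_1) = -\delta(f_2)$ and $\delta(H_2) = -\delta(f_0)$ once we know $\delta(f_0)$ and $\delta(f_2)$; the remaining task is to pin down $\delta(f_0) = \delta(f_{I^2,I^0})$.

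For $\delta(f_0)$, I would use the cyclic nature of the triangle: by \eqref{eq:compose} applied around the full cycle $I^2 \to I^0 \to I^1 \to I^2$ (together with the fact that the composite $f_{I^2,I^0,I^1}$ etc.\ are the length-two maps appearing in Proposition \ref{prop:homalgebra}), the three shifts $\delta(f_0), \delta(f_1), \delta(f_2)$ are constrained, and $\delta(f_1) = \delta(f_{I^0,I^1})$ is the $\delta$-shift of the ``split'' (unmerge) direction. The key input is that going around the cycle once returns to the same complex, and the absolute $\delta$-gradings are canonical on unlinks (Lemma \ref{lem:orientationunlink}), so the total shift around the cycle is determined. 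Combining this with $\delta(f_2) = \tfrac12$ and the relation between the $n_\pm$ of the various resolutions — here one uses that passing from $\DD_{I^2}$ to $\DD_{I^0}$ (the $0$-resolution at a positive crossing, oriented compatibly away from $c_j$) changes the writhe/signature bookkeeping by exactly $n_-(\DD_{I^2}) - n_-(\DD_{I^0})$ — yields $\delta(f_0) = n_-(\DD_{I^2}) - n_-(\DD_{I^0})$. One then double-checks consistency: the $\Delta$-grading shift built into $X(\{0,1\}^n)$ is $(\abs I - n_-(\DD))/2$, and the reader should verify that with these values of $\delta(f_0), \delta(f_2)$ the differential $D_{I,I'}$ is homogeneous of $\Delta$-degree $1$, which is exactly the content needed for Theorem \ref{thm:gradedcube}(1).

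The main obstacle I anticipate is the local computation of $\delta(f_2) = \tfrac12$ (and the companion count of how $n_\pm$ changes): this requires actually drawing the small Heegaard multi-diagram near $c_j$, identifying the top-degree generator $\Theta^{I^1,I^2}_1$, choosing a small holomorphic triangle $\psi$ with $\mu(\psi) = 0$ connecting a generator $\x$ of $\CFKtt(\bm\alpha,\bm\eta(I^1))$ to a generator $\y$ of $\CFKtt(\bm\alpha,\bm\eta(I^2))$, and carefully computing $\delta_{I^2}(\y) - \delta_{I^1}(\x)$ from the Alexander and Maslov grading formulas \eqref{eqn:maslovgrading}–\eqref{eqn:alexandergrading}. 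The half-integer arises because a merge changes the number of link components by one, so the $V^{\otimes(m-|L|)}$-type correction term contributes a $\tfrac12$-shift in $\delta$ once the grading conventions (absolute Maslov grading normalized via the orientation, and the $\delta = A - M$ convention of \cite{ManolescuOzsvathQA, RasmussenHomologies}) are taken into account. Everything else — additivity, the cycle constraint, and solving for $\delta(H_1), \delta(H_2)$ — is formal once that local input and the grading-preservation of $\Phi$ are in hand.
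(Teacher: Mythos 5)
Your high-level structure has the right ingredients --- additivity \eqref{eq:compose}, the grading-preservation of $\Phi$, and a local computation --- but the logic does not close, and you are missing the paper's key tool.

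The main gap is in your determination of $\delta(f_0)$. You propose to get it by a ``cyclic constraint.'' Unwinding that constraint: by \eqref{eq:compose}, $\delta(H_1)=\delta(f_0)+\delta(f_1)-1$ and $\delta(H_2)=\delta(f_1)+\delta(f_2)-1$; grading-preservation of $\Phi = f_2 \circ H_1 + H_2 \circ f_0$ gives $\delta(f_2)+\delta(H_1)=0$ and $\delta(H_2)+\delta(f_0)=0$. Both of these reduce to the \emph{same} equation, $\delta(f_0)+\delta(f_1)+\delta(f_2)=1$. Together with $\delta(f_2)=\tfrac12$, this only yields $\delta(f_0)+\delta(f_1)=\tfrac12$; it does not separate $\delta(f_0)$ from $\delta(f_1)$. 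Your appeal to Lemma \ref{lem:orientationunlink} does not help here, since the $\DD_{I^i}$ in Proposition \ref{prop:triple} are general \emph{partial} resolutions, still with crossings, and need not be unlinks. And the sentence in which you say passing from $\DD_{I^2}$ to $\DD_{I^0}$ ``changes the writhe/signature bookkeeping by exactly $n_-(\DD_{I^2})-n_-(\DD_{I^0})$'' is the assertion to be proved, not an argument; nothing in the proposal establishes it.

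What the paper actually does, and what your proposal never invokes, is the Kauffman-state formula for the absolute $\delta$-grading, $\delta(\x)=\sum_c\delta(\x,c)$ from \cite{OSzAlternating}, combined with the Manolescu--Ozsv\'ath trick of matching Kauffman generators $\x^i$ in $\CFKtt(\bm\alpha,\bm\eta(I^i))$ with Kauffman generators $\y^i$ in $\CFKtt(\bm\alpha,\bm\eta(I^2))$ by small triangles $\psi_i$ with $\delta(\psi_i)=0$ and $s_{\x^i}(c)=s_{\y^i}(c)$ for $c\ne c_j$. This reduces both $\delta(f_2)$ and $\delta(f_0)$ to a local-per-crossing bookkeeping. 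For $\delta(f_2)$ this gives $\tfrac12$ because $\DD_{I^1}$ is the oriented resolution of $\DD_{I^2}$, so all contributions agree except at $c_j$; for $\delta(f_0)$ one must count the crossings $c\ne c_j$ whose sign changes between $\DD_{I^0}$ and $\DD_{I^2}$, and the bookkeeping with $n_\pm$ yields $n_-(\DD_{I^2})-n_-(\DD_{I^0})$. This is precisely the computation your proposal waves at but does not carry out, and it is the part you cannot recover formally. Finally, the paper has to treat the case when some $\DD_{I^i}$ is disconnected (so that Kauffman generators do not exist) by performing finger moves to reduce to the connected case; your proposal does not address this at all.

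The last part of your proposal --- deducing $\delta(H_1)=-\delta(f_2)$ and $\delta(H_2)=-\delta(f_0)$ from the fact that $\Phi$ is a grading-preserving quasi-isomorphism (so at least one of $f_2\circ H_1$, $H_2\circ f_0$ is nonzero) plus \eqref{eq:compose} --- is correct and matches the paper, but it only works \emph{after} $\delta(f_0)$ and $\delta(f_2)$ are independently computed.
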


Before proving Proposition \ref{prop:triple}, we illustrate how it is used to prove Theorem \ref{thm:gradedcube}, starting with the corollary below.

\begin{proposition}
\label{prop:diffgradingshift} Suppose $I^0<\dots<I^k$ is a successor sequence
of tuples in $\{0,1\}^n$. Then $\delta(f_{I^0 \dots I^k}) = (2-k)/2.$
\end{proposition}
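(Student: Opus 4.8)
The plan is to argue by induction on $k$, using the additivity relation \eqref{eq:compose} from Proposition \ref{prop:welldefined} together with the explicit base-case computation provided by Proposition \ref{prop:triple}. First I would set up the induction so that the inductive step reduces the grading shift of a length-$k$ successor sequence to grading shifts of shorter sequences; specifically, for a successor sequence $I^0<\dots<I^k$ I would pick an intermediate index $0<l<k$ and apply \eqref{eq:compose} to write $\delta(f_{I^0,\dots,I^k}) = \delta(f_{I^0,\dots,I^l}) + \delta(f_{I^l,\dots,I^k}) - 1$. Combined with the inductive hypothesis that $\delta(f_{I^0,\dots,I^l}) = (2-l)/2$ and $\delta(f_{I^l,\dots,I^k}) = (2-(k-l))/2$, this yields $(2-l)/2 + (2-(k-l))/2 - 1 = (2-k)/2$, as desired. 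So the entire content of the proposition lies in the base case $k=1$, i.e.\ showing $\delta(f_{I^0,I^1}) = 1/2$ for any immediate successor pair within $\{0,1\}^n$.

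For the base case I would invoke Proposition \ref{prop:triple}. Given an immediate successor $I^1$ of $I^0$ in $\{0,1\}^n$ differing in the $j\Th$ coordinate, I need to embed this into a cyclic successor sequence $I^0<I^1<I^2$ where $I^2$ has $\infty$ in coordinate $j$ (so $I^2$ is the $\infty$-resolution at $c_j$). The subtlety is that the orientation conventions used in Proposition \ref{prop:triple} (orienting $\DD_{I^i}$ as boundaries of black regions, as in the discussion at the end of Section \ref{sec:cube}) must be reconciled with the orientation hypotheses stated in that proposition (which fix an orientation on $\DD_{I^2}$ and propagate it). The key point is that for $I^0, I^1 \in \{0,1\}^n$ the relevant absolute $\delta$-grading is the \emph{canonical} one from Lemma \ref{lem:orientationunlink}, which is orientation-independent, so I am free to use whichever orientations make Proposition \ref{prop:triple} applicable. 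Then Proposition \ref{prop:triple} directly gives: if $c_j$ is positive, $\delta(f_2) = \delta(f_{I^1,I^2}) = 1/2$, and since $f_1 = f_{I^0,I^1}$, I would need to also extract $\delta(f_1)$; if $c_j$ is negative, $\delta(f_0) = \delta(f_{I^2,I^0}) = 1/2$. Here I should be careful: $f_{I^0,I^1}$ itself is one of $f_0, f_1, f_2$ depending on which cyclic rotation I use, so by choosing the cyclic ordering appropriately (putting the $0 \to 1$ change in the ``$f_2$'' slot when $c_j$ is positive, and analyzing the negative case symmetrically), Proposition \ref{prop:triple} pins down $\delta(f_{I^0,I^1}) = 1/2$ in all cases.

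The main obstacle I anticipate is bookkeeping around the orientations and the cyclic vs.\ linear successor structure: Proposition \ref{prop:triple} is stated for a cyclic successor sequence passing through an $\infty$-resolution, with a specific orientation convention, whereas the map $f_{I^0,I^1}$ I care about lives entirely in the $\{0,1\}^n$ cube. I would need to check that the grading shift of $f_{I^0,I^1}$, computed with the canonical $\delta$-grading on both unlink complexes, really does match the value of $\delta(f_2)$ (or $\delta(f_0)$) appearing in Proposition \ref{prop:triple} — this uses that both absolute lifts are canonical and orientation-independent for unlinks, so the relative grading shift from Propositions \ref{prop:welldefined} and \ref{prop:commute} uniquely determines the absolute one once one endpoint value is known, and Proposition \ref{prop:triple} supplies exactly that. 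Once this identification is made, the induction is purely formal. A secondary minor point is to confirm the induction is well-founded — length-$1$ sequences are the base, and any length-$k$ sequence with $k \geq 2$ can be split at an interior index — which is immediate.
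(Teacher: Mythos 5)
Your reduction to the base case $k=1$ via \eqref{eq:compose} is exactly what the paper does, and your observation that the $\delta$-gradings on the summands indexed by $\{0,1\}^n$ are the canonical ones from Lemma \ref{lem:orientationunlink} (hence orientation-independent) is also correct and needed. However, your plan for the base case itself has a genuine gap. You want to read $\delta(f_{I^0,I^1})$ off directly from Proposition \ref{prop:triple} by ``choosing the cyclic ordering appropriately'' so that the $0 \to 1$ transition lands in the $f_2$ slot (or the $f_0$ slot in the negative case). That move is not available: in the setup of \eqref{eq:triple} the tuples $I^0 < I^1 < I^2$ are forced to have $j$-th entries $0, 1, \infty$ in that order, so the assignment $f_0 = f_{I^2,I^0}$ (the $\infty\to 0$ map), $f_1 = f_{I^0,I^1}$ (the $0\to 1$ map), $f_2 = f_{I^1,I^2}$ (the $1\to\infty$ map) is completely determined by the resolution data and cannot be cyclically permuted. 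Proposition \ref{prop:triple} records $\delta(f_0)$, $\delta(f_2)$, $\delta(H_1)$ and $\delta(H_2)$, but says nothing directly about $\delta(f_1)$ — which is the map you actually need.

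The missing step, which is what the paper uses, is a second application of \eqref{eq:compose}, this time to the cyclic successor sequence $I^2, I^0, I^1$. Since $H_1 = f_{I^2,I^0,I^1}$, this gives $\delta(H_1) = \delta(f_0) + \delta(f_1) - 1$, so $\delta(f_1) = \delta(H_1) - \delta(f_0) + 1$. Now Proposition \ref{prop:triple} supplies the ingredients: if $c_j$ is positive, $\delta(f_0) = n_-(\DD_{I^2}) - n_-(\DD_{I^0}) = 0$ (since $\DD_J$ has its one crossing positive and $\DD_{I^0}$ is a complete resolution) and $\delta(H_1) = -\delta(f_2) = -\tfrac12$, giving $\delta(f_1) = \tfrac12$; if $c_j$ is negative, $\delta(f_0) = \tfrac12$ and $\delta(H_1) = -\delta(f_2) = 0$, again giving $\delta(f_1) = \tfrac12$. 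Note this second use of \eqref{eq:compose} is applied to a \emph{cyclic} (not strictly increasing) successor sequence; the paper's Proposition \ref{prop:welldefined} is stated for successor sequences, but its proof applies verbatim in the cyclic case, and this is implicitly being used. With this fix, your induction goes through as you describe.
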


\begin{proof}[Proof of Proposition \ref{prop:diffgradingshift}]
Suppose $I^{i}$ and $I^{i+1}$ differ in their $j\Th$ entries, and let $J$ be
the tuple obtained by replacing this entry with $\infty$. We may identify
$f_{I^i,I^{i+1}}$ with the map $f_1$ in \eqref{eq:triple}. Note that $\DD_{J}$
is a diagram for an unlink with only one crossing. By Proposition
\ref{prop:welldefined}, $\delta(f_0) + \delta(f_1) = \delta(H_1) + 1$. If $c_j$
is positive, then $\delta(f_0) = 0$ and $\delta(H_1)= -\frac12$, by Proposition
\ref{prop:triple}; otherwise, $\delta(f_0) = \frac12$ and $\delta(H_1)= 0$. In
either case, $\delta(f_1) = \frac12$. According to Proposition
\ref{prop:welldefined}, \[\delta(f_{I^0,\dots,I^k}) = \delta(f_{I^0,I^1})+
\dots + \delta(f_{I^{k-1},I^k}) - (k-1) = (2-k)/2,\] as claimed.
\end{proof}

\begin{proof}[Proof of Theorem \ref{thm:gradedcube}]
Suppose $I^0\leq I^k$ are tuples in $\{0,1\}^n$ which differ in $k$ entries.
The grading shift of $D_{I^0,I^k}$ with respect to the grading $\Delta$ is
\[(2-k)/2 - (|I^0|-n_-(\DD))/2 + (|I^k|-n_-(\DD))/2 = 1,\] by Proposition
\ref{prop:commute} and Proposition \ref{prop:diffgradingshift}. This proves the
first statement of Theorem \ref{thm:gradedcube}.

Now, let $G_I$ denote the restriction of $G$ to the summand $\CFKtt(\bm\alpha,
\bm\eta(I))$. Recall that $G_I$ is the sum, over all sequences $I = I^0 < \dots
< I^n = I^\infty$ with $I^k \in \{0,1\}^{n-k} \times \{\infty\}^k$, of the
compositions $D_{I^{n-1}, I^n} \circ \dots \circ D_{I^0, I^1}$. It follows
easily from Propositions \ref{prop:welldefined} and \ref{prop:commute} that
$G_I$ is homogeneous. Choose a sequence $I = I^0<\dots<I^n = I^{\infty}$ as
above, and let $J = (0,\dots,0)$. Choose absolute lifts of the relative
$\delta$-gradings on the complexes $\ul\cfkt(\bm\alpha,\bm\eta(I^i))$. By
Propositions \ref{prop:welldefined}, \ref{prop:commute} and Proposition
\ref{prop:diffgradingshift}, \[ \delta(D_{J,I^1}) = \delta(D_{J,I^0}) +
\delta(D_{I^0,I^1}) -1 = (2-|I|)/2+ \delta(D_{I^0,I^1}) -1= \delta(D_{I^0,I^1})
- \abs{I}/2.
\]
Adding $\delta(D_{I^1,I^2}) + \dots + \delta(D_{I^{n-1},I^n})$ to both sides,
we have $\delta(G_J) = \delta(G_I) - \abs{I}/2$; that is, $\delta(G_I) = |I|/2
+ C$ for some constant $C$. For the second statement of Theorem
\ref{thm:gradedcube}, it suffices to show that $C=-n_-(\DD)$.

Define a tuple $I^0 \in \{0,1\}^n$ according to the following rule: if $c_j$ is
a positive crossing, let $(I^0)_j=1$; otherwise, let $(I^0)_j=0$. Note that
$\DD_{I^0}$ is the oriented (Seifert) resolution of $\DD$. For $i=1, \dots, n$,
let $I^i$ be the tuple obtained by changing the last $i$ entries of $I^0$ to
$\infty$. One of the terms appearing in $G_{I^0}$ is $D_{I^{n-1},I^n} \circ
\dots \circ D_{I^0,I^1}$. In this composition, $n_+(\DD)$ of the maps are of
the form $f_2$, as in \eqref{eq:triple}, while $n_-(\DD)$ are of the form
$H_{2}$. Therefore,
\[
\delta(G_{I^0}) = n_+(\DD)/2 - n_-(\DD)/2 = |I^0|/2-n_-(\DD)/2,
\]
which implies that $C=-n_-(\DD)$.
\end{proof}

The rest of this section is devoted to proving Proposition \ref{prop:triple}.
For this, it helps to know the $\delta$-gradings of certain generators. Let
$A_1,\dots,A_k$ denote the regions in the diagram $\DD_I$ that are not adjacent
to the marking $p_m$. Recall that a \emph{Kauffman state} is a bijection which
assigns to each crossing $c$ of $\DD_I$ one of the regions $A_i$ incident to
$c$.

A generator $\x$ of $\ul\cfkt(\bm\alpha,\bm\eta(I))$ is said to be
\emph{Kauffman} if $\x$ does not contain any intersections points between
ladybug and non-ladybug curves. A Kauffman generator $\x$ determines a Kauffman
state $s_\x$ as follows: for each crossing $c$, let $s_\x(c)$ be the region
whose corresponding $\bm\alpha$ curve intersects $\eta(I)_c$ in a point of
$\x$. (This correspondence is $2^{m-1}$-to-1.) Let $\delta(\x,c)\in\{0,\pm
1/2\}$ be the quantity defined in Figure \ref{fig:kauffman}, according to which
region is assigned to $c$ in $s_\x$. Ozsv{\'a}th and Szab{\'o}
\cite{OSzAlternating} prove that
\begin{equation}\label{eqn:kauff}\delta(\x) = \sum_c
\delta(\x,c).\end{equation}

 \begin{figure}[!htbp]
 \labellist
 \small\hair 2pt
  \pinlabel $0$ at 30 47
    \pinlabel $0$ at 81 47
 \pinlabel $1/2$ at 54 75
  \pinlabel $1/2$ at 54 19

   \pinlabel $0$ at 155 47
    \pinlabel $0$ at 206 47
 \pinlabel $-1/2$ at 180 75
  \pinlabel $-1/2$ at 180 19

\endlabellist
\begin{center}
\includegraphics[width=6cm]{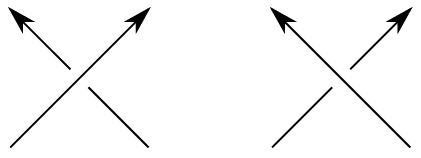}
\caption{The local contributions to $\delta(\x)$ near a crossing.}
\label{fig:kauffman}
\end{center}
\end{figure}

\begin{proof}[Proof of Proposition \ref{prop:triple}]
We only consider the case in which $c_j$ is a positive crossing; the proof for a
negative crossing is extremely similar.

First, suppose that the projections $\DD_{I^0}$, $\DD_{I^1}$, and $\DD_{I^2}$
are connected. In this case, we use an argument due to Manolescu and Ozsv\'ath
\cite{ManolescuOzsvathQA}. Choose some Kauffman generators $\x^0 \in \T_\alpha
\cap \T_{\eta(I^0)}$ and $\x^1 \in \T_\alpha \cap \T_{\eta(I^1)}$. As in
\cite[Section 3.5]{ManolescuOzsvathQA}, we may find corresponding Kauffman
generators $\y^0, \y^1 \in \T_\alpha \cap \T_{\eta(I^2)}$ such that
\begin{inparaenum}\item$s_{\x_i}(c)= s_{\y_i}(c)$ for each $c \ne c_j$, and
\item there exist homotopy classes $\psi_0 \in \pi_2(\y_0,
\Theta^{I^2,I^0}_{e_0}, \x_0)$ and $\psi_1 \in \pi_2(\x_1,
\Theta^{I^1,I^2}_{e_1}, \y_1)$ with $\delta(\psi_i) =0$ (for some
$e_i\in\{1,2\}$). \end{inparaenum}

Since $\DD_{I^1}$ is the oriented resolution of $\DD_{I^2}$, $\delta(\x_1,c) =
\delta(\y_1,c)$ for every crossing $c \ne c_j$, while $\delta(\y_1,c_j) =
\frac12$. Therefore, $\delta(f_2) = \delta(\y_1) - \delta(\x_1) = \frac12$, as
claimed. On the other hand, the sign of a crossing in $\DD_{I^0}$ need not be
the same as its sign in $\DD_{I^2}$. For any crossing $c \ne c_j$ that is
negative in $\DD_{I^2}$ and positive in $\DD_{I^0}$, we have $\delta(\x_0,c) =
\delta(\y_0,c) + \frac12$; the number of such crossings is $n_-(\DD_{I^2}) -
n_-(\DD_{I^0})$. Likewise, if $c \ne c_j$ is positive in $\DD_{I^2}$ and
negative in $\DD_{I^0}$, then $\delta(\x_0,c) = \delta(\y_0,c) - \frac12$; the
number of such crossings is $n_+(\DD_{I^2})-1- n_+(\DD_{I^0})$. Since
$n_+(\DD_{I^2}) + n_-(\DD_{I^2}) = n$ and $n_+(\DD_{I^0}) + n_-(\DD_{I^0}) =
n-1$,
\[
\begin{aligned}
\delta(f_0) &= \delta(\x_0) - \delta(\y_0) \\
&= \frac12 \big((n_-(\DD_{I^2}) - n_-(\DD_{I^0})\big) - \frac12\big((n_+(\DD_{I^2})-
n_+(\DD_{I^0})-1)\big) \\
&= n_-(\DD_{I^2}) - n_-(\DD_{I^0}),
\end{aligned}
\]
as claimed.

If either $\DD_{I^0}$ or $\DD_{I^1}$ is disconnected, then the corresponding complex has no Kauffman generators, so the argument above does
not apply. We remedy this situation as follows. Let $\DD'$ be the planar diagram
obtained from $\DD$ by performing a finger move just outside of each crossing
as in Figure \ref{fig:isotopy}, and let $\DD_{I^0}'$, $\DD_{I^1}'$ and
$\DD_{I^2}'$ be the corresponding resolutions of $\DD'$, leaving the newly
introduced crossings unresolved. Notice that all three of these diagrams are
connected, so the argument above applies.

\begin{figure}[!htbp]
\labellist
 \pinlabel (a) at 9 80 \pinlabel (b) at 109 80
\endlabellist
\begin{center}
\includegraphics[width=5.5cm]{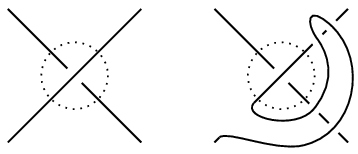}
\caption{(a) A crossing of $\DD$. (b) The modified crossing in $\DD'$.}
\label{fig:isotopy}
\end{center}
\end{figure}

Let $(\Sigma',\bm\alpha',\bm\eta'(I^0),\bm\eta'(I^1),\bm\eta'(I^2),\O,\X)$
denote the Heegaard multi-diagram encoding $\DD_{I^0}'$, $\DD_{I^1}'$ and
$\DD_{I^{2}}'$ that is obtained from $\DD'$ using the procedure in Section
\ref{sec:heegaard}, except that we do not place ladybugs on the tubes
corresponding to the edges that are contained entirely in Figure
\ref{fig:isotopy}(b). This diagram is related to $(\Sigma,\bm\alpha,
\bm\eta(I^0),\bm\eta(I^1),\bm\eta(I^2),\O,\X)$ by a sequence of handleslides,
isotopies and index one/two stabilizations avoiding $\O\cup\X$. (Essentially,
these Heegaard moves account for the Reidemeister II moves introduced by the
operation in Figure \ref{fig:isotopy}.) We therefore have diagrams,
\[
\xymatrix @!0@C=9pc@R=5pc{
\ul\cfkt(\bm\alpha,\bm\eta(I^2)) \ar[r]^-{f_0} \ar[d]_-{\Phi_2}& \ul\cfkt(\bm\alpha,\bm\eta(I^0))\ar[d]^-{\Phi_0}\\
 \ul\cfkt(\bm\alpha,\bm\eta'(I^2)) \ar[r]_-{f_0'} & \ul \cfkt(\bm\alpha,\bm\eta'(I^0))}
\quad  \quad
 \xymatrix @!0@C=9pc@R=5pc{
\ul\cfkt(\bm\alpha,\bm\eta(I^1)) \ar[r]^-{f_2} \ar[d]_-{\Phi_1}& \ul\cfkt(\bm\alpha,\bm\eta(I^2))\ar[d]^-{\Phi_2}\\
 \ul\cfkt(\bm\alpha,\bm\eta'(I^1)) \ar[r]_-{f_2'} & \ul \cfkt(\bm\alpha,\bm\eta'(I^2)),}
\]
which commute up to homotopy, where $\Phi_0$, $\Phi_1$, and $\Phi_2$ are the
grading-preserving chain homotopy equivalences associated to these Heegaard moves. An
argument very similar to that in the proof of Proposition \ref{prop:commute} shows that
$\delta(f_0) = \delta(f_0') = n_-(\DD'_{I^2}) - n_-(\DD'_{I^0}) =
n_-(\DD_{I^2}) - n_-(\DD_{I^0})$ and $\delta(f_2) = \delta(f_2') = \frac12$, as
required.

Finally, note that $H_2 \circ f_0 + f_2 \circ H_1$ is a grading-preserving
quasi-isomorphism, so at least one of these terms is nonzero. Therefore, $\delta(H_2) + \delta(f_0) =
\delta(f_2) + \delta(H_1) = 0$, by Proposition \ref{prop:welldefined}, completing the proof of Proposition \ref{prop:triple}.
\end{proof}

\section{The \texorpdfstring{$d_2$}{d\textunderscore2} differential}
\label{sec:twisted}
From now on, we shall assume that $\r$ is generic. Recall from Section
\ref{sec:cube} that the $E_2$ term of $\SS_\FF^\r$ is the direct sum
\[
\bigoplus_{I \in \RR(\DD)} \HFKtt(\bm\alpha, \bm\eta(I); \FF).
\]
With respect to this direct sum decomposition, the differential $d_2(\SS^\r_\FF)$ is a sum of maps
\[
d_{I,I''}\co \HFKtt(\bm\alpha, \bm\eta(I);\FF) \to \HFKtt(\bm\alpha,
\bm\eta(I'');\FF)
\]
over all pairs $I,I''$ for which $I''$ is a double successor of $I$. The purpose of this section is to compute these $d_{I,I''}$.

Suppose that $I,I''\in\RR(\DD)$ and that $I''$ is a double successor of $I$
which differs from $I$ in its $j_1\Th$ and $j_2\Th$. Let $J$ be the tuple
obtained from $I$ by changing its $j_1\Th$ and $j_2\Th$ entries from $0$s to
$\infty$s. Then $\DD_J$ is a 2-crossing diagram for the 2-component unlink
$L_J$. The four complete resolutions $\DD_I$, $\DD_{I^1}$, $\DD_{I^2}$ and
$\DD_{I''}$ described in Section \ref{sec:complex} are obtained from $\DD_J$ by
resolving these two crossings. Recall that $d_{I,I''}$ is defined in terms of
maps that count pseudo-holomorphic polygons in the multi-diagram
\[
\HH_{I,I''}=(\Sigma, \bm\alpha, \bm\eta(I), \bm\eta(I^1), \bm\eta(I^2),
\bm\eta(I''), \O,\X).
\]
Our strategy for computing $d_{I,I''}$ is as follows. First, we describe a
sequence of Heegaard moves from $\HH_{I,I''}$ to a ``standard'' genus 3
multi-diagram which encodes the four resolutions above. We then determine the
relevant polygon-counting maps for this genus 3 diagram. Fortunately, it
suffices to explicitly compute only a handful of these maps; the rest are
determined via the $\Psi_i$ maps defined in Section \ref{sec:psi}. Next, we
argue that this model computation determines $d_{I,I''}$ to the extent that we
can recover the isomorphism type of the complex
$(E_2(\SS^\r_\FF),d_2(\SS^\r_\FF))$. Finally, we show that this
complex is isomorphic to $(C^\r(\DD),\partial^\r)$.

As in Section \ref{sec:complex}, we assume that the marked points are ordered 
$p_1, \dots, p_m$ according to the orientation of $\DD_I$. For $i = 1, \dots,
m$, the value of $\omega_{\r}$ on the unique point of $\A$ that is
contained in the same component of $\Sigma \minus \bm\eta(I)$ as $O_i$ equals
$r_i$. As a notational convenience, we define
\[
R(i,j) = \begin{cases} r_i + \dots + r_j & i \le j \\ 0 & i> j,
\end{cases}
\]
so that $A = R(1,a)$, $B=R(a+1,b)$, $C = R(b+1,c)$, and $D=R(c+1,d)$.

\subsection{A model computation} \label{sec:destab}
%Also, for each $i$ with $r_i \ne 0$, there is a unique $i'$ with $r_{i'}=-r_i$, so $\sum_{i=1}^m r_i = 0$. In particular, we have $r_a = -r_c$ and $r_b = -r_d$.

We may reduce $\HH_{I,I''}$ to an admissible genus 3 multi-diagram via a
sequence of handleslides and isotopies in the complement of $\O\cup\X\cup\A$,
followed by index one/two destabilizations, as follows. Consider a crossing
$c_j$, where $j \ne j_1, j_2$. The curves $\eta_{c_j}(I)$, $\eta_{c_j}(I^1)$,
$\eta_{c_j}(I^2)$ and $\eta_{c_j}(I'')$ are pairwise isotopic and each
intersects either one or two of the $\bm\alpha$ curves corresponding to regions
of $\mathbb{R}^2\minus\DD$ in exactly one point. Call these curves $\alpha_1$
and, if necessary, $\alpha_2$. First, we handleslide all of the ladybug
$\bm\alpha$ circles which pass through $\eta_{c_j}(I)$ over $\alpha_1$, as in
Figure \ref{fig:hmoves2}(b). (This takes two handleslides for each such
$\bm\alpha$ curve.) Second, we handleslide $\alpha_2$ over $\alpha_1$ (if
applicable), as in (c). Third, we handleslide all other $\bm\eta(I)$ (resp.
$\bm\eta(I^1)$, $\bm\eta(I^2)$ and $\bm\eta(I'')$) curves which intersect
$\alpha_1$ over $\eta_{c_j}(I)$ (resp. $\eta_{c_j}(I^1)$, $\eta_{c_j}(I^2)$ and
$\eta_{c_j}(I'')$), as in (d). The resulting multi-diagram is the connected sum
of a multi-diagram of smaller genus with a standard torus piece. Handlesliding
further, we can ``move" this torus piece until it is adjacent to the region
containing $O_1$. We perform these operations for each $j\neq j_1,j_2$, and
then destabilize $n-2$ times.\footnote{This is destabilization in the sense of
multi-diagrams; see \cite{OSz4Manifold, RobertsSpectralSequence}.}

\begin{figure}
\labellist
 \pinlabel (a) at 2 115
\pinlabel (b) at 129 115 \pinlabel (c) at 262 115 \pinlabel (d) at 398 115
 \tiny \hair 2pt
 \pinlabel $\alpha_1$ at 95 82
 \pinlabel $\alpha_2$ at 95 25

\endlabellist
\includegraphics[width = 12cm]{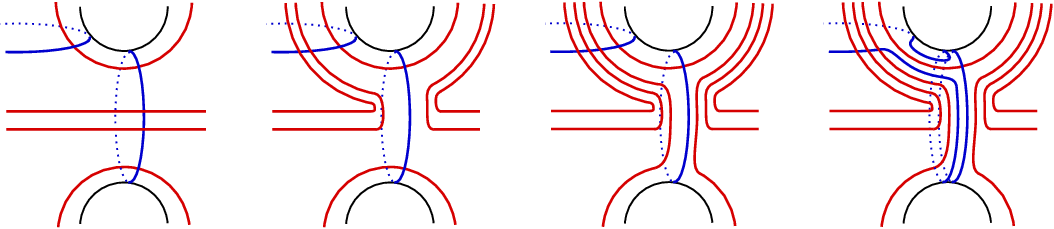}
\caption{A sequence of handleslides; in this example, there is one ladybug
$\bm\alpha$ curve passing through the vertical curves $\eta_{c_j}(I)$,
$\eta_{c_j}(I^1)$, $\eta_{c_j}(I^2)$ and $\eta_{c_j}(I'')$. For convenience, we
have not shown the $\bm\eta(I^1)$, $\bm\eta(I^2)$ or $\bm\eta(I'')$ curves
here; they are simply small translates of the blue $\bm\eta(I)$ curves in this
picture.} \label{fig:hmoves2}
\end{figure}

The genus 3 multi-diagram so obtained is the one we would associate to the planar diagram $\DD_J$, following Section \ref{sec:heegaard}. Let us refer to this multi-diagram as $\hat\HH^3_{I,I''}$. There are two cases to consider. If the smoothing of $c_{j_1}$ in $\DD_I$ connects the white regions --- i.e., $\gamma_{I^1} = \gamma_I \cup e_{j_1}$ --- then $\hat\HH^3_{I,I''}$ is isotopic to the multi-diagram
\[
\HH^3_{I,I''}=(\Sigma_3,\bm a,\bm\beta,\bm\gamma,\bm\delta,\bm\epsilon,\O,\X),
\]
depicted in Figure \ref{fig:genus3a}, where $\bm a$, $\bm\beta$, $\bm\gamma$, $\bm\delta$, and $\bm\epsilon$ are the images of the tuples $\bm\alpha$, $\bm\eta(I)$, $\bm\eta(I^1)$, $\bm\eta(I^2)$ and $\bm\eta(I'')$, respectively, after these Heegaard moves. On the other hand, if the smoothing of $c_{j_2}$ connects the black regions --- i.e., $\gamma_{I^1} = \gamma_I \minus e_{j_1}$ --- then $\hat\HH^3_{I,I''}$ is isotopic to the multi-diagram in Figure \ref{fig:genus3b}, also denoted by $\HH^3_{I,I''}$. (Note that in either case, the ladybug curves in $\HH^3_{I,I''}$ are stretched just enough to achieve admissibility, rather than all the way to the region containing $X_1$ as in the definition of $\HH$.)
We shall distinguish these two cases using the number $\nu = \nu_{I,I''}$, defined to be $1$ in the first case and $0$ in the second, as in Section \ref{sec:complex}.

\begin{figure}
\labellist
 \tiny \hair 2pt
 \pinlabel \rotatebox{20}{$r_2$} at 29 112
 \pinlabel \rotatebox{25}{$O_2$} at 36 98
 \pinlabel \rotatebox{25}{$X_2$} at 53 71
 \pinlabel \rotatebox{30}{$r_3$} at 64 58
 \pinlabel \rotatebox{85}{$r_a$} at 113 30
 \pinlabel \rotatebox{98}{$O_a$} at 129 28
 \pinlabel \rotatebox{98}{$X_a$} at 161 28
 \pinlabel {$r_{a+1}$} at 211 64
 \pinlabel \rotatebox{15}{$O_{a+1}$} at 185 86
 \pinlabel \rotatebox{15}{$X_{a+1}$} at 175 122
 \pinlabel \rotatebox{-15}{$O_b$} at 174 160
 \pinlabel \rotatebox{-15}{$X_b$} at 183 192
 \pinlabel \rotatebox{15}{$r_{b+1}$} at 244 206
 \pinlabel \rotatebox{15}{$O_{b+1}$} at 250 192
 \pinlabel \rotatebox{15}{$X_{b+1}$} at 258 162
 \pinlabel \rotatebox{-15}{$O_c$} at 257 123
 \pinlabel \rotatebox{-15}{$X_c$} at 248 91
 \pinlabel \rotatebox{-5}{$r_{c+1}$} at 250 13
 \pinlabel \rotatebox{92}{$O_{c+1}$} at 267 29
 \pinlabel \rotatebox{92}{$X_{c+1}$} at 304 28
 \pinlabel \rotatebox{92}{$r_{c+2}$} at 324 29
 \pinlabel \rotatebox{-86}{$r_d$} at 320 257
 \pinlabel \rotatebox{-83}{$O_d$} at 303 259
 \pinlabel \rotatebox{-83}{$X_d$} at 270 259
 \pinlabel \rotatebox{-85}{$r_{d+1}$} at 179 259
 \pinlabel \rotatebox{-85}{$O_{d+1}$} at 161 259
 \pinlabel \rotatebox{-85}{$X_{d+1}$} at 129 259
 \pinlabel \rotatebox{-79}{$r_{d+2}$} at 112 256
 \pinlabel \rotatebox{-24}{$r_{m}$} at 60 227
 \pinlabel \rotatebox{-23}{$O_{m}$} at 51 214
 \pinlabel \rotatebox{-23}{$X_{m}$} at 36 187
 \pinlabel \rotatebox{-18}{$r_1$} at 28 171
 \pinlabel $O_1$ at 28 158
 \pinlabel $X_1$ at 28 127
\endlabellist
\includegraphics{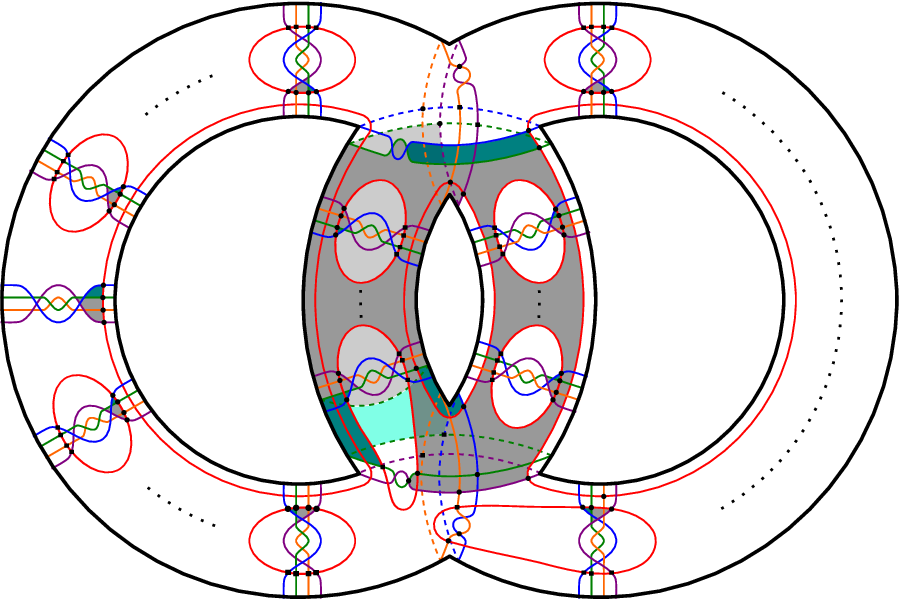}
\caption{$\HH^3_{I,I''}$ in the case that $\gamma_{I^1} = \gamma_I \cup e_{j_1}$. The tuples $\bm a$, $\bm\beta$, $\bm\gamma$, $\bm\delta$ and $\bm\epsilon$ are drawn in red, blue, green, orange, and purple, respectively. The crossing $c_{j_1}$ is on the top and $c_{j_2}$ is on the bottom. The shaded regions represent the domains of the triangle classes considered in the proof of Proposition \ref{prop:g21(w0)}. Points in $\A$ are labeled with their corresponding values of $\omega_\r$.} \label{fig:genus3a}
\end{figure}

\begin{figure}
\labellist
 \tiny \hair 2pt
 \pinlabel \rotatebox{20}{$r_2$} at 29 112
 \pinlabel \rotatebox{25}{$O_2$} at 36 98
 \pinlabel \rotatebox{25}{$X_2$} at 53 71
 \pinlabel \rotatebox{30}{$r_3$} at 64 58
 \pinlabel \rotatebox{85}{$r_a$} at 113 30
 \pinlabel \rotatebox{98}{$O_a$} at 129 28
 \pinlabel \rotatebox{98}{$X_a$} at 161 28
 \pinlabel $r_{c+1}$ at 211 64
 \pinlabel \rotatebox{15}{$O_{c+1}$} at 185 86
 \pinlabel \rotatebox{15}{$X_{c+1}$} at 175 122
 \pinlabel \rotatebox{-15}{$O_d$} at 174 160
 \pinlabel \rotatebox{-15}{$X_d$} at 183 192
 \pinlabel \rotatebox{15}{$r_{b+1}$} at 244 206
 \pinlabel \rotatebox{15}{$O_{b+1}$} at 250 192
 \pinlabel \rotatebox{15}{$X_{b+1}$} at 258 162
 \pinlabel \rotatebox{-15}{$O_c$} at 257 123
 \pinlabel \rotatebox{-15}{$X_c$} at 248 91
 \pinlabel \rotatebox{-5}{$r_{a+1}$} at 250 13
 \pinlabel \rotatebox{92}{$O_{a+1}$} at 267 29
 \pinlabel \rotatebox{92}{$X_{a+1}$} at 304 28
 \pinlabel \rotatebox{92}{$r_{a+2}$} at 324 29
 \pinlabel \rotatebox{-86}{$r_b$} at 320 257
 \pinlabel \rotatebox{-83}{$O_b$} at 303 259
 \pinlabel \rotatebox{-83}{$X_b$} at 270 259
 \pinlabel \rotatebox{-85}{$r_{d+1}$} at 179 259
 \pinlabel \rotatebox{-85}{$O_{d+1}$} at 161 259
 \pinlabel \rotatebox{-85}{$X_{d+1}$} at 129 259
 \pinlabel \rotatebox{-79}{$r_{d+2}$} at 112 256
 \pinlabel \rotatebox{-24}{$r_{m}$} at 60 227
 \pinlabel \rotatebox{-23}{$O_{m}$} at 51 214
 \pinlabel \rotatebox{-23}{$X_{m}$} at 36 187
 \pinlabel \rotatebox{-18}{$r_1$} at 28 171
 \pinlabel $O_1$ at 28 158
 \pinlabel $X_1$ at 28 127
\endlabellist
\includegraphics{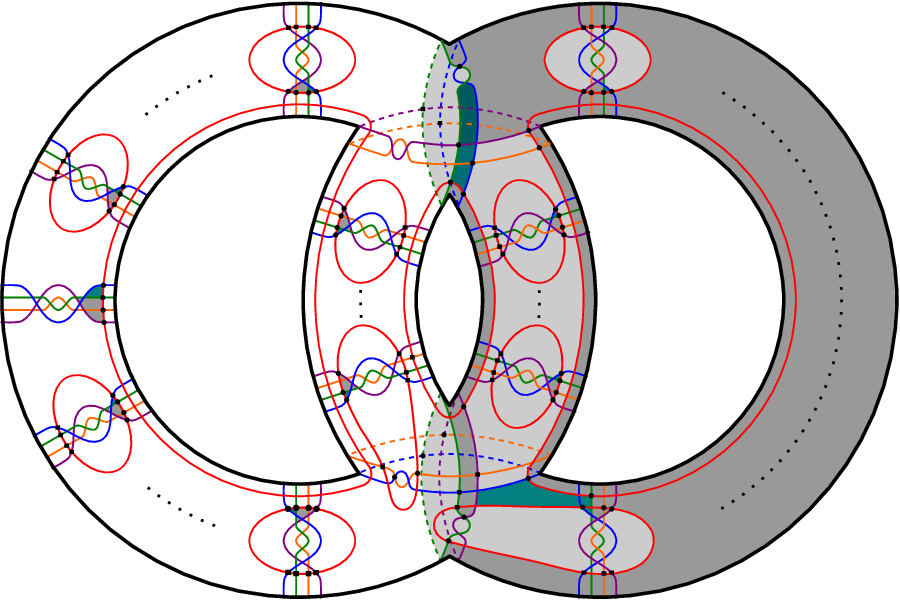}
\caption{$\HH^3_{I,I''}$ in the case that $\gamma_{I^1} = \gamma_I \minus e_{j_1}$. } \label{fig:genus3b}
\end{figure}

In Figures \ref{fig:genus3a} and \ref{fig:genus3b}, we have indicated, by
circles and squares, some intersection points between the $\bm a$ curves and
the $\bm \beta$, $\bm\gamma$, $\bm \delta$ and $\bm\epsilon$ curves. For each
$i=2,\dots,m$, let $w_i$ (resp. $x_i$, $y_i$ and $z_i$) be the circular
intersection point between $a_{p_i}$ and some $\bm \beta$ (resp. $\bm\gamma$,
$\bm\delta$ and $\bm\epsilon$) curve, and let $w_i'$ (resp. $x_i'$, $y_i'$ and
$z_i'$) be the square intersection point between $a_{p_i}$ and the same $\bm
\beta$ (resp. $\bm\gamma$, $\bm\delta$ and $\bm\epsilon$) curve. Note that
every point of $\T_ a \cap \T_\beta$ (resp. $\T_ a \cap \T_\gamma$, $\T_ a \cap
\T_\delta$ and $\T_ a \cap \T_\epsilon$) contains either $w_i$ or $w_i'$ (resp.
$x_i$ or $x_i'$, $y_i$ or $y_i'$ and $z_i$ or $z_i'$) for $i=2,\dots,m$, and
\[ \abs{\T_ a \cap \T_\beta} = \abs{\T_ a \cap \T_\gamma} = \abs{\T_ a \cap
\T_\delta} = \abs{\T_ a \cap \T_\epsilon} = 2^{m-1}.
\]
By construction, the unique point $\w_0$ (resp. $\x_0$, $\y_0$ and $\z_0$) of
$\T_{ a}\cap\T_{\beta}$ (resp. $\T_ a \cap \T_\gamma$, $\T_ a \cap \T_\delta$
and $\T_ a \cap \T_\epsilon$) in the top Maslov grading contains all of the
$w_i$ (resp. $x_i$, $y_i$ and $z_i$). For $2, \dots, m$, let $\w_i$ be the
generator obtained from $\w_0$ by replacing $w_i$ by $w_i'$, and define $\x_i$,
$\y_i$, and $\z_i$ similarly. These constitute all of the generators in the
second-to-top Maslov gradings of their respective complexes.

The two lemmas below are easy exercises in counting holomorphic disks; compare with Proposition \ref{prop:unlink} and Lemma \ref{lem:psitheta1}.

\begin{lemma} \label{lemma:differential}
The differentials on $\CFKtt(\bm a, \bm\beta)$, $\CFKtt(\bm a,
\bm\epsilon)$, $\CFKtt(\bm\beta, \bm\gamma)$, $\CFKtt(\bm\beta, \bm\delta)$,
$\CFKtt(\bm\gamma, \bm\epsilon)$, and $\CFKtt(\bm\delta, \bm\epsilon)$ are all
zero. The differentials on $\CFKtt(\bm a, \bm\gamma)$ and
$\CFKtt(\bm a, \bm\delta)$ are given by
\begin{align*}
% \label{eq:dag}
{\partial}_{ a\gamma}(\x) &=
  \begin{cases}
  (1 + T^{B+C}) (\x \minus \{x_{a+1}\} \cup \{x'_{a+1}\}) & x_{a+1} \in \x \\
  0 & x_{a+1} \not\in \x
  \end{cases} \\
% \label{eq:dad}
{\partial}_{ a\delta}(\y) &=
  \begin{cases}
  (1 + T^{C+D}) (\y \minus \{y_{c+1}\} \cup \{y'_{c+1}\}) & y_{c+1} \in \y \\
  0 & y_{c+1} \not\in \y.
  \end{cases}
\end{align*}
\end{lemma}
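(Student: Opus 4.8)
The plan is to exploit the fact that, by the construction in Section \ref{sec:destab}, the genus-$3$ multi-diagram $\HH^3_{I,I''}$ is precisely the diagram that the recipe of Section \ref{sec:heegaard} assigns to the two-crossing planar diagram $\DD_J$; under this identification the tuples $\bm\beta,\bm\gamma,\bm\delta,\bm\epsilon$ are the $\bm\eta(J')$ attached to the four resolutions $J'\in\{0,1\}^2$ of $\DD_J$. Thus $(\Sigma_3,\bm a,\bm\beta,\O,\X)$ and $(\Sigma_3,\bm a,\bm\epsilon,\O,\X)$ are admissible Heegaard diagrams for the pointed unknots $\LL_I$ and $\LL_{I''}$; $(\Sigma_3,\bm a,\bm\gamma,\O,\X)$ and $(\Sigma_3,\bm a,\bm\delta,\O,\X)$ are diagrams for the two-component pointed unlinks $\LL_{I^1}$ and $\LL_{I^2}$; and each of $(\bm\beta,\bm\gamma),(\bm\beta,\bm\delta),(\bm\gamma,\bm\epsilon),(\bm\delta,\bm\epsilon)$ has the form $(\bm\eta(J'),\bm\eta(J''))$ with $J'$ and $J''$ in $\{0,1\}^2$ differing in a single entry. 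For those last four pairs, the vanishing of the differential is then Lemma \ref{lem:isom} applied verbatim (with $\DD$ there replaced by $\DD_J$), so nothing further is required.

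For $\CFKtt(\bm a,\bm\beta)$ and $\CFKtt(\bm a,\bm\epsilon)$, the first step is to note that $\LL_I$ and $\LL_{I''}$ are pointed unknots in $S^3$, whose exteriors have vanishing second cohomology, so $[\omega_\Omega]_I = [\omega_\Omega]_{I''} = 0$; by Lemma \ref{cohomologyclass} (comparing with the empty marking) these twisted complexes are isomorphic to $\cfkt(\bm a,\bm\beta)\otimes_\F\MM$ and $\cfkt(\bm a,\bm\epsilon)\otimes_\F\MM$, so it suffices to check that the \emph{untwisted} differentials vanish. That is a dimension count: $\abs{\T_a\cap\T_\beta} = \abs{\T_a\cap\T_\epsilon} = 2^{m-1}$ by the discussion preceding the lemma, while by \eqref{eqn:err2} the homology of either complex is $\HFK(\text{unknot})\otimes_\F V^{\otimes(m-1)}$, again of rank $2^{m-1}$ over $\F$; hence the differential is identically zero.

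For $\CFKtt(\bm a,\bm\gamma)$ and $\CFKtt(\bm a,\bm\delta)$ the twisting is essential — genericity of $\Omega$ forces the weights $B+C$ and $C+D$ of the components of $\DD_{I^1}$ and $\DD_{I^2}$ not containing $p_m$ to be nonzero — so I would argue directly from Figures \ref{fig:genus3a} and \ref{fig:genus3b}, mimicking the proof of Proposition \ref{prop:unlink}. One reads off that the holomorphic disks counting for $\partial_{a\gamma}$ form a single pair with endpoints $x_{a+1}$ and $x'_{a+1}$: an empty thin bigon contributing $1$, together with a thin bigon with the same endpoints that passes once over each point of $\A$ lying on the component of $\DD_{I^1}$ missing $p_m$, contributing $T^{B+C}$ (the relevant $\omega_\Omega$-values sum to $B+C$ by Definition \ref{def:special} and the weight computation in Section \ref{sec:complex}). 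Since $x_{a+1}$ carries the larger Maslov grading, this gives $\partial_{a\gamma}(\x) = (1+T^{B+C})(\x\minus\{x_{a+1}\}\cup\{x'_{a+1}\})$ when $x_{a+1}\in\x$ and $0$ otherwise; the computation of $\partial_{a\delta}$ is identical after replacing $B+C,x_{a+1},x'_{a+1}$ by $C+D,y_{c+1},y'_{c+1}$.

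The routine ingredients are the two rank counts and the appeal to Lemma \ref{lem:isom}. The step I expect to be the real obstacle is the bookkeeping in the previous paragraph: one must follow the handleslides and destabilizations of Section \ref{sec:destab} closely enough to be certain that the distinguished ladybug is the one indexed $a+1$ (resp.\ $c+1$), that the punctured bigon accumulates exactly the weight $B+C$ (resp.\ $C+D$) rather than some other partial sum of the $r_i$, and that the orientation conventions make $x_{a+1}$ (not $x'_{a+1}$) the source of the disk. Once the two genus-$3$ pictures are pinned down this way, every region in sight is a small bigon and the disk counts are immediate.
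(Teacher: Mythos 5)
The paper offers no written proof here, only the remark that the lemma is an easy exercise in counting holomorphic disks, to be compared with Proposition \ref{prop:unlink}; your argument for $\CFKtt(\bm a,\bm\gamma)$ and $\CFKtt(\bm a,\bm\delta)$ is exactly that comparison, and you have correctly identified the bookkeeping (which ladybug carries the canceling pair, and which partial sum of the $r_i$ the punctured bigon picks up) as the only nontrivial work. For the other six complexes you take a mildly different route from what a literal transcription of the hint would give: rather than observing directly from Figures \ref{fig:genus3a}--\ref{fig:genus3b} that every basepoint-free region is one of a cancelling pair of thin bigons, you invoke Lemma \ref{lem:isom} with $\DD$ replaced by $\DD_J$ for the four $\bm\eta$--$\bm\eta$ pairs, and for $\CFKtt(\bm a,\bm\beta)$, $\CFKtt(\bm a,\bm\epsilon)$ you reduce to the untwisted complex via Lemma \ref{cohomologyclass} (valid since $H^2(S^3\minus\text{unknot})=0$) and finish with a rank count against $\HFKt(\LL_I)\cong\HFK(U)\otimes V^{\otimes(m-1)}$. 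Both detours are sound --- the text confirms that $\HH^3_{I,I''}$ is precisely the Section \ref{sec:heegaard} diagram of $\DD_J$, so Lemma \ref{lem:isom} applies verbatim, and the rank count correctly forces the untwisted (hence also the twisted) differential to vanish --- and they spare you from inspecting the figures in those six cases, at the cost of a little extra scaffolding. The proposal is correct.
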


\begin{lemma} \label{lem:psitheta2}
We have
\begin{equation*} \label{eq:psitheta2}
\begin{aligned}
\psi_i^{\beta\gamma} (\Theta_1^{\beta\gamma}) &=
\begin{cases}
\Theta_2^{\beta\gamma} & i \in \{a,c\}  \\
0 & \text{otherwise}
\end{cases}
&{\Psi}_i^{\gamma\epsilon} (\Theta_1^{\gamma\epsilon}) &=
\begin{cases}
\Theta_2^{\gamma\epsilon} & i \in \{b,d\} \\
0 & \text{otherwise}
\end{cases}
\\
{\Psi}_i^{\beta\delta} (\Theta_1^{\beta\delta}) &=
\begin{cases}
\Theta_2^{\beta\delta} & i \in \{b,d\} \\
0 & \text{otherwise}
\end{cases}
&{\Psi}_i^{\delta\epsilon} (\Theta_1^{\delta\epsilon}) &=
\begin{cases}
\Theta_2^{\delta\epsilon} & i \in \{a,c\} \\
0 & \text{otherwise},
\end{cases}
\\
\end{aligned}
\end{equation*}
while $\Psi_i^{\beta\gamma}(\Theta_2^{\beta\gamma})=\Psi_i^{\gamma\epsilon}(\Theta_2^{\gamma\epsilon})=\Psi_i^{\beta\delta}(\Theta_2^{\beta\delta})=\Psi_i^{\delta\epsilon}(\Theta_2^{\delta\epsilon})=0$ for $i=1,\dots,m$.
\end{lemma}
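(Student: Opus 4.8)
The plan is to deduce all four pairs of formulas from Lemma~\ref{lem:psitheta1}, together with a grading argument for the vanishing statements. Recall that $\bm\beta$, $\bm\gamma$, $\bm\delta$, $\bm\epsilon$ are the images of $\bm\eta(I)$, $\bm\eta(I^1)$, $\bm\eta(I^2)$, $\bm\eta(I'')$ under the handleslides, isotopies and index one/two destabilizations of Section~\ref{sec:destab}, all of which take place in the complement of $\O\cup\X\cup\A$; this holds in either of the cases $\nu\in\{0,1\}$ of Figures~\ref{fig:genus3a} and~\ref{fig:genus3b}. Each of the four pairs $(\bm\beta,\bm\gamma)$, $(\bm\gamma,\bm\epsilon)$, $(\bm\beta,\bm\delta)$, $(\bm\delta,\bm\epsilon)$ corresponds to a pair of tuples differing in a single coordinate --- $j_1$ for $(I,I^1)$ and $(I^2,I'')$, and $j_2$ for $(I,I^2)$ and $(I^1,I'')$ --- so each is obtained from a pair $(\bm\eta(K),\bm\eta(K'))$ with $K'$ an immediate successor of $K$ by the moves of Section~\ref{sec:destab}, which restrict to Heegaard moves on the sub-diagram $(\Sigma,\bm\eta(K),\bm\eta(K'),\O,\X)$ avoiding $\A\cup\O\cup\X$ (the $\bm\alpha$-handleslides do not touch it). Hence Proposition~\ref{prop:invcofpsi} gives an identification of $\CFKtt(\bm\eta(K),\bm\eta(K'))$ with the corresponding one of $\CFKtt(\bm\beta,\bm\gamma),\dots,\CFKtt(\bm\delta,\bm\epsilon)$ that intertwines the $\psi_i$-actions; since all these differentials vanish (Lemma~\ref{lem:isom}, Lemma~\ref{lemma:differential}), each complex equals its homology and this is exactly an identification of the chain maps $\Psi_i$. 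As the moves preserve gradings and $\Theta_1$ is characterized as the top-Maslov generator (Section~\ref{sec:cube}), the identification also matches $\Theta_1^{K,K'}$, $\Theta_2^{K,K'}$ with the relevant $\Theta_1$, $\Theta_2$.

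Granting these identifications, the formulas for $\Psi_i(\Theta_1)$ are precisely Lemma~\ref{lem:psitheta1}, once the special indices are matched up. By the labelling fixed in Section~\ref{sec:complex}, $a$ and $c$ are the special indices of $c_{j_1}$ and $b$ and $d$ are those of $c_{j_2}$. So for the pairs $(\bm\beta,\bm\gamma)$ and $(\bm\delta,\bm\epsilon)$, coming from tuples that differ in the $j_1$-th coordinate, Lemma~\ref{lem:psitheta1} gives $\Psi_i(\Theta_1)=\Theta_2$ for $i\in\{a,c\}$ and $0$ otherwise; for $(\bm\gamma,\bm\epsilon)$ and $(\bm\beta,\bm\delta)$, coming from tuples that differ in the $j_2$-th coordinate, it gives $\Psi_i(\Theta_1)=\Theta_2$ for $i\in\{b,d\}$ and $0$ otherwise. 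These are the four displayed formulas.

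For the vanishing $\Psi_i(\Theta_2)=0$, I would argue by gradings rather than count disks. Since $\epsilon(K,K')=1$, Lemma~\ref{lem:isom} says the minimal-$\delta$-grading summand of $\CFKtt(\bm\eta(K),\bm\eta(K'))$ is spanned by $\Theta_1$ and $\Theta_2$, and (Section~\ref{sec:cube}) $\Theta_2$ has Maslov grading one less than $\Theta_1$; in particular $\Theta_2$ is the unique minimal-Maslov generator of that summand. Since $\Psi_i$ preserves the $\delta$-grading and lowers the Maslov grading by one, $\Psi_i(\Theta_2)$ is a combination of $\Theta_1$ and $\Theta_2$ of Maslov grading $M(\Theta_2)-1$, which equals neither $M(\Theta_1)$ nor $M(\Theta_2)$; hence $\Psi_i(\Theta_2)=0$. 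Transporting back via Proposition~\ref{prop:invcofpsi} gives the four vanishing statements.

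The main point to get right is the bookkeeping in the first step --- that the moves of Section~\ref{sec:destab} restrict to the two-resolution sub-diagrams, that the induced identifications send $\Theta_1\mapsto\Theta_1$ and $\Theta_2\mapsto\Theta_2$, and that the special index pairs $\{a,c\}$ and $\{b,d\}$ attach to $c_{j_1}$ and $c_{j_2}$ as in Section~\ref{sec:complex} --- rather than any analytic difficulty. As a cross-check one can run the disk count of Lemma~\ref{lem:psitheta1} directly in Figures~\ref{fig:genus3a} and~\ref{fig:genus3b}: near a ladybug indexed by a non-special $i$, the basepoint $X_i$ is trapped between two $\bm\eta$-curve translates, so no index-one domain out of $\Theta_1$ or $\Theta_2$ can cover $X_i$ without also covering some $O_j$; for a special index one finds the single holomorphic annular domain from $\Theta_1$ to $\Theta_2$, exactly as in the proof of Lemma~\ref{lem:psitheta1}.
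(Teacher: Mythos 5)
Your main step is not correct. You claim that the Heegaard moves of Section~\ref{sec:destab} restrict to moves on the sub-diagram $(\Sigma,\bm\eta(K),\bm\eta(K'),\O,\X)$ and hence, via Proposition~\ref{prop:invcofpsi}, give an $\F[\Z]$-module identification of $\CFKtt(\bm\eta(K),\bm\eta(K'))$ with the corresponding one of $\CFKtt(\bm\beta,\bm\gamma),\dots,\CFKtt(\bm\delta,\bm\epsilon)$. No such identification exists. The destabilizations in Section~\ref{sec:destab} remove a handle containing $\alpha_1$ and one $\eta$-curve from each of the four $\bm\eta(\cdot)$ families, meeting $\alpha_1$ once; on the $\eta$-$\eta$ sub-diagram that same handle instead carries the pair $\eta_{c_j}(K)$, $\eta_{c_j}(K')$, which intersect twice, not once, so this is not an index one/two destabilization of that sub-diagram. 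Indeed it cannot be made into one: $(\Sigma,\bm\eta(K),\bm\eta(K'))$ is a diagram for $\#^{n+1}(S^1\times S^2)$ while $(\Sigma_3,\bm\beta,\bm\gamma)$ is a diagram for $\#^3(S^1\times S^2)$, so for $n>2$ no sequence of Heegaard moves connects them; concretely, $\rk\,\CFKtt(\bm\eta(K),\bm\eta(K'))=2^{n+m}$ and $\rk\,\CFKtt(\bm\beta,\bm\gamma)=2^{m+2}$, and since both differentials vanish the complexes are not even quasi-isomorphic. (The paper's later maps $\Phi_{I,I''}\co\CFKtt(\bm\beta,\bm\gamma)\to\CFKtt(\bm\eta(I),\bm\eta(I^1))$ etc.\ exist, but they are injections built from stabilization maps, not isomorphisms, and the equivariance you would need to invoke is not granted by Proposition~\ref{prop:invcofpsi}, whose statement concerns $(\bm\alpha,\bm\eta)$-diagrams for pointed links.) Consequently the appeal to Lemma~\ref{lem:psitheta1} via transport does not go through, and the ``transport back'' in your vanishing argument fails for the same reason.

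What does work is your cross-check, which is essentially the paper's proof: carry out the disk count of Lemma~\ref{lem:psitheta1} directly in Figures~\ref{fig:genus3a} and~\ref{fig:genus3b}. Your grading argument for $\Psi_i(\Theta_2)=0$ is also sound, but you should run it directly on the genus-3 diagram rather than on $\CFKtt(\bm\eta(K),\bm\eta(K'))$: the minimal-$\delta$-grading summand of $\CFKtt(\bm\beta,\bm\gamma)$ is spanned by $\Theta_1^{\beta\gamma}$ and $\Theta_2^{\beta\gamma}$ (by the obvious analogue of Lemma~\ref{lem:isom}), $\Theta_2^{\beta\gamma}$ has the smaller Maslov grading there, and $\Psi_i$ preserves $\delta$ while lowering Maslov by one, forcing $\Psi_i(\Theta_2^{\beta\gamma})=0$; likewise for the other three pairs. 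In short, drop the transport step entirely, prove the $\Theta_1$-formulas by the direct disk count in the genus-3 diagram, and prove the $\Theta_2$-vanishing by the grading argument applied there.
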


The genericity of $\r$ implies that $B+C$ and $C+D$ are nonzero, so
\begin{equation}
\label{eqn:vanishing}
H_*(\CFKtt(\bm a, \bm\gamma; \FF), \partial_{ a\gamma}) =
H_*(\CFKtt(\bm a, \bm\delta; \FF), \partial_{ a\delta}) = 0;
\end{equation}
compare with Lemma \ref{lemma:generic}.
In other words, every cycle in $\CFKtt(\bm a, \bm\gamma; \FF)$ or
$\CFKtt(\bm a, \bm\delta; \FF)$ is a boundary, so we may define maps
$\partial_{ a\gamma}^{-1}$ and $\partial_{ a\delta}^{-1}$ up to
addition of cycles. Indeed, for any $\x \in \T_ a \cap \T_\gamma$
containing $x'_{a+1}$, we may take $\partial_{ a\gamma}^{-1}(\x) =
(1+T^{B+C})^{-1} (\x \minus \{x'_{a+1}\} \cup \{x_{a+1}\})$, and define
$\partial_{ a\delta}^{-1}$ similarly.

Let $f_{ a\beta\gamma}$, $f_{ a\beta\delta}$,
$f_{ a\gamma\epsilon}$, $f_{ a\delta\epsilon}$,
$f_{ a\beta\gamma\epsilon}$, and $f_{ a\beta\delta\epsilon}$ be the maps defined by
\[
\begin{aligned}
f_{ a\beta\gamma}(\x) &= F_{ a\beta\gamma} (\x \otimes
 (\Theta_1^{\beta\gamma} + \Theta_2^{\beta\gamma})), \\
f_{ a\beta\gamma\epsilon}(\x) &= F_{ a\beta\gamma\epsilon} (\x \otimes
 (\Theta_1^{\beta\gamma} + \Theta_2^{\beta\gamma}) \otimes
 (\Theta_1^{\gamma\epsilon} + \Theta_2^{\gamma\epsilon})),
\end{aligned}
\]
and so on. According to the discussion in Section \ref{sec:cube}, these maps
fit into a commutative diagram,
\begin{equation} \label{eq:filtered}
\xy
 {(30,0)*{\CFKtt(\bm a, \bm\gamma;\FF)}="g"};
 {(0,-15)*{\CFKtt(\bm a, \bm\beta;\FF)}="b"};
 {(60,-15)*{\CFKtt(\bm a, \bm\epsilon;\FF)}="e"};
 {(30,-30)*{\CFKtt(\bm a, \bm\delta;\FF),}="d"};
 {\ar^{f_{ a\beta\gamma}} "b"; "g"};
 {\ar^{f_{ a\gamma\epsilon}} "g"; "e"};
 {\ar_{f_{ a\beta\delta}} "b"; "d"};
 {\ar_{f_{ a\delta\epsilon}} "d"; "e"};
 {\ar^{f_{ a\beta\gamma\epsilon}+f_{ a\beta\delta\epsilon}} "b"; "e"};
 {\ar@/^12pt/^{\partial_{ a\gamma}} "g"+<-6pt,8pt>; "g"+<6pt,8pt>};
 {\ar@/_12pt/_{\partial_{ a\delta}} "d"+<-6pt,-8pt>; "d"+<6pt,-8pt>};
 \endxy
\end{equation}
which may be viewed as a filtered complex, where the filtration is by
horizontal position. Let $\mathcal{S}_{I,I''}$ denote the spectral sequence associated to
this filtered complex. Its $d_1$ differential vanishes due to \eqref{eqn:vanishing}. Since the differentials $\partial_{ a\beta}$ and $\partial_{ a\epsilon}$ are also zero, we may identify the complexes
$\CFKtt(\bm a, \bm\beta;\FF)$ and $\CFKtt(\bm a, \bm\epsilon;\FF)$ with their homologies. With respect to this identification, $E_2(\mathcal{S}_{I,I''})$
is the mapping cone
\begin{equation} \label{eq:E2}
\xymatrix{ \CFKtt(\bm a, \bm\beta; \FF) \ar[r]^{g} & \CFKtt(\bm a,
\bm\epsilon; \FF), }
\end{equation}
where, for any generator $\w \in \CFKtt(\bm a, \bm\beta;\FF)$, we have
\begin{equation*} \label{eq:g(w)}
\begin{aligned}
g(\w) &= (f_{ a\beta\gamma\epsilon} +
  f_{ a\beta\delta\epsilon} +
  f_{ a\gamma\epsilon} \circ \partial_{ a\gamma}^{-1} \circ
  f_{ a\beta\gamma} + f_{ a\delta\epsilon} \circ
  \partial_{ a\delta}^{-1} \circ f_{ a\beta\delta})(\w) \\
&= F_{ a\beta\gamma\epsilon} (\w \otimes (\Theta_1^{\beta\gamma}+
  \Theta_2^{\beta\gamma}) \otimes (\Theta_1^{\gamma\epsilon} +
  \Theta_2^{\gamma\epsilon}))
 + F_{ a\beta\delta\epsilon} (\w \otimes (\Theta_1^{\beta\delta} +
  \Theta_2^{\beta\delta}) \otimes (\Theta_1^{\delta\epsilon} +
  \Theta_2^{\delta\epsilon})) \\
 & \quad +  F_{ a\gamma\epsilon}(\partial_{ a\gamma}^{-1}
 F_{ a\beta\gamma}(\w \otimes (\Theta_1^{\beta\gamma} +
 \Theta_2^{\beta\gamma})) \otimes (\Theta_1^{\gamma\epsilon} +
 \Theta_2^{\gamma\epsilon})) \\
 & \quad + F_{ a\delta\epsilon}(\partial_{ a\delta}^{-1}
 F_{ a\beta\delta}(\w \otimes (\Theta_1^{\beta\delta} +
 \Theta_2^{\beta\delta})) \otimes (\Theta_1^{\delta\epsilon} +
 \Theta_2^{\delta\epsilon})).
\end{aligned}
\end{equation*}
Our goal, then, will be to understand the map $g$.

\subsection{Commutation with the basepoint action} \label{sec:commutation}

%Lemma \ref{lem:psitheta2} suggests a way to split \eqref{eq:g(w)} into a sum of four maps.
For $k, l \in \{1,2\}$, define
\begin{multline} \label{eq:gkl(w)}
g^{k,l}(\w) =
 F_{ a\beta\gamma\epsilon} (\w \otimes \Theta_k^{\beta\gamma} \otimes \Theta_l^{\gamma\epsilon}) +
 F_{ a\beta\delta\epsilon} (\w \otimes \Theta_l^{\beta\delta} \otimes \Theta_k^{\delta\epsilon}) \\
 +  F_{ a\gamma\epsilon}(\partial_{ a\gamma}^{-1}
 F_{ a\beta\gamma}(\w \otimes \Theta_k^{\beta\gamma}) \otimes
 \Theta_l^{\gamma\epsilon}) +
 F_{ a\delta\epsilon}(\partial_{ a\delta}^{-1}
 F_{ a\beta\delta}(\w \otimes \Theta_l^{\beta\delta}) \otimes
 \Theta_k^{\delta\epsilon}).
\end{multline}
Clearly, $g = g^{1,1}+g^{1,2}+ g^{2,1} + g^{2,2}$. The lemma below follows directly from Proposition \ref{prop:maslovshift}.

\begin{lemma}
\label{lem:homogeneousg} The maps $g^{k,l}$ are each
homogeneous with respect to the Maslov grading. Moreover, the Maslov grading shifts of these maps are related by
\begin{equation*} \label{eq:grshift}
M(g^{1,1}) = M(g^{1,2})+1 = M(g^{2,1})+1= M(g^{2,2})+2.
\end{equation*}
\end{lemma}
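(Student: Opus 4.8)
\textbf{Proof plan for Lemma \ref{lem:homogeneousg}.}
The plan is to show that each of the four maps $g^{k,l}$ is a composition and sum of polygon-counting maps of the form $F_{a,\eta(\cdot),\dots,\eta(\cdot)}(\,\cdot\otimes\Theta^{\cdot,\cdot}_{e_1}\otimes\cdots)$ (together with the isomorphisms $\partial_{a\gamma}^{-1}$, $\partial_{a\delta}^{-1}$), and then to read off the Maslov grading shifts directly from Proposition \ref{prop:maslovshift}. Recall that in the genus-$3$ diagram $\HH^3_{I,I''}$ the tuples $\bm\beta,\bm\gamma,\bm\delta,\bm\epsilon$ are the images of $\bm\eta(I),\bm\eta(I^1),\bm\eta(I^2),\bm\eta(I'')$ under grading-preserving Heegaard moves, and $I<I^1,I^2<I''$ is a successor structure in $\{0,1\}^n$; thus Proposition \ref{prop:maslovshift} applies verbatim to the maps $F_{a\beta\gamma}$, $F_{a\beta\delta}$, $F_{a\gamma\epsilon}$, $F_{a\delta\epsilon}$, $F_{a\beta\gamma\epsilon}$, $F_{a\beta\delta\epsilon}$ appearing in \eqref{eq:gkl(w)}.

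First I would fix an absolute Maslov grading on each of $\CFKtt(\bm a,\bm\beta;\FF)$ and $\CFKtt(\bm a,\bm\epsilon;\FF)$ (these are the unlink complexes with vanishing differential) and observe that the differentials $\partial_{a\gamma},\partial_{a\delta}$ are homogeneous of degree $-1$ by Lemma \ref{lemma:differential} (they change a generator by swapping $x_{a+1}\leftrightarrow x'_{a+1}$, which drops the Maslov grading by one, exactly as in the last clause of Proposition \ref{prop:maslovshift}); hence $\partial_{a\gamma}^{-1},\partial_{a\delta}^{-1}$ raise the Maslov grading by one. Next, by Proposition \ref{prop:maslovshift} each of the four summands in \eqref{eq:gkl(w)} is Maslov-homogeneous, so I need only check that the four summands have the \emph{same} grading shift, which will make $g^{k,l}$ homogeneous. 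For the term $F_{a\beta\gamma\epsilon}(\w\otimes\Theta^{\beta\gamma}_k\otimes\Theta^{\gamma\epsilon}_l)$ the shift is some fixed number $N$ depending only on the diagram and on $(k,l)$ through Proposition \ref{prop:maslovshift}, namely it decreases by $1$ each time a subscript is changed from $1$ to $2$. For the composite $F_{a\gamma\epsilon}(\partial_{a\gamma}^{-1}F_{a\beta\gamma}(\w\otimes\Theta^{\beta\gamma}_k)\otimes\Theta^{\gamma\epsilon}_l)$, the shift is $\delta(F_{a\beta\gamma}) + 1 + \delta(F_{a\gamma\epsilon})$ where the $+1$ comes from $\partial_{a\gamma}^{-1}$; I would compare this with $N$ using the additivity of Maslov grading shifts under composition of triangle maps (the same bookkeeping that underlies Proposition \ref{prop:welldefined}, applied to Maslov rather than $\delta$ gradings, and which is legitimate here because $P=\mu$ is replaced by $O=X$ and $\mu=2O$ for periodic domains, exactly as invoked in the proof of Proposition \ref{prop:maslovshift}). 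The key point is that the associahedron relation forces the ``square'' $F_{a\beta\gamma}$-then-$F_{a\gamma\epsilon}$ (corrected by $\partial^{-1}$) to land in the same Maslov degree as the ``diagonal'' $F_{a\beta\gamma\epsilon}$; the same holds for the $\bm\delta$ route. Thus all four summands share the shift, and I record it as $M(g^{k,l})$.

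Finally, to get the stated relations, I would track the dependence on $(k,l)$: replacing $\Theta^{\beta\gamma}_1$ or $\Theta^{\delta\epsilon}_1$ by the corresponding subscript-$2$ generator lowers the grading shift of every summand in which it appears by exactly $1$, by the second half of Proposition \ref{prop:maslovshift}, and similarly for $\Theta^{\gamma\epsilon}$ and $\Theta^{\beta\delta}$ (note that in \eqref{eq:gkl(w)} the index $k$ is attached to $\Theta^{\beta\gamma}$ and $\Theta^{\delta\epsilon}$ while $l$ is attached to $\Theta^{\gamma\epsilon}$ and $\Theta^{\beta\delta}$, so each of $k,l$ governs one subscript in each of the four summands). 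Hence $M(g^{1,1}) = M(g^{1,2})+1 = M(g^{2,1})+1 = M(g^{2,2})+2$, as claimed. The main obstacle I anticipate is the careful bookkeeping in the previous paragraph — verifying that the grading shift of the corrected composite through $\bm\gamma$ (and through $\bm\delta$) agrees with that of the direct quadrilateral map, including getting the contribution of $\partial_{a\gamma}^{-1}$ right; once that additivity is in hand, the $(k,l)$-dependence is immediate from Proposition \ref{prop:maslovshift}.
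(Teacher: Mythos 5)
Your proposal takes essentially the same route as the paper, whose proof consists of the single line ``follows directly from Proposition~\ref{prop:maslovshift}.'' You correctly read off from Proposition~\ref{prop:maslovshift} that each polygon-counting map appearing in $g^{k,l}$ is Maslov-homogeneous and that swapping a $\Theta_1$ for a $\Theta_2$ drops the shift by $1$, and you correctly observe that the $\partial_{a\gamma}^{-1}$, $\partial_{a\delta}^{-1}$ factors raise Maslov grading by $1$ (one typo: in the passage about the composite you write $\delta(F_{a\beta\gamma})$ etc.\ where you mean the Maslov shift $M(F_{a\beta\gamma})$).

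There is, however, one step you elide that is needed to conclude that the \emph{sum} of four terms is homogeneous. Your additivity argument (the Maslov analogue of \eqref{eq:compose}) shows that the $\gamma$-route composite has the same shift as the $\gamma$-route quadrilateral, and likewise for the $\delta$-route; it does \emph{not} by itself show that the $\gamma$-route shift equals the $\delta$-route shift. That equality requires the Maslov analogue of Proposition~\ref{prop:commute} (route-independence of the grading shift), which follows from the fact that any two Whitney polygons joining the same pair of generators differ by a multi-periodic domain, and multi-periodic domains of the multi-diagram satisfy $\mu = 2O$ (the same input used in Proposition~\ref{prop:mu=P} and in the proof of Proposition~\ref{prop:maslovshift}). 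The paper silently uses exactly this route-independence later, in the proof of Proposition~\ref{prop:g21(w0)} (``Note that the Maslov grading shift of $F_{a\beta\gamma}(\cdot,\Theta_k^{\beta\gamma})$ is equal to that of $F_{a\beta\delta}(\cdot,\Theta_k^{\beta\delta})$\ldots''). So you should add a sentence citing that route-independence; with it, your proof is complete and matches the intent of the paper's one-line citation of Proposition~\ref{prop:maslovshift}.
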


%\begin{proof}[Proof of Lemma \ref{lem:homogeneousg}]
%This follows from RemarkThe first statement in Lemma \ref{lem:homogeneousg} follows from the same logic used in the proofs of Propositions \ref{prop:welldefined} and \ref{prop:commute}. The key element in those proofs is Proposition \ref{prop:mu=P}, which, in turn, follows from the fact that any doubly-periodic domain $D$ in one of our multi-diagrams satisfies $\mu(D)=P(D)$. Lemma \ref{lem:homogeneousg} is a statement about Maslov gradings rather than the delta gradings, so we need the modification that $\mu(D)=2\O(D)$. But this follows from the previous fact, together with the observation that $\O(D) = \X(D)$. The second statement of Lemma \ref{lem:homogeneousg} then follows from the fact that $m(\Theta_1^{\beta\gamma}) = m(\Theta_2^{\beta\gamma}) +1$, and likewise for the $\Theta_i^{\gamma\epsilon}$, $\Theta_i^{\beta\delta}$ and $\Theta_i^{\delta\epsilon}$. \end{proof}

This decomposition enables us to understand how $g$ interacts with
the maps $\Psi_i^{ a\beta}$ and $\Psi_i^{ a\epsilon}$.

\begin{proposition} \label{prop:psi-gkl}
For any $i =1,\dots,m$ and $k,l \in \{1,2\}$, we have $g^{k,l} \circ
\Psi^{ a\beta}_i = \Psi^{ a\epsilon}_i \circ g^{k,l}$, with the
following exceptions:
\begin{enumerate}
\item If $i=a$ or $i=c$, then $g^{1,l} \circ \Psi^{ a\beta}_i =
\Psi^{ a\epsilon}_i \circ g^{1,l} + g^{2,l}$.
\item If $i=b$ or $i=d$, then $g^{k,1} \circ \Psi^{ a\beta}_i =
\Psi^{ a\epsilon}_i \circ g^{k,1} + g^{k,2}$.
\end{enumerate}
\end{proposition}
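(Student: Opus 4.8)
The plan is to deduce Proposition \ref{prop:psi-gkl} from the general interaction formula \eqref{eqn:psif} between the basepoint maps $\Psi_i$ and the polygon-counting maps, applied separately to each of the four terms in the definition \eqref{eq:gkl(w)} of $g^{k,l}$. Recall that \eqref{eqn:psif} says that if $\y$ is a cycle representing a homology class $y$, then $f_y(\psi_j^{\alpha\beta}(x)) + \psi_j^{\alpha\gamma}(f_y(x)) + f_{\psi_j^{\beta\gamma}(y)}(x) = 0$, and there is a four-pointed analogue for the length-$4$ polygon maps $f_{\eta^0\eta^1\eta^2\eta^3}$. So the strategy is: for each of $F_{a\beta\gamma\epsilon}$, $F_{a\beta\delta\epsilon}$, and the two composite terms $F_{a\gamma\epsilon}\circ\partial_{a\gamma}^{-1}\circ F_{a\beta\gamma}$ and $F_{a\delta\epsilon}\circ\partial_{a\delta}^{-1}\circ F_{a\beta\delta}$, write down how precomposing with $\Psi_i^{a\beta}$ differs from postcomposing with $\Psi_i^{a\epsilon}$; the discrepancy will be a sum of terms involving the maps $\Psi_i^{\beta\gamma}$, $\Psi_i^{\gamma\epsilon}$, $\Psi_i^{\beta\delta}$, $\Psi_i^{\delta\epsilon}$ (and $\Psi_i^{a\gamma}$, $\Psi_i^{a\delta}$ for the composite terms), together with the polygon maps with one $\Theta$-entry replaced by a higher polygon. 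Then I would feed in the explicit values of these $\Psi_i^{\cdot\cdot}$ maps on the $\Theta$-generators, which are exactly what Lemma \ref{lem:psitheta2} provides: $\Psi_i^{\beta\gamma}(\Theta_1^{\beta\gamma}) = \Theta_2^{\beta\gamma}$ precisely when $i\in\{a,c\}$ (and $0$ otherwise and on $\Theta_2$), and similarly for the other three.

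Concretely, for the term $F_{a\beta\gamma\epsilon}(\w\otimes\Theta_k^{\beta\gamma}\otimes\Theta_l^{\gamma\epsilon})$, the $\AA_\infty$ relation \eqref{eqn:Ainfty2} with four $\bm\eta$-tuples (taking $\Psi_j$ over the $X_i$ basepoint) expresses $\Psi_i^{a\epsilon}\circ F_{a\beta\gamma\epsilon}(\cdot\otimes\Theta_k^{\beta\gamma}\otimes\Theta_l^{\gamma\epsilon}) + F_{a\beta\gamma\epsilon}(\Psi_i^{a\beta}(\cdot)\otimes\Theta_k^{\beta\gamma}\otimes\Theta_l^{\gamma\epsilon})$ as a sum of terms in which one of the $\Theta$ inputs gets a $\Psi_i$ applied or a higher polygon $\Psi_i^{\eta\eta'\eta''}$ intervenes; using that the differentials $\partial_{a\beta}$, $\partial_{\beta\gamma}$, $\partial_{\gamma\epsilon}$, $\partial_{a\epsilon}$ all vanish (Lemma \ref{lemma:differential}) and that $\Psi_i^{a\beta}(\w)$, $\Psi_i^{\beta\gamma}(\Theta_k^{\beta\gamma})$, $\Psi_i^{\gamma\epsilon}(\Theta_l^{\gamma\epsilon})$ are the only surviving contributions, the discrepancy reduces to $F_{a\beta\gamma\epsilon}(\w\otimes\Psi_i^{\beta\gamma}(\Theta_k^{\beta\gamma})\otimes\Theta_l^{\gamma\epsilon}) + F_{a\beta\gamma\epsilon}(\w\otimes\Theta_k^{\beta\gamma}\otimes\Psi_i^{\gamma\epsilon}(\Theta_l^{\gamma\epsilon}))$ plus possibly $\Psi_i^{a\beta\gamma\epsilon}$-type terms that vanish because the relevant basepoint lies in a region shared with some $O_j$ (the same mechanism as in the proof of Proposition \ref{prop:invcofpsi}). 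By Lemma \ref{lem:psitheta2}, the first of these is nonzero only when $k=1$ and $i\in\{a,c\}$, giving $F_{a\beta\gamma\epsilon}(\w\otimes\Theta_2^{\beta\gamma}\otimes\Theta_l^{\gamma\epsilon})$, and the second only when $l=1$ and $i\in\{b,d\}$. Running the identical argument for $F_{a\beta\delta\epsilon}(\w\otimes\Theta_l^{\beta\delta}\otimes\Theta_k^{\delta\epsilon})$, note the index roles are swapped: here $\Psi_i^{\beta\delta}(\Theta_l^{\beta\delta})\ne 0$ iff $l=1$, $i\in\{b,d\}$, and $\Psi_i^{\delta\epsilon}(\Theta_k^{\delta\epsilon})\ne 0$ iff $k=1$, $i\in\{a,c\}$. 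Thus when $i\in\{a,c\}$ the $\Theta_k^{\beta\gamma}$-in-$F_{a\beta\gamma\epsilon}$ term and the $\Theta_k^{\delta\epsilon}$-in-$F_{a\beta\delta\epsilon}$ term each contribute (for $k=1$), and one checks they assemble into precisely the $g^{2,l}$ terms appearing in \eqref{eq:gkl(w)}; similarly $i\in\{b,d\}$ produces the $g^{k,2}$ terms.

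For the two composite terms, I would apply \eqref{eqn:psif} (the length-$3$ version) twice — once to $F_{a\beta\gamma}(\cdot\otimes\Theta_k^{\beta\gamma})$ and once to $F_{a\gamma\epsilon}(\cdot\otimes\Theta_l^{\gamma\epsilon})$ — and use that $\partial_{a\gamma}$ is an isomorphism onto its image with $\partial_{a\gamma}^{-1}$ defined after \eqref{eqn:vanishing}, together with the commutation of $\Psi_i^{a\gamma}$ with $\partial_{a\gamma}$ (from $\Psi_i\circ\partial+\partial\circ\Psi_i=0$, applied on $\CFKtt(\bm a,\bm\gamma)$). Passing $\Psi_i^{a\gamma}$ through $\partial_{a\gamma}^{-1}$ introduces a sign-free swap, and the intermediate $\Psi_i^{a\gamma}$ terms telescope away, leaving once more only the $\Psi_i^{\beta\gamma}(\Theta_k^{\beta\gamma})$ and $\Psi_i^{\gamma\epsilon}(\Theta_l^{\gamma\epsilon})$ contributions, which again get evaluated via Lemma \ref{lem:psitheta2}. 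Summing all four terms of $g^{k,l}$ and collecting, the total discrepancy is $g^{2,l}$ when $i\in\{a,c\}$ and $k=1$, is $g^{k,2}$ when $i\in\{b,d\}$ and $l=1$, and is zero in all other cases — which is exactly the statement. The main obstacle I anticipate is bookkeeping: correctly matching up which $\Theta$-slot of which polygon map the index $i\in\{a,c\}$ versus $i\in\{b,d\}$ acts on — the asymmetry between the $\beta\gamma/\gamma\epsilon$ route and the $\beta\delta/\delta\epsilon$ route (reflected in the swapped subscripts $\Theta_l^{\beta\delta}\otimes\Theta_k^{\delta\epsilon}$ in \eqref{eq:gkl(w)}) is precisely what makes both $g^{2,l}$ and $g^{k,2}$ appear with the right index ranges, and verifying that the higher-polygon $\Psi_i^{\eta\eta'\eta''}$ correction terms all vanish (because each of $X_a, X_b, X_c, X_d$ sits in a region of the relevant multi-diagram containing an $O_j$, as one reads off Figures \ref{fig:genus3a} and \ref{fig:genus3b}) requires a careful look at the standard genus-$3$ picture.
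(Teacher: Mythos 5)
Your overall skeleton matches the paper's: apply the $\mathcal{A}_\infty$ relations \eqref{eqn:Ainfty2} to each of the four summands of $g^{k,l}$, telescope away the intermediate $\Psi_i^{a\gamma}$ and $\Psi_i^{a\delta}$ terms by commuting $\partial^{-1}$ past $\Psi_i$ (this is fine: $H_*(\CFKtt(\bm a,\bm\gamma;\FF))=0$ so the ambiguity in $\partial_{a\gamma}^{-1}$ is by cycles, which are boundaries and get killed by the outer $F_{a\gamma\epsilon}(\cdot,\Theta_l)$ applied to $\partial^{-1}$), and read off the exceptional contributions from Lemma \ref{lem:psitheta2}. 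The cancellation of the $\Psi_i^{a\beta\gamma}$, $\Psi_i^{a\gamma\epsilon}$ and corresponding $\delta$-route terms between the quadrilateral and composite pieces is also exactly what the paper does.

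The gap is in your treatment of the residual terms, which your sketch groups under ``$\Psi_i^{a\beta\gamma\epsilon}$-type terms.'' After the telescoping, the $\AA_\infty$ relation for the quadrilateral $(\bm a,\bm\beta,\bm\gamma,\bm\epsilon)$ leaves two genuinely nontrivial leftovers: $F_{a\beta\epsilon}\bigl(\w,\Psi_i^{\beta\gamma\epsilon}(\Theta_k^{\beta\gamma},\Theta_l^{\gamma\epsilon})\bigr)$ and $\Psi_i^{a\beta\epsilon}\bigl(\w,F_{\beta\gamma\epsilon}(\Theta_k^{\beta\gamma},\Theta_l^{\gamma\epsilon})\bigr)$, together with their $\delta$-route analogues. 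The pentagon map $\Psi_i^{a\beta\gamma\epsilon}$ by contrast only ever appears precomposed or postcomposed with a vanishing differential, so it drops out trivially. You attribute the vanishing of the residual terms to the $X_i$-shares-a-region-with-$O_j$ mechanism from Proposition \ref{prop:invcofpsi}, but that mechanism requires the three curve collections to be pairwise small Hamiltonian translates, which fails here: near $c_{j_1}$, $\bm\beta$ carries one resolution while $\bm\gamma$ and $\bm\epsilon$ carry the other, and near $c_{j_2}$ it is $\bm\epsilon$ alone against $\bm\beta$ and $\bm\gamma$. Indeed, if $X_a$ shared a region with some $O_j$ in $\Sigma_3\setminus(\bm\beta\cup\bm\gamma)$, then $\Psi_a^{\beta\gamma}$ would vanish identically, contradicting Lemma \ref{lem:psitheta2}.

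The paper handles these two leftovers in different ways, and you need both. For $F_{a\beta\epsilon}(\w,\Psi_i^{\beta\gamma\epsilon}(\Theta_k,\Theta_l))$, the correct mechanism is a $\delta$-grading argument: $F_{\beta\gamma\epsilon}(\Theta_k,\Theta_l)$ already sits in the minimal $\delta$-grading of $\CFKtt(\bm\beta,\bm\epsilon)$, and $\Psi_i^{\beta\gamma\epsilon}$ has $\delta$-shift one smaller than $F_{\beta\gamma\epsilon}$, so $\Psi_i^{\beta\gamma\epsilon}(\Theta_k,\Theta_l)$ must lie below the bottom grading and is therefore zero. For $\Psi_i^{a\beta\epsilon}(\w,F_{\beta\gamma\epsilon}(\Theta_k,\Theta_l))$, the term does \emph{not} vanish on its own; it cancels against the $\delta$-route term $\Psi_i^{a\beta\epsilon}(\w,F_{\beta\delta\epsilon}(\Theta_l,\Theta_k))$, because a small-triangle count (as in Lemma \ref{lem:triangles}) shows $F_{\beta\gamma\epsilon}(\Theta_k^{\beta\gamma},\Theta_l^{\gamma\epsilon}) = F_{\beta\delta\epsilon}(\Theta_l^{\beta\delta},\Theta_k^{\delta\epsilon}) = \Theta_{k,l}^{\beta\epsilon}$. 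Your proposal neither identifies this pairwise cancellation nor supplies a substitute argument, so as written the proof does not close.
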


\begin{proof}[Proof of Proposition \ref{prop:psi-gkl}]
From the $\mathcal{A}_{\infty}$ relation \eqref{eqn:Ainfty2} and the fact that $\partial_{ a\beta} = \partial_{ a\epsilon}=0$, we have, for each $\w \in
\CFKtt(\bm a,\bm\beta;\FF)$ and $\x \in \CFKtt(\bm a, \bm\gamma;\FF)$, that
\begin{align}
\label{eq:abg-assoc}
 F_{ a\beta\gamma}( \Psi_i^{ a\beta}(\w), \Theta_k^{\beta\gamma}) &=
 F_{ a\beta\gamma}( \w,
  \Psi_i^{\beta\gamma}(\Theta_k^{\beta\gamma}))
 + \Psi_i^{ a\gamma}(F_{ a\beta\gamma}( \w, \Theta_k^{\beta\gamma}))
 + \partial_{ a\gamma}(\Psi_i^{ a\beta\gamma}( \w,
   \Theta_k^{\beta\gamma})), \\
\label{eq:age-assoc}
 F_{ a\gamma\epsilon}( \Psi_i^{ a\gamma}(\x), \Theta_l^{\gamma\epsilon}) &=
 F_{ a\gamma\epsilon}( \x,
  \Psi_i^{\gamma\epsilon}(\Theta_l^{\gamma\epsilon}))
 + \Psi_i^{ a\epsilon}(F_{ a\gamma\epsilon}( \x, \Theta_l^{\gamma\epsilon}))
 + \Psi_i^{ a\gamma\epsilon}( \partial_{ a\gamma}(\x),
   \Theta_k^{\gamma\epsilon}).
\end{align}
Applying $F_{ a\gamma\epsilon}( \partial_{ a\gamma}^{-1}(\cdot),
\Theta_l^{\gamma\epsilon})$ to both sides of \eqref{eq:abg-assoc}, we have that
\begin{equation}
\begin{aligned}\label{agedelta}
 F_{ a\gamma\epsilon} & ( \partial_{ a\gamma}^{-1} (F_{ a\beta\gamma}(
  \Psi_i^{ a\beta}(\w), \Theta_k^{\beta\gamma})),
  \Theta_l^{\gamma\epsilon}) =
 F_{ a\gamma\epsilon}( \partial_{ a\gamma}^{-1} (F_{ a\beta\gamma}( \w,
  \Psi_i^{\beta\gamma}(\Theta_k^{\beta\gamma}))), \Theta_l^{\gamma\epsilon}) \\
 & +  F_{ a\gamma\epsilon}( \partial_{ a\gamma}^{-1}(  \Psi_i^{ a\gamma}(F_{ a\beta\gamma}( \w,
 \Theta_k^{\beta\gamma}))), \Theta_l^{\gamma\epsilon})
 + F_{ a\gamma\epsilon}(\Psi_i^{ a\beta\gamma}( \w,
   \Theta_k^{\beta\gamma}), \Theta_l^{\gamma\epsilon}).
\end{aligned}
\end{equation}
In the second term on the right of \eqref{agedelta}, we may commute $\partial_{ a\gamma}^{-1}$
past $\Psi_i^{ a\gamma}$. We substitute $\x =
\partial_{ a\gamma}^{-1}( F_{ a\beta\gamma}( \w,
 \Theta_k^{\beta\gamma}))$ into \eqref{eq:age-assoc} to obtain
\begin{equation} \label{eq:zigzag-assoc}
\begin{aligned}
 F_{ a\gamma\epsilon} & ( \partial_{ a\gamma}^{-1} (F_{ a\beta\gamma}(
  \Psi_i^{ a\beta}(\w), \Theta_k^{\beta\gamma})),
  \Theta_l^{\gamma\epsilon}) =
 F_{ a\gamma\epsilon}( \partial_{ a\gamma}^{-1} (F_{ a\beta\gamma}( \w,
  \Psi_i^{\beta\gamma}(\Theta_k^{\beta\gamma}))), \Theta_l^{\gamma\epsilon}) \\
 & + F_{ a\gamma\epsilon}( \partial_{ a\gamma}^{-1}( F_{ a\beta\gamma}( \w,
  \Theta_k^{\beta\gamma})),  \Psi_i^{\gamma\epsilon}(\Theta_l^{\gamma\epsilon}))
 + \Psi_i^{ a\epsilon}(F_{ a\gamma\epsilon}( \partial_{ a\gamma}^{-1}( F_{ a\beta\gamma}( \w,
  \Theta_k^{\beta\gamma})) , \Theta_l^{\gamma\epsilon})) \\
 & + \Psi_i^{ a\gamma\epsilon}( F_{ a\beta\gamma}( \w, \Theta_k^{\beta\gamma}),
   \Theta_k^{\gamma\epsilon})
 + F_{ a\gamma\epsilon}(\Psi_i^{ a\beta\gamma}( \w,
   \Theta_k^{\beta\gamma}), \Theta_l^{\gamma\epsilon}).
\end{aligned}
\end{equation}
Similarly, the $\mathcal{A}_{\infty}$ relation for the quadrilateral-counting
maps yields
\begin{equation} \label{eq:quad-assoc}
\begin{aligned}
 F_{ a\beta\gamma\epsilon} & (\Psi^{ a\beta}(\w),
  \Theta_k^{\beta\gamma}, \Theta_l^{\gamma\epsilon}) =
 F_{ a\beta\gamma\epsilon} (\w, \Psi_i^{\beta\gamma}
  (\Theta_k^{\beta\gamma}), \Theta_l^{\gamma\epsilon})
+F_{ a\beta\gamma\epsilon} (\w, \Theta_k^{\beta\gamma}
, \Psi_i^{\gamma\epsilon}(\Theta_l^{\gamma\epsilon}))  \\
& +\Psi_i^{ a\epsilon} (F_{ a\beta\gamma\epsilon} (\w,
  \Theta_k^{\beta\gamma}, \Theta_l^{\gamma\epsilon}))
+F_{ a\gamma\epsilon}(\Psi_i^{ a\beta\gamma}(\w,
  \Theta_k^{\beta\gamma}), \Theta_l^{\gamma\epsilon})
 +F_{ a\beta\epsilon}(\w, \Psi_i^{\beta\gamma\epsilon}
  (\Theta_k^{\beta\gamma}, \Theta_l^{\gamma\epsilon})) \\
& +\Psi_i^{ a\gamma\epsilon}(F_{ a\beta\gamma}(\w,
  \Theta_k^{\beta\gamma}), \Theta_l^{\gamma\epsilon})
 +\Psi_i^{ a\beta\epsilon}(\w, F_{\beta\gamma\epsilon}
  (\Theta_k^{\beta\gamma}, \Theta_l^{\gamma\epsilon})).
\end{aligned}
\end{equation}
Adding \eqref{eq:zigzag-assoc} and \eqref{eq:quad-assoc} and canceling terms,
we have
\begin{equation} \label{eq:abge-assoc}
\begin{aligned}
 F_{ a\gamma\epsilon} &( \partial_{ a\gamma}^{-1} (F_{ a\beta\gamma}(
  \Psi_i^{ a\beta}(\w), \Theta_k^{\beta\gamma})),
  \Theta_l^{\gamma\epsilon}) +
 F_{ a\beta\gamma\epsilon} (\Psi^{ a\beta}(\w),
  \Theta_k^{\beta\gamma}, \Theta_l^{\gamma\epsilon}) = \\
 &\Psi_i^{ a\epsilon}(
  F_{ a\gamma\epsilon}( \partial_{ a\gamma}^{-1}( F_{ a\beta\gamma}( \w,
   \Theta_k^{\beta\gamma})) , \Theta_l^{\gamma\epsilon})
  +F_{ a\beta\gamma\epsilon} (\w,
  \Theta_k^{\beta\gamma}, \Theta_l^{\gamma\epsilon})) \\
 &+F_{ a\gamma\epsilon}( \partial_{ a\gamma}^{-1} (F_{ a\beta\gamma}( \w,
  \Psi_i^{\beta\gamma}(\Theta_k^{\beta\gamma}))), \Theta_l^{\gamma\epsilon})
 + F_{ a\beta\gamma\epsilon} (\w, \Psi_i^{\beta\gamma} (\Theta_k^{\beta\gamma}), \Theta_l^{\gamma\epsilon}) \\
 &+ F_{ a\gamma\epsilon}( \partial_{ a\gamma}^{-1}( F_{ a\beta\gamma}(
  \w,  \Theta_k^{\beta\gamma})),  \Psi_i^{\gamma\epsilon}(\Theta_l^{\gamma\epsilon}))
 +F_{ a\beta\gamma\epsilon} (\w, \Theta_k^{\beta\gamma}, \Psi_i^{\gamma\epsilon}(\Theta_l^{\gamma\epsilon}))  \\
 &+F_{ a\beta\epsilon}(\w, \Psi_i^{\beta\gamma\epsilon}
  (\Theta_k^{\beta\gamma}, \Theta_l^{\gamma\epsilon}))
 +\Psi_i^{ a\beta\epsilon}(\w, F_{\beta\gamma\epsilon}
  (\Theta_k^{\beta\gamma}, \Theta_l^{\gamma\epsilon})).
\end{aligned}
\end{equation}
Similarly,
\begin{equation} \label{eq:abde-assoc}
\begin{aligned}
 F_{ a\delta\epsilon} & ( \partial_{ a\delta}^{-1} (F_{ a\beta\delta}(
  \Psi_i^{ a\beta}(\w), \Theta_l^{\beta\delta})),
  \Theta_k^{\delta\epsilon}) +
 F_{ a\beta\delta\epsilon} (\Psi^{ a\beta}(\w),
  \Theta_l^{\beta\delta}, \Theta_k^{\delta\epsilon}) = \\
 & \Psi_i^{ a\epsilon}(
  F_{ a\delta\epsilon}( \partial_{ a\delta}^{-1}( F_{ a\beta\delta}( \w,
   \Theta_l^{\beta\delta})) , \Theta_k^{\delta\epsilon})
  +F_{ a\beta\delta\epsilon} (\w,
  \Theta_l^{\beta\delta}, \Theta_k^{\delta\epsilon})) \\
 & + F_{ a\delta\epsilon}( \partial_{ a\delta}^{-1}( F_{ a\beta\delta}(
  \w,  \Theta_l^{\beta\delta})),  \Psi_i^{\delta\epsilon}(\Theta_k^{\delta\epsilon}))
 +F_{ a\beta\delta\epsilon} (\w, \Theta_l^{\beta\delta}, \Psi_i^{\delta\epsilon}(\Theta_k^{\delta\epsilon}))  \\
 &+F_{ a\delta\epsilon}( \partial_{ a\delta}^{-1} (F_{ a\beta\delta}( \w,
  \Psi_i^{\beta\delta}(\Theta_l^{\beta\delta}))), \Theta_k^{\delta\epsilon})
 + F_{ a\beta\delta\epsilon} (\w, \Psi_i^{\beta\delta} (\Theta_l^{\beta\delta}), \Theta_k^{\delta\epsilon}) \\
 &+F_{ a\beta\epsilon}(\w, \Psi_i^{\beta\delta\epsilon}
  (\Theta_l^{\beta\delta}, \Theta_k^{\delta\epsilon}))
 +\Psi_i^{ a\beta\epsilon}(\w, F_{\beta\delta\epsilon}
  (\Theta_l^{\beta\delta}, \Theta_k^{\delta\epsilon})).
\end{aligned}
\end{equation}
%(Note that \eqref{abge-assoc} \eqref{eq:abde-assoc} differ we have interchanged $k$ and $l$ and swapped the thirdand fourth lines.)

The first lines in \eqref{eq:abge-assoc} and \eqref{eq:abde-assoc} sum to
$g^{k,l}(\Psi_i^{ a\beta}(\w))$, and the sum of the second lines equals
$\Psi_i^{ a\epsilon}(g^{k,l}(\w))$. By Lemma \ref{lem:psitheta2}, if $k=1$ and
$i \in \{b,d\}$, then the sum of the third lines of \eqref{eq:abge-assoc} and
\eqref{eq:abde-assoc} equals $g^{2,l}(\w)$; otherwise, it equals zero.
Similarly, the sum of the fourth lines equals $g^{k,2}(\w)$ if $l=1$ and $i \in
\{a,c\}$, and zero otherwise. Thus, to finish the proof of Proposition
\ref{prop:psi-gkl}, we only need to show that the sum of the fifth lines is
zero; that is,
\begin{multline} \label{eq:fifthlines}
F_{ a\beta\epsilon}(w, \Psi_i^{\beta\gamma\epsilon}
  (\Theta_k^{\beta\gamma}, \Theta_l^{\gamma\epsilon}) + \Psi_i^{\beta\delta\epsilon}
  (\Theta_l^{\beta\delta}, \Theta_k^{\delta\epsilon})) \\
+\Psi_i^{ a\beta\epsilon}(w, F_{\beta\gamma\epsilon}
  (\Theta_k^{\beta\gamma}, \Theta_l^{\gamma\epsilon})+ F_{\beta\delta\epsilon}
  (\Theta_l^{\beta\delta}, \Theta_k^{\delta\epsilon})) = 0.
\end{multline}
This follows from an argument nearly identical to those in the proofs of Lemma
\ref{lem:triangles} and Proposition \ref{prop:d^2=0}. Let
$\Theta_{k,l}^{\beta\epsilon} \in \T_\beta \cap \T_\epsilon$ denote the
generator consisting of the point of $\beta_{c_{j_2}} \cap \epsilon_{c_{j_2}}$
nearest the point of $\beta_{c_{j_2}} \cap \gamma_{c_{j_2}}$ in
$\Theta_{k}^{\beta\gamma}$; the point of $\beta_{c_{j_1}} \cap
\epsilon_{c_{j_1}}$ nearest the point of $\gamma_{c_{j_1}} \cap
\epsilon_{c_{j_1}}$ in $\Theta_{l}^{\gamma\epsilon}$; and, for each
$i=1,\dots,m$, the intersection point $\theta_{p_i}^{\beta\epsilon}$ of
$\beta_{p_i}\cap\epsilon_{p_i}$ with smallest $\delta$-grading contribution, as
in Section \ref{sec:heegaard}. As in Lemma \ref{lem:isom},
$\Theta_{1,1}^{\beta\epsilon}$, $\Theta_{2,1}^{\beta\epsilon}$,
$\Theta_{1,2}^{\beta\epsilon}$, and $\Theta_{2,2}^{\beta\epsilon}$ are the
generators in $\T_\beta \cap \T_\epsilon$ with minimal $\delta$-grading.
Moreover, it is not hard to see that \[ F_{\beta\gamma\epsilon}
(\Theta_k^{\beta\gamma}, \Theta_l^{\gamma\epsilon})= F_{\beta\delta\epsilon}
(\Theta_l^{\beta\delta}, \Theta_k^{\delta\epsilon}) =
\Theta^{\beta\epsilon}_{k,l};
\]
the domains that contribute to these maps are simply disjoint unions of small triangles. Furthermore, the $\delta$-grading shifts of $\Psi_i^{\beta\gamma\epsilon}$ and
$\Psi_i^{\beta\gamma\epsilon}$ are one less than those of
$F_{\beta\gamma\epsilon}$ and $F_{\beta\delta\epsilon}$, respectively. Thus,
\[
\Psi_i^{\beta\gamma\epsilon} (\Theta_k^{\beta\gamma},
\Theta_l^{\gamma\epsilon})= \Psi_i^{\beta\delta\epsilon}
(\Theta_l^{\beta\delta}, \Theta_k^{\delta\epsilon}) = 0,
\]
and both terms on the left side of \eqref{eq:fifthlines} vanish.
\end{proof}

Next, we describe the actions of the maps $\Psi_i^{ a\beta}$ and $\Psi_i^{
a\epsilon}$. The diagrams $(\Sigma_3,\bm a,\bm\beta,\O,\X)$ and $(\Sigma_3,\bm
a,\bm\epsilon,\O,\X)$ both satisfy the hypotheses of Proposition
\ref{prop:twistedunknot} with respect to the marking $(\A,\omega_{\r})$.
Therefore, without any direct computation, we know that
\begin{equation} \label{eq:psiabrel}
 \sum_{i=1}^m T^{R(1,i)} \Psi^{ a\beta}_i = 0,
\end{equation}
\begin{multline} \label{eq:psiaerel}
\sum_{i \in \{1,\dots,a, d+1, \dots, m\}} T^{R(1,i)} \Psi^{ a\epsilon}_i
 + \sum_{i=c+1}^d T^{A + R(c+1,i)} \Psi^{ a\epsilon}_i \\
 + \sum_{i=b+1}^c T^{A + D + R(b+1,i)} \Psi^{ a\epsilon}_i
 + \sum_{i=a+1}^b T^{A + D + C + R(a+1,i)} \Psi^{ a\epsilon}_i = 0.
\end{multline}
Any element of $\CFKtt(\bm a, \bm\beta;\FF)$ may be obtained from $\w_0$
through a sum of compositions of the $\Psi_i^{ a\beta}$ maps, by Proposition
\ref{prop:twistedunknot}. Therefore, by Proposition \ref{prop:psi-gkl}, the
values $g^{1,1}(\w_0)$, $g^{1,2}(\w_0)$, $g^{2,1}(\w_0)$ and $g^{2,2}(\w_0)$
determine the entire function $g$. We shall see momentarily that
$g^{2,1}(\w_0)$ is a nonzero multiple of $\z_0$, the unique generator of
$\CFKtt(\bm a, \bm\epsilon;\FF)$ in the top Maslov grading. It follows that the
other values $g^{k,l}(\w_0)$ are completely determined by $g^{2,1}(\w_0)$.
Indeed, it must be the case that $g^{1,1}(\w_0) = 0$ by Lemma
\ref{lem:homogeneousg}. Next, by \eqref{eq:psiabrel} and Proposition
\ref{prop:psi-gkl}, we have
\begin{equation} \label{eq:g12(w0)}
\begin{aligned}
0 &= g \left(  \sum_{i=1}^m T^{R(1,i)} \Psi^{ a\beta}_i(\w_0) \right) \\
&= \sum_{i=1}^m T^{R(1,i)} (g^{1,1} + g^{1,2} + g^{2,1} +
g^{2,2})(\Psi^{ a\beta}_i(\w_0)) \\
&= \sum_{i=1}^m T^{R(1,i)} \Psi^{ a\epsilon}_i( (g^{1,1} + g^{1,2} +
g^{2,1} + g^{2,2})(\w_0)) \\
& \qquad + (T^{A}+T^{A+B+C})( g^{2,1}+ g^{2,2})(\w_0) +
(T^{A+B}+T^{A+B+C+D})( g^{1,2}+ g^{2,2})(\w_0)
\end{aligned}
\end{equation}
The sum of the terms in the top Maslov grading must equal zero, so
\[
(T^A+T^{A+B+C})  g^{2,1}(\w_0) + (T^{A+B}+T^{A+B+C+D})  g^{1,2}(\w_0) = 0, \\
\]
which determines $ g^{1,2}(\w_0)$. Likewise, the sum of the terms in the second-to-top Maslov grading must equal
zero, so
\begin{equation} \label{eq:g22(w0)}
\sum_{i=1}^m T^{R(1,i)} \Psi^{ a\epsilon}_i(( g^{1,2} +  g^{2,1})(\w_0))+
(T^{A}+T^{A+B} +T^{A+B+C} + T^{A+B+C+D})  g^{2,2}(\w_0) = 0,
\end{equation}
which determines $ g^{2,2}(\w_0)$. Thus, $g$ is determined by the following.

\begin{proposition} \label{prop:g21(w0)}
The map $ g^{2,1}$ satisfies
\[
 g^{2,1}(\w_0)  =\frac{T^{B+\nu C}}{1+T^{B+C}} \z_0.
\]
\end{proposition}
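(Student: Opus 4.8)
The plan is to compute $g^{2,1}(\w_0)$ by a direct analysis of the holomorphic polygons in the explicit genus $3$ multi-diagram $\HH^3_{I,I''}$ of Figures \ref{fig:genus3a} and \ref{fig:genus3b}. The main tools are: the descriptions of the differentials $\partial_{a\gamma},\partial_{a\delta}$ and of the basepoint maps $\Psi^{\bullet}_i$ on the various two-sided complexes in Lemmas \ref{lemma:differential} and \ref{lem:psitheta2}; the model for the unknot complexes $\CFKtt(\bm a,\bm\beta;\FF)$ and $\CFKtt(\bm a,\bm\epsilon;\FF)$ and their $\Psi_i$-actions furnished by Proposition \ref{prop:twistedunknot} (and its proof); and the $\mathcal{A}_\infty$ relations relating the polygon maps $F$ with the maps $\Psi_i$, as set up in Section \ref{sec:psi} and already exploited in the proof of Proposition \ref{prop:psi-gkl}.

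First I would show that $g^{2,1}(\w_0)$ is a scalar multiple of $\z_0$. Since $\w_0$ is the unique generator of $\CFKtt(\bm a,\bm\beta;\FF)$ in the top Maslov grading and $\z_0$ is the unique top-Maslov generator of $\CFKtt(\bm a,\bm\epsilon;\FF)$, this is a Maslov-grading computation: by Lemma \ref{lem:homogeneousg} (which rests on Proposition \ref{prop:maslovshift}) the maps $g^{k,l}$ are homogeneous with grading shifts in arithmetic progression, and one checks — e.g.\ by computing the shift of the standard small-triangle count $F_{a\beta\gamma}(\w_0\otimes\Theta^{\beta\gamma}_1)=\x_0$ and propagating — that $g^{2,1}$ carries $\w_0$ exactly to the top Maslov grading of $\CFKtt(\bm a,\bm\epsilon;\FF)$ (equivalently, $g^{1,1}(\w_0)=0$, since its Maslov grading would be one higher). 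Hence $g^{2,1}(\w_0)=c\,\z_0$ for some $c\in\FF$, and everything comes down to pinning down $c$.

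To compute $c$ I would evaluate the four summands of $g^{2,1}$ in \eqref{eq:gkl(w)} on $\w_0$. Each constituent polygon map reduces, via the $\mathcal{A}_\infty$ relations, to a short list of base computations: the top-to-top triangle counts $F_{a\beta\gamma}(\w_0\otimes\Theta^{\beta\gamma}_1)$, $F_{a\beta\delta}(\w_0\otimes\Theta^{\beta\delta}_1)$, $F_{a\gamma\epsilon}(\,\cdot\,\otimes\Theta^{\gamma\epsilon}_1)$, $F_{a\delta\epsilon}(\,\cdot\,\otimes\Theta^{\delta\epsilon}_1)$ — each realized by a disjoint union of small triangles, with the powers of $T$ swept over the points of $\A$ read directly off Figures \ref{fig:genus3a}--\ref{fig:genus3b} — together with the $\Psi_i$-actions, which are known on $\CFKtt(\bm a,\bm\beta;\FF)$ and $\CFKtt(\bm a,\bm\epsilon;\FF)$ from Proposition \ref{prop:twistedunknot} and can be read off on $\CFKtt(\bm a,\bm\gamma;\FF)$, $\CFKtt(\bm a,\bm\delta;\FF)$ from Lemmas \ref{lemma:differential} and \ref{lem:psitheta2} by the same argument as in that proposition. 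Concretely, using $\Theta^{\beta\gamma}_2=\Psi^{\beta\gamma}_a(\Theta^{\beta\gamma}_1)$ (Lemma \ref{lem:psitheta2}) in the triangle $\mathcal{A}_\infty$ relation rewrites $F_{a\beta\gamma}(\w_0\otimes\Theta^{\beta\gamma}_2)$ as a known multiple of a generator in the image of $\partial_{a\gamma}$; applying $\partial_{a\gamma}^{-1}$ produces the factor $(1+T^{B+C})^{-1}$ of Lemma \ref{lemma:differential}; and $F_{a\gamma\epsilon}(\,\cdot\,\otimes\Theta^{\gamma\epsilon}_1)$ then lands on $\z_0$ with a net power $T^{B+\nu C}$ accumulated (the $T^{B}$ coming from the markings swept in the $\bm\beta\to\bm\gamma$ step, and the extra $T^{C}$ occurring precisely in the $\nu=1$ case of Figure \ref{fig:genus3a}). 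A parallel analysis of the $\bm\delta$-path zig-zag term and the two quadrilateral terms shows that they contribute nothing to the coefficient of $\z_0$, whether by Maslov-grading constraints or by mutual cancellation. Hence $c=T^{B+\nu C}/(1+T^{B+C})$, as claimed.

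The main obstacle is exactly the explicit identification of the holomorphic polygons occurring in $\HH^3_{I,I''}$ and the verification that no unexpected domains contribute: this requires the same combination of small-domain analysis with a positivity/$P=\mu$ argument (Proposition \ref{prop:mu=P}, Lemmas \ref{lem:isom} and \ref{lem:triangles}) that pervades the paper, now carried out in the presence of the twisting $(\A,\omega_\Omega)$, together with the careful bookkeeping of the resulting $T$-exponents. Getting the exponent to be exactly $B+\nu C$ — and not something involving $A$ or $D$, or a different power of $C$ — and confirming that the $\bm\delta$-path and quadrilateral contributions really do cancel, are where the delicacy lies.
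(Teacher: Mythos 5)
Your plan correctly identifies the overall skeleton of the computation: decompose $g^{2,1}(\w_0)$ into the four summands of \eqref{eq:gkl(w)}, show that three of them vanish, and evaluate the remaining zig-zag term $F_{a\gamma\epsilon}\circ\partial_{a\gamma}^{-1}\circ F_{a\beta\gamma}(\,\cdot\,\otimes\Theta_2^{\beta\gamma})$. However, there is a real gap at the heart of the argument, and it is precisely the place where the paper's proof is most clever.

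You assert that the triangle counts $F_{a\beta\gamma}(\w_0\otimes\Theta_2^{\beta\gamma})$ and $F_{a\gamma\epsilon}(\x_0\otimes\Theta_1^{\gamma\epsilon})$ are ``known'' (realized by disjoint unions of small triangles, counted with coefficient one). This is not so: the relevant domains are the large turquoise and gray regions in Figures \ref{fig:genus3a}--\ref{fig:genus3b}, not small triangles, and the mod-$2$ counts of their holomorphic representatives are not evident. The paper only pins these maps down as $s\cdot\x_{a+1}$ and $t\cdot T^{B+\nu C}\z_0$ for \emph{unknown} scalars $s,t\in\F$ --- using the cycle condition and a single-positive-class argument to constrain the target, but explicitly declining to count the triangles. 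Your alternative route via the $\AA_\infty$ relation $\Theta_2^{\beta\gamma}=\Psi_a^{\beta\gamma}(\Theta_1^{\beta\gamma})$ does not close either: relation \eqref{eq:abg-assoc} introduces the uncontrolled term $\partial_{a\gamma}(\Psi_a^{a\beta\gamma}(\w_0,\Theta_1^{\beta\gamma}))$, and pushing this through $\partial_{a\gamma}^{-1}$ and $F_{a\gamma\epsilon}$ reintroduces higher polygon maps with basepoint insertions that you have no handle on.

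The missing idea is the paper's indirect determination of $s\cdot t$. After establishing that the other three summands of $g^{2,1}(\w_0)$ vanish (for the $\delta$-zig-zag by the observation that $F_{a\beta\delta}(\w_0\otimes\Theta_1^{\beta\delta})$ must be a multiple of $\y_0$ yet is a cycle while $\partial_{a\delta}\y_0\neq 0$; for the two quadrilaterals by a detailed local-multiplicity analysis forcing a $-1$ multiplicity --- neither of which is a ``Maslov-grading constraint or mutual cancellation'' as you suggest), the paper argues: if $s\cdot t=0$, then by \eqref{eq:g12(w0)}, \eqref{eq:g22(w0)} and Proposition \ref{prop:psi-gkl} the whole map $g$ would vanish, forcing $\HFKtt(\LL_J,[\omega_\Omega]_J;\FF)$ to have rank $2^m$ over $\FF$, which contradicts the known rank ($2^{m-1}$ or $0$ depending on whether $[\omega_\Omega]_J$ vanishes). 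Hence $s\cdot t=1$. This global consistency argument, not a direct moduli-space computation, is what makes the proof go; as written your plan does not supply a substitute for it.
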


\begin{proof}[Proof of Proposition \ref{prop:g21(w0)}]
In each of Figures \ref{fig:genus3a} and \ref{fig:genus3b}, the turquoise
regions represent the domain of a Whitney triangle $\psi_1 \in \pi_2(\w_0,
\Theta_2^{\beta\gamma}, \x_{a+1})$, and the (partially overlapping) gray
regions represent the domain of a triangle $\psi_2 \in \pi_2(\x_0,
\Theta_1^{\gamma\epsilon}, \z_0)$. Both of these domains avoid the basepoints,
and their weights are $\gen{\omega_{\r}, \psi_1} = 0$ and
$\gen{\omega_{\r}, \psi_2} = B + \nu C$. Moreover, one can verify using
Sarkar's formula for the Maslov index of polygons \cite{SarkarMaslov} that
$\mu(\psi_1) = \mu(\psi_2) = 0$. Since the map $F_{ a\beta\gamma} (\cdot
\otimes \Theta_2^{\beta\gamma})$ is homogeneous, it follows that
\[
F_{ a\beta\gamma} (\w_0 \otimes \Theta_2^{\beta\gamma}) = \sum_{i=1}^{m-1}
s_i\w_i
\]
for some coefficients $s_i\in\FF$. Since $\w_0$ is a cycle, the right side of
\eqref{eqn:fabg} must be as well, which implies that $s_i=0$ for $i\neq a+1$.
Furthermore, it is easy to verify that $\psi_1$ is the only positive class in
$\pi_2(\w_0, \Theta_2^{\beta\gamma},\x_{a+1})$ Thus, we may conclude that
\begin{equation}\label{eqn:fabg}
F_{ a\beta\gamma} (\w_0 \otimes \Theta_2^{\beta\gamma}) = s \cdot\x_{a+1}
\end{equation}
for some $s \in \F$. A similar argument shows that
\begin{equation}\label{eqn:fage}F_{ a\gamma\epsilon} (\x_0 \otimes
\Theta_1^{\gamma\epsilon}) = t\cdot T^{B+\nu C} \z_0
\end{equation}
for some $t \in \F$. Therefore,
\begin{equation} \label{eq:abge} F_{
a\gamma\epsilon} (\partial_{ a\gamma}^{-1}( F_{ a\beta\gamma} (\w_0 \otimes
\Theta_2^{\beta\gamma}) \otimes \Theta_1^{\gamma\epsilon}) = s\cdot
t\cdot\frac{T^{B+\nu C}}{1+T^{B+C}} \z_0.
\end{equation}
We shall see that $s$ and $t$ are both equal to $1$. Remarkably, we will not
need any direct analysis of moduli spaces to prove this fact.

Note that the Maslov grading shift of $F_{ a\beta\gamma}(\cdot,
\Theta_k^{\beta\gamma})$ is equal to that of $F_{ a\beta\delta}(\cdot,
\Theta_k^{\beta\delta})$, so $F_{ a\beta\delta}(\w_0,
\Theta_2)$ must be in the second-to-top Maslov grading in $\CFKtt(\bm a,
\bm\delta;\FF)$. %(Indeed, it is not hard to find a class in $\pi_2(\w_0,
%\Theta_2^{\beta\delta}, \y_{c+1})$ with Maslov index $0$ that avoids the basepoints.)
This implies that $F_{ a\beta\delta}(\w_0, \Theta_1)$ is in the top Maslov
grading and, hence, is a multiple of $\y_0$. However, this multiple must be
zero since $\w_0$ is a cycle while $\partial_{ a\delta}(\y_0) \neq 0$. Thus,
\begin{equation} \label{eq:fabge}
F_{ a\delta\epsilon} (\partial_{ a\delta}^{-1}( F_{ a\beta\delta} (\w_0 \otimes
\Theta_1^{\beta\delta}) \otimes \Theta_2^{\delta\epsilon}) = 0.
\end{equation}

Next, we claim that the two terms in $ g^{2,1}(\w_0)$ which count holomorphic
rectangles both vanish; that is,
\begin{equation} \label{eq:noquad}
F_{ a\beta\gamma\epsilon}(\w_0 \otimes \Theta_2^{\beta\gamma} \otimes
\Theta_1^{\gamma\epsilon}) = F_{ a\beta\delta\epsilon}(\w_0 \otimes
\Theta_1^{\beta\delta} \otimes \Theta_2^{\delta\epsilon}) = 0.
\end{equation}
It follows from \eqref{eqn:fabg}, \eqref{eqn:fage} and the fact that $g^{2,1}$
is homogeneous that both terms in \eqref{eq:noquad} are multiples of $\z_0$. To
prove \eqref{eq:noquad}, we show that the domain of any Whitney rectangle
$\psi$ in $\pi_2(\w_0, \Theta_2^{\beta\gamma}, \Theta_1^{\gamma\epsilon},
\z_0)$ or $\pi_2(\w_0, \Theta_1^{\beta\delta}, \Theta_2^{\delta\epsilon},
\z_0)$ in Figure \ref{fig:genus3a} which avoids $\O\cup\X$ has some negative
multiplicities (the same argument works for the diagram in Figure
\ref{fig:genus3b}). For $i=1, \dots, a$, the local multiplicities of $D(\psi)$
near $p_i$ are as shown in Figure \ref{fig:noquad}(a), for some integers $p,q$.
To avoid negative multiplicities, we are forced to have $p=q=0$; it follows
that the multiplicity of the top region equals that of the bottom region. For
$i=1$, this top region has multiplicity $0$ since it contains $X_m$.
Inductively, the region directly to the right of $X_a$ in Figure
\ref{fig:genus3a} has multiplicity $0$. The multiplicities of $D(\psi)$ in the
regions near $O_{a+1}$ and $O_{c+1}$ are therefore as shown in Figures
\ref{fig:noquad}(b) and (c), for some integers $r,s,t$, and we are forced to
have $r=s=t=0$. Since neither $\Theta_1^{\beta\gamma}$ nor
$\Theta_1^{\delta\epsilon}$ is a corner of $D(\psi)$, the multiplicity on the
underside of the upper-right tube must be $-1$. As a result, $\psi$ has no
holomorphic representative.

\begin{figure}
\labellist
 \hair 2pt
 \pinlabel (a) at -7 365
 \pinlabel (b) at 181 365
 \pinlabel (c) at 181 175
 \tiny
 \pinlabel $p$ at 21 288
 \pinlabel $q$ at 37 288
 \pinlabel \rotatebox{55}{$p+q$} at 26 271
 \pinlabel $-p$ at 61 292
 \pinlabel $-q$ at 61 284
 \pinlabel $-p$ at 61 266
 \pinlabel $O_j$ at 62 317
 \pinlabel $X_j$ at 62 237
 \pinlabel \rotatebox{-65}{$-r-s$} at 212 323
 \pinlabel $-s$ at 230 290
 \pinlabel $0$ at 199 272
 \pinlabel $r$ at 250 286
 \pinlabel $s$ at 270 282
 \pinlabel $1$ at 290 282
 \pinlabel $1$ at 307 282
 \pinlabel $r+s$ at 274 265
 \pinlabel $O_{a+1}$ at 250 320
 \pinlabel $0$ at 300 310
 \pinlabel $0$ at 240 247
 \pinlabel $\Theta^{\beta\gamma}_1$ at 323 305
 \pinlabel $1$ at 330 285
 \pinlabel $0$ at 355 291
 \pinlabel $0$ at 340 318
 \pinlabel \rotatebox{69}{$-1$} at 372 321
 \pinlabel $0$ at 360 274
 \pinlabel $-t$ at 335 260
 \pinlabel $t$ at 304 242
 \pinlabel $-t$ at 330 210
 \pinlabel \rotatebox{-13}{$O_{c+1}$} at 335 234
 \pinlabel \rotatebox{-65}{$-t$} at 225 112
 \pinlabel $0$ at 199 82
 \pinlabel $-t$ at 285 114
 \pinlabel $t$ at 270 76
 \pinlabel $O_{a+1}$ at 250 120
 \pinlabel $0$ at 306 115
 \pinlabel $\Theta^{\delta\epsilon}_1$ at 307 90
 \pinlabel $1$ at 296 150
 \pinlabel $0$ at 340 128
 \pinlabel \rotatebox{69}{$-1$} at 365 117
 \pinlabel \rotatebox{-10}{$-r-s$} at 350 74
 \pinlabel $s$ at 304 54
 \pinlabel \rotatebox{3}{$-s$} at 308 78
 \pinlabel $r$ at 307 67
 \pinlabel \rotatebox{80}{$r+s$} at 290 56
 \pinlabel $1$ at 301 39
 \pinlabel $1$ at 298 29
 \pinlabel \rotatebox{-13}{$O_{c+1}$} at 335 44
 \pinlabel $0$ at 240 55

 %\pinlabel $(a)$ at 2 174
%\pinlabel $(b)$ at 175 174
%\pinlabel $p$ at 31 97
%\pinlabel $q$ at 47 97
%\pinlabel \rotatebox{55}{$p+q$} at 36 80
%\pinlabel $-p$ at 71 101
%\pinlabel $-q$ at 71 93
%\pinlabel $-p$ at 71 75
%\pinlabel $O_i$ at 72 126
%\pinlabel $X_i$ at 72 46

%\pinlabel \rotatebox{-65}{$-r-s$} at 208 132
%\pinlabel $-s$ at 226 99
%\pinlabel $0$ at 195 81
%\pinlabel $r$ at 246 95
%\pinlabel $s$ at 266 91
%\pinlabel $1$ at 286 91
%\pinlabel $1$ at 303 91
%\pinlabel $r+s$ at 270 74
%\pinlabel $O_{a+1}$ at 246 129
%\pinlabel $0$ at 296 119
%\pinlabel $0$ at 236 56
%\pinlabel $\Theta^{\beta\gamma}_1$ at 320 114
%\pinlabel $1$ at 326 94
%\pinlabel $0$ at 351 100
%\pinlabel $0$ at 336 127
%\pinlabel \rotatebox{69}{$-1$} at 368 130
%\pinlabel $0$ at 356 83
%\pinlabel $-t$ at 331 69
%\pinlabel $t$ at 300 52
%\pinlabel $-t$ at 326 19
%\pinlabel \rotatebox{-13}{$O_{c+1}$} at 331 43

\endlabellist

\includegraphics[width=10.6cm]{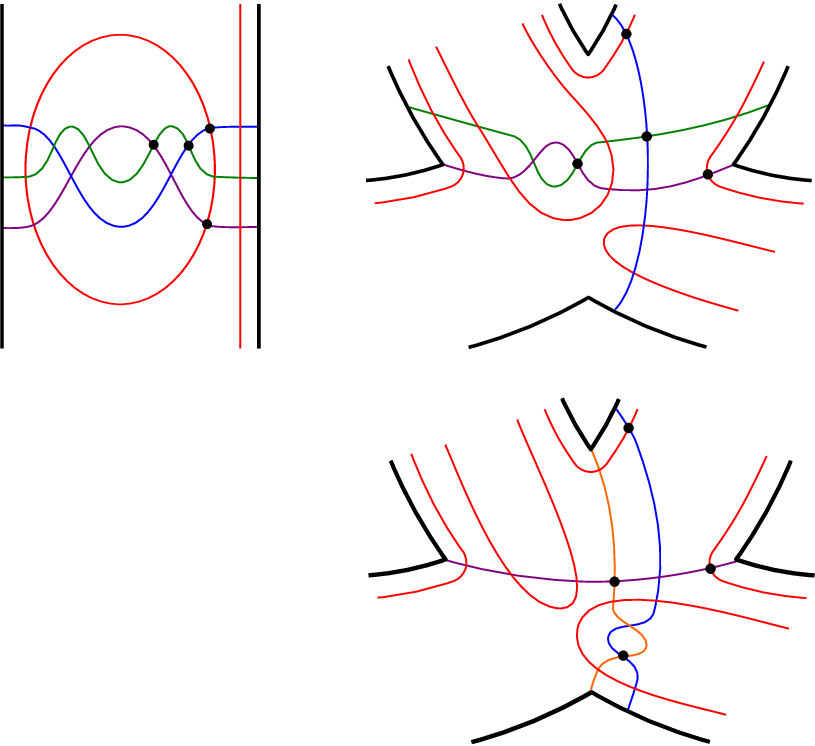}
\caption{Local multiplicities of the Whitney rectangle $\psi$.}
\label{fig:noquad}
\end{figure}

Therefore, the only potentially nonzero contribution to $ g^{2,1}(\w_0)$ (of the four terms in \eqref{eq:gkl(w)}) is that in \eqref{eq:abge}. If $s\cdot t = 0$, then $g(\w_0)$ is also zero, by \eqref{eq:g12(w0)} and \eqref{eq:g22(w0)}.
This implies that $g$ is identically zero, by Proposition \ref{prop:psi-gkl}. On the other hand, Theorem \ref{thm:cubeofres} tells us that the homology of the filtered complex in \eqref{eq:filtered} is $\HFKtt(\LL_J,
[\omega_{\r}]_J; \FF)$. Since the spectral sequence $\mathcal{S}_{I,I''}$ converges no later than the $E_3$ page, $\HFKtt(\LL_J, [\omega_{\r}]_J; \FF)$ is isomorphic to the homology of the mapping cone of $g$, by \eqref{eq:E2}. This means that if $g\equiv 0$, then $\HFKtt(\LL_J, [\omega_{\r}]_J; \FF)$ has rank $2^m$ over $\FF$. But this is plainly
impossible: if $[\omega_{\r}]_J = 0$, then by \eqref{eqn:err},
\[
\HFKtt(\LL_J, [\omega_{\r}]_J; \FF) \cong \HFKt(\LL_J)\otimes_{\F}\FF,
\]
which has rank $2^{m-1}$ over $\FF$; otherwise, if $[\omega_{\r}]_J \neq 0,$ then $\HFKtt(\LL_J, [\omega_{\r}]_J; \FF) = 0$, by Proposition \ref{prop:unlink}. Therefore, it must be the case that $s\cdot t = 1$, completing the proof of Proposition \ref{prop:g21(w0)}.
\end{proof}

\subsection{Sufficiency of the model computation}
In this subsection, we show that the model computation above suffices to describe the complex $(E_2(\SS^{\r}_\FF),d_2(\SS^{\r}_\FF))$ up to isomorphism.

The sequence of Heegaard moves from $\HH_{I,I''}$ to $\HH^3_{I,I''}$ described
at the beginning of Section \ref{sec:destab} induces chain homotopy
equivalences,
\[
\begin{aligned}
\CFKtt(\bm a, \bm\beta) & \to \CFKtt(\bm\alpha, \bm\eta(I)),
 & \CFKtt(\bm a, \bm\gamma) & \to \CFKtt(\bm\alpha, \bm\eta(I^1)), \\
\CFKtt(\bm a, \bm\delta) & \to \CFKtt(\bm\alpha, \bm\eta(I^2)),
 & \CFKtt(\bm a, \bm\epsilon) & \to \CFKtt(\bm\alpha, \bm\eta(I'')),
\end{aligned}
\]
which preserve both Maslov and Alexander gradings. These are compositions of
the maps corresponding to stabilizations with triangle-counting maps
corresponding to isotopies and handleslides. We shall denote these chain
homotopy equivalences by $\Phi_{I,I''}$. There are also maps $\ol \Phi_{I,I''}$
in the reverse direction which are homotopy inverses of the $\Phi_{I,I''}$.
These are compositions of triangle-counting maps with the maps corresponding to
destabilizations.

These Heegaard moves also induce homogeneous maps (denoted by $\Phi_{I,I''}$ as
well),
\[
\begin{aligned}
\CFKtt(\bm\beta, \bm\gamma) & \to \CFKtt(\bm\eta(I), \bm\eta(I^1)),
 & \CFKtt(\bm\gamma, \bm\epsilon) & \to \CFKtt(\bm\eta(I^1), \bm\eta(I'')), \\
\CFKtt(\bm\beta, \bm\delta) & \to \CFKtt(\bm\eta(I), \bm\eta(I^2)),
 & \CFKtt(\bm\delta, \bm\epsilon) & \to \CFKtt(\bm\eta(I^2), \bm\eta(I'')), \\
\end{aligned}
\]
which give rise to injections on homology taking the part of $\HFKtt(\bm\beta,
\bm\gamma)$ in the top Maslov grading to that of $\HFKtt(\bm\eta(I),
\bm\eta(I^1))$, etc. (See \cite{OSz4Manifold}.) Hence,
$\Phi_{I,I''}(\Theta_1^{\beta\gamma}) = T^e \Theta_1^{I,I^1}$ for some $e \in
\Z$. Every point of $\A$ is contained in the same region as a basepoint in the
triple-diagram associated to each pair of consecutive Heegaard diagrams in the
sequence from $(\Sigma,\bm\eta(I), \bm\eta(I^1))$ to
$(\Sigma_3,\bm\beta,\bm\gamma)$. It follows that $\Phi_{I,I''}$, does not pick
up any nontrivial powers of $T$; that is, $e=1$. By the same token,
\[
\Phi_{I,I''}(\Theta_1^{\beta\gamma}) = \Theta_1^{I,I^1}, \quad
\Phi_{I,I''}(\Theta_1^{\beta\delta}) = \Theta_1^{I,I^2}, \quad
\Phi_{I,I''}(\Theta_1^{\gamma\epsilon}) = \Theta_1^{I^1,I''}, \quad
\Phi_{I,I''}(\Theta_1^{\delta\epsilon}) = \Theta_1^{I^2,I''}.
\]
Furthermore, since $\Phi_{I,I''}$ is equivariant with respect to the maps $\Psi_i$, we have
\[
\Phi_{I,I''}(\Theta_2^{\beta\gamma}) = \Theta_2^{I,I^1}, \quad
\Phi_{I,I''}(\Theta_2^{\beta\delta}) = \Theta_2^{I,I^2}, \quad
\Phi_{I,I''}(\Theta_2^{\gamma\epsilon}) = \Theta_2^{I^1,I''}, \quad
\Phi_{I,I''}(\Theta_2^{\delta\epsilon}) = \Theta_2^{I^2,I''},
\]
by Lemma \ref{lem:psitheta1} and Lemma \ref{lem:psitheta2}.

\begin{figure}
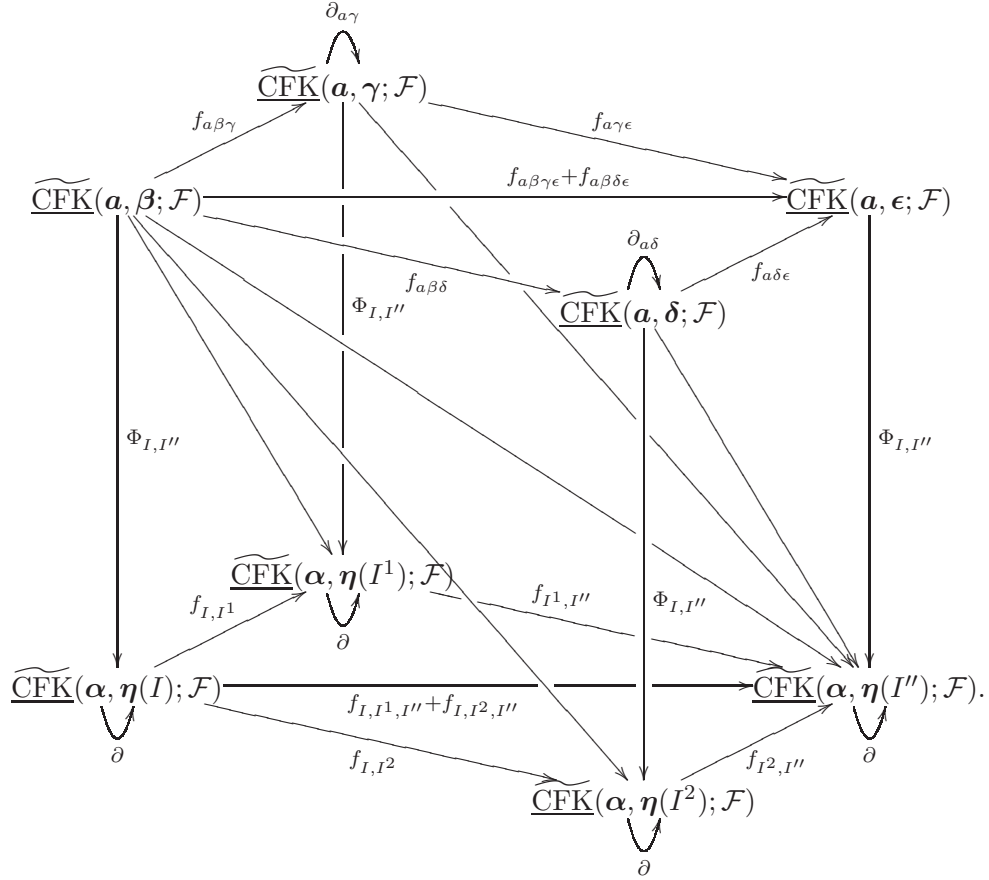

\[
\xy
 {(30,0)*{\CFKtt(\bm a, \bm\gamma;\FF)}="g"};
 {(0,-15)*{\CFKtt(\bm a, \bm\beta;\FF)}="b"};
 {(100,-15)*{\CFKtt(\bm a, \bm\epsilon;\FF)}="e"};
 {(70,-30)*{\CFKtt(\bm a, \bm\delta;\FF)}="d"};
 {(30,-65)*{\CFKtt(\bm\alpha, \bm\eta(I^1);\FF)}="I'1"};
 {(0,-80)*{\CFKtt(\bm\alpha, \bm\eta(I);\FF)}="I"};
 {(100,-80)*{\CFKtt(\bm\alpha, \bm\eta(I'');\FF).}="I''"};
 {(70,-95)*{\CFKtt(\bm\alpha, \bm\eta(I^2);\FF)}="I'2"};
 {\ar^{f_{ a\beta\gamma}} "b"; "g"};
 {\ar^{f_{ a\gamma\epsilon}} "g"; "e"};
 {\ar_(0.6){f_{ a\beta\delta}} "b"; "d"};
 {\ar_{f_{ a\delta\epsilon}} "d"; "e"};
 {\ar^(0.6){f_{ a\beta\gamma\epsilon}+f_{ a\beta\delta\epsilon}} "b"; "e"};
 {\ar^{f_{I,I^1}} "I"; "I'1"};
 {\ar^(0.4){f_{I^1,I''}} |(0.24){\hole} |(0.57){\hole} "I'1"; "I''"};
 {\ar_{f_{I,I^2}} "I"; "I'2"};
 {\ar_{f_{I^2,I''}} "I'2"; "I''"};
 {\ar_(0.42){f_{I,I^1,I''}+f_{I,I^2,I''}} |(0.57){\hole} |(0.7){\hole} "I"; "I''"};
 {\ar^{\Phi_{I,I''}} "b"; "I"};
 {\ar^(0.46){\Phi_{I,I''}}|(0.23){\hole} |(0.33){\hole} |(0.53){\hole} |(0.76){\hole} "g"; "I'1"};
 {\ar^(0.6){\Phi_{I,I''}} "d"; "I'2"};
 {\ar^{\Phi_{I,I''}} "e"; "I''"};
 {\ar "b"; "I'1"};
 {\ar "b"; "I'2"};
 {\ar|(0.7){\hole} "b"; "I''"};
 {\ar|(0.19){\hole} |(0.33){\hole} |(0.57){\hole} "g"; "I''"};
 {\ar "d"; "I''"};
 {\ar@/^12pt/^{\partial_{ a\gamma}} "g"+<-6pt,8pt>; "g"+<6pt,8pt>};
 {\ar@/^12pt/^{\partial_{ a\delta}} "d"+<-6pt,8pt>; "d"+<6pt,8pt>};
 {\ar@/_12pt/_{\partial} "I"+<-6pt,-8pt>; "I"+<6pt,-8pt>};
 {\ar@/_12pt/_{\partial} "I'1"+<-6pt,-8pt>; "I'1"+<6pt,-8pt>};
 {\ar@/_12pt/_{\partial} "I'2"+<-6pt,-8pt>; "I'2"+<6pt,-8pt>};
 {\ar@/_12pt/_{\partial} "I''"+<-6pt,-8pt>; "I''"+<6pt,-8pt>};
 \endxy
\]
\caption{The complex induced by the Heegaard moves from $\HH^3_{I,I''}$ to
$\HH_{I,I''}$.} \label{fig:filteredmap}
\end{figure}

The $\mathcal{A}_{\infty}$ relations \eqref{eqn:Ainfty}, applied to the large
multi-diagram which includes all of the multi-diagrams in the sequence from
$\HH_{I,I''}$ to $\HH^3_{I,I''}$, show that these maps fit into a complex as
shown in Figure \ref{fig:filteredmap}. We may view this complex as a filtered
map between two filtered complexes, which induces a map of spectral sequences.
On the $E_2$ page, we have a commutative square,
\begin{equation} \label{eq:E2map}
\xy
 {(0,0)*{\HFKtt(\bm a, \bm\beta;\FF)}="b"};
 {(40,0)*{\HFKtt(\bm a, \bm\epsilon;\FF)}="e"};
 {(0,-20)*{\HFKtt(\bm\alpha, \bm\eta(I);\FF)}="I"};
 {(40,-20)*{\HFKtt(\bm\alpha, \bm\eta(I'');\FF).}="I''"};
 {\ar^{g} "b"; "e"};
 {\ar^{d_{I,I''}}"I"; "I''"};
 {\ar^{(\Phi_{I,I''})_*}_\cong "b"; "I"};
 {\ar^{(\Phi_{I,I''})_*}_\cong "e"; "I''"};
 \endxy
\end{equation}

The maps $(\Phi_{I,I''})_*$ parametrize $\HFKtt(\bm\alpha, \bm\eta(I);\FF)$ and
$\HFKtt(\bm\alpha, \bm\eta(I'');\FF)$ by groups that we understand concretely.
\eqref{eq:E2map} then says that, with respect to these parametrizations, the
map $d_{I,I''}$ is described by $g$. To show that this determines the
\emph{global} structure of $(E_2(\SS_\FF^\r),d_2(\SS_\FF^\r))$,
we must verify that any two of these parametrizations agree where they overlap.
Specifically, consider another double successor pair $J,J''\in\RR(\DD)$ for
which either $I=J$, $I=J''$, $I''=J$ or $I''=J''$.\footnote{This $J$ is not
related to the $J$ used earlier in this section.} Without loss of generality,
let us assume that $I=J$; the other three cases are treated identically. Let
\[
\HH_{J,J''}^3 = (\Sigma_3, \bm a', \bm\beta', \bm\gamma', \bm\delta',
\bm\epsilon', \O, \X)
\]
be the genus $3$ Heegaard diagram obtained from $\HH_{J,J''}$, as described in
Section \ref{sec:destab}, and let $\w_0'$ denote the point in $\T_{a'} \cap
\T_{\beta'}$ of maximal Maslov grading. Since $\HFKtt(\bm\alpha, \bm\eta(I))$
has rank $1$ in this grading, we know that $(\Phi_{J,J''})_*([\w_0']) = \lambda
(\Phi_{I,I''})_*([\w_0'])$ for some nonzero $\lambda \in \FF$. Since any
element of $\HFKtt(\bm a, \bm \beta)$ (resp.~$\HFKtt(\bm a', \bm\beta')$) can
be obtained from $[\w_0]$ (resp.~$[\w_0']$) via the action of the maps
$\psi_i^{a\beta}$ (resp. $\psi_i^{a'\beta'}$), and $(\Phi_{I,I''})_*$ and
$(\Phi_{J,J''})_*$ are equivariant with respect to these actions, the constant
$\lambda$ completely determines the relationship between the two
parametrizations. In fact, the following proposition implies that $\lambda=1$.

\begin{proposition}\label{prop:naturality}
The elements $\Phi_{I,I''}(\w_0)$ and $\Phi_{J,J''}(\w'_0)$ represent the same
homology class in $\HFKtt(\bm\alpha, \bm\eta(I))$.
\end{proposition}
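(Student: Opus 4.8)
The plan is to prove the stronger assertion that both $\Phi_{I,I''}(\w_0)$ and $\Phi_{J,J''}(\w_0')$ represent a single class in $\HFKtt(\bm\alpha,\bm\eta(I);\FF)$ depending only on $\DD_I$ and the marking $(\A,\omega_\Omega)$, by factoring the two maps through a common ``partial collapse'' of the Heegaard diagram. First I would record that $\Phi_{I,I''}$ and $\Phi_{J,J''}$ preserve the Maslov grading and that $\w_0$, $\w_0'$ are the unique generators in the top Maslov grading of $\CFKtt(\bm a,\bm\beta;\FF)$ and $\CFKtt(\bm a',\bm\beta';\FF)$, so $[\Phi_{I,I''}(\w_0)]$ and $[\Phi_{J,J''}(\w_0')]$ lie in the top Maslov grading of $\HFKtt(\bm\alpha,\bm\eta(I);\FF)$. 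Since $L_I$ is an unknot, $[\omega_\Omega]_I=0$ (as $H^2(S^3\minus L_I)=0$), so $\CFKtt(\bm\alpha,\bm\eta(I);\FF)$ is the untwisted complex over $\FF$, with homology $V^{\otimes(m-1)}\otimes_\F\FF$, which is one-dimensional over $\FF$ in the top Maslov grading. Hence $\Phi_{J,J''}(\w_0')=\lambda\,\Phi_{I,I''}(\w_0)$ on homology for some $\lambda\in\FF^\times$, and it suffices to show that $\Phi_{I,I''}$ and $\Phi_{J,J''}$ each carry the unique top-Maslov generator of their source to the unique top-Maslov class of the target with coefficient exactly $1$ relative to compatible normalizations.

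Next I would build the common refinement. Let $\{c_{j_1},c_{j_2}\}$ and $\{c_{j_1'},c_{j_2'}\}$ be the crossings at which $I,I''$ and $J=I,J''$ differ. By the construction in Section~\ref{sec:destab}, when restricted to the $(\bm\alpha,\bm\eta(I))$-summand, the Heegaard moves producing $\HH^3_{I,I''}$ from $\HH_{I,I''}$ (resp.\ $\HH^3_{J,J''}$ from $\HH_{J,J''}$) are exactly the moves collapsing the crossings $c_j$ with $j\notin\{j_1,j_2\}$ (resp.\ $j\notin\{j_1',j_2'\}$); each collapse is a sequence of handleslides and isotopies in the complement of $\O\cup\X\cup\A$ followed by index one/two destabilizations, and the collapses of distinct crossings commute up to chain homotopy. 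Let $\hat\HH$ be the diagram obtained from $\HH_I=(\Sigma,\bm\alpha,\bm\eta(I),\P)$ by collapsing only the crossings $c_j$ with $j\notin\{j_1,j_2,j_1',j_2'\}$, and let $\hat\x_0$ be its unique top-Maslov generator. Then the $(\bm a,\bm\beta)$-diagram of $\HH^3_{I,I''}$ is obtained from $\hat\HH$ by collapsing $c_{j_1'}$ and $c_{j_2'}$, and the $(\bm a',\bm\beta')$-diagram of $\HH^3_{J,J''}$ by collapsing $c_{j_1}$ and $c_{j_2}$; since the collapses commute up to homotopy, $\Phi_{I,I''}$ and $\Phi_{J,J''}$ both factor up to homotopy through the homotopy inverse of $\HH_I\to\hat\HH$. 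So it is enough to check that any single ``collapse a crossing'' homotopy equivalence, between diagrams of the type appearing in Section~\ref{sec:destab}, carries the unique top-Maslov generator of one complex to that of the other with coefficient $1$: granting this, both $[\Phi_{I,I''}(\w_0)]$ and $[\Phi_{J,J''}(\w_0')]$ equal the image of $[\hat\x_0]$ under the homotopy inverse of $\HH_I\to\hat\HH$, and are therefore equal.

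For that last normalization I would argue exactly as in the paragraph above that established $\Phi_{I,I''}(\Theta_1^{\beta\gamma})=\Theta_1^{I,I^1}$. An index one/two destabilization induces a bijection on generators and introduces no power of $T$. A handleslide or isotopy performed in the complement of $\A$ induces a triangle-counting map that, on the unique top-Maslov generator, is computed by a single small triangle whose domain is disjoint from $\A$, hence of weight $\gen{\omega_\Omega,\cdot}=0$; it therefore carries the unique top-Maslov generator to the unique top-Maslov generator with coefficient $1$. Composing these steps gives the claim, and hence $[\Phi_{I,I''}(\w_0)]=[\Phi_{J,J''}(\w_0')]$.

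The main obstacle is precisely this last point: one must confirm that the handleslides and isotopies collapsing $c_j$ can genuinely be taken disjoint from \emph{all} of $\A$, the delicate case being the two markings near $c_j$ carrying the values $\pm\Omega(j)$. If these cannot be avoided outright, I would first migrate the pair together into a region containing $A_m$ or $B_m$ --- this does not change $[\omega_\Omega]$ and, by Lemma~\ref{cohomologyclass2}, is realized by a $\psi_i$-equivariant isomorphism introducing no net power of $T$, since the two migrated markings have opposite weights --- and then perform the collapse away from $\A$. The remaining ingredients, namely that the collapses of distinct crossings compose to homotopic maps and that the diagrams appearing midway through each collapse still carry a well-defined top-Maslov class, are routine.
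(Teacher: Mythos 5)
Your plan is a reasonable reorganization of the argument: you factor $\Phi_{I,I''}$ and $\Phi_{J,J''}$ through a common partial collapse $\hat\HH$ and reduce to showing that each single crossing collapse preserves the top Maslov generator, whereas the paper shows directly that the round-trip $\ol\Phi_{J,J''}\circ\Phi_{I,I''}$ carries $\w_0$ to $\w_0'$. These two routes differ mostly in bookkeeping, and both reduce to the same core normalization problem: showing a composition of handleslide and isotopy triangle maps carries a top-Maslov generator to a top-Maslov generator with coefficient exactly $1$, not some power of $T$.

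The gap is in how you justify that normalization. You assert the relevant triangle-counting map ``is computed by a single small triangle whose domain is disjoint from $\A$, hence of weight $\gen{\omega_\Omega,\cdot}=0$.'' This is not what the paper's argument establishes, and stated literally it is false in two ways. First, as the paper notes in the case analysis around Figure~\ref{fig:hmoves}, when a non-ladybug curve is handleslid and the top-Maslov generator has a point on the curve being slid over, the canonical polygon has a hexagonal piece in its domain --- it is not a disjoint union of ``small'' triangles. Second, and more importantly, exhibiting one contributing polygon of weight zero does not bound the coefficient: the count of holomorphic polygons ranges over the full moduli problem, and different contributing polygons could a priori carry different $T$-powers. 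What is needed is the argument the paper extracts from Lemma~\ref{lem:doublyperiodic} and Proposition~\ref{prop:mu=P}: any two contributing polygons differ by a multi-periodic domain, which decomposes into doubly-periodic domains of the intermediate pairs (which miss $\A$ because the moves avoid $\A$) plus a periodic domain in $\Pi^0_{a^1,\beta^1}$ (which vanishes because the initial diagram presents the unknot in $S^3$). Only then can one conclude that every contributing polygon has $\gen{\omega_\Omega,\cdot}=0$, so the coefficient equals the untwisted count, which is $1$ because the composite is a grading-preserving quasi-isomorphism. Without this periodic-domain step your normalization claim is unsupported, and it is precisely the crux of the proposition.

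A secondary caveat: your ``obstacle'' workaround invokes Lemma~\ref{cohomologyclass2} and asserts the resulting isomorphism ``introduces no net power of $T$.'' Inspecting the proof of Lemma~\ref{cohomologyclass}, the isomorphism sends $\x\mapsto T^{\epsilon_{\ul\s}(\x)}\x$ with $\epsilon$ depending on the relative $\Spin^c$ structure and a chosen base generator, so the ``no net power'' claim requires choosing the base generator to be $\w_0$ and tracking how $\epsilon$ behaves under composition with the rest of the maps; it is not automatic. The paper sidesteps this issue entirely by arranging the handleslides to avoid $\A$ from the outset.
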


\begin{proof}[Proof of Proposition \ref{prop:naturality}]
It suffices to show that the composition
\begin{equation}\label{eqn:compos}
\CFKtt(\bm a, \bm\beta;\FF) \xrightarrow{\Phi_{I,I''}} \CFKtt(\bm\alpha,
\bm\eta(I);\FF) \xrightarrow{\ol\Phi_{J,J''}} \CFKtt(\bm a', \bm\beta';\FF)
\end{equation}
sends $\w_0$ to $\w_0'$. Since the maps induced by (de)stabilizations commute
with those induced by isotopies and handleslides, $\Phi_{I,I''}$ and
$\Phi_{J,J''}$ can be factored into the compositions
\[
\begin{aligned}
\CFKtt(\bm a, \bm\beta;\FF)\xrightarrow{\Phi} \CFKtt(\bm a^1, \bm\beta^1;\FF)
\xrightarrow{\Phi_{I,I''}'} \CFKtt(\bm\alpha,
\bm\eta(I);\FF),\\
\CFKtt(\bm\alpha,\bm\eta(I);\FF)\xrightarrow{\ol\Phi_{J,J''}'} \CFKtt(\bm a^k,
\bm\beta^k;\FF)\xrightarrow{\ol\Phi}\CFKtt(\bm a', \bm\beta';\FF),
\end{aligned}
\]
where $(\Sigma,\bm a^1,\bm\beta^1,\O,\X)$ and $(\Sigma,\bm a^k, \bm\beta^k,\O,\X)$
are obtained from $H^3_{I,I''}$ and $H^3_{J,J''}$ by stabilizing $n-2$ times,
and $\Phi$ and $\ol\Phi$ are the maps induced by stabilization and
destabilization, respectively.

By definition, the map $\Phi$ sends $\w_0$ to the unique generator $\w^1_0$ in
$\T_{a^1}\cap\T_{\beta^1}$ of maximal Maslov grading. Likewise, $\ol\Phi$ sends
the unique generator $\w^k_0$ in $\T_{a^k}\cap\T_{\beta^k}$ of maximal Maslov
grading to $\w'_0$. To prove Proposition \ref{prop:naturality}, it then
suffices to show that the composition
\[
\CFKtt(\bm a^1, \bm\beta^1;\FF)
\xrightarrow{\Phi_{I,I''}'}
\CFKtt(\bm\alpha,\bm\eta(I);\FF)\xrightarrow{\ol\Phi_{J,J''}'} \CFKtt(\bm a^k,
\bm\beta^k;\FF)
\]
sends $\w^1_0$ to $\w^k_0$. The map $\ol\Phi_{J,J''}'\circ\Phi_{I,I''}$ is a
composition $\Phi_{k-1}\circ\dots\circ \Phi_1$, where \[\Phi_i: \CFKtt(\bm a^i,
\bm\beta^i;\FF)\rightarrow \CFKtt(\bm a^{i+1}, \bm\beta^{i+1};\FF)\] is the
triangle-counting map induced by the handleslide or isotopy taking $(\Sigma,\bm
a^i, \bm\beta^i,\O,\X)$ to $(\Sigma,\bm a^{i+1}, \bm\beta^{i+1},\O,\X)$, where
either $\bm a^i=\bm a^{i+1}$ or $\bm\beta^i=\bm\beta^{i+1}$. Note that for some
intermediate $j$, we have $\bm a^j = \bm \alpha$ and $\bm \beta^j =
\bm\eta(I)$.

Recall from the previous section that a Kauffman generator is one which does
not contain any intersection point between a ladybug curve and a non-ladybug
curve. Let $\w^i_0$ denote the unique Kauffman generator in
$\T_{a^i}\cap\T_{\beta^i}$ of maximal Maslov grading. It is not hard to see
that, for each $i=1,\dots,k-1$, there is a Whitney triangle $\psi_i$ in either
$\pi_2(\Theta^{a^{i+1} a^{i}},\w^{i}_0,\w^{i+1}_0)$ or
$\pi_2(\w^i_0,\Theta^{\beta^i\beta^{i+1}},\w^{i+1}_0)$ (depending on whether
$\bm a^i=\bm a^{i+1}$ or $\bm\beta^i=\bm\beta^{i+1}$, respectively) which
avoids $\O\cup\X\cup\A$. If $(\Sigma,\bm a^{i+1}, \bm\beta^{i+1},\O,\X)$ is
obtained from $(\Sigma,\bm a^i, \bm\beta^i,\O,\X)$ by an isotopy or handleslide
of a ladybug curve, then $\w^{i+1}_0$ is ``very close" to $\w^i_0$ and the
domain of $\psi_i$ is just a disjoint union of small triangles. Otherwise, if
$(\Sigma,\bm a^{i+1}, \bm\beta^{i+1},\O,\X)$ is obtained from $(\Sigma,\bm a^i,
\bm\beta^i,\O,\X)$ by a handleslide of a non-ladybug curve, then there are two
possibilities.

Without loss of generality, assume $\bm a^i=\bm a^{i+1}$ and that $\bm
\beta^{i+1}$ is obtained from $\bm\beta^i$ by handlesliding $\beta^i_1$ over
$\beta^i_2$, as shown in Figure \ref{fig:hmoves}. If there is no point of
$\w^i_0$ on $a^i_1\cap\beta^i_1$, then $\w^{i+1}_0$ is ``very close" to
$\w^i_0$ and the domain of $\psi_i$ is a disjoint union of small triangles; see
Figure \ref{fig:hmoves}(a). Otherwise, $\w^{i+1}_0$ is ``very close" to
$\w^i_0$ away from the portion of the diagram shown in Figure
\ref{fig:hmoves}(b). In these distant regions, the domain of $\psi_i$ is a
disjoint union of small triangles; near $a^i_1$ and $\beta_2^i$, the domain of
$\psi_i$ consists of the hexagon shown in the figure.

\begin{figure}
\labellist
 \pinlabel (a) at 0 120
 \pinlabel (b) at 129 120
 \hair 2pt
 \small \pinlabel $\beta^i_1$ at 25 86 \pinlabel $\beta^i_2$ at 74 23
\pinlabel $a^i_1$ at 101 87 \pinlabel $a^i_2$ at 101 38
\endlabellist

\includegraphics[width=9cm]{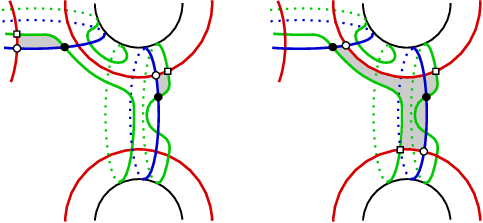}
\caption{The white circles and squares represent $\w_0^i$ and $\w_0^{i+1}$, respectively, and the $\bm\beta^{i+1}$ curves are in green.}
\label{fig:hmoves}
\end{figure}

The concatenation $\psi = \psi_1*\dots*\psi_k$ is therefore a Whitney
$(k+2)$-gon connecting $\w^1_0$ to $\w^k_0$, with evaluation
$\gen{\omega_{\r},\psi}=0$. Suppose that $\psi'$ is another concatenation
of triangles connecting these two generators and missing $\O\cup\X$. Then
$D(\psi')-D(\psi)$ is a multi-periodic domain $P$ on the large multi-diagram
that encodes all intermediate diagrams between $(\Sigma,\bm
a^1,\bm\beta^1,\O,\X)$ and $(\Sigma,\bm a^k,\bm\beta^k,\O,\X)$. One can show,
exactly as in the proof of Lemma \ref{lem:doublyperiodic}, that any such
periodic domain is the sum of doubly-periodic domains in $\Pi^0_{a^i,a^{i+1}}$
or $\Pi^0_{\beta^i,\beta^{i+1}}$, for $i=1,\dots,k-1$, with a periodic domain
$P'$ in $\Pi^0_{a^1, \beta^1}$. The former domains must miss $\A$ since the
handleslides and isotopies all avoid $\A$, and since $(\Sigma,\bm
a^1,\bm\beta^1,\O,\X)$ is a diagram for the unknot in $S^3$, we have $P' = 0$.
Thus, $P$ misses $\A$, so $\gen{\omega_{\r},\psi'} = \gen{\omega_\r,
\psi} = 0$. This implies that the coefficient of $\w^k_0$ in
$\ol\Phi_{J,J''}'\circ\Phi_{I,I''}(\w^1_0)$ is $1$.
\end{proof}

\begin{proof}[Proof of Theorems \ref{thm:main} and \ref{thm:main2}]
For each $I \in \RR(\DD)$ such that either \begin{inparaenum}\item there is a
double successor $I''$ of $I$ with $I''\in \RR(\DD)$ or \item $I$ is a double
successor of some $J\in \RR(\DD)$\end{inparaenum}, Proposition
\ref{prop:naturality} gives us a canonical class $w^I  \in \HFKtt(\bm\alpha,
\bm\eta(I);\FF)$. For all other $I \in \RR(\DD)$, we may take $w^I$ to be any
generator of $\HFKtt(\bm\alpha, \bm\eta(I);\FF)$ in the top Maslov grading.

Recall that $\YY_I$ is the vector space over $\FF$ generated by $y_1,
\dots, y_m$, modulo the relation
\[
\sum_{i=1}^m T^{r_{\sigma_I}(1) + \dots + r_{\sigma_I}(i)} y_{\sigma_I(i)} = 0.
\]
By Proposition \ref{prop:twistedunknot}, there are isomorphisms
\[
\rho_I\co \Lambda^*(\YY_I) \to \HFKtt(\bm\alpha, \bm\eta(I);\FF)
\]
such that $\rho_I(1) = w_I$ and $\rho_I(y_i x) = \psi_i^{\alpha\eta(I)}
(\rho_I(x))$ for all $x \in \Lambda^*(\YY_I)$. By expressing Proposition
\ref{prop:psi-gkl}, Proposition \ref{prop:g21(w0)}, \eqref{eq:g12(w0)} and
\eqref{eq:g22(w0)} in terms of these identifications, we see that if $I''$ is a
double successor of $I$, then $d_{I,I''}$ is as described in Section
\ref{sec:complex}. Thus, the maps $\rho_I$ induce an isomorphism of chain
complexes from $(C(\DD),
\partial^\r)$ to $(E_2(\SS_\FF^\r),d_2(\SS_\FF^\r))$, and the
grading $\Delta$ agrees with the grading on $(C(\DD),\partial^{\r})$
defined in Section \ref{sec:complex}. This identification, combined with
Theorem \ref{thm:collapse}, completes the proof.
\end{proof}

\begin{remark} \label{rmk:untwistedd1}
One can also use the computations in this section to determine the $d_1$ differential of the \emph{untwisted} spectral sequence $\SS_\F$ (which does not depend on $\r$). Unfortunately, the rank of its $E_2$ page, after dividing by $2^{m-|L|}$ to adjust for the number of marked points, is not an invariant of $L$. For instance, the complex associated to a $0$-crossing diagram of the unknot with $m$ marked points consists of a single copy of $\Lambda_*(\YY_I)$ (where $I$ is the empty tuple), with rank $2^{m-1}$. On the other hand, for a $3$-crossing diagram for the unknot obtained by changing one crossing of a diagram for the trefoil, with one marked point on each of the six edges, a \emph{Mathematica} computation shows that the $E_2$ page has rank $48$ rather than $32 = 2^5$.
\end{remark}

\bibliographystyle{amsplain}
\bibliography{bibliography}

\def\cprime{$'$}
\providecommand{\bysame}{\leavevmode\hbox to3em{\hrulefill}\thinspace}
\providecommand{\MR}{\relax\ifhmode\unskip\space\fi MR }
% \MRhref is called by the amsart/book/proc definition of \MR.
\providecommand{\MRhref}[2]{%
  \href{http://www.ams.org/mathscinet-getitem?mr=#1}{#2}
}
\providecommand{\href}[2]{#2}
\begin{thebibliography}{10}

\bibitem{BaldwinGillam}
John Baldwin and W.~Danny Gillam, \emph{Computations of {H}eegaard-{F}loer knot
  homology}, \arxiv{math/0610167}, 2006.

\bibitem{BaldwinPlamenevskaya}
John~A. Baldwin and Olga Plamenevskaya, \emph{{Khovanov homology, open books,
  and tight contact structures}}, Adv. Math. \textbf{224} (2010), no.~6,
  2544--2582.

\bibitem{BloomMutation}
Jonathan~M. Bloom, \emph{Odd {K}hovanov homology is mutation invariant}, Math.
  Res. Lett. \textbf{17} (2010), no.~1, 1--10.

\bibitem{BloomMonopole}
\bysame, \emph{A link surgery spectral sequence in monopole {F}loer homology},
  Adv. Math. \textbf{226} (2011), no.~4, 3216--3281.

\bibitem{ChampanerkarKofmanSpanning}
Abhijit Champanerkar and Ilya Kofman, \emph{Spanning trees and {K}hovanov
  homology}, Proc. Amer. Math. Soc. \textbf{137} (2009), no.~6, 2157--2167.

\bibitem{ColinGhigginiHonda1}
Vincent Colin, Paolo Ghiggini, and Ko~Honda, \emph{{Embedded contact homology
  and open book decompositions}}, \arxiv{1008.2734}, 2010.

\bibitem{ColinGhigginiHonda2}
Vincent Colin, Paolo Ghiggini, and Ko~Honda, \emph{Equivalence of {H}eegaard
  {F}loer homology and embedded contact homology via open book decompositions},
  Proc. Natl. Acad. Sci. USA \textbf{108} (2011), no.~20.

\bibitem{GhigginiFibred}
Paolo Ghiggini, \emph{Knot {F}loer homology detects genus-one fibred knots},
  Amer. J. Math. \textbf{130} (2008), no.~5, 1151--1169.

\bibitem{GordonLitherlandSignature}
Cameron~McA. Gordon and Richard~A. Litherland, \emph{On the signature of a
  link}, Invent. Math. \textbf{47} (1978), no.~1, 53--69.

\bibitem{GreeneSpanning}
Joshua~E. Greene, \emph{A spanning tree model for the {H}eegaard {F}loer
  homology of a branched double-cover}, J. Topol. \textbf{6} (2013), no.~2,
  525--567.

\bibitem{GrigsbyWehrliColored}
J.~Elisenda Grigsby and Stephan Wehrli, \emph{On the colored {J}ones
  polynomial, sutured {F}loer homology, and knot {F}loer homology}, Adv. Math.
  \textbf{223} (2010), no.~6, 2114--2165.

\bibitem{HeddenNiRanks}
Matthew Hedden and Yi~Ni, \emph{Manifolds with small {H}eegaard {F}loer ranks},
  Geom. Topol. \textbf{14} (2010), no.~3, 1479--1501.

\bibitem{HeddenNiUnlink}
Matthew Hedden and Yi~Ni, \emph{Khovanov module and the detection of unlinks},
  2013.

\bibitem{HeddenWatson}
Matthew Hedden and Liam Watson, \emph{Does {K}hovanov homology detect the
  unknot?}, Amer. J. Math. \textbf{132} (2010), no.~5, 1339--1345.

\bibitem{JaegerTwisted}
Thomas~C. Jaeger, \emph{A remark on {R}oberts' totally twisted {K}hovanov
  homology}, J. Knot Theory Ramifications \textbf{22} (2013), no.~6, 1350022,
  16.

\bibitem{KauffmanFormal}
Louis~H. Kauffman, \emph{Formal knot theory}, Mathematical Notes, vol.~30,
  Princeton University Press, Princeton, NJ, 1983.

\bibitem{KrizKrizInvariant}
Daniel Kriz and Igor Kriz, \emph{A spanning tree cohomology theory for links},
  Adv. Math. \textbf{255} (2014), 414--454.

\bibitem{KronheimerMrowkaSutures}
Peter Kronheimer and Tomasz Mrowka, \emph{Knots, sutures, and excision}, J.
  Differential Geom. \textbf{84} (2010), no.~2, 301--364.

\bibitem{KronheimerMrowkaUnknot}
\bysame, \emph{Khovanov homology is an unknot-detector}, Publ. Math. Inst.
  Hautes \'Etudes Sci. (2011), no.~113, 97--208.

\bibitem{KutluhanLeeTaubes1}
Cagatay Kutluhan, Yi-Jen Lee, and Clifford~H. Taubes, \emph{{HF=HM I: Heegaard
  Floer homology and Seiberg-Witten Floer homology}}, \arxiv{1007.1979}, 2010.

\bibitem{KutluhanLeeTaubes2}
\bysame, \emph{{HF=HM II: Reeb orbits and holomorphic curves for the
  ech/Heegaard-Floer correspondence}}, \arxiv{1008.1595}, 2010.

\bibitem{KutluhanLeeTaubes3}
\bysame, \emph{{HF=HM III: Holomorphic curves and the differential for the
  ech/Heegaard-Floer correspondence}}, \arxiv{1010.3456}, 2010.

\bibitem{KutluhanLeeTaubes4}
\bysame, \emph{{HF=HM IV: The Seiberg-Witten Floer homology and ech
  correspondence}}, \arxiv{1107.2297}, 2011.

\bibitem{KutluhanLeeTaubes5}
\bysame, \emph{{HF=HM V: Seiberg-Witten Floer homology and handle additions}},
  \arxiv{1204.0115}, 2012.

\bibitem{LekiliThesis}
Yanki Lekili, \emph{{Broken Lefschetz fibrations, Lagrangian matching
  invariants and Ozsv{\'a}th-Szab{\'o} invariants}}, Ph.D. thesis, MIT, 2009.

\bibitem{LipshitzCylindrical}
Robert Lipshitz, \emph{A cylindrical reformulation of {H}eegaard {F}loer
  homology}, Geom. Topol. \textbf{10} (2006), 955--1097.

\bibitem{ManolescuSkein}
Ciprian Manolescu, \emph{An unoriented skein exact triangle for knot {F}loer
  homology}, Math. Res. Lett. \textbf{14} (2007), no.~5, 839--852.

\bibitem{ManolescuOzsvathQA}
Ciprian Manolescu and Peter Ozsv{\'a}th, \emph{On the {K}hovanov and knot
  {F}loer homologies of quasi-alternating links}, Proceedings of {G}\"okova
  {G}eometry-{T}opology {C}onference 2007, G\"okova Geometry/Topology
  Conference (GGT), G\"okova, 2008, pp.~60--81.

\bibitem{ManolescuOzsvathSarkar}
Ciprian Manolescu, Peter Ozsv{\'a}th, and Sucharit Sarkar, \emph{A
  combinatorial description of knot {F}loer homology}, Ann. of Math. (2)
  \textbf{169} (2009), no.~2, 633--660.

\bibitem{ManolescuOzsvathSzaboThurston}
Ciprian Manolescu, Peter Ozsv{\'a}th, Zolt{\'a}n Szab{\'o}, and Dylan Thurston,
  \emph{On combinatorial link {F}loer homology}, Geom. Topol. \textbf{11}
  (2007), 2339--2412.

\bibitem{NiFibered}
Yi~Ni, \emph{Knot {F}loer homology detects fibered knots}, Invent. Math.
  \textbf{170} (2007), no.~3, 577--608.

\bibitem{NiActions}
\bysame, \emph{Homological actions on sutured {F}loer homology}, Math. Res.
  Lett. \textbf{21} (2014), no.~5.

\bibitem{OSzAlternating}
Peter Ozsv{\'a}th and Zolt{\'a}n Szab{\'o}, \emph{Heegaard {F}loer homology and
  alternating knots}, Geom. Topol. \textbf{7} (2003), 225--254 (electronic).

\bibitem{OSz4Genus}
\bysame, \emph{Knot {F}loer homology and the four-ball genus}, Geom. Topol.
  \textbf{7} (2003), 615--639 (electronic).

\bibitem{OSzGenus}
\bysame, \emph{Holomorphic disks and genus bounds}, Geom. Topol. \textbf{8}
  (2004), 311--334 (electronic).

\bibitem{OSzKnot}
\bysame, \emph{Holomorphic disks and knot invariants}, Adv. Math. \textbf{186}
  (2004), no.~1, 58--116.

\bibitem{OSz3Manifold}
\bysame, \emph{Holomorphic disks and topological invariants for closed
  three-manifolds}, Ann. of Math. (2) \textbf{159} (2004), no.~3, 1027--1158.

\bibitem{OSzMutation}
\bysame, \emph{Knot {F}loer homology, genus bounds, and mutation}, Topology
  Appl. \textbf{141} (2004), no.~1-3, 59--85.

\bibitem{OSzDouble}
\bysame, \emph{On the {H}eegaard {F}loer homology of branched double-covers},
  Adv. Math. \textbf{194} (2005), no.~1, 1--33.

\bibitem{OSz4Manifold}
\bysame, \emph{Holomorphic triangles and invariants for smooth four-manifolds},
  Adv. Math. \textbf{202} (2006), no.~2, 326--400.

\bibitem{OSzLink}
\bysame, \emph{Holomorphic disks, link invariants and the multi-variable
  {A}lexander polynomial}, Algebr. Geom. Topol. \textbf{8} (2008), no.~2,
  615--692.

\bibitem{OSzCube}
\bysame, \emph{A cube of resolutions for knot {F}loer homology}, J. Topol.
  \textbf{2} (2009), no.~4, 865--910.

\bibitem{RasmussenHomologies}
Jacob Rasmussen, \emph{Knot polynomials and knot homologies}, Geometry and
  topology of manifolds, Fields Inst. Commun., vol.~47, Amer. Math. Soc.,
  Providence, RI, 2005, pp.~261--280.

\bibitem{RasmussenThesis}
Jacob~A. Rasmussen, \emph{Floer homology and knot complements}, Ph.D. thesis,
  Harvard University, 2003, \arxiv{math/0509499}.

\bibitem{RobertsSpectralSequence}
Lawrence Roberts, \emph{Notes on the {H}eegaard-{F}loer link surgery spectral
  sequence}, \arxiv{math/0808.2817}, 2008.

\bibitem{RobertsDouble}
Lawrence~P. Roberts, \emph{On knot {F}loer homology in double branched covers},
  Geom. Topol. \textbf{17} (2013), no.~1, 413--467.

\bibitem{RobertsTwisted}
\bysame, \emph{Totally twisted {K}hovanov homology}, Geom. Topol. \textbf{19}
  (2015), no.~1, 1--59.

\bibitem{SarkarMaslov}
Sucharit Sarkar, \emph{Maslov index formulas for {W}hitney {$n$}-gons}, J.
  Symplectic Geom. \textbf{9} (2011), no.~2, 251--270.

\bibitem{SarkarWang}
Sucharit Sarkar and Jiajun Wang, \emph{An algorithm for computing some
  {H}eegaard {F}loer homologies}, Ann. of Math. (2) \textbf{171} (2010), no.~2,
  1213--1236.

\bibitem{TaubesECH=SW1}
Clifford~H. Taubes, \emph{{Embedded contact homology and Seiberg-Witten Floer
  cohomology I}}, Geom. Topol. \textbf{14} (2010), 2497--2581.

\bibitem{TaubesECH=SW2}
\bysame, \emph{{Embedded contact homology and Seiberg-Witten Floer cohomology
  II}}, Geom. Topol. \textbf{14} (2010), 2583--2720.

\bibitem{TaubesECH=SW3}
\bysame, \emph{{Embedded contact homology and Seiberg-Witten Floer cohomology
  III}}, Geom. Topol. \textbf{14} (2010), 2721--2817.

\bibitem{TaubesECH=SW4}
\bysame, \emph{{Embedded contact homology and Seiberg-Witten Floer cohomology
  IV}}, Geom. Topol. \textbf{14} (2010), 2819--2960.

\bibitem{TaubesECH=SW5}
\bysame, \emph{{Embedded contact homology and Seiberg-Witten Floer cohomology
  V}}, Geom. Topol. \textbf{14} (2010), 2961--3000.

\bibitem{WehrliSpanning}
Stephan~M. Wehrli, \emph{A spanning tree model for {K}hovanov homology}, J.
  Knot Theory Ramifications \textbf{17} (2008), no.~12, 1561--1574.

\bibitem{WehrliMutation}
\bysame, \emph{Mutation invariance of {K}hovanov homology over {$\Bbb F_2$}},
  Quantum Topol. \textbf{1} (2010), no.~2, 111--128.

\end{thebibliography}

\end{document}